\newcommand{\va}[0]{\mathbf a}
\newcommand{\vb}[0]{\mathbf b}
\newcommand{\vg}[0]{\mathbf g}
\newcommand{\vt}[0]{\mathbf t}
\newcommand{\vw}[0]{\mathbf w}
\newcommand{\vx}[0]{\mathbf x}
\newcommand{\vX}[0]{\mathbf X}
\newcommand{\vY}[0]{\mathbf Y}
\newcommand{\vZ}[0]{\mathbf Z}
\newcommand{\vzero}[0]{\mathbf 0}
\newcommand{\vone}[0]{\mathbf 1}
\newcommand{\mbf}[1]{\mbox{\boldmath$#1$}}
\newcommand{\Cov}[0]{\text{Cov}}
\newcommand{\Var}[0]{\text{Var}}
\newcommand{\tr}[0]{\text{tr}}
\newcommand{\E}[0]{\mathbb{E}}
\newcommand{\Id}[0]{\text{Id}}
\newcommand{\hiota}[0]{h_{(\iota)}}
\newcommand{\fiota}[0]{f_{(\iota)}}
\newcommand{\vech}[0]{\text{vech}}
\newcommand{\MAX}[0]{\text{MAX}}
\newcommand{\Prob}[0]{\mathbb{P}}
\newcommand{\vertiii}[1]{{\left\vert\kern-0.25ex\left\vert\kern-0.25ex\left\vert #1 
    \right\vert\kern-0.25ex\right\vert\kern-0.25ex\right\vert}}
\theoremstyle{plain}
\newtheorem{thm}{Theorem}[section]
\newtheorem{lem}[thm]{Lemma}
\newtheorem{cor}[thm]{Corollary}
\theoremstyle{definition}
\newtheorem{defn}{Definition}[section]
\newtheorem{exmp}{Example}[section]
\theoremstyle{remark}
\newtheorem{rmk}{Remark}
\begin{document}

\begin{frontmatter}

\title{Gaussian approximation for the sup-norm of high-dimensional matrix-variate U-statistics and its applications\thanksref{T1}}
\runtitle{Gaussian approximation for high-dimensional U-statistics}
\thankstext{T1}{First version: January 30, 2016. This version: \today.}


\begin{aug}
\author{\fnms{Xiaohui} \snm{Chen}\thanksref{m1}\ead[label=e1]{xhchen@illinois.edu}\thanksref{t2}}
\runauthor{Chen}
\thankstext{t2}{Research partially supported by NSF grant DMS-1404891 and UIUC Research Board Award RB15004.} 

\affiliation{University of Illinois at Urbana-Champaign\thanksmark{m1}}

\address{Xiaohui Chen\\
Department of Statistics \\
725 S. Wright Street \\
Champaign, IL 61820 \\
\printead{e1}}
\end{aug}
%


\begin{abstract}
This paper studies the Gaussian approximation of high-dimensional and non-degenerate U-statistics of order two under the supremum norm. We propose a two-step Gaussian approximation procedure that does not impose structural assumptions on the data distribution.  Specifically, subject to mild moment conditions on the kernel, we establish the explicit rate of convergence that decays polynomially in sample size for a high-dimensional scaling limit, where the dimension can be much larger than the sample size. We also supplement a practical Gaussian wild bootstrap method to approximate the quantiles of the maxima of centered U-statistics and prove its asymptotic validity. The wild bootstrap is demonstrated on statistical applications for high-dimensional non-Gaussian data including: (i) principled and data-dependent tuning parameter selection for regularized estimation of the covariance matrix and its related functionals; (ii) simultaneous inference for the covariance and rank correlation matrices. In particular, for the thresholded covariance matrix estimator with the bootstrap selected tuning parameter, we show that the Gaussian-like convergence rates can be achieved for heavy-tailed data, which are less conservative than those obtained by the Bonferroni technique that ignores the dependency in the underlying data distribution. In addition, we also show that even for subgaussian distributions, error bounds of the bootstrapped thresholded covariance matrix estimator can be much tighter than those of the minimax estimator with a universal threshold.
\end{abstract}

\end{frontmatter}

\section{Introduction}

Let $\vX_1,\cdots,\vX_n$ be a sample of independent and identically distributed (iid) random vectors in $\mathbb{R}^p$ with the distribution function $F$. Let $B$ be a separable Banach space equipped with the norm $\|\cdot\|$ and $h : \mathbb{R}^p \times \mathbb{R}^p \to B$ be a $B$-valued measurable and symmetric kernel function such that $h(\vx_1,\vx_2)=h(\vx_2,\vx_1)$ for all $\vx_1,\vx_2 \in \mathbb{R}^p$ and $\E\|h(\vX_1,\vX_2)\| < \infty$. Consider the U-statistics of order two
\begin{equation}
\label{eqn:ustat-order2}
U = {n \choose 2}^{-1} \sum_{1 \le i < j \le n} h(\vX_i, \vX_j).
\end{equation}
The main focus of this paper is to study the asymptotic behavior of the random variable $\|U-\E U\|$ in the high-dimensional setup when $p:=p(n) \to \infty$. Since the introduction of U-statistics by Hoeffding \cite{hoeffding1948}, their limit theorems have been extensively studied in the classical asymptotic setup where $n$ diverges and $p$ is fixed \cite{hoeffding1963,gregory1977,serfling1980,arconesgine1993,zhang1999,ginelatalazinn2000,houdrepatricia2003,hsingwu2004}. Recently, due to the explosive data enrichment, regularized estimation and dimension reduction of high-dimensional data have attracted a lot of research attentions such as covariance matrix estimation \cite{bickellevina2008a,bickellevina2008b,karoui2008,chenxuwu2013a}, graphical models \cite{dempster1972,yuanlin2007,buhlmannvandegeer2011}, discriminant analysis \cite{maizouyuan2012a}, factor models \cite{fanliaomincheva2011,MR2933663}, among many others. Those problems all involve the consistent estimation of an expectation of U-statistics of order two $\E[h(\vX,\vX')]$, where $\vX$ and $\vX'$ are two independent random vectors in $\mathbb{R}^p$ with the distribution $F$. Below are two examples.

\begin{exmp}
\label{exmp:sample-covmat}
The sample covariance matrix $\hat{S} = (n-1)^{-1} \sum_{i=1}^n (\vX_i-\bar{\vX}) (\vX_i-\bar{\vX})^\top$, where $\bar{\vX}=n^{-1}\sum_{i=1}^n \vX_i$ is the sample mean vector, is an unbiased estimator of the covariance matrix $\Sigma = \Cov(\vX)$. Then $\hat{S}$ is a matrix-valued U-statistic of form (\ref{eqn:ustat-order2}) with the unbounded kernel
\begin{equation}
\label{eqn:sample-covmat-kernel}
h(\vx_1,\vx_2) = {1\over2} (\vx_1-\vx_2) (\vx_1-\vx_2)^\top \qquad \text{ for } \vx_1,\vx_2 \in \mathbb{R}^p.
\end{equation}
\end{exmp}

\begin{exmp}
\label{exmp:kendalltau}
The covariance matrix $\Sigma$ quantifies the linear dependency in $\vX = (X_1,\cdots,X_p)^\top$. The rank correlation is another measure for the nonlinear dependency in a random vector. For $m,k=1,\cdots,p$, $(X_m, X_k)$ and $(X'_m, X'_k)$ are said to be {\it concordant} if $(X_m-X'_m) (X_k-X'_k) > 0$. Let \begin{equation}
\label{eqn:kendaltau-kernel}
h_{mk}(\vx_1,\vx_2) = 2 \cdot \vone\{ (\vx_{1m}-\vx_{2m}) (\vx_{1k}-\vx_{2k}) > 0\}
\end{equation}
and $h(\vx_1,\vx_2) = \{h_{mk}(\vx_1,\vx_2) \}_{m,k=1}^p$. Kendall's tau rank correlation coefficient matrix $T = \{\tau_{mk}\}_{m,k=1}^p$
can be written as a U-statistic with the bounded kernel $h_{mk}$ in (\ref{eqn:kendaltau-kernel})
$$
\tau_{mk} = {n \choose 2}^{-1} \sum_{1 \le i<j \le n} h_{mk}(\vX_i,\vX_j) - 1.
$$
Then, $(\tau_{mk}+1)/2$ is an unbiased estimator of $\Prob((X_m-X'_m)(X_k-X'_k)>0)$, i.e. the probability that $(X_m, X_k)$ and $(X'_m, X'_k)$ are concordant.
\end{exmp}

In this paper, we are interested in the following central questions: {\it how does the dimension impact the asymptotic behavior of U-statistics and how can we make statistical inference when $p \to \infty$?} Motivation of this paper comes from the estimation and inference problems for large covariance matrix and its related functionals \cite{meinshausenbuhlmann2006,yuanlin2007,rothmanbickellevinazhu2008a,pengwangzhouzhu2009a,yuan2010a,cailiuluo2011a,bickellevina2008b,chenxuwu2013a,chenxuwu2015+}. To establish rate of convergence for the regularized estimators or to study the $\ell^\infty$-norm Gaussian approximations in high-dimensions, a key issue is to characterize the supremum norm of $U-\E U$. Therefore, as the primary concern of the current paper, we shall consider $B=\mathbb{R}^{p\times p}$ and $\|h\| = \max_{1\le m,k \le p} |h_{mk}|$. 

Our first main contribution is to provide a Gaussian approximation scheme for the high-dimensional {\it non-degenerate} U-statistics under the sup-norm. Different from the central limit theorem (CLT) type results for the maxima of sums of iid random vectors \cite{cck2013}, which are directly approximated by the Gaussian counterparts with the matching first and second moments, approximating the sup-norm of U-statistics is more subtle because of its dependence and nonlinearity. Here, we propose a {\it two-step} Gaussian approximation method in Section \ref{sec:gaussian-approx}. In the first step, we approximate the U-statistics by the leading component of a linear form in the Hoeffding decomposition (a.k.a. the H\'ajek projection); in the second, the linear term is further approximated by the Gaussian random vectors. To approximate the distribution of the sup-norm of U-statistics by a linear form, a maximal moment inequality is developed to control the nonlinear and {\it canonical}, i.e. {\it completely degenerate}, form of the reminder term. Then the linear projection is handled by the recent development of Gaussian approximation in high-dimensions \cite{cck2013,zhangcheng2014,zhangwu2015a}. Explicit rate of convergence of the Gaussian approximation for high-dimensional U-statistics is established for unbounded kernels subject to sub-exponential and uniform polynomial moment conditions. Specifically, under either moment conditions, we show that the same convergence rate that decays polynomially in sample size as in the Gaussian approximation for the maxima of sums of iid random vectors is attained and the validity of the Gaussian approximation is proved for a high-dimensional scaling limit, where $p$ can be much larger than $n$.

The second contribution of this paper is to propose a Gaussian wild bootstrap procedure for approximating the quantiles of $\sqrt{n} \|U-\E U\|$. Since the (unobserved) linear projection terms of the centered U-statistics depend on the unknown underlying data distribution $F$ and there is a nonlinear remainder term, we use an additional estimation step beyond the Gaussian approximation. Here, we employ the idea of decoupling and estimate the linear projection on an independent dataset. Validity of the Gaussian wild bootstrap is established under the same set of assumptions in the Gaussian approximation results. One important feature of the Gaussian approximation and the bootstrap procedure is that no structural assumptions on the distribution $F$ are required and the strong dependence in $F$ is allowed, which in fact helps the Gaussian and bootstrap approximation. In Section \ref{sec:stat_apps}, we demonstrate the capability of the proposed bootstrap method applied to a number of important high-dimensional problems, including the data-dependent tuning parameter selection in the thresholded covariance matrix estimator and the simultaneous inference of the covariance and Kendall's tau rank correlation matrices. Two additional applications for the estimation problems of the sparse precision matrix and the sparse linear functionals are given in the Supplemental Materials (SM). In those problems, we show that the Gaussian like convergence rates can be achieved for non-Gaussian data with heavy-tails. For the sparse covariance matrix estimation problem, we also show that the thresholded estimator with the tuning parameter selected by the bootstrap procedure can gain potentially much tighter performance bounds over the minimax estimator with a universal threshold that ignores the dependency in $F$ \cite{bickellevina2008b,chenxuwu2013a,caizhou2011a}.

To establish the Gaussian approximation result and the validity of the bootstrap method, we have to bound the the expected sup-norm of the second-order canonical term in the Hoeffding decomposition of the U-statistics and establish its non-asymptotic maximal moment inequalities. An alternative simple data splitting approach by reducing the U-statistics to sums of iid random matrices can give the exact rate for bounding the moments in the non-degenerate case \cite{talagrand1996,massart2000,kleinrio2005,einmahlli2008}. Nonetheless, the reduction to the iid summands in terms of data splitting does not exploit the complete degeneracy structure of the canonical term and it does not lead to the convergence result in the Gaussian approximation for the non-degenerate U-statistics; see Section \ref{app:concentration_ineq_canonical-Ustat} for details. In addition, unlike the Hoeffding decomposition approach, the data splitting approximation is not asymptotically tight in distribution and therefore it is less useful in making inference of the high-dimensional U-statistics. \\

{\bf Notations and definitions.} For a vector $\vx$, we use $|\vx|_1 = \sum_j |x_j|$, $|\vx|:=|\vx|_2 = (\sum_j x_j^2)^{1/2}$, and $|\vx|_\infty = \max_j |x_j|$ to denote its entry-wise $\ell^1$, $\ell^2$, and $\ell^\infty$ norms, respectively. For a matrix $M$, we use $|M|_F=(\sum_{i,j} M_{ij}^2)^{1/2}$ and $\|M\|_2 =\max_{|\va|=1} |M\va|$ to denote its Frobenius and spectral norms, respectively. Denote $a \vee b = \max(a,b)$ and $a \wedge b = \min(a,b)$. We shall use $K, K_0, K_1,\cdots$ to denote positive finite absolute constants, and $C, C', C_0, C_1, \cdots$ and $c,c',c_0,c_1,\cdots$, to denote positive finite constants whose values do not depend on $n$ and $p$ and may vary at different places. We write $a \lesssim b$ if $a \le C b$ for some constant $C > 0$, and $a \asymp b$ if $a \lesssim b$ and $b \lesssim a$. For a random variable $X$, we write $\|X\|_q = (\E|X|^q)^{1/q}$ for $q>0$. We use $\|h\| = \max_{1\le m , k \le p} |h_{mk}|$ and $\|h\|_\infty = \sup_{\vx_1,\vx_2\in\mathbb{R}^p} \|h(\vx_1,\vx_2)\|$. Throughout the paper, we write $\vX_1^n = (\vX_1,\cdots,\vX_n)$ and let $\vX$ and $\vX'$ be two independent random vectors in $\mathbb{R}^p$ with the distribution $F$, which are independent of $\vX_1^n$. We write $\E h = \E [h(\vX,\vX')]$ and $\E g = \E g(\vX)$, where $g(\vx) = \E[h(\vx,\vX')] - \E h$ and $\vx \in \mathbb{R}^p$. For a matrix-valued kernel $h : \mathbb{R}^p \times \mathbb{R}^p \to \mathbb{R}^{p \times p}$, we say that: (i) $h$ is {\it non-degenerate} w.r.t. $F$ if $\Var(g_{mk}(\vX)) > 0$ for all $m,k=1,\cdots,p$; (ii) $h$ is {\it canonical} or {\it completely degenerate} w.r.t. $F$ if $\E[h_{mk}(\vx_1,\vX')] = \E[h_{mk}(\vX,\vx_2)] = \E[h_{mk}(\vX,\vX')] = 0$ for all $m,k=1,\cdots,p$ and for all $\vx_1,\vx_2 \in \mathbb{R}^p$. Without loss of generality, we shall assume throughout the paper that $p \ge 3$ and the matrix $h = \{h_{mk}\}_{m,k=1}^p$ is symmetric, i.e. $h_{mk} = h_{km}$.

\section{Gaussian approximation}
\label{sec:gaussian-approx}

In this section, we study the Gaussian approximation for $\max_{1\le m,k \le p} (U_{mk} - \E U_{mk})$ in (\ref{eqn:ustat-order2}), or equivalently the approximation for the sup-norm of the centered U-statistics by considering $U-\E U$ and $-U+\E U$. If $\vX_i$'s are non-Gaussian, a seemingly intuitive method would be generating Gaussian random vectors $\vY_i$'s by matching the first and second moments of $\vX_i$; i.e. to approximate $U$ by $U' = {n \choose 2}^{-1} \sum_{1 \le i < j \le n} h(\vY_i,\vY_j)$. However, empirical evidence suggests that this may not be a good approximation and theoretically it seems that the nonlinearity in $U$ and $U'$ accounts for a statistically invalid approximation. To illustrate this point, we simulate $n=200$ iid observations from the $p$-variate elliptic $t$-distribution in (\ref{eqn:elliptic_t_distn}) with mean zero and degree of freedom $\nu=8$ in the SM. We consider the covariance matrix kernel (\ref{eqn:sample-covmat-kernel}) as an example. For $p=40$, the P-P plot of the empirical cdfs of the sup-norm of the centered covariance matrices made from $\vX_i$ and $\vY_i$ is shown in Figure \ref{fig:gaussian-approx} (left) over 5000 simulations.

\begin{figure}[t!] 
   \centering
	\subfigure {\label{subfig:approx_gaussianchaos_n=200_p=40}\includegraphics[scale=0.35]{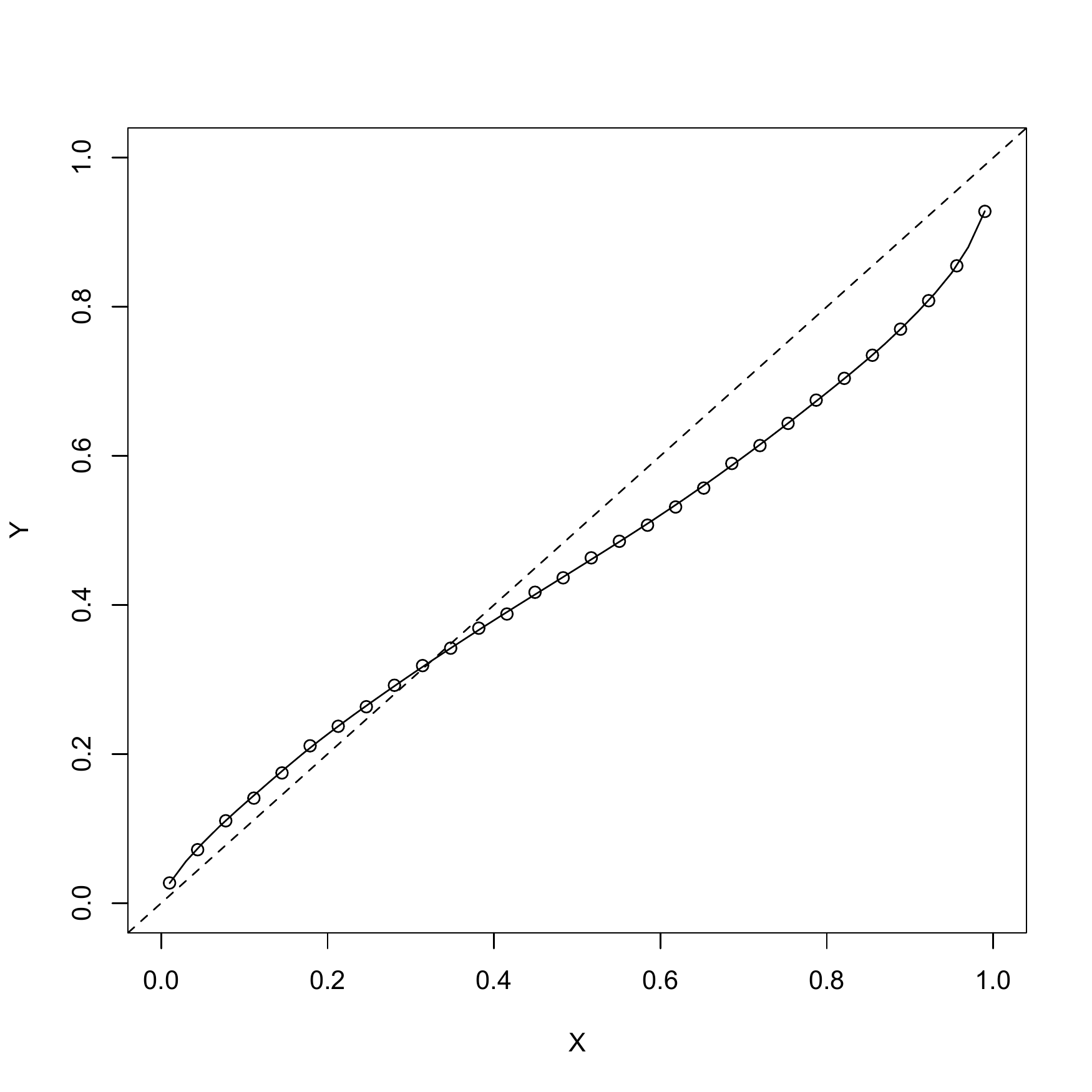}}
	\subfigure{\label{subfig:approx_hoeffding_n=200_p=40} \includegraphics[scale=0.35]{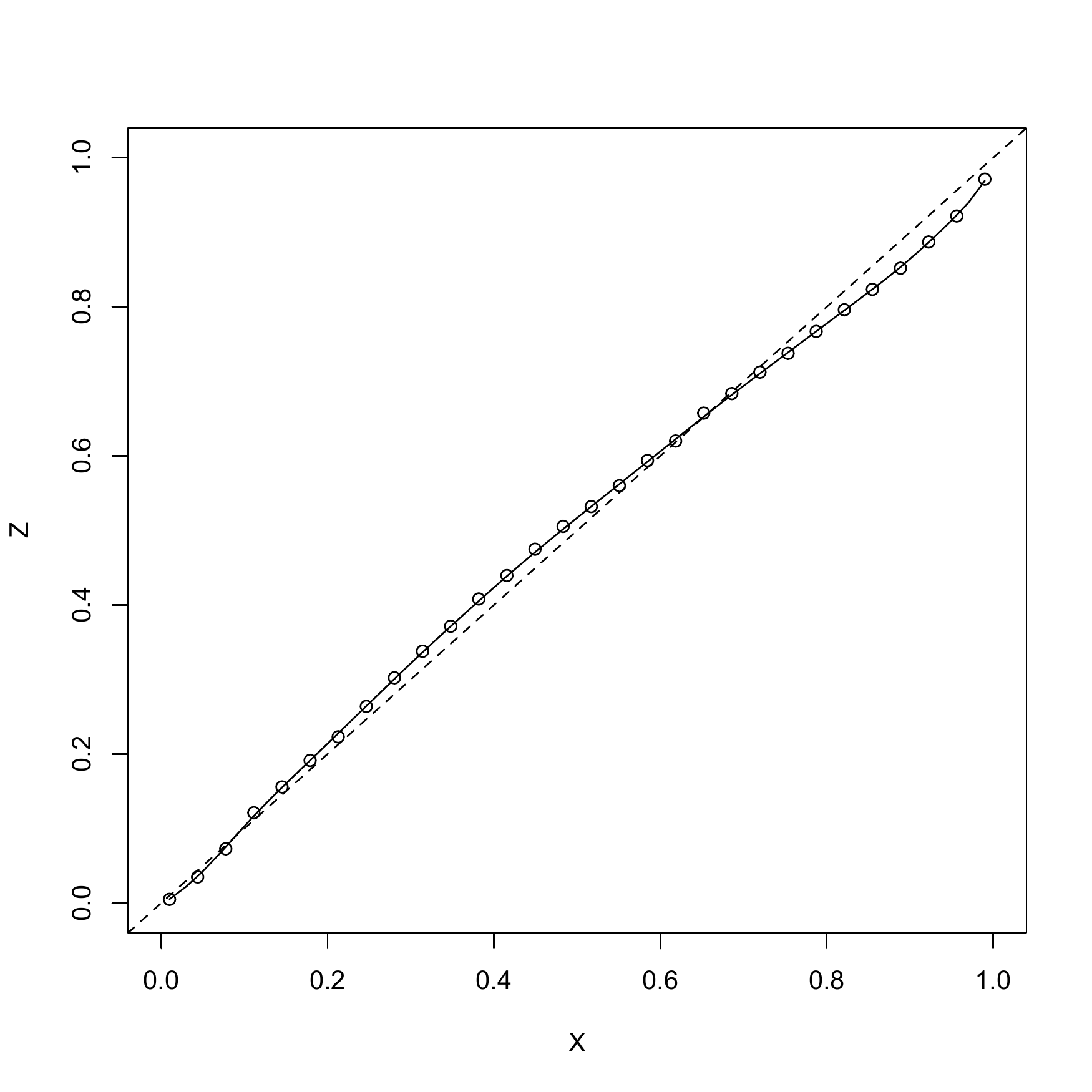}}
	
   \caption{P-P plots of the sup-norm approximation for the centered sample covariance matrix $U - \E U$ with the kernel (\ref{eqn:sample-covmat-kernel}) by $U'-\E U'$ (left) and by the leading term in the Hoeffding decomposition of $U - \E U$ (right).}
   \label{fig:gaussian-approx}
\end{figure}

To correct the bias, a closer inspection reveals that $U$ is an approximately linear statistic and its linear projection part in the Hoeffding decomposition is the leading term. This motivates us to propose a {\it two-step approximation} method. Let
\begin{eqnarray}
\label{eqn:hoeffding-decomp-order-one}
g(\vx_1) &=& \E h(\vx_1,\vX') - \E h, \\
\label{eqn:hoeffding-decomp-order-two}
f(\vx_1,\vx_2) &=& h(\vx_1,\vx_2) -\E h(\vx_1,\vX') - \E h(\vX,\vx_2) + \E h.
\end{eqnarray}
Clearly, $f$ is a $B$-valued symmetric and canonical U-statistic of order two w.r.t. the distribution $F$. Then the Hoeffding decomposition of the kernel $h$ is given by
\begin{equation}
\label{eqn:hoeffding-decomp}
h(\vx_1,\vx_2) = f(\vx_1,\vx_2) + g(\vx_1) + g(\vx_2) + \E h,
\end{equation}
from which we have
$$
U - \E U = {2 \over n(n-1)} \sum_{1 \le i<j \le n} f(\vX_i, \vX_j) + {2 \over n} \sum_{i=1}^n g(\vX_i).
$$
On the right-hand side of the last expression, the second term is expected to be the leading term (a.k.a. the H\'ajek projection) and the first term to be negligible under the sup-norm. Therefore, we can reasonably expect that
$$
{\sqrt{n}\over2}(U - \E U) \approx {1 \over \sqrt{n}} \sum_{i=1}^n g(\vX_i),
$$
where the latter can be further approximated by $n^{-1/2} \sum_{i=1}^n Z_i$ for iid Gaussian random vectors $Z_i \sim N(\vzero,\Gamma_g)$ and $\Gamma_g$ is the positive-definite covariance matrix of $g(\vX_i)$; c.f. \cite{cck2013}. Denote $p'=p(p+1)/2$. Here, we slightly abuse notations and write $\tilde{\vg}_i = \vech(g(\vX_i))$ as the half-vectorized lower triangular matrix of $g(\vX_i)$ by columns. Therefore $\Gamma_g=\Cov(\tilde{\vg}_i)$ is the $p' \times p'$ covariance matrix indexed by $((j,k),(m,l))$ such that $j \ge k$ and $m \ge l$. Similarly, we shall use $Z_i$ to denote either the $p \times p$ matrix or the $p' \times 1$ half-vectorized version. For the previous elliptic $t$-distribution example, we plot the empirical cdfs of $\max_{1\le m,k \le p}(\hat{S}_{mk}-\sigma_{mk})/2$ against $\max_{1\le m,k \le p} n^{-1}\sum_{i=1}^n Z_{i,mk}$. Figure \ref{fig:gaussian-approx} (right) shows a much better approximation using the leading term in the Hoeffding decomposition.

Let $T = \sqrt{n}(U-\E U)/2$, $L = n^{-1/2} \sum_{i=1}^n g(\vX_i)$, $W = n^{-1/2} (n-1)^{-1} \sum_{1 \le i<j \le n} f(\vX_i, \vX_j)$, and $Z = n^{-1/2} \sum_{i=1}^n Z_i$, where $Z_i$ are iid $N(\vzero, \Gamma_g)$. Denote $\bar{T}_0 = \max_{m,k} T_{mk}$, $\bar{L}_0 = \max_{m,k} L_{mk}$, and $\bar{Z}_0 = \max_{m,k} Z_{mk}$. Let 
$$\rho(\bar{T}_0,\bar{Z}_0) = \sup_{t \in \mathbb{R}} | \Prob(\bar{T}_0 \le t) - \Prob(\bar{Z}_0 \le t) |$$
be the Kolmogorov distance between $\bar{T}_0$ and $\bar{Z}_0$. Let $B_n \ge 1$ be a sequence of real numbers possibly tending to infinity. We consider two types of conditions on the kernel moments. First, we establish the explicit convergence rate for the kernels with sub-exponential moments; e.g. the $\varepsilon$-contaminated normal distribution (\ref{eqn:eps_contaminated_normal_distn}) in the SM.

\begin{thm}[Gaussian approximation for centered U-statistics: sub-exponential kernel]
\label{thm:gaussian-approximation-nondegenarate-U-exp-tail}
Let $U$ be a non-degenerate U-statistic of order two. Assume that there exist constants $C_1,C_2 \in (0,\infty)$ and $K \in (0,1)$ such that
\begin{enumerate}
\item[(GA.1)] {\bf Kernel moment:} $\E g_{mk}^2 \ge C_1$ and 
\begin{equation}
\label{eqn:subexponential-kernel-moment-condition}
\max_{\ell=0,1,2} \E(|h_{mk}|^{2+\ell} / B_n^\ell) \vee \E[\exp(|h_{mk}| / B_n)] \le 2
\end{equation}
for all $1\le m,k \le p$;

\item[(GA.2)] {\bf Scaling limit:}
\begin{equation}
\label{eqn:subexponential-kernel-scaling-limit}
{B_n^2 \log^7(pn) \over n} \le C_2 n^{-K}.
\end{equation}
\end{enumerate}
Then there exists a constant $C > 0$ depending only on $C_1,C_2$ such that
\begin{equation}
\label{eqn:gaussian-approximation-nondegenarate-U-exp-tail}
\rho(\bar{T}_0,\bar{Z}_0) \le C n^{-K/8}.
\end{equation}
\end{thm}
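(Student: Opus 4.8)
The plan is to implement the two-step approximation scheme indicated above. By the Hoeffding decomposition \eqref{eqn:hoeffding-decomp} one has $T = L + W$, with $L = n^{-1/2}\sum_{i=1}^{n} g(\vX_i)$ the normalized H\'ajek projection and $W = n^{-1/2}(n-1)^{-1}\sum_{1\le i<j\le n} f(\vX_i,\vX_j)$ a normalized \emph{canonical} (completely degenerate) U-statistic. I would bound $\rho(\bar T_0,\bar Z_0)$ in two parts: (i) the cost of replacing $T$ by $L$, controlled through the sup-norm of the remainder $W$ together with Gaussian anti-concentration; and (ii) the Gaussian approximation error $\rho(\bar L_0,\bar Z_0)$ for the linear part, handled by the high-dimensional CLT of \cite{cck2013}.

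For part (i), the key input is the maximal moment inequality for the sup-norm of a canonical U-statistic (Section~\ref{app:concentration_ineq_canonical-Ustat}): under (GA.1) it should yield a bound of the form $\varphi_n := \E\|W\| \lesssim \sqrt{(\log p)/n} + n^{-1}B_n\,\mathrm{poly}(\log(pn))$, whence $\Prob(\|W\| > \delta) \le \varphi_n/\delta$ for every $\delta>0$ by Markov. Since $\bar L_0 \le \bar T_0 + \|W\|$ and $\bar T_0 \le \bar L_0 + \|W\|$, for any $t\in\mathbb R$ and $\delta>0$,
$$\Prob(\bar T_0 \le t) \le \Prob(\bar L_0 \le t+\delta) + \varphi_n/\delta \le \Prob(\bar Z_0 \le t+\delta) + \rho(\bar L_0,\bar Z_0) + \varphi_n/\delta ,$$
and analogously $\Prob(\bar T_0 \le t) \ge \Prob(\bar Z_0 \le t-\delta) - \rho(\bar L_0,\bar Z_0) - \varphi_n/\delta$. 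As $\Gamma_g = \Cov(\tilde{\vg}_i)$ has diagonal entries $\E g_{mk}^2 \ge C_1 > 0$, Nazarov's Gaussian anti-concentration inequality gives $\sup_{t\in\mathbb R}\Prob(t-\delta < \bar Z_0 \le t+\delta) \le C\delta\sqrt{\log p}$, with $C = C(C_1)$. Hence $\rho(\bar T_0,\bar Z_0) \le \rho(\bar L_0,\bar Z_0) + C\delta\sqrt{\log p} + \varphi_n/\delta$, and taking $\delta = (\varphi_n/\sqrt{\log p})^{1/2}$ leaves $\rho(\bar T_0,\bar Z_0) \le \rho(\bar L_0,\bar Z_0) + 2C(\varphi_n\sqrt{\log p})^{1/2}$.

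For part (ii), $L$ is a normalized sum of the iid mean-zero vectors $g(\vX_i)\in\mathbb R^{p'}$, $p'=p(p+1)/2$. By Jensen applied to $g_{mk}(\vx) = \E h_{mk}(\vx,\vX') - \E h_{mk}$, together with $|\E h_{mk}| \le (\E h_{mk}^2)^{1/2} \le \sqrt 2$, condition (GA.1) transfers to $g$: $\E g_{mk}^2 \ge C_1$ and $\E\exp(|g_{mk}(\vX)|/(cB_n)) \le 2$ for an absolute constant $c$. The high-dimensional CLT of \cite{cck2013} (with $p$ replaced by $p'$, and $\log p' \asymp \log p$) then gives $\rho(\bar L_0,\bar Z_0) \lesssim (B_n^2\log^7(pn)/n)^{1/6} \le C' n^{-K/6}$ by (GA.2). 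It remains to combine: (GA.2) forces $\log(pn) \lesssim n^{(1-K)/7}$ and $B_n \lesssim n^{(1-K)/2}$, so the bound on $\varphi_n$ gives $\varphi_n\sqrt{\log p} \lesssim n^{-K/4}$, hence $(\varphi_n\sqrt{\log p})^{1/2} \lesssim n^{-K/8}$; together with $n^{-K/6} \le n^{-K/8}$ this yields $\rho(\bar T_0,\bar Z_0) \le Cn^{-K/8}$ with $C$ depending only on $C_1,C_2$.

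The main obstacle is the sup-norm maximal moment inequality for the canonical term $W$ entering part (i): one must control $\E\max_{m,k}\big|\sum_{i<j}f_{mk}(\vX_i,\vX_j)\big|$ with the right dependence on $n$, $\log(pn)$, and the sub-exponential scale $B_n$, genuinely exploiting the complete degeneracy of $f$ (the scaling $\E\|W\| \asymp \sqrt{(\log p)/n}$, rather than a worse power, is exactly what makes the $\delta$-optimization close). As noted in the introduction, reducing $W$ to an iid sum by data splitting wastes the degeneracy and gives a bound too crude for this purpose; the correct route goes through decoupling, Rademacher/Gaussian-chaos symmetrization, and hypercontractivity- and moment-type inequalities for order-two canonical U-statistics in the spirit of \cite{ginelatalazinn2000,arconesgine1993}, while carefully propagating (GA.1). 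The remaining ingredients — the Gaussian anti-concentration bound and the high-dimensional CLT — are by now standard.
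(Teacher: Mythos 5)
Your proposal is correct and yields the claimed $n^{-K/8}$ rate, and it shares the paper's two-step strategy (H\'ajek projection plus canonical remainder, then a high-dimensional CLT for the linear part). The implementation of the remainder step, however, is genuinely different. You replace $T$ by $L$ directly via the Lipschitz bound $|\bar T_0 - \bar L_0| \le \|W\|$, then Markov on $\|W\|$, Nazarov anti-concentration for $\bar Z_0$, and a single $\delta$-optimization, giving the penalty $2(\E\|W\|\sqrt{\log p})^{1/2}$; the paper instead folds $\E\|W\|$ into the smoothing lemma (Lemma~\ref{lem:gaussian-approximation-nondegenarate-U-smooth}, the $\psi\E\|W\|$ term) and jointly optimizes the smoothing parameters $\psi$, $\beta$, the truncation $u$, and the tail probability $\gamma$ in Theorem~\ref{thm:GA_master_thm}. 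Balancing $\psi\E\|W\|$ against $\psi^{-1}\sqrt{\log p}$ in the paper's bound produces the same $(\E\|W\|\sqrt{\log p})^{1/2}$, so the two routes give the same final exponent, but your decoupled version is cleaner in that it reuses the CCK~CLT as a black box rather than re-deriving it inside the joint optimization. Notably, the paper itself uses exactly your Markov-based treatment of $\bar T_0 - \bar L_0$ in the bootstrap proof (Step~2 of the proof of Theorem~\ref{thm:gaussian_wild_bootstrap_validity} together with Lemma~\ref{lem:bound-on-rho_ominus}), just not in the proof of Theorem~\ref{thm:gaussian-approximation-nondegenarate-U-exp-tail}.

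Two small inaccuracies worth noting, neither of which affects the conclusion. First, the exponent you quote for the linear-part approximation is off: the CCK-type bound gives $\rho(\bar L_0,\bar Z_0)\lesssim (B_n^2\log^7(pn)/n)^{1/8}\lesssim n^{-K/8}$ (the leading term is $n^{-1/8}\ell_n^{7/8}(\tilde D_3^3+\tilde D_4^2)^{1/4}$, and $(\tilde D_3^3+\tilde D_4^2)^{1/4}\lesssim B_n^{1/4}$), not $(\cdot)^{1/6}$. Second, the sharp estimate from Theorem~\ref{thm:expectation-bound} gives
\begin{equation*}
\E\|W\| \;\lesssim\; n^{-1}(\log p)^{3/2}\log(np)\,B_n \;+\; n^{-1/2}\log p \;+\; n^{-3/4}(\log p)^{5/4}B_n^{1/2},
\end{equation*}
so the dominant $B_n$-free term carries a full $\log p$ factor, not $\sqrt{\log p}$ as you wrote. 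Plugging this into your $\delta$-optimization still gives $(\E\|W\|\sqrt{\log p})^{1/2}\lesssim n^{-K/8}$ under (GA.2) (use $B_n\ell_n^{7/2}n^{-1/2}\lesssim n^{-K/2}$ and $\ell_n\lesssim n^{(1-K)/7}$), so the final bound is unchanged.
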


The assumptions in Theorem \ref{thm:gaussian-approximation-nondegenarate-U-exp-tail} have meaningful interpretations. (GA.1) ensures the non-degeneracy of the Gaussian approximation and that the truncation does not lose too much information due to the sub-exponential tails. (GA.2) describes the high-dimensional scaling limit of valid Gaussian approximation range. In the high-dimensional context, the dimension $p$ grows with the sample size $n$ and the distribution function $F$ also depends on $n$. Therefore, $B_n$ is allowed to increase with $n$. Theorem \ref{thm:gaussian-approximation-nondegenarate-U-exp-tail} shows that the approximation error in the Kolmogorov distance converges to zero even if $p$ can be much larger than $n$ and no structural assumptions on $F$ are required. In particular, Theorem \ref{thm:gaussian-approximation-nondegenarate-U-exp-tail} applies to kernels with the sub-exponential distribution such that $\|h_{mk}\|_q \le C q$ for all $q \ge 1$, in which case $B_n = O(1)$ and the dimension $p$ is allowed to have a subexponential growth rate in the sample size $n$, i.e. $p = O(\exp(n^{(1-K)/7}))$. Condition (GA.1) also covers bounded kernels $\|h\|_\infty \le B_n$, where $B_n$ may increase with $n$.

\begin{rmk}
Theorem \ref{thm:gaussian-approximation-nondegenarate-U-exp-tail} shows that the asymptotic validity of Gaussian approximation for centered non-degenerate U-statistics holds under the high-dimensional scaling limit (GA.2), which involves only a polynomial factor of $\log{p}$. However, the sup-norm convergence rate $n^{-K/8}$ obtained in (\ref{eqn:gaussian-approximation-nondegenarate-U-exp-tail}) is slower than $n^{-1/2}$. Similar observations have been made in the existing literature on the Berry-Esseen type bounds \cite{portnoy1986,bentkus2003} for the normalized sums of iid random vectors $\vX_i \in \mathbb{R}^p$ with mean zero and the identity covariance matrix. \cite{portnoy1986} showed that the sample mean $\sqrt{n} \bar{\vX}$ has the asymptotic normality if $p = o(\sqrt{n})$ and \cite{bentkus2003} showed that 
$$\sup_{A \in {\cal A}} |\Prob(\sqrt{n} \bar{\vX} \in A) - \Prob(\vZ \in A)| \le K p^{1/4} \E |\vX_1|^3 / n^{1/2},$$
where ${\cal A}$ is the class of all convex subsets in $\mathbb{R}^p$, $\vZ \sim N(\vzero, \Id_p)$, and $K > 0$ is an absolute constant. In either case, the dependence of the CLT rate on the dimension $p$ is polynomial ($p/n^{1/2}$ and $p^{7/4}/n^{1/2}$, resp). \cite{cck2013} considered the Gaussian approximation for $\max_{j \le p} \sqrt{n} \bar{X}_j$ and they obtained the rate $n^{-c}$ for some (unspecified) exponent $c>0$. Following the proofs of Theorem \ref{thm:gaussian-approximation-nondegenarate-U-exp-tail} in the current paper and Theorem 2.2 and Corollary 2.1 in \cite{cck2013}, we can show that $c$ is allowed to take the value $K/8$. Therefore, the effect of higher-order terms than the H\'ajek projection to a linear subspace in the Hoeffding decomposition vanishes in the Gaussian approximation. A similar observation is made for the uniform polynomial moment kernels; c.f. Theorem \ref{thm:gaussian-approximation-nondegenarate-U-uniform-poly-tail}. For multivariate symmetric statistics of order two, to the best of our knowledge, the Gaussian approximation result (\ref{eqn:gaussian-approximation-nondegenarate-U-exp-tail}) with the explicit convergence rate is new. When $p$ is fixed, the rate of convergence and the Edgeworth expansion of such statistics can be found in \cite{bickelgotzevanzwet1986,gotze1987,bentkusgotzevanzwet1997}. In those papers, assuming the Cram\'er condition on $g(\vX_1)$ and suitable moment conditions on $h(\vX_1,\vX_2)$, the Edgeworth expansion of U-statistics was established for the univariate case ($p=1$) with remainder $o(n^{-1/2})$ or $O(n^{-1})$ \cite{bickelgotzevanzwet1986,bentkusgotzevanzwet1997} and the multivariate case ($p>1$ fixed) with remainder $o(n^{-1/2})$ \cite{gotze1987}. In the latter work \cite{gotze1987}, it is unclear that how the constant in the error bound depends on the dimensionality parameter $p$. On the contrary, our Theorem \ref{thm:gaussian-approximation-nondegenarate-U-exp-tail} can allow $p$ to be larger than $n$ in order to obtain the CLT type results in much higher dimensions.
\qed
\end{rmk}

Next, we consider kernels with {\it uniform} polynomial moments (up to the fourth order); e.g. the elliptical $t$-distribution (\ref{eqn:elliptic_t_distn}) in the SM.

\begin{thm}[Gaussian approximation for centered U-statistics: kernel with uniform polynomial moment]
\label{thm:gaussian-approximation-nondegenarate-U-uniform-poly-tail}
Let $U$ be a non-degenerate U-statistic of order two. Assume that there exist constants $C_1,C_2 \in (0,\infty)$ and $K \in (0,1)$ such that
\begin{enumerate}
\item[(GA.1')] {\bf Kernel moment:} $\E g_{mk}^2 \ge C_1$ and 
\begin{equation}
\label{eqn:unifpoly-kernel-moment-condition}
\max_{\ell=0,1,2} \E(|h_{mk}|^{2+\ell} / B_n^\ell) \vee \E[(\|h\| / B_n)^4] \le 1
\end{equation}
for all $1\le m,k \le p$;

\item[(GA.2')] {\bf Scaling limit:}
\begin{equation}
\label{eqn:unifpoly-kernel-scaling-limit}
{B_n^4 \log^7(pn) \over n} \le C_2 n^{-K}.
\end{equation}
\end{enumerate}
Then there exists a constant $C > 0$ depending only on $C_1,C_2$ such that (\ref{eqn:gaussian-approximation-nondegenarate-U-exp-tail}) holds.
\end{thm}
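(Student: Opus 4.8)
The plan is to reuse, almost verbatim, the two-step scheme behind Theorem~\ref{thm:gaussian-approximation-nondegenarate-U-exp-tail}: decompose $T = \sqrt{n}(U-\E U)/2 = L + W$ from the Hoeffding decomposition (\ref{eqn:hoeffding-decomp}), where $L = n^{-1/2}\sum_{i=1}^n g(\vX_i)$ is the H\'ajek projection and $W = n^{-1/2}(n-1)^{-1}\sum_{1\le i<j\le n} f(\vX_i,\vX_j)$ is the normalized canonical remainder; control $\|W\|$; approximate $\bar{L}_0=\max_{m,k}L_{mk}$ by $\bar{Z}_0$ via the high-dimensional CLT for sums of i.i.d.\ vectors applied to $\tilde{\vg}_i=\vech(g(\vX_i))$; and stitch the two together with a Gaussian anti-concentration bound. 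The only substantive change relative to the sub-exponential case is that the heavy tails of the kernel are now absorbed through the uniform fourth moment $\E[(\|h\|/B_n)^4]\le 1$ in (GA.1') instead of the sub-exponential bound $\E[\exp(|h_{mk}|/B_n)]\le 2$; this is precisely what promotes the factor $B_n^2$ in the scaling budget (GA.2) to $B_n^4$ in (GA.2'). It suffices to establish (A) a moment bound $\E\|W\|\lesssim n^{-K/4}$ (up to polylogarithmic factors), and (B) a Gaussian approximation $\rho(\bar{L}_0,\bar{Z}_0)\lesssim n^{-K/8}$ for the linear part.

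For (B) I would apply the Gaussian approximation for maxima of sums of i.i.d.\ random vectors under uniform polynomial moments (Theorem~2.2 and Corollary~2.1 of \cite{cck2013}) to the i.i.d.\ vectors $\tilde{\vg}_i=\vech(g(\vX_i))$ in $\mathbb{R}^{p'}$, $p'=p(p+1)/2$. The required hypotheses follow from (GA.1'): conditional Jensen applied to $g_{mk}(\vX)=\E[h_{mk}(\vX,\vX')\mid\vX]-\E h_{mk}$ gives $\E|g_{mk}|^{2+\ell}\lesssim \E|h_{mk}|^{2+\ell}\le B_n^\ell$ for $\ell=0,1,2$; the envelope satisfies $\max_{m,k}|g_{mk}(\vX)|\le \E[\|h\|\mid\vX]+\E\|h\|$, hence $\E[\max_{m,k}|g_{mk}|^4]\lesssim \E\|h\|^4\le B_n^4$; and nondegeneracy $\E g_{mk}^2\ge C_1$ is assumed. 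Under (GA.2'), $B_n^4\log^7(pn)/n\le C_2 n^{-K}$, so \cite{cck2013} yields $\rho(\bar{L}_0,\bar{Z}_0)\lesssim n^{-K/8}$, where $\bar{Z}_0=\max_{m,k}Z_{mk}$ with $Z=n^{-1/2}\sum_{i=1}^n Z_i\sim N(\vzero,\Gamma_g)$ and $\Gamma_g=\Cov(\tilde{\vg}_1)$; this is the assertion, already flagged in the remark after Theorem~\ref{thm:gaussian-approximation-nondegenarate-U-exp-tail}, that the exponent can be taken to be $K/8$.

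For (A), a direct fourth-moment estimate is too weak: since $W_{mk}$ is a normalized order-two canonical U-statistic one has $\E W_{mk}^4\lesssim n^{-2}$, whence $\E\|W\|\le \epsilon+\sum_{m,k}\E W_{mk}^4/\epsilon^3\lesssim p^{1/2}n^{-1/2}$, which is useless when $p\gg n$. Instead I would truncate the kernel: write $h=h_1+h_2$ with $h_1=h\,\vone\{\|h\|\le\tau_n\}$ a bounded kernel ($\|h_1\|_\infty\le\tau_n$) and $h_2=h-h_1$, and correspondingly $W=W_1+W_2$ through the Hoeffding decompositions of $h_1$ and $h_2$. The bounded part $W_1$ is handled by the maximal moment inequality for canonical U-statistics developed in Section~\ref{app:concentration_ineq_canonical-Ustat} --- the very engine used in Theorem~\ref{thm:gaussian-approximation-nondegenarate-U-exp-tail}, now with $B_n$ replaced by $\tau_n$ --- which gives $\E\|W_1\|\lesssim \tau_n\,\mathrm{poly}(\log(pn))/\sqrt{n}$; the tail part is discarded crudely, $\E\|W_2\|\lesssim \sqrt{n}\,\E\|h_2\|=\sqrt{n}\,\E[\|h\|\,\vone\{\|h\|>\tau_n\}]\le \sqrt{n}\,B_n^4/\tau_n^3$ by Markov's inequality (and the triangle inequality through the Hoeffding decomposition of $h_2$). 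Choosing $\tau_n\asymp n^{1/4}B_n$ (up to logs) balances the two and, using $B_n\lesssim n^{(1-K)/4}/\log^{7/4}(pn)$ from (GA.2'), yields $\E\|W\|\lesssim n^{-K/4}$ up to polylogarithmic factors, which (GA.2') leaves ample room to absorb.

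Finally I combine (A) and (B). Since $|\bar{T}_0-\bar{L}_0|\le\|W\|$, for every $t$ one has $\Prob(\bar{L}_0\le t-\epsilon_n)-\Prob(\|W\|>\epsilon_n)\le \Prob(\bar{T}_0\le t)\le \Prob(\bar{L}_0\le t+\epsilon_n)+\Prob(\|W\|>\epsilon_n)$; combining this with (B), Markov's inequality $\Prob(\|W\|>\epsilon_n)\le\E\|W\|/\epsilon_n$, and a Gaussian anti-concentration inequality for $\bar{Z}_0$ --- legitimate because (GA.1') forces the diagonal entries $\E g_{mk}^2$ of $\Gamma_g$ into $[C_1,1]$ --- gives
\[
\rho(\bar{T}_0,\bar{Z}_0)\ \lesssim\ \rho(\bar{L}_0,\bar{Z}_0)+\epsilon_n\sqrt{\log p}+\E\|W\|/\epsilon_n\ \lesssim\ n^{-K/8}+\epsilon_n\sqrt{\log p}+n^{-K/4}/\epsilon_n,
\]
and a suitable choice $\epsilon_n\asymp n^{-K/8}/\sqrt{\log p}$ (up to the polylogarithmic slack) makes the right-hand side $\lesssim n^{-K/8}$, which is (\ref{eqn:gaussian-approximation-nondegenarate-U-exp-tail}); the two-sided sup-norm follows by repeating the argument for $-U+\E U$, and no structural assumption on $F$ is used anywhere. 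The main obstacle is step (A), and within it the maximal moment inequality for the canonical remainder: after decoupling $\sum_{i<j}f_1(\vX_i,\vX_j)$ into a bilinear form over an independent copy one bounds its $q$-th moment for $q\asymp\log(pn)$ by iterating a Rosenthal-type inequality conditionally on one sample and then unions over the $p'$ coordinates --- it is exactly the need to keep $\|f_1\|_q$ under control for such $q$ that forces the boundedness level $\tau_n$ (and hence, via $\tau_n\asymp n^{1/4}B_n$, the fourth power of $B_n$) into the scaling limit. The remaining ingredients --- the Hoeffding algebra, the decoupling constant, the polynomial-moment version of \cite{cck2013}, the anti-concentration inequality, and tracking powers of $n$ against logs in the final balance --- are quoted or routine.
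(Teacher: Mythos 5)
Your proposal is correct and reaches the stated rate, and at the top level it follows the same two-step plan as the paper: Hoeffding decomposition $T=L+W$, control the canonical remainder $W$, Gaussian-approximate the H\'ajek projection $L$, and glue via anti-concentration. The execution differs in two places, however. (i) The paper proves this theorem as a short corollary of the master theorem (Theorem~\ref{thm:GA_master_thm}) and the moment bounds in Lemma~\ref{lem:moment-bounds-unifpoly-kernel} with $q=4$; Theorem~\ref{thm:GA_master_thm} is itself a joint optimization over the smoothing parameters $\beta,\psi$ of the smoothing Lemma~\ref{lem:gaussian-approximation-nondegenarate-U-smooth} together with the canonical-kernel expectation bound Theorem~\ref{thm:expectation-bound}. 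You instead combine the linear and canonical contributions with the cruder sandwich ``Markov on $\|W\|$ plus anti-concentration,'' which is less sharp (it effectively requires $\sqrt{\E\|W\|}\,(\log p)^{1/4}\lesssim n^{-K/8}$ rather than $\psi\,\E\|W\|\lesssim n^{-K/8}$ with the optimally tuned $\psi$) but still succeeds here because (GA.2') carries a $\log^7(pn)$-power of slack. (ii) The truncation $h=h_1+h_2$ in your step~(A) is an unnecessary detour: Theorem~\ref{thm:expectation-bound} and Corollary~\ref{cor:expectation-polynomial-kernel} apply directly to the \emph{unbounded} canonical kernel $f$ under (GA.1') and give $\E\|W\|\lesssim B_n\,(\log p)^{3/2}/n^{1/2}$, strictly sharper than the $B_n\,(\log p)^{3/4}/n^{1/4}$ your truncation yields — the reason being that the triangle-inequality estimate $\E\|W_2\|\lesssim \sqrt{n}\,\E\|h_2\|$ forfeits the degeneracy of the canonical form on the tail event, which Theorem~\ref{thm:expectation-bound} exploits everywhere. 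Since (GA.2') absorbs this loss, your route still delivers $n^{-K/8}$; the paper's route is tighter and avoids the truncation entirely.
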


Theorem \ref{thm:gaussian-approximation-nondegenarate-U-exp-tail} and \ref{thm:gaussian-approximation-nondegenarate-U-uniform-poly-tail} allow us to approximate the quantiles of $\bar{T}_0$ by those of $\bar{Z}_0$, with the knowledge of $\Gamma_g$. In practice, the covariance matrix $\Gamma_g$ and the H\'ajek projection terms $g(\vX_i), i=1,\cdots,n,$ depend on the data distribution $F$, which is unknown. Thus, quantiles of $\bar{Z}_0$ need to be estimated in real applications. However, we shall see in Section \ref{subsec:gassuain_wild_bootstrap} that Theorem \ref{thm:gaussian-approximation-nondegenarate-U-exp-tail} and \ref{thm:gaussian-approximation-nondegenarate-U-uniform-poly-tail} can still be used to derive a feasible resampling based method to approximate the quantiles of Gaussian maxima $\bar{Z}_0$ and therefore $ \bar{T}_0$.

\section{Wild bootstrap}
\label{subsec:gassuain_wild_bootstrap}

The main purpose of this section is to approximate the quantiles of $\bar{T}_0$. Let $\vX'_1,\cdots,\vX'_n$ be an independent copy of $\vX_1,\cdots,\vX_n$ that are observed; call this training data. Such data can always be obtained by a {\it half-sampling} or {\it data splitting} on the original data. Therefore, we assume that the sample size of total data $\{\vX_1,\cdots,\vX_n, \vX'_1,\cdots,\vX'_n\}$ is $2n$. Since $g(\vX_i), i=1,\cdots,n,$ are unknown, we construct an estimator for it. Let $\hat{g}_i := \hat{g}_i(\vX_1,\cdots,\vX_n, \vX'_1,\cdots,\vX'_n)$ be an estimator of $g(\vX_i)$ using the original and training data. Recall that $g(\vx) = \E h(\vx, \vX'_j) - \E h(\vX'_i,\vX'_j)$ for any fixed $\vx \in \mathbb{R}^p$, which can be viewed as the population version for the second variable $\vX'_j$. Therefore, we build an empirical version as our estimator of $g(\vX_i)$. Specifically, we consider
\begin{equation}
\label{eqn:hat_g_i}
\hat{g}_i = {1\over n} \sum_{j=1}^n h(\vX_i, \vX'_j) - {n \choose 2}^{-1} \sum_{1 \le j < l \le n} h(\vX'_j, \vX'_l).
\end{equation}
Conditional on $\vX_i$, $\hat{g}_i$ is an unbiased estimator of $g(\vX_i)$. It is interesting to view $\hat{g}_i$ as a {\it decoupled} estimator of $g(\vX_i)$. Let
\begin{equation*}
\hat{L}_0^* = \max_{1 \le m,k \le p} {1\over\sqrt{n}} \sum_{i=1}^n \hat{g}_{i,mk} e_i \quad \text{and} \quad  \bar{L}_0^* = \max_{1 \le m,k \le p} {1\over\sqrt{n}} \sum_{i=1}^n g_{mk}(\vX_i) e_i,
\end{equation*}
where $e_1,e_2,\cdots$ are iid standard Gaussian random variables that are also independent of $\vX_1,\cdots,\vX_n, \vX'_1,\cdots,\vX'_n$. Then $\hat{L}_0^*$ and $\bar{L}_0^*$ are bootstrapped versions of $\bar{L}_0$. Denote the conditional quantiles of $\hat{L}_0^*$ and $\bar{L}_0^*$ given the data $\vX_1,\cdots,\vX_n,\vX'_1,\cdots,\vX'_n$ as
\begin{eqnarray*}
a_{\hat{L}_0^*}(\alpha) &=& \inf\{ t \in \mathbb{R} : \Prob_e(\hat{L}_0^* \le t) \ge \alpha \}, \\
a_{\bar{L}_0^*}(\alpha) &=& \inf\{ t \in \mathbb{R} : \Prob_e(\bar{L}_0^* \le t) \ge \alpha \},
\end{eqnarray*}
where $\Prob_e$ is the probability taken w.r.t. $e_1,\cdots,e_n$. Now, we can compute the conditional quantile $a_{\hat{L}_0^*}(\alpha)$ by the Gaussian wild bootstrap method. Specifically, $a_{\hat{L}_0^*}(\alpha)$ can be numerically approximated by resampling on the multiplier Gaussian random variables $e_1,\cdots,e_n$ and we wish to use $a_{\hat{L}_0^*}(\alpha)$ to approximate the quantiles of $\bar{T}_0$.

\begin{thm}[Asymptotically validity of Gaussian wild bootstrap for centered U-statistics]
\label{thm:gaussian_wild_bootstrap_validity}
Let $U$ be a non-degenerate U-statistic of order two. \\
(i) {\bf (Subexponential kernel)} If (GA.1) and (GA.2) hold for some constants $C_1,C_2 \in (0,\infty)$ and $K \in (0,1)$, then there exist a constant $C > 0$ depending only on $C_1,C_2$ such that for all $\alpha \in (0,1)$
\begin{equation}
\label{eqn:gaussian_wild_bootstrap_validity}
| \Prob(\bar{T}_0 \le a_{\hat{L}_0^*}(\alpha)) - \alpha | \le C n^{-K/8}.
\end{equation}
(ii) {\bf (Uniform polynomial kernel)} If (GA.1') and (GA.2') hold for some constants $C_1,C_2 \in (0,\infty)$ and $K \in (0,1)$, then there exist a constant $C > 0$ depending only on $C_1,C_2$ such that for all $\alpha \in (0,1)$
\begin{equation}
\label{eqn:gaussian_wild_bootstrap_validity_unifpoly}
| \Prob(\bar{T}_0 \le a_{\hat{L}_0^*}(\alpha)) - \alpha | \le C n^{-K/12}.
\end{equation}
\end{thm}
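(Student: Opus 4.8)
The plan is to follow the three-stage route for multiplier/wild bootstrap validity from \cite{cck2013}, adapted to the two-step Hoeffding structure of $\bar{T}_0$. Stage one is already in hand: Theorems \ref{thm:gaussian-approximation-nondegenarate-U-exp-tail} and \ref{thm:gaussian-approximation-nondegenarate-U-uniform-poly-tail} give $\rho(\bar{T}_0,\bar{Z}_0)\le C n^{-K/8}$ under the respective hypotheses, where $\bar{Z}_0$ is the coordinatewise maximum of a centered Gaussian vector with covariance $\Gamma_g$. Hence it suffices to (a) show that, on an event of probability $1-o(1)$, the conditional law of $\hat{L}_0^*$ given the data $\{\vX_i,\vX'_i\}_{i=1}^n$ is close in Kolmogorov distance to the law of $\bar{Z}_0$, with rate $n^{-K/8}$ under (GA.1)--(GA.2) and rate $n^{-K/12}$ under (GA.1')--(GA.2'), and then (b) pass from this distributional closeness to the quantile statement by anti-concentration of Gaussian maxima.

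For step (a), note that conditionally on the data $\hat{L}_0^*$ is the coordinatewise maximum of a centered Gaussian vector in $\mathbb{R}^{p'}$ with covariance $\hat{\Gamma}=n^{-1}\sum_{i=1}^n\vech(\hat{g}_i)\vech(\hat{g}_i)^\top$, while the law of $\bar{Z}_0$ is that of the maximum of a centered Gaussian vector with covariance $\Gamma_g$. The Gaussian comparison inequality of \cite{cck2013} then bounds $\sup_{t}|\Prob_e(\hat{L}_0^*\le t)-\Prob(\bar{Z}_0\le t)|$ by a constant multiple of $\Delta_n^{1/3}(1\vee\log(p/\Delta_n))^{2/3}$ on the event $\{\Delta_n\le\delta\}$, where $\Delta_n:=|\hat{\Gamma}-\Gamma_g|_\infty$. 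Thus the core task is a high-probability bound on $\Delta_n$. Split $\hat{\Gamma}-\Gamma_g=(\hat{\Gamma}-\tilde{\Gamma})+(\tilde{\Gamma}-\Gamma_g)$, where $\tilde{\Gamma}=n^{-1}\sum_i\vech(g(\vX_i))\vech(g(\vX_i))^\top$ is the conditional covariance of the infeasible bootstrap $\bar{L}_0^*$. The term $\tilde{\Gamma}-\Gamma_g$ is a centered empirical average of the products $g_{mk}(\vX_i)g_{m'k'}(\vX_i)$, whose moments are controlled by the kernel moment conditions ($\E|g_{mk}|^q\lesssim\E|h_{mk}|^q$ by Jensen), so a Rosenthal/Bernstein-type inequality plus a union bound over the at most $p^4$ coordinate pairs gives $|\tilde{\Gamma}-\Gamma_g|_\infty$ of order $B_n^2$ times a power of $\log(pn)$ divided by $n$, negligible under (GA.2)/(GA.2'). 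For $\hat{\Gamma}-\tilde{\Gamma}$ we exploit the decoupled structure of $\hat{g}_i$: writing $\hat{g}_i-g(\vX_i)=D_i-V$, with $D_i=n^{-1}\sum_{j=1}^n\{h(\vX_i,\vX'_j)-\E[h(\vX_i,\vX')\mid\vX_i]\}$ conditionally centered over the training sample given $\vX_i$, and $V={n\choose2}^{-1}\sum_{1\le j<l\le n}h(\vX'_j,\vX'_l)-\E h$ a centered U-statistic on the training data common to all $i$, one expands $\vech(\hat{g}_i)\vech(\hat{g}_i)^\top-\vech(g(\vX_i))\vech(g(\vX_i))^\top$ into bilinear terms in $D_i$, $g(\vX_i)$ and $V$, and controls $n^{-1}\sum_i$ of each: the $D_i$-contributions via conditional independence of the $\vX'_j$ together with truncation at level $B_n$ for the heavy-tailed entries, and the $V$-contributions via the maximal moment inequalities for U-statistics, including the canonical case from Section \ref{app:concentration_ineq_canonical-Ustat}. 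This is where the gap between parts (i) and (ii) originates: under (GA.1) the resulting control of $\Delta_n$ is small enough that the Gaussian-comparison error is $\lesssim n^{-K/8}$ and is dominated by the Gaussian-approximation error, whereas under only uniform fourth moments the truncation and maximal-inequality arguments give weaker control of $\Delta_n$, and the cube root in the Gaussian-comparison inequality degrades the rate to $n^{-K/12}$.

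For step (b), let $\varpi_n$ denote the total Kolmogorov error accumulated above ($n^{-K/8}$, respectively $n^{-K/12}$). By anti-concentration of the Gaussian maximum $\bar{Z}_0$ --- its distribution function has modulus of continuity $\lesssim\sqrt{\log p}$ because $\min_{m,k}\E g_{mk}^2\ge C_1$ bounds the diagonal of $\Gamma_g$ from below --- one obtains, on the good event, $\Prob(\bar{Z}_0\le a_{\hat{L}_0^*}(\alpha))\ge\alpha-C\varpi_n\sqrt{\log p}$ and, symmetrically, $\Prob(\bar{Z}_0\le a_{\hat{L}_0^*}(\alpha))\le\alpha+C\varpi_n\sqrt{\log p}$. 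Combining with $\rho(\bar{T}_0,\bar{Z}_0)\le Cn^{-K/8}$ gives $|\Prob(\bar{T}_0\le a_{\hat{L}_0^*}(\alpha))-\alpha|\le\rho(\bar{T}_0,\bar{Z}_0)+C\varpi_n\sqrt{\log p}+\Prob(\text{bad event})$; choosing the slack $\delta$ so that the bad-event probability is $\lesssim n^{-K/8}$ and absorbing the $\sqrt{\log p}$ into the polynomial decay through the scaling limits (GA.2)/(GA.2') yields (\ref{eqn:gaussian_wild_bootstrap_validity}) and (\ref{eqn:gaussian_wild_bootstrap_validity_unifpoly}).

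I expect the principal obstacle to be the high-probability control of $\Delta_n=|\hat{\Gamma}-\Gamma_g|_\infty$ under the merely polynomial condition (GA.1'): one must push the maximal moment inequalities through the products $\hat{g}_{i,mk}\hat{g}_{i,m'k'}$ across $p^4$ coordinate pairs and $n$ decoupled summands, carefully tracking the interplay among the truncation level $B_n$, the fourth-moment budget, and the $\log^7(pn)$ slack in (GA.2'), so as to confirm that the loss incurred is exactly the one producing the $n^{-K/12}$ rate.
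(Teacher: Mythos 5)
Your overall plan mirrors the CCK bootstrap-validity template, but it departs from the paper's proof in two ways. The paper never bounds $|\hat{\Gamma}-\Gamma_g|_\infty$ directly; it inserts the \emph{infeasible} bootstrap $\bar{L}_0^*$ (using the true $g(\vX_i)$) as an intermediate object. The Gaussian comparison is run between $\bar{L}_0^*$ and $\bar{Z}_0$ through $\Delta_1=n^{-1}\|\sum_i\tilde{\vg}_i\tilde{\vg}_i^\top-\Gamma_g\|$, which only involves \emph{independent} summands; the feasibility gap $\hat{L}_0^*-\bar{L}_0^*$ is then handled separately through $\Delta_2=n^{-1}\max_{m,k}\sum_i(\hat{g}_{i,mk}-g_{mk}(\vX_i))^2$, together with a separate condition controlling the nonlinear remainder $|\bar{T}_0-\bar{L}_0|$, all three feeding into a symmetric-difference lemma (Lemma~\ref{lem:bound-on-rho_ominus}, a Nazarov-sharpened variant of CCK Theorem~3.2). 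Your route collapses the two bootstraps into a single Gaussian comparison of $\hat{\Gamma}$ against $\Gamma_g$. This is a sensible alternative but harder to push through cleanly: the $\hat{g}_i$'s are \emph{not} independent across $i$ (they all share the common $V$-statistic over the training data), and the cross-terms in the bilinear expansion of $\vech(\hat{g}_i)\vech(\hat{g}_i)^\top$ --- particularly $n^{-1}\sum_i D_i D_i^\top$ and $n^{-1}\sum_i D_i g(\vX_i)^\top$ --- are neither sums of independent terms nor canonical U-statistics, so the maximal inequalities of Section~\ref{app:concentration_ineq_canonical-Ustat} do not apply to them off the shelf. The paper's $\Delta_2$-based decoupling avoids exactly this issue by pulling out the magnitude $|\hat{L}_0^*-\bar{L}_0^*|$ rather than the difference of second moments.

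Two further points you should fix. First, the transfer step in your stage (b) is not justified as written: $\bar{T}_0$ is a function of $\vX_1^n$ and $a_{\hat{L}_0^*}(\alpha)$ is a function of $(\vX_1^n,{\vX'}_1^n)$, so they are \emph{dependent}, and the Kolmogorov distance $\rho(\bar{T}_0,\bar{Z}_0)$ does not allow you to replace $\bar{T}_0$ by $\bar{Z}_0$ inside $\Prob(\cdot\le a_{\hat{L}_0^*}(\alpha))$ in one step. You must first pass to the \emph{deterministic} quantile $a_{\bar{Z}_0}(\alpha\pm\eta)$ on the good event (via anti-concentration) and only then invoke the unconditional Kolmogorov bound at a non-random threshold; this is precisely the symmetric-difference machinery that Lemma~\ref{lem:bound-on-rho_ominus} packages. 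Second, the attribution of the $n^{-K/12}$ rate to ``the cube root in the Gaussian-comparison inequality'' is off. In the paper's computation the cube root of $\Delta_1$ yields $\lesssim n^{-K/8}$ in \emph{both} regimes; the loss occurs because the constraint on the exponents $(K_1,K_2)$ tightens from $K_1+2K_2\le K/2$ to $K_1+2K_2\le K/4$ under (GA.2'), driven by the $\Delta_2$ estimation error $\E(\Delta_2^{1/2})\lesssim n^{-1/4}B_n\log^{1/2}(np)$ (Lemma~\ref{lem:gaussian_wild_bootstrap_Delta2_unifpoly_moment}), i.e.\ by how well $\hat{g}_i$ approximates $g(\vX_i)$, not by the Gaussian comparison itself. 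Your stage-(a) analysis needs to isolate this contribution if it is to reproduce the correct exponent.
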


\begin{rmk}
\label{rmk:wild-bootstrap}
From Theorem \ref{thm:gaussian_wild_bootstrap_validity}, the convergence rate of the wild bootstrap approach for subexponential kernels is the same as the Gaussian approximation results (Theorem \ref{thm:gaussian-approximation-nondegenarate-U-exp-tail}), while it is slower for kernels with uniform polynomial moment of the order four (Theorem \ref{thm:gaussian-approximation-nondegenarate-U-uniform-poly-tail}). The major error in the latter case is due to the estimation of $\bar{L}_0^*$ by $\hat{L}_0^*$ in the wild bootstrap. Under (GA.1') and (GA.2'), the approximation error of $\hat{L}_0^*$ for $\bar{L}_0^*$ is on the order $O(n^{-1/4} B_n (\log(np))^{1/2})$; see Lemma \ref{lem:gaussian_wild_bootstrap_Delta2_unifpoly_moment} in the SM. This is different from the previous work \cite{cck2013}, which does not need this extra estimation step for $g(\vX_i), i =1,\cdots,n,$ since only sums of iid random vectors $n^{-1/2} \sum_{i=1}^n \vX_i$ are involved. Therefore, for sums of iid random vectors, the wild bootstrap can attain the rate $n^{-K/8}$ for both subexponential and uniform polynomial moment (of the order four) observations. However, with better moment conditions on the U-statistic kernel, the rate $n^{-K/8}$ can be attained for polynomial moment kernels. Specifically, assuming that 
$\max_{\ell=0,1,2} \E(|h_{mk}|^{2+\ell} / B_n^\ell) \vee \E[(\|h\| / B_n)^q] \le 1$ for $q \ge 8$, one can show that the convergence rate (\ref{eqn:gaussian_wild_bootstrap_validity}) is attained. In addition, \cite{cck2013} does not deal with the higher-order nonlinear terms, here we have to explicitly handle the canonical part $W$ in the Hoeffding decomposition. The degeneracy structure plays a key role to establish the convergence of the bootstrap method (as well as the Gaussian approximation results in Section \ref{sec:gaussian-approx}) and new proof techniques, in particular the {\it decoupling}, are required. The established moment bounds in Section \ref{app:concentration_ineq_canonical-Ustat} are especially suitable for controlling the completely degenerate errors of quadratic forms.
\qed
\end{rmk}


To assess the quality of the Gaussian wild bootstrap, we show two examples for the covariance matrix kernel on the $\varepsilon$-contaminated normal distribution (\ref{eqn:eps_contaminated_normal_distn}) with the sub-exponential moment and on the elliptic $t$-distribution (\ref{eqn:elliptic_t_distn}) with the uniform polynomial moment. In each simulation, we generate 200 bootstrap samples for $\hat{L}_0^*$. Then, we estimate $\Prob(\bar{T}_0 \le a_{\hat{L}_0^*}(\alpha))$ for the whole range of probabilities $\alpha\in[0.01,0.99]$. Figure \ref{fig:wild_bootstrap} shows the empirical approximation result. Here, we choose $V = 0.9\times\vone_p\vone_p^\top + 0.1\times \Id_p$ in (\ref{eqn:eps_contaminated_normal_distn}) and (\ref{eqn:elliptic_t_distn}), where $\vone_p$ is the $p \times 1$ vector of all ones. From Figure \ref{fig:wild_bootstrap}, the bootstrap approximation seems to be better in the sub-exponential moment case than in the polynomial moment case; see (GA.1)+(GA.2) versus (GA.1')+(GA.2'). More simulation examples can be found in the SM.

\begin{figure}[t!] 
   \centering
      \subfigure{\label{subfig:approx_wild_bootstrap_eps_contaminated_n=200_p=40} \includegraphics[scale=0.35]{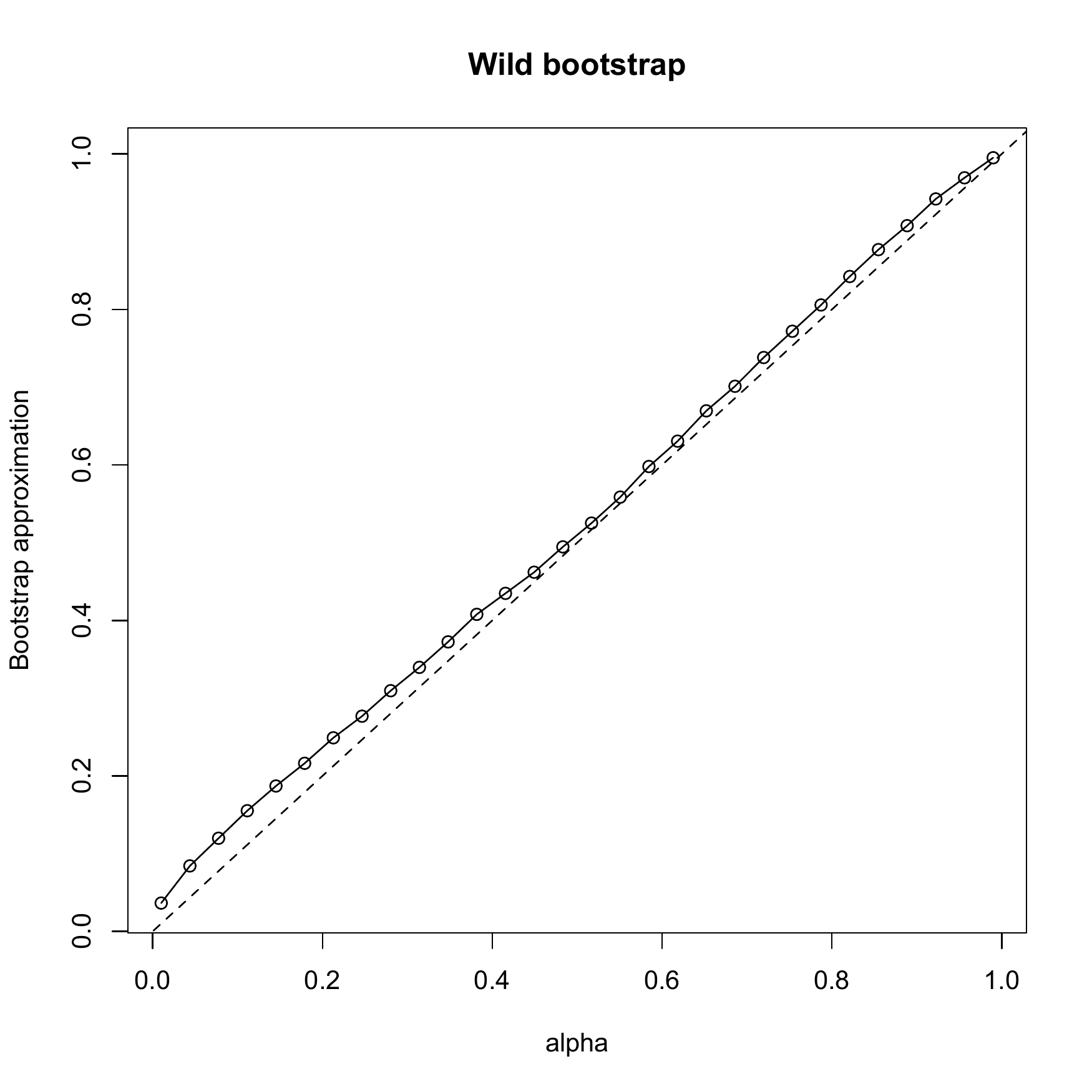}}
	\subfigure {\label{subfig:approx_wild_bootstrap_n=200_p=40}\includegraphics[scale=0.35]{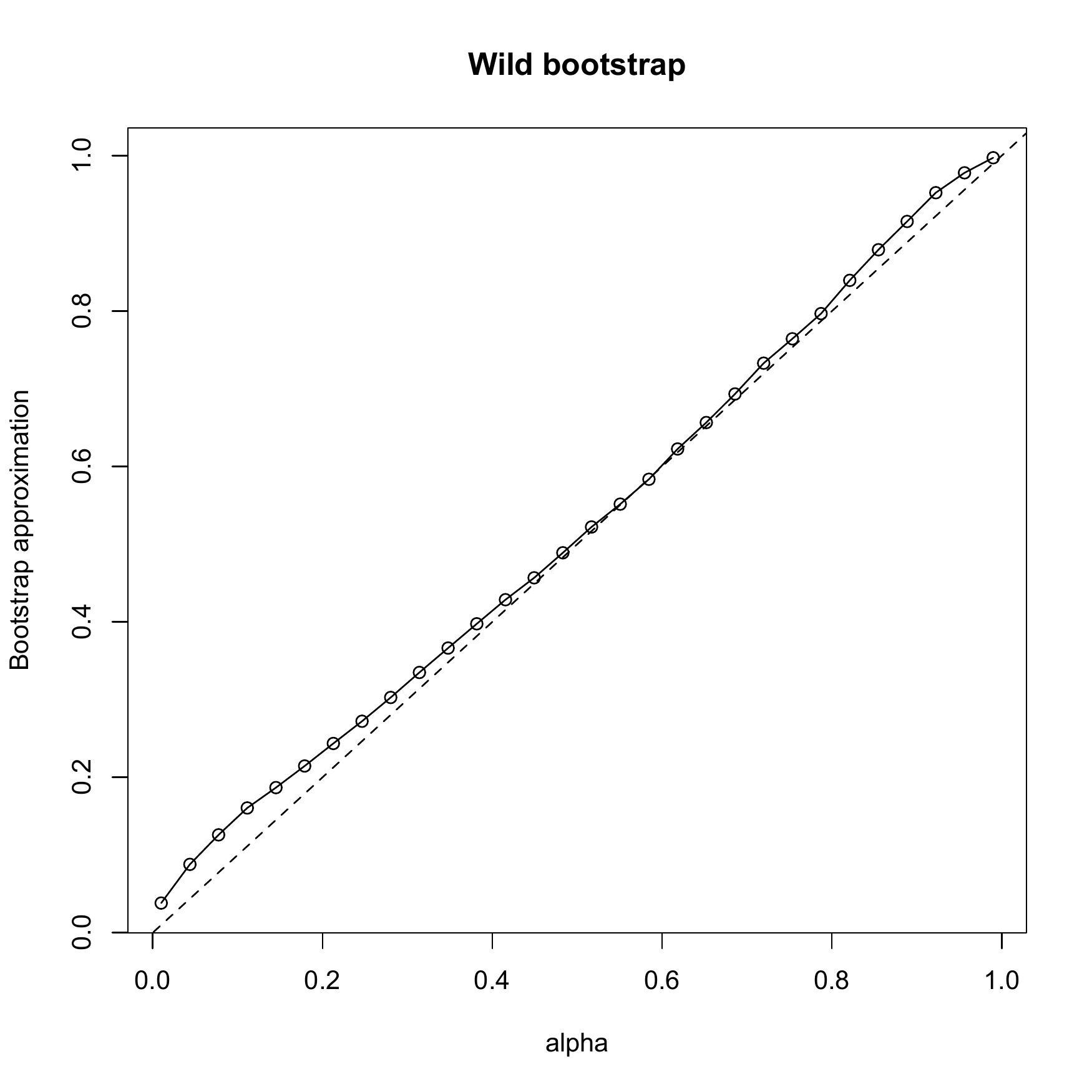}}
   \caption{Plots of the estimated probabilities $\Prob(\bar{T}_0 \le a_{\hat{L}_0^*}(\alpha))$ by the Gaussian wild bootstrap for the range of $\alpha \in [0.01,0.99]$. Left is the $\varepsilon$-contaminated normal distribution (model: (M1)+(D1) in the SM) and right is the elliptical $t$-distribution (model: (M2)+(D1)). Sample size $n=200$ and dimension $p=40$.}
   \label{fig:wild_bootstrap}
\end{figure}

\section{Statistical applications}
\label{sec:stat_apps}

In this section, we present two statistical applications for the theoretical results established in Section \ref{sec:gaussian-approx}--\ref{subsec:gassuain_wild_bootstrap}. Two additional examples can be found in the SM. Here, for notational convenience, we rescale $\hat{L}_0^*$ and let $\hat{L}_0^* = 2 n^{-1} \max_{1 \le m,k \le p} |\sum_{i=1}^n \hat{g}_{i,mk} e_i|$. Recall that $\Gamma_g = \Cov(\tilde\vg_i)$, where $\vg_i = \vech(g(\vX_i))$.

\subsection{Tuning parameter selection for the thresholded covariance matrix estimator}
\label{subsec:tuning_selection_thresholded_cov_mat}

Consider the problem of {\it sparse} covariance matrix estimation. Let $r \in [0,1)$ and
\begin{equation*}
{\cal G}(r, C_0, \zeta_p) = \Big\{ \Sigma \in \mathbb{R}^{p\times p} : \sigma_{mm} \le C_0 , \sum_{k=1}^p |\sigma_{mk}|^r \le \zeta_p \text{ for all } m = 1,\cdots,p \Big\}
\end{equation*}
be the class of sparse covariance matrices in terms of the strong $\ell^r$-ball. Here, $C_0 > 0$ is a constant and $\zeta_p > 0$ may grow with $p$. Let $\tau \ge 0$ and
$$
\hat\Sigma(\tau) = \{\hat{s}_{mk} \vone\{|\hat{s}_{mk}| > \tau \} \}_{m,k=1}^p
$$
be the thresholded sample covariance matrix estimator of $\Sigma$. The class ${\cal G}(r, C_0, \zeta_p)$ was introduced in \cite{bickellevina2008b} and the high-dimensional properties of $\hat\Sigma(\tau)$ were analyzed in \cite{bickellevina2008b} for iid sub-Gaussian data and in \cite{chenxuwu2013a,chenxuwu2015+} for heavy-tailed time series with algebraic tails. In both scenarios, the rates of convergence were obtained with the Bonferroni (i.e. union bound) technique and one-dimensional concentration inequalities. Those performance bounds of the thresholded estimator $\hat\Sigma(\tau)$ critically depend on the tuning parameter $\tau$. The ideal choice of the threshold for establishing the rate of convergence under the spectral and Frobenius norms is $\tau_\diamond = \|\hat{S}-\Sigma\|$, whose distribution depends on the unknown underlying data distribution $F$. In the problem of the high-dimensional sparse covariance matrix estimation, data-dependent tuning parameter selection is often empirically done with the cross-validation (CV) and its theoretical properties largely remain unknown. High probability bounds of $\tau_\diamond$ are given in \cite{bickellevina2008b,chenxuwu2013a}. Here, we provide a principled and data-dependent way to determine the threshold $\tau$.

\begin{defn}[Subgaussian random variable]
\label{defn:subgaussian_rv}
A random variable $X$ is said to be {\it subgaussian} with mean zero and {\it variance factor} $\nu^2$, if 
\begin{equation}
\label{eqn:subgaussian_rv}
\E [\exp(X^2 / \nu^2) ] \le \sqrt{2}.
\end{equation}
Denote $X \sim \text{subgaussian}(\nu^2)$. In particular, if $X \sim N(0,\sigma^2)$, then $X \sim \text{subgaussian}(4 \sigma^2)$.
\end{defn}

The upper bound $\sqrt{2}$ in (\ref{eqn:subgaussian_rv}) is not essential and it is chosen for conveniently comparing with $\|X\|_{\psi_2}$, which is the Orlicz norm of $X$ for $\psi_2(x) = \exp(x^2) - 1$ and $x \ge 0$. In general, the variance factor for a subgaussian random variable is {\it not} equivalent to the variance. For a sequence of random variables $X_n, n = 1,2,\cdots,$ if $X_n \sim \text{subgaussian}(\nu^2)$ and $\sigma^2 = \Var(X_n)$, then by Markov's inequality, we always have $\sigma^2 \le \sqrt{2} \nu^2$, while $\nu^2$ may depend on $n$ and it may diverge at faster rate than $\sigma^2$ such as $\sigma^2 \le C_0$ and $\nu^2 \to \infty$ as $n \to \infty$. As a simple example, let $a_n > 0$ be a sequence of real numbers such that $a_n \to \infty$ and consider random variables $X_1,\cdots,X_n$ such that $\Prob(X_n = \pm a_n) = (2 a_n^2)^{-1}$ and $\Prob(X_n = 0) = 1 - (2 a_n^2)^{-1}$. Obviously, $\E X_n = 0$ and $\Var(X_n) = 1$. Let $\nu^2 = C a_n^2$ for some constant $C > 0$. Then $\E[\exp(X_n^2 / \nu^2)] = [1-(2 a_n^2)^{-1}] + a_n^{-2} e^{C^{-1}} \le \sqrt{2}$ for all large enough $n$; i.e. $X_n \sim \text{subgaussian}(C a_n^2)$. In fact, if $X \sim \text{subgaussian}(\nu^2)$, then $\nu^2 \ge \|X\|_{\psi_2}$. Therefore, we are mainly interested in the general case when $\nu^2 := \nu^2_n \to \infty$ as $n \to \infty$ in the statistical applications.

\begin{thm}[Adaptive threshold selection by wild bootstrap: subgaussian observations]
\label{thm:thresholded_cov_mat_rate_adaptive}
Let $\nu \ge 1$ and $\vX_i$ be iid mean zero random vectors such that $X_{ik} \sim  \text{subgaussian}(\nu^2)$ for all $k=1,\cdots,p$ and $\Sigma \in {\cal G}(r, C_0, \zeta_p)$. Suppose that there exist constants $C_i > 0, i=1,\cdots,4,$ such that $\{\Gamma_g\}_{(j,k),(j,k)} \ge C_1$, $\|X_{1k}\|_4 \le C_2$, $\|X_{1k}\|_6 \le C_3 \nu^{1/3}$ and $\|X_{1k}\|_8 \le C_4 \nu^{1/2}$ for all $j,k=1,\cdots,p$. Let $\beta \in (0,1)$ and $\tau_* = \beta^{-1} a_{\hat{L}_0^*}(1-\alpha)$, where the bootstrap samples are generated with the covariance matrix kernel in (\ref{eqn:sample-covmat-kernel}). If $\nu^4 \log^7(np) \le C_5 n^{1-K}$ for some $K \in (0,1)$, then we have
\begin{eqnarray}
\label{eqn:thresholded_cov_mat_rate_spectral_adaptive}
\|\hat\Sigma(\tau_*)-\Sigma\|_2 &\le& \left[{3+2\beta \over \beta^{1-r}}+ \left({\beta \over 1-\beta}\right)^r \right] \zeta_p a_{\hat{L}_0^*}(1-\alpha)^{1-r}, \\
\label{eqn:thresholded_cov_mat_rate_F_adaptive}
p^{-1} | \hat\Sigma(\tau_*)-\Sigma|_F^2 &\le& 2\left[{4+3\beta^2 \over \beta^{2-r}}+ 2 \left({\beta \over 1-\beta}\right)^r \right] \zeta_p a_{\hat{L}_0^*}(1-\alpha)^{2-r},
\end{eqnarray}
with probability at least $1-\alpha-C n^{-K/8}$ for some constant $C > 0$ depending only on $C_1,\cdots,C_5$. In addition, we have $\E[a_{\hat{L}_0^*}(1-\alpha)] \le C' (\log(p)/n)^{1/2}$ and
\begin{equation}
\label{eqn:tau_*-bound_subgaussian}
\E[\tau_*] \le C' \beta^{-1} (\log(p)/n)^{1/2},
\end{equation}
where $C' > 0$ is a constant depending only on $\alpha$ and $C_1,\cdots,C_5$.
\end{thm}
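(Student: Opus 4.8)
The plan is to combine a deterministic oracle inequality for the thresholded estimator with the probabilistic control on $\tau_\diamond = \|\hat S - \Sigma\| = \bar T_0' / \sqrt n$ (up to the factor $2$ in the definition of $T$) that the wild bootstrap provides via Theorem~\ref{thm:gaussian_wild_bootstrap_validity}. The first ingredient is a standard thresholding lemma in the spirit of \cite{bickellevina2008b}: on the event $\mathcal A = \{ \|\hat S - \Sigma\| \le \tau_\diamond \}$, if the threshold $\tau$ satisfies $\tau \ge (1+\beta^{-1}) \tau_\diamond$ or, more usefully here, $\tau_\diamond \le \beta \tau$, then entrywise one has $|\hat s_{mk} \vone\{|\hat s_{mk}|>\tau\} - \sigma_{mk}| \le (\text{const}) (\tau \vone\{|\sigma_{mk}| > \beta\tau\} + |\sigma_{mk}| \vone\{|\sigma_{mk}| \le \beta\tau\})$; summing over $k$ against the $\ell^r$-ball constraint $\sum_k |\sigma_{mk}|^r \le \zeta_p$ gives a row-sum bound of order $\zeta_p \tau^{1-r}$, hence the spectral-norm bound by Gershgorin/Schur, and the analogous $\zeta_p \tau^{2-r}$ bound for $p^{-1}|\cdot|_F^2$. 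Plugging in $\tau = \tau_* = \beta^{-1} a_{\hat L_0^*}(1-\alpha)$ and using that on the relevant event $\tau_\diamond \le a_{\hat L_0^*}(1-\alpha) = \beta \tau_*$ yields the constants displayed in \eqref{eqn:thresholded_cov_mat_rate_spectral_adaptive}--\eqref{eqn:thresholded_cov_mat_rate_F_adaptive}; the bookkeeping of the two terms $(3+2\beta)\beta^{-(1-r)}$ and $(\beta/(1-\beta))^r$ is exactly the deterministic split between the ``large $\sigma$'' and ``small $\sigma$'' contributions.

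The second ingredient is the coverage guarantee. By construction $\bar T_0 = \sqrt n \|\hat S - \Sigma\|/2 = \sqrt n \tau_\diamond /2$ (taking the max over $\pm$), so Theorem~\ref{thm:gaussian_wild_bootstrap_validity}(i) applied with the covariance kernel \eqref{eqn:sample-covmat-kernel} gives $\Prob(\bar T_0 \le a_{\hat L_0^*}(1-\alpha)) \ge 1-\alpha - Cn^{-K/8}$, i.e. $\Prob(\tau_\diamond \le a_{\hat L_0^*}(1-\alpha)) \ge 1 - \alpha - Cn^{-K/8}$ after absorbing the factor $2$ into the definition of $\hat L_0^*$ (which the paper has rescaled accordingly in Section~\ref{sec:stat_apps}). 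On that event $\tau_\diamond \le \beta \tau_*$, so the deterministic inequality of the previous paragraph applies, and the two bounds hold simultaneously with probability at least $1-\alpha - Cn^{-K/8}$. One must check that the moment hypotheses here — $\|X_{1k}\|_4 \le C_2$, $\|X_{1k}\|_6 \le C_3\nu^{1/3}$, $\|X_{1k}\|_8 \le C_4\nu^{1/2}$ — together with $X_{1k}\sim\text{subgaussian}(\nu^2)$ imply (GA.1)+(GA.2) for the covariance kernel with $B_n \asymp \nu^2$: indeed $h_{mk} = \frac12(X_{1m}-X_{2m})(X_{1k}-X_{2k})$, so $\|h_{mk}\|_{2+\ell} \lesssim \nu^{2\ell/( \ldots)}$-type bounds follow by Cauchy--Schwarz from the stated moment control, $\E\exp(|h_{mk}|/B_n)\le 2$ follows from subgaussianity of the marginals with $B_n \asymp \nu^2$, $\E g_{mk}^2 \ge C_1$ is assumed via $\{\Gamma_g\}_{(j,k),(j,k)}\ge C_1$, and (GA.2) becomes $\nu^4\log^7(np)/n \le C n^{-K}$, exactly the stated scaling condition.

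The last piece is the expectation bound \eqref{eqn:tau_*-bound_subgaussian}. Since $\tau_* = \beta^{-1} a_{\hat L_0^*}(1-\alpha)$, it suffices to show $\E[a_{\hat L_0^*}(1-\alpha)] \lesssim (\log p / n)^{1/2}$. Conditionally on the data, $\hat L_0^* = 2n^{-1}\max_{m,k}|\sum_i \hat g_{i,mk} e_i|$ is a Gaussian maximum with conditional variances $4 n^{-2}\sum_i \hat g_{i,mk}^2$, so its $(1-\alpha)$-quantile is at most $C n^{-1}\sqrt{\log p}\,\max_{m,k}(\sum_i \hat g_{i,mk}^2)^{1/2}$ up to an $\alpha$-dependent constant; taking expectations and using $\E[\hat g_{i,mk}^2]\lesssim \E[g_{mk}(\vX_i)^2] + (\text{decoupling remainder}) \lesssim 1$ under the fourth-moment control on $h_{mk}$ gives $\E[a_{\hat L_0^*}(1-\alpha)]\lesssim n^{-1/2}\sqrt{\log p}$. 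The main obstacle I anticipate is not any single step but the careful verification that the decoupled estimator $\hat g_i$ in \eqref{eqn:hat_g_i} has second moments uniformly bounded (so that the Gaussian-maximum quantile bound is clean) while \emph{simultaneously} the sampling error $\hat L_0^* $ vs.\ $\bar L_0^*$ stays within the $n^{-K/8}$ budget — this is where the subgaussian moment hypotheses are doing real work, via the lemmas in Section~\ref{app:concentration_ineq_canonical-Ustat} bounding the canonical remainder $W$ and the decoupling error, and one has to track that $B_n \asymp \nu^2$ is consistent across all of them.
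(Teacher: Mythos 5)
Your proposal follows essentially the same route as the paper's proof: verify (GA.1)--(GA.2) for the covariance kernel with $B_n \asymp \nu^2$ (the paper uses Lemma~\ref{lem:moment-bounds-gaussian-obs} plus the Cauchy--Schwarz reductions you describe), invoke Theorem~\ref{thm:gaussian_wild_bootstrap_validity}(i) to get $\|\hat S - \Sigma\| \le a_{\hat L_0^*}(1-\alpha) = \beta\tau_*$ with probability $\ge 1-\alpha - Cn^{-K/8}$, and on that event run the Bickel--Levina-style deterministic decomposition (the paper's $I$--$VI$ with the auxiliary parameter $\eta = 1-\beta$) to obtain exactly the displayed constants. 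The only place you are materially sketchier than the paper is the expectation bound~\eqref{eqn:tau_*-bound_subgaussian}: bounding $\E[a_{\hat L_0^*}(1-\alpha)]$ requires controlling $\E\bigl[\max_{m,k}\bigl(n^{-1}\sum_i \hat g_{i,mk}^2\bigr)^{1/2}\bigr]$, not merely each $\E[\hat g_{i,mk}^2]\lesssim 1$; the paper handles the maximum over $p^2$ coordinates via the maximal inequality of Chernozhukov--Chetverikov--Kato and Pisier's inequality, together with the decoupling remainder bound from Lemma~\ref{lem:gaussian_wild_bootstrap_Delta2_exp_moment} and the scaling $\nu^4\log^7(np)\le C_5 n^{1-K}$. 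You flag this as the anticipated obstacle, so you have the right idea, but you would need to make that maximal-inequality step explicit to close the argument.
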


\begin{rmk}[Comments on the conditions in Theorem \ref{thm:thresholded_cov_mat_rate_adaptive}]
\label{rmk:cov-kernel-comments-on-conditions}
The non-degeneracy condition $\{\Gamma_g\}_{(j,k),(j,k)} \ge C_1$ is quite mild in Theorem \ref{thm:thresholded_cov_mat_rate_adaptive}. Consider the multivariate cumulants of the joint distribution of the random vector $\vX = (X_1,\cdots,X_p)^\top$ following a distribution $F$ in $\mathbb{R}^p$. Let $\chi(\vt) = \E[\exp(\iota \vt^\top \vX)]$ be the characteristic function of $\vX$, where $\vt = (t_1,\cdots,t_p)^\top \in \mathbb{R}^p$ and $\iota = \sqrt{-1}$. Then, the {\it multivariate cumulants} $\kappa_{r_1 r_2 \cdots r_p}^{1 2 \cdots p}$ of the joint distribution of $\vX$ are the coefficients in the expansion
$$
\log \chi(\vt) = \sum_{r_1,r_2,\cdots,r_p = 0}^\infty  \kappa_{r_1 r_2 \cdots r_p}^{1 2 \cdots p} {(\iota t_1)^{r_1} (\iota t_2)^{r_2} \cdots (\iota t_p)^{r_p} \over r_1! r_2! \cdots r_p!}.
$$
For the covariance matrix kernel (\ref{eqn:sample-covmat-kernel}), we have
\begin{equation}
\label{eqn:write-Gamma_g-as-fourth-culmulant}
\{\Gamma_g\}_{(j,k), (m,l)} = (\kappa_{1111}^{jkml} + \sigma_{jm}\sigma_{kl} + \sigma_{jl}\sigma_{km}) / 4,
\end{equation}
where $\kappa_{1111}^{jkml}$ is the fourth-order cumulants of $F$. Therefore, if $\kappa_{1111}^{jkjk} \ge C$ for some (large) constant $C > 0$ depending only on $C_0$ and $C_1$, then $\{\Gamma_g\}_{(j,k), (j,k)} \ge C_1$ for $\Sigma \in {\cal G}(r, C_0, \zeta_p)$.

For data following distributions in the elliptic family \cite[Chapter 1]{muirhead1982}, the condition $\{\Gamma_g\}_{(j,k), (j,k)} \ge C_1$ is equivalent to $\min_{1 \le j \le p} \sigma_{jj} \ge C$ for some constant $C > 0$ depending only on $C_1$. To see this, for $F$ in the elliptic family \cite[Chapter 1]{muirhead1982}, it is known that
$$
\kappa_{1111}^{jkml} = \kappa (\sigma_{jk}\sigma_{ml}+\sigma_{jm}\sigma_{kl}+\sigma_{jl}\sigma_{km}),
$$
where the kurtosis parameter $\kappa = [1+\varepsilon(\nu^4-1)] / [1+\varepsilon(\nu^2-1)]^2-1$ for the $\varepsilon$-contaminated normal distribution in (\ref{eqn:eps_contaminated_normal_distn}) and $\kappa=2 / (\nu-4)$ for the elliptic $t$-distribution in (\ref{eqn:elliptic_t_distn}). For the elliptic $t$-distribution with $\nu=8$ (as considered in Figure \ref{fig:gaussian-approx}), we have $\{\Gamma_g\}_{(j,k), (j,k)} = (3\sigma_{jj}\sigma_{kk} + 4 \sigma_{jk}^2) / 8$. Therefore, $\{\Gamma_g\}_{(j,k), (j,k)} \ge C_1$ if and only if there exists a constant $C > 0$ such that $\sigma_{jj} \ge C$ for all $j=1,\cdots,p$. Similar comments apply to the $\varepsilon$-contaminated normal distribution.

The assumption $\sigma_{kk} \le C_0$ in Theorem \ref{thm:thresholded_cov_mat_rate_adaptive} is redundant and it is automatically fulfilled under a slightly stronger condition $\|X_{1k}\|_4 \le C_2$. Conditions on the growth rate on $\|X_{1k}\|_\ell, \ell=4,6,8$, are also not restrictive. Consider the special case for the multivariate Gaussian distribution $\vX_i \sim N(\vzero, \Sigma)$ such that $\sigma_{kk} \le C_0$. Then $X_{1k}$ are subgaussian$(4C_0)$ and $\max_{\ell=4,6,8} \|X_{1k}\|_\ell\le C$ for some constant $C > 0$ depending only on $C_0$. Therefore, if the data follow the Gaussian distribution, then the bootstrapped thresholded covariance matrix estimator $\hat\Sigma(\tau_*)$ attains (\ref{eqn:thresholded_cov_mat_rate_spectral_adaptive}) and (\ref{eqn:thresholded_cov_mat_rate_F_adaptive}) when $p = O(\exp(n^{(1-K)/7}))$. However, we shall emphasize that, for $\Sigma \in {\cal G}(r, C_0, \zeta_p)$, although the diagonal entries in $\Sigma$ are uniformly bounded by a constant $C_0$, we do allow $\nu^2$ to grow with $n$ in the subgaussian distribution, in which case the bootstrap approach can have advantages over the non-adaptive minimax thresholding procedure (see the paragraphs below for more detailed discussions).
\qed
\end{rmk}

There are a number of interesting features of Theorem \ref{thm:thresholded_cov_mat_rate_adaptive}. Consider $r=0$; i.e. $\Sigma$ is truly sparse such that $\max_{1 \le m \le p} \sum_{k=1}^p \vone\{\sigma_{mk} \neq 0\} \le \zeta_p$ for $\Sigma \in {\cal G}(0, C_0, \zeta_p)$. Then we can take $\beta=1$ and the convergence rates are 
$$
\|\hat\Sigma(\tau_*)-\Sigma\|_2 \le 6 \zeta_p a_{\hat{L}_0^*}(1-\alpha) \quad \text{ and } \quad p^{-1} | \hat\Sigma(\tau_*)-\Sigma|_F^2 \le 18 \zeta_p a_{\hat{L}_0^*}(1-\alpha)^2.
$$
Hence, the tuning parameter can be adaptively selected by bootstrap samples while the rate of convergence is {\it nearly optimal} in the following sense.
Since the distribution of $\tau_*$ mimics that of $\tau_\diamond$, $\hat\Sigma(\tau_*)$ achieves the same convergence rate as the thresholded estimator $\hat\Sigma(\tau_\diamond)$ for the oracle choice of the threshold $\tau_\diamond$ with probability at least $1-\alpha-C n^{-K/8}$. On the other hand, the bootstrap method is not fully equivalent to the oracle procedure in terms of the constants in the estimation error bounds. Suppose that we know the support $\Theta$ of $\Sigma$, i.e. locations of the nonzero entries in $\Sigma$. Then, the {\it oracle} estimator is simply $\breve\Sigma = \{\hat{s}_{mk} \vone\{(m,k) \in \Theta\} \}_{m,k=1}^p$ and we have
\begin{eqnarray*}
\|\breve\Sigma - \Sigma\|_2 &\le& \max_{m \le p} \sum_{k=1}^p |\hat{s}_{mk}-\sigma_{mk}| \vone\{(m,k) \in \Theta\} \\
&\le& \|\hat{S}-\Sigma\| \max_{m \le p} \sum_{k=1}^p \vone\{(m,k) \in \Theta\}  = \tau_\diamond \zeta_p.
\end{eqnarray*}
Therefore, the constant of the convergence rate for the bootstrap method does not attain the oracle estimator. However, we shall comment that $\beta$ is not a tuning parameter since it does not depend on $F$ and the effect of $\beta$ only appears in the constants in front of the convergence rates (\ref{eqn:thresholded_cov_mat_rate_spectral_adaptive}) and (\ref{eqn:thresholded_cov_mat_rate_F_adaptive}).

Assuming that the observations are subgaussian$(\nu^2)$ and the variance factor $\nu^2$ is a {\it fixed} constant, it is known that the threshold value $\tau_\Delta = C(\nu) \sqrt{\log(p)/n}$ achieves the minimax rate for estimating the sparse covariance matrix \cite{caizhou2011a}. Compared with the minimax optimal tuning parameter $\tau_\Delta$, our bootstrap threshold $\tau_*$ exhibits several advantages which we shall highlight (with stronger side conditions). First, $\tau_\Delta$ is non-adaptive since the constant $C(\nu) > 0$ depends on the underlying distribution $F$ through $\nu^2$ and it is more conservative than the bootstrap threshold $\tau_*$ in view of (\ref{eqn:tau_*-bound_subgaussian}). The reason is that the minimax lower bound is based on the worst case analysis and the matching upper bound is obtained by the union bound which ignores the dependence structures in $F$. On the contrary, $\tau_*$ takes into account the dependence information of $F$ by conditioning on the observations. Second, the bootstrap threshold $\tau_*$ does not need the knowledge of $\nu^2$ and it allows $\nu^2$ to increase with $n$ and $p$. In this case, the universal thresholding rule $\tau_\Delta = C' \nu \sqrt{\log(p)/n}$ even for $\Sigma \in {\cal G}(r, C_0, \zeta_p)$, in which the variances $\sigma_{kk}, k=1,\cdots,p,$ are uniformly bounded by a constant. In contrast, from (\ref{eqn:tau_*-bound_subgaussian}), the bootstrap threshold $\tau_* = O_\Prob( (\log(p)/n)^{1/2} )$, where the constant of $O_\Prob(\cdot)$ depends only on $\alpha,\beta,C_1,\cdots,C_5$. Therefore $\tau_* = o_\Prob(\tau_\Delta)$ as $\nu^2 \to \infty$ and $\tau_*$ can potentially gain much tighter performance bounds than $\tau_\Delta$. One exception for ruling out the increasing $\nu^2$ when $\max_{k \le p} \sigma_{kk} \le C_0$ is the Gaussian distribution $\vX_i \sim N(\vzero, \Sigma)$. However, the main focus of this paper is the statistical estimation and inference for high-dimensional non-Gaussian data and therefore the Gaussian example is not so interesting here. Third, as we shall demonstrate in Theorem \ref{thm:thresholded_cov_mat_rate_adaptive_polymom}, the Gaussian type convergence rate of the bootstrap method in Theorem \ref{thm:thresholded_cov_mat_rate_adaptive} remains valid even for heavy-tailed data with polynomial moments. Specifically, we have the following result.

\begin{thm}[Adaptive threshold selection by wild bootstrap: uniform polynomial moment observations]
\label{thm:thresholded_cov_mat_rate_adaptive_polymom}
Let $\vX_i$ be iid mean zero random vectors such that $\|\max_{1 \le k \le p} |X_{1k}| \|_8 \le \nu$ and $\Sigma \in {\cal G}(r, C_0, \zeta_p)$. Suppose that there exist constants $C_i > 0, i=1,\cdots,4,$ such that $\{\Gamma_g\}_{(j,k),(j,k)} \ge C_1$, $\|X_{1k}\|_4 \le C_2$, $\|X_{1k}\|_6 \le C_3 \nu^{1/3}$ and $\|X_{1k}\|_8 \le C_4 \nu^{1/2}$ for all $j,k=1,\cdots,p$. Let $\beta \in (0,1)$ and $\tau_* = \beta^{-1} a_{\hat{L}_0^*}(1-\alpha)$, where the bootstrap samples are generated with the covariance matrix kernel in (\ref{eqn:sample-covmat-kernel}). If $\nu^8 \log^7(np) \le C_5 n^{1-K}$ for some $K \in (0,1)$, then (\ref{eqn:thresholded_cov_mat_rate_spectral_adaptive}) and (\ref{eqn:thresholded_cov_mat_rate_F_adaptive}) hold with probability at least $1-\alpha-C n^{-K/12}$ for some constant $C > 0$ depending only on $C_1,\cdots,C_5$. In addition, (\ref{eqn:tau_*-bound_subgaussian}) holds for some constant $C' > 0$ depending only on $\alpha$ and $C_1,\cdots,C_5$.
\end{thm}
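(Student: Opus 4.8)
The plan is to follow exactly the same deterministic argument used for Theorem \ref{thm:thresholded_cov_mat_rate_adaptive}, while replacing the subgaussian-moment inputs by the uniform polynomial-moment versions of the wild-bootstrap theorem. The heart of the subgaussian proof is purely deterministic: on the event $\{\tau_\diamond \le \tau_* \le \beta^{-1}\tau_\diamond\}$ — equivalently, on the event that $a_{\hat L_0^*}(1-\alpha)$ sandwiches $\tau_\diamond = \|\hat S - \Sigma\|$ between $\beta a_{\hat L_0^*}(1-\alpha)$ and $a_{\hat L_0^*}(1-\alpha)$ — the thresholding algebra for $\Sigma \in \mathcal G(r,C_0,\zeta_p)$ yields the spectral and Frobenius bounds (\ref{eqn:thresholded_cov_mat_rate_spectral_adaptive}) and (\ref{eqn:thresholded_cov_mat_rate_F_adaptive}) with $a_{\hat L_0^*}(1-\alpha)$ in place of the generic threshold; this step does not depend on the tail behavior of the data at all. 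So the only thing that changes is the probability with which the sandwiching event holds. First I would note that $\tau_\diamond = \|\hat S - \Sigma\|$ differs from $\bar T_0 = \sqrt n(U - \E U)/2 = \sqrt n(\hat S - \Sigma)/2$ only by the (negligible) difference between $\hat S$ and the U-statistic form and by the $\sqrt n/2$ scaling, which is absorbed into the definition of $\hat L_0^*$ as rescaled in Section \ref{sec:stat_apps}; hence up to lower-order terms $\Prob(\tau_\diamond \le a_{\hat L_0^*}(1-\alpha)) = \Prob(\bar T_0 \le a_{\hat L_0^*}(1-\alpha))$.

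Next I would invoke part (ii) of Theorem \ref{thm:gaussian_wild_bootstrap_validity}. The verification of its hypotheses is where the stated moment conditions enter. Under the covariance matrix kernel (\ref{eqn:sample-covmat-kernel}), $h_{mk}(\vx_1,\vx_2) = (x_{1m}-x_{2m})(x_{1k}-x_{2k})/2$, so $|h_{mk}| \le \tfrac12(|x_{1m}|+|x_{2m}|)(|x_{1k}|+|x_{2k}|)$ and $\|h\| \le \tfrac12 (\max_k |x_{1k}| + \max_k |x_{2k}|)^2$. Using $\|X_{1k}\|_4 \le C_2$, $\|X_{1k}\|_6 \le C_3\nu^{1/3}$, $\|X_{1k}\|_8 \le C_4\nu^{1/2}$ together with Cauchy–Schwarz and Minkowski, one checks $\E|h_{mk}|^2 \lesssim 1$, $\E|h_{mk}|^3 \lesssim \nu$, $\E|h_{mk}|^4 \lesssim \nu^2$, which after rescaling gives $\max_{\ell=0,1,2}\E(|h_{mk}|^{2+\ell}/B_n^\ell) \le 1$ with $B_n \asymp \nu^2$; and $\E(\|h\|/B_n)^4 \lesssim \E(\max_k|X_{1k}|)^8 / B_n^4 \le \nu^8 / B_n^4 \le 1$ with the same $B_n \asymp \nu^2$, using $\|\max_k|X_{1k}|\|_8 \le \nu$. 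The non-degeneracy $\E g_{mk}^2 \ge C_1$ is exactly $\{\Gamma_g\}_{(j,k),(j,k)} \ge C_1$, so (GA.1') holds. Then $B_n^4 \log^7(np) \asymp \nu^8 \log^7(np) \le C_5 n^{1-K}$ is precisely the stated scaling hypothesis, so (GA.2') holds. Theorem \ref{thm:gaussian_wild_bootstrap_validity}(ii) therefore gives $|\Prob(\bar T_0 \le a_{\hat L_0^*}(1-\alpha)) - (1-\alpha)| \le C n^{-K/12}$, and applying the same bound to $-\bar T_0$ and to the lower sandwich point $\beta a_{\hat L_0^*}(1-\alpha)$ (using that $\tau_* = \beta^{-1}a_{\hat L_0^*}(1-\alpha)$ automatically satisfies $\tau_* \ge \tau_\diamond$ on a $(1-\alpha-Cn^{-K/12})$-probability event while $\beta\tau_* = a_{\hat L_0^*}(1-\alpha) \le \tau_\diamond$ fails on at most the same-order event) yields that the sandwiching event holds with probability at least $1 - \alpha - C n^{-K/12}$. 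Combining with the deterministic thresholding bounds proves (\ref{eqn:thresholded_cov_mat_rate_spectral_adaptive}) and (\ref{eqn:thresholded_cov_mat_rate_F_adaptive}) at the stated probability.

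For the moment bound $\E[\tau_*] \le C'\beta^{-1}(\log(p)/n)^{1/2}$, and equivalently $\E[a_{\hat L_0^*}(1-\alpha)] \le C'(\log(p)/n)^{1/2}$, I would argue through a maximal-inequality bound on $\E[\bar L_0^*]$ and the closeness of $a_{\hat L_0^*}$ to $a_{\bar L_0^*}$. Conditionally on the data, $\bar L_0^* = \max_{m,k} n^{-1/2}\sum_i g_{mk}(\vX_i)e_i$ is a maximum of $p^2$ Gaussians with conditional variances $n^{-1}\sum_i g_{mk}(\vX_i)^2$, whose expectation is $\E g_{mk}^2 \lesssim 1$; a standard Gaussian maximal inequality plus a bound on $\E\max_{m,k} n^{-1}\sum_i g_{mk}(\vX_i)^2$ (which is controlled by $\E g_{mk}^4 \lesssim \nu^2$ via a union bound over $p^2$ terms, costing a factor $\log p$, and this stays of the right order since $\nu^8 \log^7(np) \le C_5 n^{1-K}$ keeps $\nu$ well below any threatening polynomial in $n$) gives $\E[a_{\bar L_0^*}(1-\alpha)] \lesssim (\log(p)/n)^{1/2}$; then Lemma \ref{lem:gaussian_wild_bootstrap_Delta2_unifpoly_moment} controls $|a_{\hat L_0^*}(1-\alpha) - a_{\bar L_0^*}(1-\alpha)|$ by $O(n^{-1/4}B_n(\log np)^{1/2})$ which is again lower order, and the factor $\beta^{-1}$ comes from $\tau_* = \beta^{-1}a_{\hat L_0^*}(1-\alpha)$. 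The main obstacle — the only place the polynomial-tail case differs substantively from the subgaussian case — is making sure that this $\E[\tau_*]$ bound, which relies on comparing $\E\max_{m,k}(\cdot)$ against the crude polynomial-moment bounds, does not pick up extra $\nu$-factors that would swamp the $(\log p/n)^{1/2}$ rate; this is where the quantitative scaling $\nu^8\log^7(np)\le C_5 n^{1-K}$ is used, and it is the step that forces the $n^{-K/12}$ (rather than $n^{-K/8}$) rate through Theorem \ref{thm:gaussian_wild_bootstrap_validity}(ii).
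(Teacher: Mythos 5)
Your overall strategy matches the paper's: verify that the polynomial moment hypotheses imply (GA.1') and (GA.2') with $B_n\asymp\nu^2$ for the covariance kernel (this is exactly what Lemma~\ref{lem:moment-bounds-unifpoly-obs} packages), invoke Theorem~\ref{thm:gaussian_wild_bootstrap_validity}(ii) to get the $n^{-K/12}$ rate, reuse the deterministic thresholding algebra from the subgaussian proof verbatim, and bound $\E[\tau_*]$ via a conditional Gaussian maximal inequality combined with a moment bound on $\bar\xi$ and the decoupling error $\Delta_2$. That is the paper's route.

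However, there is a genuine error in how you describe the key event, and it is worth correcting because the false step would, if actually needed, sink the argument. You claim the thresholding bounds hold on the two-sided ``sandwiching'' event $\{\tau_\diamond\le\tau_*\le\beta^{-1}\tau_\diamond\}$, i.e.\ $a_{\hat L_0^*}(1-\alpha)\le\|\hat S-\Sigma\|\le\beta^{-1}a_{\hat L_0^*}(1-\alpha)$, and assert that the lower constraint $\{a_{\hat L_0^*}(1-\alpha)\le\|\hat S-\Sigma\|\}$ ``fails on at most the same-order event.'' That is false: by construction and Theorem~\ref{thm:gaussian_wild_bootstrap_validity}(ii), $\Prob\bigl(\|\hat S-\Sigma\|\le a_{\hat L_0^*}(1-\alpha)\bigr)=1-\alpha+O(n^{-K/12})$, so the lower constraint fails with probability $\approx 1-\alpha$, which is essentially $1$ for small $\alpha$. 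Fortunately, the deterministic thresholding argument never uses a lower bound on the threshold: re-reading the proof of Theorem~\ref{thm:thresholded_cov_mat_rate_adaptive}, the only input used is $\max_{m,k}|\hat s_{mk}-\sigma_{mk}|\le\beta\tau_*=a_{\hat L_0^*}(1-\alpha)$ (the terms $I$ through $VI$ are all bounded above in terms of $\tau_*$, and a small $\tau_*$ only makes those bounds smaller). So the relevant event is the one-sided $\{\tau_\diamond\le\tau_*\}$, which has probability $\ge1-\alpha-Cn^{-K/12}$, and your proof should discard the sandwiching entirely. Two smaller points: with $\|X_{1k}\|_8\le C_4\nu^{1/2}$ one gets $\E g_{mk}^4\lesssim\nu^4$ (not $\nu^2$) and $\E\max_{m,k}\max_ig_{mk}^2(\vX_i)\lesssim n^{1/2}\nu^4$, which is what feeds the $\E[\bar\xi]$ bound via \cite[Lemma 9]{cck2014b} plus Lemma~\ref{lem:gaussian_wild_bootstrap_Delta2_unifpoly_moment}; and the $n^{-K/12}$ rate is produced directly by Theorem~\ref{thm:gaussian_wild_bootstrap_validity}(ii), not by the $\E[\tau_*]$ step as your final sentence implies.
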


From Theorem \ref{thm:thresholded_cov_mat_rate_adaptive_polymom}, the subgaussian assumption on $F$ is not essential: for the non-Gaussian data with heavier tails than subgaussian, the thresholded covariance matrix estimator with the threshold selected by the wild bootstrap approach again attains the Gaussian type convergence rate at the asymptotic confidence level $100(1-\alpha)\%$. In particular, the dimension $p$ may still be allowed to increase subexponentially fast in the sample size $n$. The cost of the heavy-tailed distribution $F$ is only a sacrifice of the convergence rate from $n^{-K/8}$ to $n^{-K/12}$. However, as commented in Remark \ref{rmk:wild-bootstrap}, this gap becomes smaller and eventually vanishes for stronger moment conditions (here, we need $q \ge 16$).

Next, we compare Theorem \ref{thm:thresholded_cov_mat_rate_adaptive_polymom} with the threshold obtained by the union bound approach. Assume that $\E|X_{1k}|^q < \infty$ for $q \ge 8$. By the Nagaev inequality \cite{nagaev1979a} applied to the split sample in Remark \ref{rmk:data-splitting-reduction}, one can show that
$$\tau_\sharp = C_q \Big\{ {p^{4/q} \over n^{1-2/q}} \xi_q + \Big( {\log{p} \over n}\Big)^{1/2} \xi_4 \Big\}, \qquad \text{where } \xi_q = \max_{1 \le k \le p} \|X_{1k}\|_q,$$
is the right threshold that gives a large probability bound for $\tau_\diamond = \|\hat{S}-\Sigma\|$. For $q = 8$, we see that $\tau_* = o_\Prob(\tau_\sharp)$ when $n^{1/2} = o(p)$. Therefore, in high dimensional settings, the bootstrap method adapts to the dependence in $F$ and gives better convergence rate under the spectral and Frobenius norms. Moreover, for observations with polynomial moments, the minimax lower bound is currently not available to justify $\tau_\sharp$.

%


\subsection{Simultaneous inference for covariance and rank correlation matrices}

Another related important problem of estimating the sparse covariance matrix $\Sigma$ is the consistent recovery of its support, i.e. non-zero off-diagonal entries in $\Sigma$ \cite{lamfan2009a}. Towards this end, a lower bound of the minimum signal strength ($\Sigma$-min condition) is a necessary condition to separate the weak signals and true zeros. Yet, the $\Sigma$-min condition is never verifiable. To avoid this undesirable condition, we can alternatively formulate the recovery problem as a more general hypothesis testing problem
\begin{equation}
\label{eqn:cov_mat_simultaneous_test_formulation}
H_0: \Sigma = \Sigma_0  \quad \text{versus} \quad  H_1: \Sigma \neq \Sigma_0,
\end{equation}
where $\Sigma_0$ is a known $p \times p$ matrix. In particular, if $\Sigma_0=\Id_{p \times p}$, then the support recovery can be re-stated as the following simultaneously testing problem: for all $m, k \in \{ 1,\cdots,p \}$ and $m \neq k$,
\begin{equation}
\label{eqn:cov_mat_simultaneous_test_formulation_2}
H_{0,mk}: \sigma_{mk} = 0 \quad \text{versus} \quad H_{1,mk}: \sigma_{mk} \neq 0.
\end{equation}
The test statistic we construct is $\bar{T}_0 = \|\hat{S}-\Sigma_0\|_{\text{off}}$, which is an $\ell^\infty$ type statistic by taking the maximum magnitudes on the off-diagonal entries. Then $H_0$ is rejected if $\bar{T}_0 \ge a_{\hat{L}_0^*}(1-\alpha)$.

\begin{cor}[Asymptotic size of the simultaneous test: subgaussian observations]
\label{cor:cov_mat_simultaneous_test}
Let $\nu \ge 1$ and $\vX_i$ be iid mean zero random vectors such that $X_{ik} \sim  \text{subgaussian}(\nu^2)$ for all $k=1,\cdots,p$. Suppose that there exist constants $C_i > 0, i=1,\cdots,4,$ such that $\{\Gamma_g\}_{(j,k),(j,k)} \ge C_1$, $\|X_{1k}\|_4 \le C_2$, $\|X_{1k}\|_6 \le C_3 \nu^{1/3}$ and $\|X_{1k}\|_8 \le C_4 \nu^{1/2}$ for all $j,k=1,\cdots,p$. Let $\beta \in (0,1)$ and $\tau_* = \beta^{-1} a_{\hat{L}_0^*}(1-\alpha)$, where the bootstrap samples are generated with the covariance matrix kernel in (\ref{eqn:sample-covmat-kernel}). If $\nu^4 \log^7(np) \le C_5 n^{1-K}$ for some $K \in (0,1)$, then the above test based on $\bar{T}_0$ for (\ref{eqn:cov_mat_simultaneous_test_formulation}) has the size $\alpha + O(n^{-K/8})$; i.e. the family-wise error rate of the simultaneous test problem (\ref{eqn:cov_mat_simultaneous_test_formulation_2}) is asymptotically controlled at the level $\alpha$.
\end{cor}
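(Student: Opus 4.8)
The plan is to deduce the corollary from the wild bootstrap validity result, Theorem~\ref{thm:gaussian_wild_bootstrap_validity}(i), applied to the sample covariance kernel~(\ref{eqn:sample-covmat-kernel}). Under $H_0$ we have $\Sigma=\Sigma_0$, so the test statistic is $\bar T_0=\|\hat S-\Sigma\|_{\text{off}}=\max_{m\ne k}|\hat s_{mk}-\sigma_{mk}|$, which, under the rescaling convention introduced at the start of Section~\ref{sec:stat_apps}, is the sup-norm of the centered non-degenerate U-statistic $U-\E U$ over the off-diagonal entries, with $U$ given by the kernel $h_{mk}(\vx_1,\vx_2)=\tfrac12(\vx_{1m}-\vx_{2m})(\vx_{1k}-\vx_{2k})$. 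The Gaussian approximation and bootstrap arguments of Sections~\ref{sec:gaussian-approx}--\ref{subsec:gassuain_wild_bootstrap} never use the full index set $\{1,\dots,p\}^2$: they go through verbatim when the maximum runs over any fixed subset of entries and, since $h$ is symmetric, over both $U-\E U$ and $-(U-\E U)$, so it suffices to verify conditions (GA.1) and (GA.2) for this kernel restricted to the off-diagonal indices (with the bootstrap statistic $\hat L_0^*$ restricted to the same indices).

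For the non-degeneracy part of (GA.1), $\E g_{mk}^2\ge C_1$ is immediate from $\{\Gamma_g\}_{(j,k),(j,k)}=\Var(g_{jk}(\vX))=\E g_{jk}^2\ge C_1$ (recall $\E g=0$). For the tail part, take $B_n\asymp\nu^2$: since each $X_{1k}$ is $\text{subgaussian}(\nu^2)$, the increment $X_{1m}-X_{2m}$ is $\text{subgaussian}(c\nu^2)$, and the elementary bound $|ab|\le(a^2+b^2)/2$ gives $\E[\exp(|h_{mk}|/B_n)]\le 2$ once the constant in $B_n$ is chosen large enough. The polynomial parts follow from Cauchy--Schwarz, $\E|h_{mk}|^{2+\ell}\le 2^{-(2+\ell)}\|X_{1m}-X_{2m}\|_{2(2+\ell)}^{2+\ell}\|X_{1k}-X_{2k}\|_{2(2+\ell)}^{2+\ell}$, together with $\|X_{1k}\|_4\le C_2$, $\|X_{1k}\|_6\le C_3\nu^{1/3}$, $\|X_{1k}\|_8\le C_4\nu^{1/2}$, which yield $\E|h_{mk}|^{2}\lesssim 1$, $\E|h_{mk}|^{3}\lesssim\nu^2\asymp B_n$, $\E|h_{mk}|^{4}\lesssim\nu^4\asymp B_n^2$. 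These absolute constants need not literally equal $2$, which is harmless: applying Theorem~\ref{thm:gaussian_wild_bootstrap_validity} to $U/M$ for a large enough constant $M$ leaves the rejection event $\{\bar T_0\ge a_{\hat L_0^*}(1-\alpha)\}$ unchanged because the conditional quantiles rescale by $M$, while it only alters $C_1$ and the constant in (GA.2) by fixed factors. Finally, (GA.2) reads $B_n^2\log^7(pn)/n\le C_2 n^{-K}$, which is precisely the assumed scaling $\nu^4\log^7(np)\le C_5 n^{1-K}$ up to constants.

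Theorem~\ref{thm:gaussian_wild_bootstrap_validity}(i) then gives $|\Prob(\bar T_0\le a_{\hat L_0^*}(1-\alpha))-(1-\alpha)|\le Cn^{-K/8}$, hence the size of the test is $\Prob_{H_0}(\bar T_0\ge a_{\hat L_0^*}(1-\alpha))=\alpha+O(n^{-K/8})$. When $\Sigma_0=\Id_{p\times p}$, under the global null of~(\ref{eqn:cov_mat_simultaneous_test_formulation_2}) every off-diagonal $\sigma_{mk}$ vanishes, so the family-wise error rate is $\Prob_{H_0}(\exists\,m\ne k:\ |\hat s_{mk}|>a_{\hat L_0^*}(1-\alpha))=\Prob_{H_0}(\bar T_0>a_{\hat L_0^*}(1-\alpha))=\alpha+O(n^{-K/8})$, i.e. asymptotically controlled at level $\alpha$.

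I do not expect a genuine obstacle here: all the heavy lifting --- the H\'ajek projection, the maximal moment bound for the canonical remainder of Section~\ref{app:concentration_ineq_canonical-Ustat}, the high-dimensional Gaussian comparison, and the control of the decoupled bootstrap estimator $\hat g_i$ --- is already packaged inside Theorem~\ref{thm:gaussian_wild_bootstrap_validity}. The only mildly delicate step is the bookkeeping that turns the componentwise subgaussian hypotheses into (GA.1): one must track that the increment doubles the subgaussian parameter, that a product of two subgaussians is only subexponential (forcing the $B_n\asymp\nu^2$, rather than $B_n\asymp\nu$, scaling), and that the hypotheses on $\|X_{1k}\|_6$ and $\|X_{1k}\|_8$ are calibrated exactly so that $\E(|h_{mk}|^{2+\ell}/B_n^\ell)$ remains bounded for $\ell=1,2$.
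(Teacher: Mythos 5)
Your proposal is correct and follows the same route the paper uses (implicitly, via the verification steps in the proof of Theorem~\ref{thm:thresholded_cov_mat_rate_adaptive}): check (GA.1)--(GA.2) for the covariance kernel with $B_n\asymp\nu^2$ via Lemma~\ref{lem:moment-bounds-gaussian-obs} and the assumed $\ell^4,\ell^6,\ell^8$ moment bounds, then invoke Theorem~\ref{thm:gaussian_wild_bootstrap_validity}(i) (applied to the two-sided, off-diagonal max, which the machinery handles verbatim) to obtain $|\Prob(\bar T_0\le a_{\hat L_0^*}(1-\alpha))-(1-\alpha)|\le Cn^{-K/8}$ and hence the stated size. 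The only cosmetic divergence is your $U/M$ rescaling device for absorbing the constants into the ``$\le 2$'' in (GA.1); the paper instead absorbs them into $B_n$ directly (cf.\ (\ref{eqn:check_GA1})), and the two are equivalent bookkeeping.
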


From Corollary \ref{cor:cov_mat_simultaneous_test}, the test based on $\bar{T}_0$ is asymptotically exact of size $\alpha$ for subgaussian data. A similar result can be established for observations with polynomial moments. Due to the space limit, the details are omitted. \cite{changzhouzhouwang2016} proposed a similar test statistic for comparing the two-sample large covariance matrices. Their results (Theorem 1 in \cite{changzhouzhouwang2016}) are analogous to Corollary \ref{cor:cov_mat_simultaneous_test} in this paper in that no structural assumptions in $\Sigma$ are needed in order to obtain the asymptotic validity of both tests. However, we shall note that their assumptions (C.1), (C.2), and (C.3) on the non-degeneracy are stronger than our condition $\{\Gamma_g\}_{(j,k),(j,k)} \ge C_1$. For subgaussian observations $X_{ik} \sim \text{subgaussian}(\nu^2)$, (C.3) in \cite{changzhouzhouwang2016} assumed that $\min_{1 \le j \le k \le p} s_{jk} /\nu^4 \ge c$ for some constant $c > 0$, where $s_{jk}=\Var(X_{1j}X_{1k})$. If $\nu^2 \to \infty$, then \cite[Theorem 1]{changzhouzhouwang2016} requires that $s_{jk}$ for all $j,k=1,\cdots,p$ have to obey a uniform lower bound that diverges to infinity. For the covariance matrix kernel, since $g(\vx) = (\vx\vx^\top - \Sigma) / 2$, we only need that $\min_{j,k} s_{jk} \ge c$ for some fixed lower bound.

Next, we comment that a distinguishing feature of our bootstrap test from the $\ell^2$ test statistic \cite{chenzhangzhong2010} is that no structural assumptions are made on $F$ and we allow for the strong dependence in $\Sigma$. For example, consider again the elliptic distributions with the positive-definite $V=\varrho \vone_p \vone_p^\top + (1-\varrho) \Id_{p\times p}$ such that the covariance matrix $\Sigma$ is proportion to $V$. Then, we have 
\begin{eqnarray*}
\tr(V^4) &=& p[1+(p-1)\varrho^2]^2 + p(p-1)[2\varrho+(p-2)\varrho^2]^2, \\
\tr(V^2) &=& \varrho^2 p^2 + (1-\varrho^2) p.
\end{eqnarray*}
For any $\varrho\in(0,1)$, $\tr(V^4) / \tr^2(V^2) \to 1$ as $p \to \infty$. Therefore, the limiting distribution of the $\ell^2$ test statistic in \cite{chenzhangzhong2010} is no longer normal and its asymptotic distribution remains unclear.

Finally, the covariance matrix testing problem (\ref{eqn:cov_mat_simultaneous_test_formulation}) can be generalized further to nonparametric forms which can gain more robustness to outliers and the nonlinearity in the dependency structures. Let $U_\diamond = \E[h(\vX_1,\vX_2)]$ be the expectation of the random matrix associated with $h$ and $U_0$ be a known $p \times p$ matrix. Consider the testing problem
$$
H_0: U_\diamond = U_0  \quad \text{versus} \quad  H_1: U_\diamond \neq U_0.
$$
Then, the test statistic can be constructed as $\bar{T}_0 = \|U-U_0\|$ (or $\bar{T}'_0 = \|U-U_0\|_{\text{off}}$) and $H_0$ is rejected if $\bar{T}_0 \ge a_{\hat{L}_0^*}(1-\alpha)$ (or $\bar{T}'_0 \ge a_{\hat{L}_0^*}(1-\alpha)$), where the bootstrap samples are generated w.r.t. the kernel $h$. The above test covers Kendall's tau rank correlation matrix as a special case where $h$ is the bounded kernel defined in (\ref{eqn:kendaltau-kernel}).

\begin{cor}[Asymptotic size of the simultaneous test for Kendall's tau correlation matrix]
\label{cor:kendall_tau_mat_simultaneous_test}
Let $\vX_i$ be iid random vectors following the distribution $F$ in $\mathbb{R}^p$. Suppose that there exists a constant $C_1 > 0$ such that $\{\Gamma_g\}_{(j,k),(j,k)} \ge C_1$ for all $j,k=1,\cdots,p$. Let $\beta \in (0,1)$ and $\tau_* = \beta^{-1} a_{\hat{L}_0^*}(1-\alpha)$, where the bootstrap samples are generated with Kendall's tau rank correlation matrix kernel in (\ref{eqn:kendaltau-kernel}). If $\log^7(np) \le C_2 n^{1-K}$ for some $K \in (0,1)$, then the test based on $\bar{T}'_0$ has the size $\alpha + O(n^{-K/8})$.
\end{cor}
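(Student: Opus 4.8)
The plan is to reduce the statement to the Gaussian wild bootstrap validity for bounded kernels, i.e. Theorem \ref{thm:gaussian_wild_bootstrap_validity}(i), applied to Kendall's tau rank correlation matrix kernel $h_{mk}$ in (\ref{eqn:kendaltau-kernel}). The first step is to observe that $\bar{T}'_0 = \|U - U_0\|_{\text{off}} = \max_{m \ne k} |U_{mk} - (U_0)_{mk}|$, and under the null hypothesis $U_\diamond = U_0$ we have $(U_0)_{mk} = \E U_{mk}$, so $\bar{T}'_0$ coincides (after the rescaling by $\sqrt{n}/2$ absorbed into $\bar{T}_0$ and $a_{\hat{L}_0^*}$) with the centered statistic $\bar{T}_0 = \sqrt{n}\|U - \E U\|_{\text{off}}/2$ studied in Section \ref{subsec:gassuain_wild_bootstrap}; taking the maximum over off-diagonal entries rather than all entries is handled identically, since the Gaussian approximation and anti-concentration arguments are insensitive to which index subset the maximum is taken over. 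Thus the task reduces to verifying that (GA.1) and (GA.2) hold for the kernel (\ref{eqn:kendaltau-kernel}).

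The second step is the verification of (GA.1). The kernel $h_{mk}$ takes values in $\{0,2\}$, hence $\|h\|_\infty \le 2$, so we may take $B_n = 2$ (a fixed constant): then $\E(|h_{mk}|^{2+\ell}/B_n^\ell) \le 2^2 \le 2 B_n^{\ell}/B_n^\ell$ — more carefully, $\E|h_{mk}|^2 \le 4$, $\E|h_{mk}|^3/B_n \le 8/2 = 4$, $\E|h_{mk}|^4/B_n^2 \le 16/4 = 4$, and $\E[\exp(|h_{mk}|/B_n)] \le e^{2/2} = e < 2$ fails the bound $\le 2$; so instead one chooses $B_n$ a slightly larger absolute constant (e.g. $B_n = 4$) so that all five quantities in (\ref{eqn:subexponential-kernel-moment-condition}) are bounded by $2$. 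Since a bounded kernel trivially satisfies a sub-exponential moment condition for a suitable absolute $B_n$, this is routine. The non-degeneracy half of (GA.1), namely $\E g_{mk}^2 \ge C_1$, is exactly the hypothesis $\{\Gamma_g\}_{(j,k),(j,k)} \ge C_1$ of the corollary — note $\{\Gamma_g\}_{(m,k),(m,k)} = \Var(g_{mk}(\vX)) = \E g_{mk}^2$ — so there is nothing further to check here.

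The third step is (GA.2): since $B_n$ is an absolute constant, the scaling condition $B_n^2 \log^7(pn)/n \le C_2 n^{-K}$ becomes $\log^7(np)/n \le C_2' n^{-K}$, which is precisely the hypothesis $\log^7(np) \le C_2 n^{1-K}$ of the corollary (absorbing $B_n^2$ into $C_2$). With (GA.1) and (GA.2) both in force, Theorem \ref{thm:gaussian_wild_bootstrap_validity}(i) yields $|\Prob(\bar{T}_0 \le a_{\hat{L}_0^*}(\alpha)) - \alpha| \le C n^{-K/8}$, where $\bar{T}_0$ and $a_{\hat{L}_0^*}$ are built from the kernel (\ref{eqn:kendaltau-kernel}). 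Finally, to conclude the statement about $\bar{T}'_0$ with the $(1-\alpha)$ quantile and the rejection rule $\bar{T}'_0 \ge a_{\hat{L}_0^*}(1-\alpha)$, I would apply the displayed bound with $\alpha$ replaced by $1-\alpha$ and pass to the complementary event: $\Prob(\bar{T}'_0 \ge a_{\hat{L}_0^*}(1-\alpha)) = 1 - \Prob(\bar{T}_0 \le a_{\hat{L}_0^*}(1-\alpha)) = \alpha + O(n^{-K/8})$ under $H_0$, which is exactly the asserted size. The only genuinely delicate point — and the step I expect to require the most care — is confirming that restricting the maximum to off-diagonal entries (the $\|\cdot\|_{\text{off}}$ norm rather than $\|\cdot\|$) does not disturb the proof of Theorem \ref{thm:gaussian_wild_bootstrap_validity}; this should follow because that proof's use of the Gaussian comparison and anti-concentration inequalities (from \cite{cck2013}) applies verbatim to the maximum over any fixed coordinate subset of size at most $p^2$, and the bootstrap estimator $\hat g_{i,mk}$ is formed coordinatewise so the off-diagonal restriction commutes with all the relevant bounds. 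Everything else is a direct specialization of the general theorem to a bounded kernel, so no new estimates are needed.
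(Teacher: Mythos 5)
Your overall strategy is the right one: under $H_0$ the statistic $\bar{T}'_0$ coincides (after the constant rescaling of Section 4 and restriction to off-diagonal indices) with the centered U-statistic, so the corollary should follow from Theorem~\ref{thm:gaussian_wild_bootstrap_validity}(i) applied to the bounded kernel (\ref{eqn:kendaltau-kernel}), with (GA.2) immediate from the hypothesis on $\log^7(np)$. The reduction to the complementary event and the passage from $\alpha$ to $1-\alpha$ are also fine.

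There is, however, a genuine gap in your verification of (GA.1), and your proposed fix does not close it. The $\ell=0$ term of (\ref{eqn:subexponential-kernel-moment-condition}) is $\E|h_{mk}|^2/B_n^0 = \E|h_{mk}|^2$, which involves no factor of $B_n$ at all, so enlarging $B_n$ cannot help. Moreover the bound $\E h_{mk}^2 \le 4$ is sharp in general: since $h_{mk}\in\{0,2\}$ with $\Prob(h_{mk}=2)=p_{mk}=\Prob\bigl((X_m-X'_m)(X_k-X'_k)>0\bigr)$, one has $\E h_{mk}^2 = 4p_{mk}$, and $p_{mk}$ is close to $1$ for strongly positively dependent coordinates, so $\E h_{mk}^2>2$ does occur. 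The correct fix is to observe that (GA.1) needs only to hold for the kernel modulo constant shifts, because adding a constant to $h$ changes neither $g$ in (\ref{eqn:hoeffding-decomp-order-one}), nor $f$ in (\ref{eqn:hoeffding-decomp-order-two}), nor $U - \E U$, nor the bootstrap statistic $\hat{L}_0^*$. Replacing $h_{mk}$ by the shifted kernel $\tilde{h}_{mk}=h_{mk}-1=\sign\bigl((x_{1m}-x_{2m})(x_{1k}-x_{2k})\bigr)\in\{-1,1\}$ gives $\E\tilde{h}_{mk}^2=1$, $\E|\tilde{h}_{mk}|^{2+\ell}/B_n^\ell\le1$, and $\E\bigl[\exp(|\tilde{h}_{mk}|/B_n)\bigr]=e^{1/B_n}\le2$ for $B_n\ge1/\log2$, so taking $B_n$ to be a fixed absolute constant (say $B_n=2$) verifies (GA.1) outright.

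A second point you touch on but do not fully resolve: for Kendall's tau the restriction to off-diagonal indices is not merely harmless bookkeeping, it is mandatory. For continuous $F$, $h_{mm}\equiv2$ a.s., hence $g_{mm}\equiv0$ and $\{\Gamma_g\}_{(m,m),(m,m)}=0$, so the diagonal entries are completely degenerate and the non-degeneracy hypothesis can only be imposed, and is only needed, on the sub-vector of pairs $(j,k)$ with $j\ne k$. Theorem~\ref{thm:gaussian_wild_bootstrap_validity} (and the Gaussian approximation and anti-concentration steps underpinning it) indeed applies to the maximum over any fixed index subset, so applying it to the off-diagonal sub-vector gives exactly the claimed size control; but the reason for the off-diagonal restriction should be stated as a degeneracy issue rather than dismissed as an insensitivity of the comparison inequalities.
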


Therefore, the asymptotic validity of the bootstrap test for large Kendall's tau rank correlation matrix is obtained when $\log{p} = o(n^{1/7})$ without imposing structural and moment assumptions on $F$.

\section{Proofs of the main results}
\label{sec:proof}

The rest of the paper is organized as follows. In Section \ref{app:concentration_ineq_canonical-Ustat}, we first present a useful inequality for bounding the expectation of the sup-norm of the {\it canonical} U-statistics and then compare with an alternative simple data splitting bound by reducing to the moment bounding exercise for the sup-norm of sums of iid random matrices. We shall discuss several advantages of using the U-statistics approach by exploring the degeneracy structure. Section \ref{subsec:proof-of-GA} contains the proofs of the main results on Gaussian approximation and Section \ref{subsec:proof-of-bootstrap} proves the convergence rate of the Gaussian wild bootstrap. Proofs of the statistical applications are given in Section \ref{subsec:proof-of-applications}. Additional proofs and technical lemmas are given in the SM.

\subsection{A maximal inequality for canonical U-statistics}
\label{app:concentration_ineq_canonical-Ustat}

Before proving our main results, we first establish a maximal inequality of the canonical U-statistics of order two. The derived expectation bound is useful in controlling the size of the nonlinear and completely degenerate error term in the Gaussian approximation.

\begin{thm}[Expectation bound for canonical U-statistics]
\label{thm:expectation-bound}
Let $f: \mathbb{R}^p \times \mathbb{R}^p \to \mathbb{R}^{p\times p}$ be a symmetric and canonical U-statistic kernel of order two and $V={n\choose2}^{-1}\sum_{1\le i < j \le n} f(\vX_i,\vX_j)$ such that $\E\|f(\vX_1,\vX_2)\|<\infty$. Let  ${\vX'}_1^n$ be an independent copy of $\vX_1^n$, $M = \max_{1\le i < j \le n} (\|f(\vX_i,\vX_j)\| \vee \| f(\vX_i, \vX'_j)\|)$, $D_q = \max_{1 \le m,k \le p} (\E|f_{mk}|^q)^{1/q}$ for $q > 0$. If $2 \le p \le  \exp(b n)$ for some absolute constant $b>0$, then there exists an absolute constant $K > 0$ such that 
\begin{equation}
\label{eqn:expectation-bound-canonical}
\E \|V\| \le K (1 \vee b^{1/2}) \Big\{ \Big( {\log{p} \over n} \Big)^{3/2} \|M\|_4 + {\log{p} \over n} D_2 + \Big( {\log{p} \over n} \Big)^{5/4}  D_4 \Big\}.
\end{equation}
\end{thm}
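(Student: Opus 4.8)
The plan is to bound $\E\|V\|$ by a symmetrization-plus-chaining argument tailored to the complete degeneracy of $f$, rather than the crude reduction to sums of iid matrices. First I would symmetrize: introduce Rademacher variables $\varepsilon_1,\dots,\varepsilon_n$ and decoupled copies, using the standard fact that for canonical (completely degenerate) U-statistics one has, up to absolute constants, $\E\|V\|\lesssim n^{-2}\,\E\big\|\sum_{i\ne j}\varepsilon_i\varepsilon_j' f(\vX_i,\vX_j')\big\|$, where $\vX'$ is an independent copy and $\varepsilon,\varepsilon'$ are independent Rademacher sequences (this is the decoupling inequality of de la Peña–Montgomery-Smith combined with Rademacher symmetrization in both coordinates; the degeneracy is exactly what makes the diagonal removable and the decoupling two-sided). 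Conditionally on the data, $\sum_{i,j}\varepsilon_i\varepsilon_j' f_{mk}(\vX_i,\vX_j')$ is a Rademacher chaos of order two in each entry $(m,k)$, and $\|V\|=\max_{m,k}|\cdot|$ over $p^2\le e^{2bn}$ coordinates.

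Next I would apply a chaos tail bound entrywise. By the Hanson–Wright / second-order Rademacher chaos concentration inequality, conditionally on $(\vX_i,\vX_j')$, each entry of $\sum_{i,j}\varepsilon_i\varepsilon_j'f_{mk}(\vX_i,\vX_j')$ has subexponential-type tails governed by two data-dependent scales: the Frobenius-type scale $\big(\sum_{i,j}f_{mk}(\vX_i,\vX_j')^2\big)^{1/2}$ and the operator-norm-type scale of the $n\times n$ matrix $(f_{mk}(\vX_i,\vX_j'))_{i,j}$, which I would simply bound by $M=\max_{i,j}(\|f(\vX_i,\vX_j)\|\vee\|f(\vX_i,\vX_j')\|)$ times $n$ in the worst case, but more carefully by $n$ times the spectral norm. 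Taking a union bound over the $p^2$ coordinates converts the two tail regimes into the two terms $(\log p/n)^{1/2}D_2$-type and $(\log p/n)$-type contributions after dividing by $n^2$; integrating the tail and controlling the random scales in $L^2$ and $L^4$ (using $\E\sum_{i,j}f_{mk}^2\le n^2 D_2^2$ and Cauchy–Schwarz for the $D_4$ and $\|M\|_4$ pieces) produces precisely the three-term bound in \eqref{eqn:expectation-bound-canonical}, with the $\log p$ powers $3/2$, $1$, $5/4$ emerging from how the Gaussian/subexponential inverse-tail functions compose with the $\sqrt{\log p}$ from the union bound and with the $L^4$ truncation of the random scales. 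The factor $(1\vee b^{1/2})$ enters because $p\le e^{bn}$ allows $\log p\asymp bn$, so bounding $\log p/n\le b$ in the places where the scale is controlled by $M$ rather than by a moment gives the claimed dependence.

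The main obstacle is the honest control of the \emph{operator-norm scale} of the random $n\times n$ matrix $A^{mk}=(f_{mk}(\vX_i,\vX_j'))_{i,j}$: a naive bound $\|A^{mk}\|_2\le n\max_{i,j}|f_{mk}|$ wastes a factor of $n^{1/2}$ and would destroy the rate, so one must instead bound $\E\|A^{mk}\|_2$ (or rather its contribution after union bound and integration) by matrix-Bernstein/Latała-type estimates that yield something like $\sqrt n\,\|M\|_4$ up to logarithmic factors, which is what feeds the $(\log p/n)^{3/2}\|M\|_4$ term once divided by $n^2$ and multiplied by the extra $\sqrt{\log p}$ from the chaos tail. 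Getting the interplay of the three scales — $D_2$, $D_4$, and $M$ — to land on exactly the stated exponents, and in particular making sure the union bound over $p^2$ coordinates only costs the advertised powers of $\log p$, is the delicate bookkeeping; everything else (decoupling, symmetrization, integrating subexponential tails, Cauchy–Schwarz for moments) is routine. I would organize the write-up so that the chaos inequality is invoked as a black box conditionally on the data, reducing the problem to moment bounds on the two random scales, which is where the hypotheses $\E\|f(\vX_1,\vX_2)\|<\infty$, the definitions of $D_q$ and $M$, and the constraint $p\le e^{bn}$ are used.
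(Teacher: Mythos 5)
Your top-level plan --- randomize to a Rademacher chaos, apply Hanson--Wright conditionally on the data, union-bound over the $p^2$ entries, then integrate the tail and control the random scales in $L^2$/$L^4$ --- is exactly what the paper does. But there are two issues, one of misplaced worry and one genuine gap.

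First, the operator-norm scale you single out as the main obstacle is a non-issue in the paper's argument. The paper just observes that for the $n\times n$ array $\Lambda^{m,k}=(f_{mk}(\vX_i,\vX_j))_{i<j}$ one trivially has $|\Lambda^{m,k}|_2\le|\Lambda^{m,k}|_F$, so the spectral-regime threshold $V_2\log p$ is dominated by the Frobenius-regime threshold $V_1\log p$, and the whole tail is integrated against the \emph{single} random scale $V_1=\max_{m,k}|\Lambda^{m,k}|_F$. No matrix-Bernstein or Lata{\l}a-type bound on $\|A^{mk}\|_2$ is needed; the cost of this crude step is the extra $\sqrt{\log p}$ that shows up in the final exponents.

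Second, and this is the real gap: after the union bound the quantity you must control is $\E\max_{m,k}\sum_{i<j}f_{mk}^2(\vX_i,\vX_j)$, i.e.\ an expectation of a \emph{maximum} over $p^2$ coordinates of a quadratic form in the data, not the per-coordinate moment $\E\sum_{i<j}f_{mk}^2\le n^2D_2^2$ you write down. The two can differ by a large factor precisely because $p$ may be exponentially large in $n$. The paper handles this by performing a \emph{second} Hoeffding decomposition, this time of $f_{mk}^2-\E f_{mk}^2$ into a canonical part $\tilde f^{m,k}$ and a linear part $\tilde f_1^{m,k}$, applying the Rademacher-chaos/Hanson--Wright step once more to the canonical part and the maximal inequality for sums of iid variables (Lemma~9 of \cite{cck2014b}) to the linear part. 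This produces a self-bounding quadratic inequality in $I:=\E\max_{m,k}\sum_{i<j}f_{mk}^2$ of the form $I\lesssim n^2D_2^2+(\log p)\sqrt{I}\,\|M\|_2+n(\log p)^{1/2}\bigl(nD_4^4+(\log p)\E M^4\bigr)^{1/2}$, and the solution of this quadratic is exactly what produces the three terms with the exponents $3/2$, $1$, $5/4$ on $\log p/n$. Without this recursive step there is no principled way to pass from a single-coordinate bound to a bound on the maximum, and the stated exponents will not appear; this is where your proposal would stall.
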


Note that Theorem \ref{thm:expectation-bound} is non-asymptotic. As immediate consequences of Theorem \ref{thm:expectation-bound}, we can derive the rate of convergence of $\E \|V\|$ with kernels under the subexponential and uniform polynomial moment conditions.

\begin{cor}[Kernel with subexponential moment]
\label{cor:expectation-subexponential-kernel}
Let $B_n$ be a sequence of positive reals and $f$ be a symmetric and canonical kernel. Suppose that 
\begin{equation}
\label{eqn:subexponential-moment}
\max_{1 \le m,k \le p}  \E \left[ \exp(|f_{mk}| / B_n) \right] \le 2
\end{equation}
and $2 \le p \le  \exp(b n)$ for some absolute constant $b>0$. Then, there exists a constant $C(b)>0$ such that
\begin{equation}
\label{eqn:expectation-subexponential-kernel}
\E \|V\| \le C(b) B_n  \{  n^{-3/2} (\log{p})^{3/2} \log(np)   + n^{-1} \log{p} \}.
\end{equation}
\end{cor}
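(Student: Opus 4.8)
The plan is to derive Corollary \ref{cor:expectation-subexponential-kernel} directly from the non-asymptotic bound \eqref{eqn:expectation-bound-canonical} in Theorem \ref{thm:expectation-bound}, so the entire task reduces to estimating the three kernel-dependent quantities $\|M\|_4$, $D_2$, and $D_4$ under the single sub-exponential hypothesis \eqref{eqn:subexponential-moment}. First I would note that the moment condition $\E[\exp(|f_{mk}|/B_n)] \le 2$ controls all polynomial moments: expanding the exponential (or using the standard equivalence between the Orlicz-$\psi_1$ norm and the growth of $L^q$ norms), one gets $(\E|f_{mk}|^q)^{1/q} \le C q B_n$ for every $q \ge 1$ and an absolute constant $C$. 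Applying this with $q=2$ and $q=4$ yields $D_2 \lesssim B_n$ and $D_4 \lesssim B_n$.

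The only nontrivial piece is bounding $\|M\|_4 = (\E M^4)^{1/4}$, where $M = \max_{1\le i<j\le n}(\|f(\vX_i,\vX_j)\| \vee \|f(\vX_i,\vX'_j)\|)$ is a maximum over roughly $n^2$ entrywise maxima of the matrix-valued kernel, hence over at most $2 \binom{n}{2} p^2 \le n^2 p^2$ scalar random variables, each satisfying the sub-exponential tail bound $\Prob(|f_{mk}| > t) \le 2 e^{-t/B_n}$. The standard maximal inequality for sub-exponential variables gives $\|M\|_4 \lesssim B_n \log(n^2 p^2) \asymp B_n \log(np)$; more carefully, one can bound $\E M^4$ by splitting at a threshold $\sim B_n \log(n^2p^2)$ and integrating the tail, the contribution above the threshold being negligible because $p \le e^{bn}$ ensures $\log(np) \lesssim (1\vee b) n$ so the union-bound losses are absorbed. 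This is where the factor $\log(np)$ and the dependence on $b$ in $C(b)$ enter.

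Substituting these three estimates into \eqref{eqn:expectation-bound-canonical}: the first term becomes $(\log p/n)^{3/2}\,\|M\|_4 \lesssim (1\vee b^{1/2})\, B_n\, n^{-3/2}(\log p)^{3/2}\log(np)$; the second term is $(\log p /n)\, D_2 \lesssim (1\vee b^{1/2})\,B_n\, n^{-1}\log p$; and the third term is $(\log p/n)^{5/4} D_4 \lesssim (1\vee b^{1/2})\,B_n\,n^{-5/4}(\log p)^{5/4}$. The third term is dominated by the second since $(\log p/n)^{5/4} = (\log p/n)\cdot(\log p/n)^{1/4}$ and $\log p \le bn$ makes $(\log p/n)^{1/4} \le b^{1/4}$ bounded; it can therefore be folded into the constant $C(b)$ alongside the second term. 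Collecting the two surviving terms and writing $C(b) = K(1\vee b^{1/2})$ for the absolute constant $K$ of Theorem \ref{thm:expectation-bound} gives exactly \eqref{eqn:expectation-subexponential-kernel}. The main obstacle, such as it is, is the clean bookkeeping of the $\log(np)$ factor and the $b$-dependence in the maximal inequality for $\|M\|_4$; everything else is a routine substitution.
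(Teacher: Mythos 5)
Your proposal is correct and takes essentially the same route as the paper: bound $D_2,D_4,\|M\|_4$ from the $\psi_1$ condition and substitute into Theorem \ref{thm:expectation-bound}. (The paper delegates the three bounds to Lemma \ref{lem:moment-bounds-subexponential-kernel}, which uses Pisier's inequality for $\|M\|_4$; your union-bound/threshold computation is an equivalent route.) One small correction of attribution: the $b$-dependence in $C(b)$ does not actually come from the maximal inequality for $\|M\|_4$ — that estimate is $\|M\|_4 \lesssim B_n \log(np)$ with an absolute constant and needs no use of $p \le e^{bn}$. The $b$-dependence enters only through the $(1\vee b^{1/2})$ prefactor already present in Theorem \ref{thm:expectation-bound} and, as you correctly observe, through absorbing the third term $(\log p/n)^{5/4}D_4 \lesssim b^{1/4} B_n (\log p)/n$ into the second.
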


\begin{cor}[Kernel with uniform polynomial moment]
\label{cor:expectation-polynomial-kernel}
Let $q \ge 4$ and $B_n, B'_n$ be two sequences of positive reals. Let $f$ be a symmetric and canonical kernel. Suppose that
\begin{equation}
\label{eqn:uniform-polynomial-moment}
\E (\|f\| / B_n)^q + \max_{1 \le m,k \le p} \E (|f_{mk}| / B'_n)^4 \le 1
\end{equation}
and $2 \le p \le  \exp(b n)$ for some absolute constant $b>0$. Then, there exists a constant $C(b)>0$ such that
\begin{equation}
\label{eqn:expectation-polynomial-kernel}
\E \|V\| \le C(b) \{ B_n n^{-1} (\log{p})^{3/2} + B'_n n^{-1} \log{p}  \}.
\end{equation}
\end{cor}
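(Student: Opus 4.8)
The plan is to deduce this corollary directly from Theorem~\ref{thm:expectation-bound} by translating the uniform polynomial moment assumption (\ref{eqn:uniform-polynomial-moment}) into bounds on the three quantities $\|M\|_4$, $D_2$, $D_4$ appearing on the right-hand side of (\ref{eqn:expectation-bound-canonical}). The easy bounds come first: (\ref{eqn:uniform-polynomial-moment}) gives $\max_{m,k}\E(|f_{mk}|/B'_n)^4\le 1$, hence $D_4\le B'_n$, and by Lyapunov's inequality (applied entrywise, then maximizing over $m,k$) $D_2\le D_4\le B'_n$. It also gives $\E(\|f(\vX_1,\vX_2)\|/B_n)^q\le 1$, i.e. $\|\,\|f(\vX_1,\vX_2)\|\,\|_q\le B_n$; since for $i\neq j$ the vectors $\vX_i$ and $\vX'_j$ are independent with law $F$, the same $q$-th moment bound holds for each $\|f(\vX_i,\vX'_j)\|$.

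The one step requiring genuine care is the estimate of $\|M\|_4$, since $M$ is a maximum of $N:=2\binom{n}{2}<n^2$ nonnegative random variables $Y_\ell$, each satisfying $\|Y_\ell\|_q\le B_n$ with $q$ possibly as small as $4$. I would use the elementary inequality $\max_\ell Y_\ell^4\le\bigl(\sum_\ell Y_\ell^q\bigr)^{4/q}$ (valid because $q\ge 4$ and $Y_\ell\ge 0$), then Jensen's inequality for the concave map $x\mapsto x^{4/q}$, to get $\E M^4\le\bigl(\E\sum_\ell Y_\ell^q\bigr)^{4/q}\le (NB_n^q)^{4/q}$, so $\|M\|_4\le N^{1/q}B_n\le n^{2/q}B_n$. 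The point is that the resulting factor $n^{2/q}$ will be dominated by the $n^{-3/2}$ already attached to the $\|M\|_4$ term in (\ref{eqn:expectation-bound-canonical}).

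Finally I would substitute these three bounds into (\ref{eqn:expectation-bound-canonical}) and simplify each of the three terms. The middle term is $D_2\log p/n\le B'_n n^{-1}\log p$, already in the desired form. For the first term, $(\log p/n)^{3/2}\|M\|_4\le(\log p)^{3/2}n^{2/q-3/2}B_n\le B_n n^{-1}(\log p)^{3/2}$, using that $q\ge 4$ forces $2/q-3/2\le -1$. For the last term, using $p\le\exp(bn)$ so that $(\log p)^{1/4}\le (bn)^{1/4}$, one gets $(\log p/n)^{5/4}D_4=(\log p)^{1/4}(\log p)\,n^{-5/4}B'_n\le b^{1/4}B'_n n^{-1}\log p$, which is absorbed into the constant. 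Combining the three estimates and folding $1\vee b^{1/2}$ and $b^{1/4}$ into a single constant $C(b)$ yields (\ref{eqn:expectation-polynomial-kernel}). Apart from the $\|M\|_4$ bound, this is essentially a bookkeeping exercise once Theorem~\ref{thm:expectation-bound} is available.
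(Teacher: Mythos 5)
Your proposal is correct and follows essentially the same route as the paper: substitute bounds for $D_2$, $D_4$, $\|M\|_4$ into (\ref{eqn:expectation-bound-canonical}) and clean up using $q\ge 4$ and $\log p\le bn$. The only cosmetic difference is that you derive $\|M\|_4\le n^{2/q}B_n$ directly via the elementary $\max_\ell Y_\ell^4\le(\sum_\ell Y_\ell^q)^{4/q}$ plus Jensen, whereas the paper obtains the same bound (up to a factor $2$) by citing Lemma~\ref{lem:moment-bounds-unifpoly-kernel}, which in turn invokes Lemma~2.2.2 of van der Vaart and Wellner---the content is identical.
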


\begin{rmk}[Comparison of Theorem \ref{thm:expectation-bound} with sums of iid random matrices]
\label{rmk:data-splitting-reduction}
For the $U$-statistic taking values in a Banach space $B$ (here, we consider $B=\mathbb{R}^{p \times p}$), its expected norm can also be bounded by the expected norm of sums of iid random matrices. Assume that $\E \|f(\vX_1,\vX_2)\| < \infty$ and let $m = [n/2]$ be the largest integer no greater than $n/2$. As noted in \cite{hoeffding1963}, we can write
\begin{equation}
\label{eqn:hoeffding_average}
m(V-\E V) = {1 \over n!} \sum_{\text{all } \pi_n} \sum_{i=1}^{m} [f(\vX_{\pi_n(2i-1)},\vX_{\pi_n(2i)}) - \E f],
\end{equation}
where the summation $\sum_{\text{all } \pi_n}$ is taken over all possible permutations $\pi_n : \{1,\cdots,n\} \to \{1,\cdots,n\}$. By Jensen's inequality and the iid assumption of $\vX_i$, we have
\begin{equation}
\label{eqn:reducing-Ustat-to-iid-sum}
\E \|V - \E V\| \le {1 \over m} \E \Big\|\sum_{i=1}^{m} [f(\vX_i,\vX_{i+m}) - \E f ] \Big\|,
\end{equation}
which can be viewed as a data splitting method into two halves. Under condition (\ref{eqn:subexponential-moment}),  it follows from Bernstein's inequality \cite[Proposition 5.16]{vershynin2010a} that
\begin{equation}
\label{eqn:reducing-Ustat-to-iid-sum-bernstein-bound}
\E \|V - \E V\| \le K_1 B_n ( \sqrt{\log(p)/n} + \log(p)/n ).
\end{equation}
So if $\log{p} \le b n^{1-\varepsilon}$ for some $\varepsilon \in (0,1)$, then $\E \| V - \E V \| \le C(b) B_n (\log(p)/n)^{1/2}$. There are three advantages of using the U-statistics approach in Theorem \ref{thm:expectation-bound} over the data splitting method into iid summands (\ref{eqn:reducing-Ustat-to-iid-sum}) and (\ref{eqn:reducing-Ustat-to-iid-sum-bernstein-bound}). For the canonical kernel, $\E V = 0$.

First, we can obtain from (\ref{eqn:expectation-subexponential-kernel}) that
$$
\E\|V\| \le C(b) B_n \{ n^{1-5\varepsilon/2} + n^{-3/2\varepsilon} (\log{n}) + n^{-\varepsilon} \}.
$$
Therefore, sharper rate is obtained by (\ref{eqn:expectation-subexponential-kernel}) when $\varepsilon \in (1/2,1)$ which covers the regime of valid Gaussian approximation and bootstrap. Under the scaling limit for the Gaussian approximation validity, i.e. $B_n^a \log^7(np) / n \le C n^{-K_2}$ for some $K_2 \in (0,1)$, where $a=2$ for the subexponential moment kernel and $a=4$ for the uniform polynomial moment kernel, it is easy to see that $\log{p} \le \log(np) \le C n^{(1-K_2)/7}$ so we can take $\varepsilon = (6+K_2)/7$.

Second and more importantly, the rate of convergence obtained by the Bernstein bound (\ref{eqn:reducing-Ustat-to-iid-sum-bernstein-bound}) does not lead to a convergence rate for the Gaussian approximation and the bootstrap method. Following the argument of proving Theorem \ref{thm:GA_master_thm}, one can only show with (\ref{eqn:reducing-Ustat-to-iid-sum-bernstein-bound}) that for any $\gamma\in(0,1)$ and $\ell_n=\log(np/\gamma) \ge 1$
$$
\rho(\bar{T}_0, \bar{Z}_0) \lesssim n^{-1/8} \ell_n^{7/8} (\tilde{D}_3^3+\tilde{D}_4^2)^{1/4} + \ell_n^{1/2} B_n^{1/2} + n^{-1/2} \ell_n^{3/2} u(\gamma) + \gamma,
$$
where the second term on the right-hand side does not converge to zero and $u(\gamma)$ is defined in the proof of Theorem \ref{thm:GA_master_thm}. The reason is that, although (\ref{eqn:reducing-Ustat-to-iid-sum-bernstein-bound}) is rate-exact for {\it non-degenerate} U-statistics, where the dependence of the rate in (\ref{eqn:reducing-Ustat-to-iid-sum-bernstein-bound}) on the sample size is $O(B_n n^{-1/2})$, it is not strong enough to control the size of the nonlinear correction term $\E\|\sqrt{n} V\|$ when $p \to \infty$ (recall that $W=\sqrt{n} V$). On the contrary, our bound in Theorem \ref{thm:expectation-bound} exploits the degeneracy structure of $V$ and the dependence of the rate in (\ref{eqn:expectation-bound-canonical}) on the sample size is $O(B_n n^{-1} + \|M\| n^{-3/2})$. Therefore, Theorem \ref{thm:expectation-bound} is more mathematically appealing in the degenerate case.

Third, the reduction to sums of iid random matrices in (\ref{eqn:reducing-Ustat-to-iid-sum}) does not give tight asymptotic distributions in order to make inference on the non-degenerate U-statistics. To illustrate this point, we consider the case $p=1$ and let $X_i$ be iid mean zero random variables with variance $\sigma^2$. Let $\zeta_1^2 = \Var(g(X_1))$ and $\zeta_2^2 = \Var(h(X_1, X_2))$. Assume that $\zeta_1^2 > 0$. So $\zeta_1^2$ is the variance of the leading projection term used in the Gaussian approximation and by Jensen's inequality $\zeta_1^2 \le \zeta_2^2$. Note that $\sqrt{n}(U-\E U) \stackrel{D}{\to} N(0, 4 \zeta_1^2)$ \cite[Theorem A, page 192]{serfling1980} and by the CLT $\sqrt{2/m} \sum_{i=1}^m [f(\vX_i,\vX_{i+m}) - \E f] \stackrel{D}{\to} N(0, 2 \zeta_2^2)$. Since in general $\zeta_2^2 \neq 2 \zeta_1^2$, the limiting distribution of the U-statistic is not the same as that in the data splitting method in view of (\ref{eqn:hoeffding_average}). For example, consider the non-degenerate covariance kernel $h(x_1,x_2)=(x_1-x_2)^2/2$ w.r.t. $F$ and $U = {n \choose 2}^{-1} \sum_{1 \le i<j \le n} h(X_i, X_j)$. Denote $\mu_4 = \E X_1^4$ and $g(x_1)=(x_1^2-\sigma^2)/2$. Then, $\zeta_2^2 = (\mu_4 + \sigma^4)/2$ and $\zeta_1^2 = (\mu_4-\sigma^4) / 4$ so that $\zeta_2^2 > 2 \zeta_1^2$ when $\sigma^2 > 0$. In particular, if $X_i$ are iid $N(0,\sigma^2)$, then $\mu_4=3\sigma^4$, $4 \zeta_1^2 = 2\sigma^4$, and $2 \zeta_2^2= 4\sigma^4$. Therefore, even though (\ref{eqn:reducing-Ustat-to-iid-sum-bernstein-bound}) gives better rate in the non-degenerate case, the reduction by splitting the data into the iid summands is not optimal for the Gaussian approximation purpose, which is the main motivation of this paper. In fact, $\zeta_2^2$ serves no purpose in the limiting distribution of $\sqrt{n}(U - \E U)$.
\qed
\end{rmk}

\subsection{Proof of results in Section \ref{sec:gaussian-approx}}
\label{subsec:proof-of-GA}

Let $\beta > 0$ and for $Z \in \mathbb{R}^{p \times p}$
$$F_\beta(Z) = \beta^{-1} \log(\sum_{m,k=1}^p \exp(\beta Z_{mk}))$$
be the smooth-max function for approximating $\bar{Z}_0=\max_{1 \le m,k \le p} Z_{mk}$. Denote $\pi_{mk}(Z) = \partial_{mk} F_\beta(Z)$ as the first-order partial derivative w.r.t. $Z_{mk}$ and ${\cal C}_b^3(\mathbb{R})$ the space of all bounded functions that are three times continuously differentiable on $\mathbb{R}$ with $\sup_{t \in \mathbb{R}} |\partial^q f(t)| < +\infty$ for $q = 0,1,2,3$. Let $\lambda_0 : \mathbb{R} \to [0,1]$ be such that: (i) $\lambda_0 \in {\cal C}_b^3(\mathbb{R})$ and $\sup_{s\in\mathbb{R}} |\partial^q \lambda_0(s)| \le K_0$ for some absolute constant $K_0>0$ and for $q = 0,1,2,3$; (ii) $\lambda_0(s)=1$ if $s\le0$ and $\lambda_0(s) = 0$ if $s \ge 1$. Let $e_\beta = 2 \beta^{-1} \log{p}$ and for any $t \in \mathbb{R}$, let $\lambda(s) = \lambda_0(\psi (s-t-e_\beta))$ for $\psi > 0$. Then $\sup_{s\in\mathbb{R}} |\partial^q \lambda(s)| \le K_0 \psi^q$ for $q = 0,1,2,3$. Here, $\beta$ and $\psi$ are smoothing parameters for approximating the max and indicator functions, respectively. In particular, we have
\begin{eqnarray}
\label{eqn:smoothing_max}
\bar{Z}_0 &\le& F_\beta(Z) \le \bar{Z}_0 + e_\beta \qquad \text{for all } Z \in \mathbb{R}^{p \times p}, \\
\label{eqn:smoothing_indicator}
\vone(s \le t + e_\beta) &\le& \lambda(s) \le \vone(s \le t+e_\beta+\psi^{-1}) \qquad \text{for all } s,t \in \mathbb{R}.
\end{eqnarray}
Define
\begin{equation*}
\tilde{D}_q = \max_{1 \le m,k \le p} (\E|g_{mk}|^q)^{1/q}, \quad D_q = \max_{1 \le m,k \le p} (\E|h_{mk}|^q)^{1/q}.
\end{equation*}
Clearly $\tilde{D}_q \le D_q$ for $q\ge1$. Let
$$u_x(\gamma) = \inf\{u\ge0 : \Prob(g_{mk}^2(\vX_i) \le u^2 \E g_{mk}^2 \text{ for all } m,k,i) \ge 1- \gamma\}$$
and $u_z(\gamma)$ be similarly defined with $g_{mk}(\vX_i)$ replaced by $Z_{i,mk}$, where $Z_i$ follow iid $N(\vzero, \Gamma_g)$. Put $u(\gamma)=u_x(\gamma) \vee u_z(\gamma)$. To prove Theorem \ref{thm:gaussian-approximation-nondegenarate-U-exp-tail} and \ref{thm:gaussian-approximation-nondegenarate-U-uniform-poly-tail}, we first need a smoothing lemma.

\begin{lem}
\label{lem:gaussian-approximation-nondegenarate-U-smooth}
Let $u>0, \gamma\in(0,1)$. Assume that $c_0 \le \tilde{D}_2 \le C_0$, $\sqrt{8} u \tilde{D}_2 \beta / \sqrt{n} \le 1$ and $u \ge u(\gamma)$. Then we have
\begin{equation}
\label{eqn:gaussian-approximation-nondegenarate-U-smooth}
\rho(\bar{T}_0,\bar{Z}_0) \le C_1 [ (\psi \E\|W\| + \Delta_v) + (e_\beta + \psi^{-1}) \sqrt{1 \vee \log(p \psi)} ],
\end{equation}
where
\begin{equation}
\label{eqn:bound_Delta_v}
\Delta_v = n^{-1/2} (\psi^3 + \psi^2 \beta + \psi \beta^2) \tilde{D}_3^3 +  (\psi^2 + \psi \beta) \bar{\varphi}(u) + \psi \bar{\varphi}(u) \sqrt{\log(p/\gamma)} + \gamma
\end{equation}
and $\bar{\varphi}(u) = C_2 u^{-1} \tilde{D}_4^2$. Here, $C_1, C_2>0$ are constants only depending on $c_0$ and $C_0$.
\end{lem}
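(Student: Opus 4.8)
The plan is to run the Chernozhukov--Chetverikov--Kato smooth-max comparison, inserting one extra step that strips off the canonical remainder $W$ before the Gaussian comparison. Fix $t\in\mathbb{R}$ and write $m(\cdot)=\lambda(F_\beta(\cdot))$, a bounded and three-times continuously differentiable functional on $\mathbb{R}^{p\times p}$. The sandwich bounds (\ref{eqn:smoothing_max})--(\ref{eqn:smoothing_indicator}) give $\vone(\bar{T}_0\le t)\le m(T)$ and $m(Z)\le\vone(\bar{Z}_0\le t+e_\beta+\psi^{-1})$, so $\Prob(\bar{T}_0\le t)\le\E m(T)=\E m(L+W)$. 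Because the gradient $(\pi_{mk})$ of $F_\beta$ is a probability vector, $F_\beta$ is $1$-Lipschitz in the norm $\|\cdot\|$; together with $\sup_s|\lambda'(s)|\le K_0\psi$ this gives the pointwise bound $|m(L+W)-m(L)|\le K_0\psi\|W\|$, whence $\E m(L+W)\le\E m(L)+K_0\psi\,\E\|W\|$. This produces the $\psi\,\E\|W\|$ term, which is kept as is in this lemma, the bound on $\E\|W\|$ being supplied separately by Theorem \ref{thm:expectation-bound} and its corollaries. The matching lower bound for $\Prob(\bar{T}_0\le t)$ is obtained symmetrically, re-centering $\lambda$ so that $\vone(\bar{T}_0\le t)\ge m(T)$ and $m(Z)\ge\vone(\bar{Z}_0\le t-e_\beta-\psi^{-1})$.

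Next I would compare $\E m(L)$ with $\E m(Z)$, both being smooth functionals of sums of $n$ iid centered $\mathbb{R}^{p\times p}$-valued summands with the common covariance $\Gamma_g$, namely $n^{-1/2}g(\vX_i)$ and $n^{-1/2}Z_i$. This is a \emph{Lindeberg} (Slepian) telescoping argument: replace $g(\vX_i)$ by $Z_i$ one index at a time and Taylor-expand $\E m$ to third order, the zeroth, first and second order terms cancelling because the first two moments match. The third-order term is controlled by the chain-rule derivative estimates $|\partial m|\lesssim\psi$, $|\partial^2 m|\lesssim\psi^2+\psi\beta$, $|\partial^3 m|\lesssim\psi^3+\psi^2\beta+\psi\beta^2$, which follow from $\sup_s|\lambda^{(q)}(s)|\le K_0\psi^q$ and the standard bounds $|\partial^2 F_\beta|\lesssim\beta$, $|\partial^3 F_\beta|\lesssim\beta^2$. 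To legitimize the third-order expansion under the stated moment hypotheses alone, I would first truncate $g_{mk}(\vX_i)$ at level $u\sqrt{\E g_{mk}^2}$: since $u\ge u(\gamma)$ the truncation is inactive off an event of probability at most $\gamma$, the discarded part has $L^1$-norm $O(\bar{\varphi}(u))=O(u^{-1}\tilde{D}_4^2)$ by a fourth-moment Markov bound, and the constraint $\sqrt{8}\,u\tilde{D}_2\beta/\sqrt{n}\le1$ ensures that $\beta$ times a truncated summand is $O(1)$, so the Taylor remainders are genuinely of third order. Collecting the third-moment term $n^{-1/2}(\psi^3+\psi^2\beta+\psi\beta^2)\tilde{D}_3^3$, the two truncation contributions $(\psi^2+\psi\beta)\bar{\varphi}(u)$ and $\psi\bar{\varphi}(u)\sqrt{\log(p/\gamma)}$ (the latter picking up a high-probability bound on the partial sums), and the exceptional probability $\gamma$, one gets $|\E m(L)-\E m(Z)|\lesssim\Delta_v$. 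This step is essentially a matrix-indexed version of the Gaussian comparison lemma of \cite{cck2013}, which I would either cite directly or reprove along these lines.

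Finally, to undo the smoothing shift I would invoke Gaussian anti-concentration for maxima (Nazarov's inequality) applied to $\bar{Z}_0$: since the entrywise variances of the $Z_i$ are bounded above and bounded away from zero, $\sup_t[\Prob(\bar{Z}_0\le t+\varepsilon)-\Prob(\bar{Z}_0\le t)]\lesssim\varepsilon\sqrt{1\vee\log(p\psi)}$ with $\varepsilon=e_\beta+\psi^{-1}$. Chaining the three estimates --- the $W$-perturbation $\psi\,\E\|W\|$, the Gaussian comparison $\Delta_v$, and the anti-concentration $(e_\beta+\psi^{-1})\sqrt{1\vee\log(p\psi)}$ --- in both directions and taking the supremum over $t$ yields the claimed bound on $\rho(\bar{T}_0,\bar{Z}_0)$, with $C_1,C_2$ absorbing the absolute constants and the dependence on $c_0,C_0$.

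I expect the main obstacle to be the truncated Lindeberg step: keeping the chain-rule derivative bounds for $\lambda\circ F_\beta$ sharp enough to reproduce precisely the displayed powers of $\psi$ and $\beta$, and routing the fourth-moment quantity $\tilde{D}_4$ and the cutoff $u(\gamma)$ through the truncation error so that the single parameter constraint $\sqrt{8}\,u\tilde{D}_2\beta/\sqrt{n}\le1$ suffices to bound all the remainder terms at once. By contrast, the removal of $W$ via the Lipschitz estimate and the Gaussian anti-concentration step are comparatively routine.
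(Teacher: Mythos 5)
Your proposal follows the same route as the paper: strip off the canonical remainder $W$ via the Lipschitz/derivative bound on $\lambda\circ F_\beta$ (the paper phrases this through the mean value theorem and $\sum_{m,k}\pi_{mk}=1$, which is identical content to your $1$-Lipschitz claim for $F_\beta$), then compare $\E[\lambda(F_\beta(L))]$ with $\E[\lambda(F_\beta(Z))]$ by the truncated Lindeberg argument of Chernozhukov--Chetverikov--Kato, and close with Gaussian anti-concentration. The only cosmetic difference is that the paper simply cites Theorem 2.1 and Step 1 of Theorem 2.2 in \cite{cck2013} for the $\Delta_v$ bound rather than re-deriving it, and invokes \cite[Lemma 2.1]{cck2013} for anti-concentration rather than naming Nazarov's inequality; these are the same estimates.
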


\begin{thm}[Rate of convergence for Gaussian approximation]
\label{thm:GA_master_thm}
Let $\gamma \in (0,1)$ and $\ell_n = \log(pn/\gamma) \ge 1$. Assume $c_0 \le \tilde{D}_2 \le C_0$ and $2 \le p \le \exp(b n)$ for some absolute constants $c_0,C_0,b > 0$. Then, there exists a constant $C > 0$ depending only on $c_0,C_0,$ and $b$ such that
\begin{eqnarray}
\nonumber
\rho(\bar{T}_0, \bar{Z}_0) &\le& C \Big\{ n^{-1/8} \ell_n^{7/8} (\tilde{D}_3^3 + \tilde{D}_4^2)^{1/4} + n^{-1/2} \ell_n \|M\|_4^{1/2} + n^{-1/4} \ell_n^{3/4} D_2^{1/2} \\
\label{eqn:GA_master_thm}
&& \qquad + n^{-3/8} \ell_n^{7/8} D_4^{1/2} + n^{-1/2} \ell_n^{3/2} u(\gamma) + \gamma \Big\}.
\end{eqnarray}
\end{thm}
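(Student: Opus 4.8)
\emph{The plan.} I would substitute the canonical‑$U$‑statistic moment bound of Theorem~\ref{thm:expectation-bound} into the smoothing estimate of Lemma~\ref{lem:gaussian-approximation-nondegenarate-U-smooth} and then optimize over the three free parameters $\psi$ (indicator smoothing), $\beta$ (max smoothing) and $u$ (truncation level). We may assume the right‑hand side of \eqref{eqn:GA_master_thm} is at most $1$, since otherwise there is nothing to prove; this in particular lets us assume $\psi\ge\ell_n^{-1/2}$ throughout. The standing hypothesis $c_0\le\tilde D_2\le C_0$ is exactly what Lemma~\ref{lem:gaussian-approximation-nondegenarate-U-smooth} requires, and that lemma applies as soon as $\sqrt8\,u\tilde D_2\beta/\sqrt n\le1$ and $u\ge u(\gamma)$.

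First I would bound $\E\|W\|$. Since $W=\tfrac{\sqrt n}{2}V$ with $V={n\choose2}^{-1}\sum_{i<j}f(\vX_i,\vX_j)$ the $U$-statistic of the symmetric canonical kernel $f$ of \eqref{eqn:hoeffding-decomp-order-two}, Theorem~\ref{thm:expectation-bound} applies (as $2\le p\le\exp(bn)$) and, together with $\|f_{mk}\|_q\le\|h_{mk}\|_q+2\|g_{mk}\|_q+|\E h_{mk}|\le4D_q$ (conditional Jensen) and $\log p\le\ell_n$, gives
\[
\psi\,\E\|W\|\ \lesssim\ \psi\ell_n^{3/2}n^{-1}\|M\|_4+\psi\ell_n n^{-1/2}D_2+\psi\ell_n^{5/4}n^{-3/4}D_4 ,
\]
where $M$ is the envelope of $f$ from Theorem~\ref{thm:expectation-bound} and the absolute factor $1\vee b^{1/2}$ is absorbed into the constant.

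Then I would set $\beta=\min\{2\psi\log p,\ \sqrt n/(\sqrt8\,\tilde D_2 u(\gamma))\}$ and $u=\sqrt n/(\sqrt8\,\tilde D_2\beta)\ge u(\gamma)$, so both constraints of Lemma~\ref{lem:gaussian-approximation-nondegenarate-U-smooth} hold. With this choice $e_\beta=2\beta^{-1}\log p\le\psi^{-1}+\sqrt8\,C_0\,u(\gamma)\log p\,n^{-1/2}$, so the $(e_\beta+\psi^{-1})\sqrt{1\vee\log(p\psi)}$ term is $\lesssim\psi^{-1}\sqrt{\ell_n}+n^{-1/2}\ell_n^{3/2}u(\gamma)$, and since $\beta\le2\psi\log p$ in both regimes one gets $\Delta_v\lesssim n^{-1/2}\psi^3\ell_n^2(\tilde D_3^3+\tilde D_4^2)+\gamma$. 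Combining, Lemma~\ref{lem:gaussian-approximation-nondegenarate-U-smooth} reduces to
\[
\rho(\bar T_0,\bar Z_0)\ \lesssim\ a_3\psi^3+(a_1+a_1'+a_1'')\psi+\ell_n^{1/2}\psi^{-1}+n^{-1/2}\ell_n^{3/2}u(\gamma)+\gamma ,
\]
with $a_3=n^{-1/2}\ell_n^2(\tilde D_3^3+\tilde D_4^2)$, $a_1=n^{-1}\ell_n^{3/2}\|M\|_4$, $a_1'=n^{-1/2}\ell_n D_2$, $a_1''=n^{-3/4}\ell_n^{5/4}D_4$. Taking $\psi=(\ell_n^{1/2}/a_3)^{1/4}\wedge(\ell_n^{1/2}/a_1)^{1/2}\wedge(\ell_n^{1/2}/a_1')^{1/2}\wedge(\ell_n^{1/2}/a_1'')^{1/2}$ makes $a_3\psi^3$, each linear term and $\ell_n^{1/2}\psi^{-1}$ all bounded by $(a_3\ell_n^{3/2})^{1/4}+(a_1\ell_n^{1/2})^{1/2}+(a_1'\ell_n^{1/2})^{1/2}+(a_1''\ell_n^{1/2})^{1/2}$, which equals the first four terms of \eqref{eqn:GA_master_thm}; one also checks $\psi\lesssim n^{1/8}$ (using $\tilde D_3^3+\tilde D_4^2\ge c_0^3+c_0^2$ from $\tilde D_3,\tilde D_4\ge\tilde D_2\ge c_0$), so $\log(p\psi)\lesssim\ell_n$.

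The hard part is precisely this coupled, constrained optimization over $(\psi,\beta,u)$: $u$ must exceed $u(\gamma)$ to keep $\bar\varphi(u)$ finite, $\beta$ must be of order at least $\psi\log p$ so that $e_\beta\lesssim\psi^{-1}$, and the constraint $\sqrt8\,u\tilde D_2\beta\le\sqrt n$ pins the product $u\beta$, so the three cannot all be made favorable at once. Isolating the regime where $u(\gamma)$ is so large that $\beta\asymp\psi\log p$ becomes inadmissible — which is exactly what produces the $n^{-1/2}\ell_n^{3/2}u(\gamma)$ contribution — and carrying the $\log(pn/\gamma)$ powers carefully enough that the leading term lands at $n^{-1/8}\ell_n^{7/8}(\tilde D_3^3+\tilde D_4^2)^{1/4}$ rather than a larger log power, is the crux; everything else is routine bookkeeping on top of Lemma~\ref{lem:gaussian-approximation-nondegenarate-U-smooth} and Theorem~\ref{thm:expectation-bound}.
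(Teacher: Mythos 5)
Your argument is correct and rests on exactly the same two ingredients the paper uses---the smoothing estimate of Lemma~\ref{lem:gaussian-approximation-nondegenarate-U-smooth} and the canonical-moment bound of Theorem~\ref{thm:expectation-bound}---so in substance it is the paper's proof. The only difference is cosmetic but welcome: you optimize directly over $\psi$ (with $\beta=\min\{2\psi\log p,\ \sqrt n/(\sqrt8\,\tilde D_2 u(\gamma))\}$ and $u$ derived from the active constraint), while the paper fixes $u=\max\{u_0,\dots,u_4,u(\gamma)\}$ and then defines $\beta(u)$ and truncated $\psi_i(u)$, which forces a two-case analysis around $u^*$; your ``WLOG RHS $\le 1$, hence $\psi\ge\ell_n^{-1/2}$'' reduction absorbs the term $\psi\bar\varphi(u)\ell_n^{1/2}$ into $a_3\psi^3$ and so eliminates that case split, reproducing the paper's $u_0,\dots,u_4$ (up to combining $u_0\vee u_1$ into $(\tilde D_3^3+\tilde D_4^2)^{1/4}$) without the truncated $\psi_i$ machinery.
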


\begin{proof}[Proof sketch of Theorem \ref{thm:GA_master_thm}]
The proof is based on a delicate combination of Lemma \ref{lem:gaussian-approximation-nondegenarate-U-smooth} and Theorem \ref{thm:expectation-bound}. Since the proof of Theorem \ref{thm:GA_master_thm} is quite involved, here we only explain the main idea and give a sketch of the proof. All proof details can be found in the SM.

{\bf Main idea.} Lemma \ref{lem:gaussian-approximation-nondegenarate-U-smooth} gives a general rate of convergence for the Gaussian approximation with some unspecified smoothing parameters $\beta$ and $\psi$. The error bound in Lemma \ref{lem:gaussian-approximation-nondegenarate-U-smooth} involves three parts: (i) one from approximating the linear projection $\Delta_v$, (ii) one from the second-order canonical remainder $W$, and (iii) one from the smoothing errors of the max and indicator functions. Recall that $\beta$ is the degree of smoothing for the max function (\ref{eqn:smoothing_max}) and $\psi$ controls the approximation of the indicator function (\ref{eqn:smoothing_indicator}). For larger $\beta>0$ (or $\psi>0$), $F_\beta(Z)$ (or $\lambda(s)$) is closer to the non-smooth function $\max_{m,k} Z_{mk}$ (or $\vone(s \le t + e_\beta)$). Specifically, for larger $\beta$ and $\psi$, (iii) contributes less, while (i) and (ii) contribute more, to the error bound for the Gaussian approximation (\ref{eqn:gaussian-approximation-nondegenarate-U-smooth}). Hence, the key step is to optimize the Gaussian approximation error bound (\ref{eqn:gaussian-approximation-nondegenarate-U-smooth}) on the smoothing parameters $\beta$ and $\psi$ to find the best trade-off of the three parts. 

{\bf Step 1. Choose the smoothing parameters.} Note that both $\beta$ and $\psi$ depend on the truncation parameter $u$ in Lemma \ref{lem:gaussian-approximation-nondegenarate-U-smooth}. Therefore, the optimization problem eventually boils down to choose a proper threshold $u$. Once $u$ is chosen, then we shall first have a natural choice of $\beta$ to make the constraint of Lemma \ref{lem:gaussian-approximation-nondegenarate-U-smooth} on $\sqrt{8} u \tilde{D}_2 \beta / \sqrt{n} \le 1$ active in order to apply (\ref{eqn:gaussian-approximation-nondegenarate-U-smooth}); i.e $\beta = C n^{-1/2} u^{-1}$. So $\beta$ is a strictly decreasing function in $u$ and we need to choose a non-decreasing $\psi(u)$ to counter-balance the smoothing errors. Motivated from the proof of \cite[Theorem 2.2]{cck2013} in which only the linear part of $\bar{T}_0$ was dealt with, here we need to choose a larger $u$ because of the extra nonlinear term $W$. Since in the linear case of \cite[Theorem 2.2]{cck2013} $u = \max\{ u_0, u_1, u(\gamma) \}$ where $u_0=n^{3/8} \ell_n^{-5/8} \tilde{D}_4^{1/2}$ and $u_1=n^{3/8} \ell_n^{-5/8} \tilde{D}_3^{3/4}$, it is intuitive to choose $u=\max\{ u_0, u_1, u_2, u_3, u_4, u(\gamma) \}$ in our case such that $u_i$ takes the form $n^{\varepsilon_{i1}} \ell_n^{-\varepsilon_{i2}} T_i^{\varepsilon_{i3}}$, where $T_i =\|M\|_4, D_2, D_4$ for $i=2,3,4$ and the corresponding indicator smoothing parameters as
\begin{equation*}
\psi_i(u) = \min \{ n^{1/2-\varepsilon_{i1}} \ell_n^{\varepsilon_{i2}-1} T_i^{-\varepsilon_{i3}}, \quad \ell_n^{-1/6} (\bar\varphi(u))^{-1/3} \},
\end{equation*}
where $\bar\varphi(u)$ is defined in Lemma \ref{lem:gaussian-approximation-nondegenarate-U-smooth}. For $i=2,3,4$, we require different $\{\varepsilon_{ij}\}_{j =1,2,3}$, to balance the error bound (\ref{eqn:gaussian-approximation-nondegenarate-U-smooth}). Let $u_i^*$ be the solution for balancing the two components in $\psi_i(u)$. Then, $\psi_i(u)$ is strictly increasing when $u \in (0,u_i^*)$ and it is truncated to a constant level for $u \ge u_i^*$. 

{\bf Step 2. Calculate the error bound for the chosen parameters.} Now, we invoke Theorem \ref{thm:expectation-bound} to quantify the contributions of $\|M\|_4, D_2$, and $D_4$ to $\E\|W\|$. Combining (\ref{eqn:expectation-bound-canonical}) and (\ref{eqn:gaussian-approximation-nondegenarate-U-smooth}), it will be shown (after some algebraic manipulations on the two cases $0 < u < u^*$ and $u \ge u^*$, where $u^*=\max_{1 \le i \le 4} \{u_i^*\}$) that the optimal choice of $\{\varepsilon_{ij}\}_{j =1,2,3}$ in order to achieve the overall error bound
\begin{equation}
\label{eqn:desired-bound-general-GAR}
\rho(\bar{T}_0, \bar{Z}_0) \lesssim n^{-1/2} \ell_n^{3/2} u + \gamma
\end{equation}
is given by $u_2=\ell_n^{-1/2} \|M\|_4^{1/2}$, $u_3=n^{1/4} \ell_n^{-3/4} D_2^{1/2}$, and $u_4=n^{1/8} \ell_n^{-5/8} D_4^{1/2}$. Then, the explicit rate of convergence (\ref{eqn:GA_master_thm}) is immediate by substituting the choice of $u$ into (\ref{eqn:desired-bound-general-GAR}). 
\end{proof}

In the following proofs of Theorem \ref{thm:gaussian-approximation-nondegenarate-U-exp-tail}, \ref{thm:gaussian-approximation-nondegenarate-U-uniform-poly-tail} and \ref{thm:gaussian_wild_bootstrap_validity}, the constants of $\lesssim$ depend only on $C_1$ and $C_2$ in (GA.1) and (GA.2) in the sub-exponential kernel case and (GA.1') and (GA.2') in the uniform polynomial kernel case.

\begin{proof}[Proof of Theorem \ref{thm:gaussian-approximation-nondegenarate-U-exp-tail}]
Choose $\gamma = n^{-K_1}$ for some $K_1 \ge 1/8$. Let $\ell_n=\log(p n^{1+K_1})$. By Theorem \ref{thm:GA_master_thm} and Lemma \ref{lem:moment-bounds-subexponential-kernel}, we have
\begin{eqnarray*}
\rho(\bar{T}_0, \bar{Z}_0) &\lesssim& n^{-1/8} \ell_n^{7/8} B_n^{1/4} + n^{-1/2} \ell_n B_n^{1/2} \ell_n^{1/2} + n^{-1/4} \ell_n^{3/4} \\
&& \qquad + n^{-3/8} \ell_n^{7/8} B_n^{1/4} + n^{-1/2} \ell_n^{3/2} B_n \ell_n^2 + \gamma \\
&\lesssim& n^{-1/8} \ell_n^{7/8} B_n^{1/4} + n^{-1/2} \ell_n^{7/2} B_n + n^{-K_1}.
\end{eqnarray*}
By (\ref{eqn:subexponential-kernel-scaling-limit}), we have $\rho(\bar{T}_0, \bar{Z}_0) \lesssim n^{-\min(K_1, K/8)}$. Since $K \in (0,1)$, (\ref{eqn:gaussian-approximation-nondegenarate-U-exp-tail}) follows.
\end{proof}

\begin{proof}[Proof of Theorem \ref{thm:gaussian-approximation-nondegenarate-U-uniform-poly-tail}]
Choose $\gamma = n^{-K_1}$ for some $K_1 \in(0, K)$. Let $\ell_n=\log(p n^{1+K_1})$. By Theorem \ref{thm:GA_master_thm} and Lemma \ref{lem:moment-bounds-unifpoly-kernel} with $q=4$, we have
\begin{eqnarray*}
\rho(\bar{T}_0, \bar{Z}_0) &\lesssim& n^{-1/8} \ell_n^{7/8} B_n^{1/4} + n^{-1/2} \ell_n B_n^{1/2} n^{1/4} + n^{-1/4} \ell_n^{3/4} \\
&& \qquad + n^{-3/8} \ell_n^{7/8} B_n^{1/4} + n^{-1/2} \ell_n^{3/2} B_n n^{1+K_1 \over 4} + \gamma \\
&\lesssim& n^{-1/8} \ell_n^{7/8} B_n^{1/4} + \ell_n^{3/2} B_n n^{-{1-K_1 \over 4}} + n^{-K_1} \\
&\lesssim& n^{-\min({K \over 8}, {K-K_1 \over 4}, K_1)}.
\end{eqnarray*}
Choose $K_1 \in [K/8, K/2]$. Then, it follows that $\rho(\bar{T}_0, \bar{Z}_0) \lesssim n^{-K/8}$.
\end{proof}

\subsection{Proof of results in Section \ref{subsec:gassuain_wild_bootstrap}}
\label{subsec:proof-of-bootstrap}

\begin{proof}[Proof of Theorem \ref{thm:gaussian_wild_bootstrap_validity}] 

Let $\rho_\ominus(\alpha) = \Prob(\{\bar{T}_0 \le a_{\hat{L}_0^*}(\alpha) \} \ominus \{ \bar{L}_0 \le a_{\bar{Z}_0}(\alpha) \} )$, where $a_{\bar{Z}_0}(\alpha) = \inf\{t\in\mathbb{R} : \Prob(\bar{Z}_0 \le t) \ge \alpha\}$ is the $\alpha$-th quantile of $\bar{Z}_0$ and $A \ominus B = (A \setminus B) \cup (B \setminus A)$ is the symmetric difference of two subsets $A$ and $B$. Let $\ell_n = \log(np) \ge 1$. We first deal with the sub-exponential kernel moment condition. Assume (GA.1) and (GA.2). The proof contains two steps. \\

\underline{\bf \it Step 1.} Relate the bootstrap approximation to the Gaussian approximation. \\

Note that for all $\alpha \in (0,1)$
\begin{eqnarray*}
| \Prob(\bar{T}_0 \le a_{\hat{L}_0^*}(\alpha)) - \alpha | &\le& \rho(\bar{L}_0,\bar{Z}_0) + | \Prob(\bar{T}_0 \le a_{\hat{L}_0^*}(\alpha)) - \Prob(\bar{L}_0 \le a_{\bar{Z}_0}(\alpha)) | \\
&\le& \rho(\bar{L}_0,\bar{Z}_0) + \rho_\ominus(\alpha).
\end{eqnarray*}
Under conditions (GA.1) and (GA.2), following the proof of Theorem \ref{thm:GA_master_thm} and Theorem \ref{thm:gaussian-approximation-nondegenarate-U-exp-tail}, we have $\rho(\bar{L}_0,\bar{Z}_0) \lesssim n^{-K/8}$. Let $\Delta_1$ be defined in (\ref{eqn:gaussian_wild_bootstrap_Delta1}). By Lemma \ref{lem:bound-on-rho_ominus}, we can bound $\rho_\ominus(\alpha)$ as
\begin{equation}
\label{eqn:rho_ominus_bound}
\rho_\ominus(\alpha) \le 2 \left[ \rho(\bar{L}_0, \bar{Z}_0) + C v^{1/3} (\log{p})^{2/3}+ \Prob(\Delta_1 > v) \right] + C' \zeta_1 (\log{p})^{1/2} + 5 \zeta_2,
\end{equation}
provided that
\begin{eqnarray}
\label{eqn:quantile_cond_1}
\Prob(|\bar{T}_0 - \bar{L}_0| > \zeta_1) < \zeta_2, \hspace{0.6in} (\text{effect of } W), \\
\label{eqn:quantile_cond_2}
\Prob(\Prob_e(|\hat{L}_0^* - \bar{L}_0^*| > \zeta_1) > \zeta_2) < \zeta_2, \hspace{0.6in} (\text{effect of } \Delta_2),
\end{eqnarray}
where (\ref{eqn:quantile_cond_1}) is due to the nonlinear remainder of the U-statistics decomposition and (\ref{eqn:quantile_cond_2}) is due to the estimation error of $g(\vX_i)$. Choose $\zeta_1 = C n^{-K_1} \ell_n^{-1/2}$ and $\zeta_2 = C n^{-K_2}$ for some $K_1,K_2>0$ whose values are to be determined in Step 2. Then, $\zeta_1 \sqrt{\log{p}} \le C n^{-K_1}$. Choose $v=\ell_n^{-1/2} (\E \Delta_1)^{3/4}$. By Lemma \ref{lem:gaussian_wild_bootstrap_Delta1_exp_moment}, $v \lesssim \ell_n^{-1/2} \varpi_1(n,p)^{3/4}$, where $\varpi_1(n,p)$ is defined in (\ref{eqn:varpi_1}). By Markov's inequality, 
$$\Prob(\Delta_1 > v) \le \E(\Delta_1) / v = \ell_n^{1/2} (\E \Delta_1)^{1/4} \lesssim \ell_n^{1/2} \varpi_1(n,p)^{1/4},$$
where under (GA.2) $\varpi_1(n,p) \le n^{-1/2} \ell_n^{1/2} B_n + n^{-1} \ell_n^3 B_n^2 \lesssim n^{-1/2} \ell_n^{1/2} B_n$. Therefore, we have
$$
v \lesssim \ell_n^{-1/2} (n^{-1/2} \ell_n^{1/2} B_n)^{3/4} \lesssim \ell_n^{-2} n^{-3K/8}.
$$
So it follows that $(v^{1/2} \log{p})^{2/3} \lesssim n^{-K/8}$
and
$$
\Prob(\Delta_1 > v) \lesssim \ell_n^{1/2} (n^{-1/2} \ell_n^{1/2} B_n)^{1/4} \lesssim n^{-K/8}.
$$
Substituting those bounds into (\ref{eqn:rho_ominus_bound}), we get
\begin{equation}
\label{eqn:rho_ominus_bound_simplified}
\rho_\ominus(\alpha) \lesssim n^{-K/8} + n^{-K_1} + n^{-K_2} \lesssim n^{-\min\{K/8, \; K_1, \; K_2\}}.
\end{equation}

\vspace{0.1in}

\underline{\bf \it Step 2.} Verify the constraints (\ref{eqn:quantile_cond_1}) and (\ref{eqn:quantile_cond_2}).  \\

Since max is a 1-Lipschitz function and by Markov's inequality and Lemma \ref{lem:gaussian_wild_bootstrap_W_exp_moment}, we have $\Prob(|\bar{T}_0 - \bar{L}_0| > \zeta_1) \le \E \|W\| / \zeta_1 \le C \varpi_3(n,p) \zeta_1^{-1}$, where $\varpi_3(n,p)$ is defined in (\ref{eqn:bound_W_exp_moment}). Then, (\ref{eqn:quantile_cond_1}) is fulfilled whenever $C \zeta_1 \zeta_2 \ge \varpi_3(n,p)$ for some constant $C>0$. Next, we deal with (\ref{eqn:quantile_cond_2}). Note that
$$
|\hat{L}_0^* - \bar{L}_0^*| \le {1\over\sqrt{n}} \max_{1 \le m,k \le p} \Big| \sum_{i=1}^n [\hat{g}_{i,mk} - g_{mk}(\vX_i)] e_i \Big|.
$$
By the argument leading to (\ref{eqn:V_1}), conditional on $\vX_1^n$ and ${\vX'}_1^n$, we have
\begin{eqnarray*}
\E_e |\hat{L}_0^* - \bar{L}_0^*| \le {K_3 \over\sqrt{n}} (\log{p})^{1/2} \max_{1 \le m,k \le p} \Big\{ \sum_{i=1}^n [\hat{g}_{i,mk} - g_{mk}(\vX_i)]^2 \Big\}^{1/2},
\end{eqnarray*}
from which it follows that
$$
\E |\hat{L}_0^* - \bar{L}_0^*| \le K_3 (\log{p})^{1/2} \E (\Delta_2^{1/2})
$$
and $\Delta_2$ is defined in (\ref{eqn:gaussian_wild_bootstrap_Delta2}). By Markov's inequality (also conditional on $\vX_1^n$ and ${\vX'}_1^n$), $\Prob_e(|\hat{L}_0^* - \bar{L}_0^*| > \zeta_1) \le \zeta_1^{-1} \E_e |\hat{L}_0^* - \bar{L}_0^*|$ so that
\begin{eqnarray*}
\Prob(\Prob_e(|\hat{L}_0^* - \bar{L}_0^*| > \zeta_1) > \zeta_2) &\le& \Prob(\E_e |\hat{L}_0^* - \bar{L}_0^*| > \zeta_1 \zeta_2) \\
&\le& {\E |\hat{L}_0^* - \bar{L}_0^*| \over \zeta_1 \zeta_2} \le {K_3 (\log{p})^{1/2} \E (\Delta_2^{1/2}) \over \zeta_1 \zeta_2}.
\end{eqnarray*}
By Lemma \ref{lem:gaussian_wild_bootstrap_Delta2_exp_moment}, (\ref{eqn:quantile_cond_2}) is fulfilled when $\varpi_2(n,p) \le C \zeta_1 \zeta_2^2 / \sqrt{\log{p}}$. Recall that we need to check
\begin{eqnarray}
\label{eqn:quantile_cond_1_explict}
\varpi_3(n,p) &\le& C \zeta_1 \zeta_2 = C {n^{-(K_1+K_2)} \over \sqrt{\log(np)}}, \\
\label{eqn:quantile_cond_2_explicit}
\varpi_2(n,p) &\le& C {\zeta_1 \zeta_2^2 \over \sqrt{\log{p}}} = C {n^{-(K_1+2K_2)} \over \sqrt{\log{p}} \sqrt{\log(np)}}.
\end{eqnarray}
By Lemma \ref{lem:gaussian_wild_bootstrap_Delta2_exp_moment} and \ref{lem:gaussian_wild_bootstrap_W_exp_moment}
\begin{eqnarray*}
\varpi_3(n,p) &\le& n^{-1} (\log(np))^{5/2} B_n + n^{-1/2} (\log{p}) + n^{-3/4} (\log(np))^{5/4} B_n^{1/2} , \\
\varpi_2(n,p) &\le& n^{-1/2} (\log(np))^{3/2} B_n + n^{-1} (\log(np))^2 B_n.
\end{eqnarray*}
So sufficient conditions for (\ref{eqn:quantile_cond_1_explict}) and (\ref{eqn:quantile_cond_2_explicit}) are given by
\begin{eqnarray}
\label{eqn:quantile_cond_1_explict_suffcond}
n^{-1} \ell_n^3 B_n + n^{-1/2} \ell_n^{3/2} + n^{-3/4} \ell_n^{7/4} B_n^{1/2} &\lesssim& n^{-(K_1+K_2)}, \\
\label{eqn:quantile_cond_2_explicit_suffcond}
n^{-1/2} \ell_n^{5/2} B_n + n^{-1} \ell_n^3 B_n &\lesssim& n^{-(K_1+2K_2)}.
\end{eqnarray}
Under (GA.2), since the LHS of (\ref{eqn:quantile_cond_1_explict_suffcond}) is bounded by $n^{-K} + n^{-K/2} + n^{-3K/4} \le 3 n^{-K/2}$ and the LHS of (\ref{eqn:quantile_cond_2_explicit_suffcond}) is bounded by $2 n^{-K/2}$, we deduce that $K_1$ and $K_2$ must satisfy the constraint $K_1 + 2 K_2 \le K/2$. Take $K_1=K_2=K/6$. Then we obtain from (\ref{eqn:rho_ominus_bound_simplified}) that
$$
| \Prob(\bar{T}_0 \le a_{\hat{L}_0^*}(\alpha)) - \alpha |  \lesssim n^{-\min\{K/8, \; K/6\}} = n^{-K/8}.
$$

Similar argument applies to the kernel with uniform polynomial moment, so we only sketch the proof in this case. Assume (GA.1') and (GA.2'). \\

\underline{\bf \it Step 1'.} We shall use the same $v = \ell_n^{-1/2} (\E \Delta_1)^{3/4}$ as in the sub-exponential kernel case. Under (GA.1') and (GA.2'), by Lemma \ref{lem:gaussian_wild_bootstrap_Delta1_unifpoly_moment} with $q = 4$, we have $\varpi_1(n,p) \lesssim n^{-1/2} \ell_n B_n^2$ and $v \lesssim \ell_n^{-1/2} (n^{-1/2} \ell_n B_n^2)^{3/4} = (n^{-1} \ell_n^{2/3} B_n^4)^{3/8}$. Therefore, $(v^{1/2} \log{p})^{2/3} \lesssim (n^{-1} \ell_n^6 B_n^4)^{1/8} \lesssim n^{-K/8}$ and
$$
\Prob(\Delta_1 > v) \lesssim \ell_n^{1/2} (n^{-1/2} \ell_n B_n^2)^{1/4} \lesssim n^{-K/8}.
$$
Then, by (\ref{eqn:rho_ominus_bound}), we get $\rho_\ominus(\alpha) \lesssim n^{-\min\{K/8, \; K_1, \; K_2\}}.$

\vspace{0.1in}

\underline{\bf \it Step 2'.} By Lemma \ref{lem:gaussian_wild_bootstrap_Delta2_unifpoly_moment} and \ref{lem:gaussian_wild_bootstrap_W_unifpoly_moment} with $q=4$, we must verify the constraints
\begin{eqnarray*}
n^{-1/2} \ell_n^2 B_n + n^{-1/2} \ell_n^{3/2} + n^{-3/4} \ell_n^{7/4} B_n^{1/2} &\lesssim& n^{-(K_1+K_2)}, \\
n^{-1/4} \ell_n^{3/2} B_n + n^{-1/2} \ell_n^2 B_n &\lesssim& n^{-(K_1+2K_2)}.
\end{eqnarray*}
Under (GA.2'), a sufficient condition for the last two inequalities to hold is $K_1 + 2 K_2 \le K/4$. Then, we can take $K_1=K_2=K/12$ to get $\rho_\ominus(\alpha) \lesssim n^{-K/12}$ and thus (\ref{eqn:gaussian_wild_bootstrap_validity_unifpoly}).
\end{proof}

\subsection{Proof of results in Section \ref{sec:stat_apps}}
\label{subsec:proof-of-applications}

\begin{proof}[Proof of Theorem \ref{thm:thresholded_cov_mat_rate_adaptive}]
Let $\beta \in (0,1)$ and $\tau_\diamond = \beta^{-1} \|\hat{S}-\Sigma\|$. By the subgaussian assumption and Lemma \ref{lem:moment-bounds-gaussian-obs}, it is easy to verify that there is a large enough constant $C>0$ depending only on $C_2,C_3,C_4$ such that
\begin{equation}
\label{eqn:check_GA1}
\max_{\ell=0,1,2} \E[|h_{mk}|^{2+\ell} / (C \nu^{2\ell})] \vee \E[\exp(|h_{mk}| / \nu^2)] \le 2,
\end{equation}
where $h$ is the covariance matrix kernel in (\ref{eqn:sample-covmat-kernel}). Since $\{\Gamma_g\}_{(j,k),(j,k)} \ge C_1$ for all $j,k=1,\cdots,p$ and $\nu^4 \log^7(np) \le C_5 n^{1-K}$, by Theorem \ref{thm:gaussian_wild_bootstrap_validity}, we have $ \|\hat{S}-\Sigma\| \le a_{\hat{L}_0^*}(1-\alpha)$ with probability at least $1-\alpha-Cn^{-K/8}$, where $C > 0$ is a constant depending only on $C_i,i=1,\cdots,5$. Therefore, $\Prob(\tau_\diamond \le \tau_*) \ge 1- \alpha -Cn^{-K/8}$ and the rest of the proof is restricted to the event $\{\tau_\diamond \le \tau_*\}$. By the decomposition
\begin{eqnarray*}
\rho(\hat\Sigma(\tau_*)-\Sigma) &\le& \rho(\hat\Sigma(\tau_*)-T_{\tau_*}(\Sigma)) + \rho(T_{\tau_*}(\Sigma)-\Sigma) \\
&\le& I + II + III + \tau_*^{1-r} \zeta_p,
\end{eqnarray*}
where $T_\tau(\Sigma) = \{\sigma_{mk} \vone\{|\sigma_{mk}| > \tau\}\}_{m,k=1}^p$ is the resulting matrix of the thresholding operator on $\Sigma$ and
\begin{eqnarray*}
I &=& \max_m \sum_k |\hat{s}_{mk}| \vone\{|\hat{s}_{mk}| > \tau_*, |\sigma_{mk}| \le \tau_*\}, \\
II &=& \max_m \sum_k |\sigma_{mk}| \vone\{|\hat{s}_{mk}| \le \tau_*, |\sigma_{mk}| > \tau_*\}, \\
III &=& \max_m \sum_k |\hat{s}_{mk}-\sigma_{mk}| \vone\{|\hat{s}_{mk}| > \tau_*, |\sigma_{mk}| > \tau_*\}.
\end{eqnarray*}
Note that on the event $\{\tau_\diamond \le \tau_*\}$, $\max_{m,k} |\hat{s}_{mk}-\sigma_{mk}| \le \beta \tau_*$. Since $\Sigma \in {\cal G}(r, C_0, \zeta_p)$, we can bound
$$
III \le (\beta \tau_*) (\tau_*^{-r} \zeta_p) = \beta \tau_*^{1-r} \zeta_p.
$$
By triangle inequality,
\begin{eqnarray*}
II &\le& \max_m \sum_k |\hat{s}_{mk}-\sigma_{mk}| \vone\{|\sigma_{mk}| > \tau_*\} + \max_m \sum_k |\hat{s}_{mk}| \vone\{|\hat{s}_{mk}| \le \tau_*, |\sigma_{mk}| > \tau_*\} \\
&\le& (\beta \tau_*) (\tau_*^{-r} \zeta_p) + \tau_* (\tau_*^{-r} \zeta_p) = (1+\beta) \tau_*^{1-r} \zeta_p.
\end{eqnarray*}
Let $\eta \in (0,1)$. We have $I \le IV + V + VI$, where
\begin{eqnarray*}
IV &=& \max_m \sum_k |\sigma_{mk}| \vone\{|\hat{s}_{mk}| > \tau_*, |\sigma_{mk}| \le \tau_*\}, \\
V &=&  \max_m \sum_k |\hat{s}_{mk}-\sigma_{mk}| \vone\{|\hat{s}_{mk}| > \tau_*,  |\sigma_{mk}| \le \eta \tau_*\},  \\
VI &=& \max_m \sum_k |\hat{s}_{mk}-\sigma_{mk}| \vone\{|\hat{s}_{mk}| > \tau_*,  \eta \tau_* < |\sigma_{mk}| \le \tau_*\} .
\end{eqnarray*}
Clearly, $IV \le \tau_*^{1-r} \zeta_p$. On the indicator event of $V$, we observe that 
$$
\beta \tau_* \ge |\hat{s}_{mk}-\sigma_{mk}| \ge |\hat{s}_{mk}|-|\sigma_{mk}| > (1-\eta) \tau_*.
$$
Therefore, $V=0$ if $\eta + \beta \le 1$. For $VI$, we have
$$
VI \le (\beta\tau_*) (\eta \tau_*)^{-r} \zeta_p.
$$
Collecting all terms, we conclude that
$$
\rho(\hat\Sigma(\tau_*)-\Sigma) \le (3+2\beta+\eta^{-r} \beta) \zeta_p \tau_*^{1-r} + V.  
$$
Then (\ref{eqn:thresholded_cov_mat_rate_spectral_adaptive}) follows from the choice $\eta=1-\beta$. The Frobenius norm rate (\ref{eqn:thresholded_cov_mat_rate_F_adaptive}) can be established similarly. Details are omitted.

Next, we prove (\ref{eqn:tau_*-bound_subgaussian}). Let $\Phi(\cdot)$ denote the cdf of the standard Gaussian random variable. By the union bound, we have for all $t > 0$
$$
\Prob_e \Big({2 \over \sqrt{n}} \Big\| \sum_{i=1}^n \hat{g}_i e_i \Big\| \ge t \Big) \le 2 p^2 \Big[1-\Phi \Big( {t \over \bar\xi} \Big) \Big],
$$
where $\bar\xi = \max_{1 \le m,k \le p} \xi_{mk}$ and $\xi_{mk}^2 = 4 n^{-1} \sum_{i=1}^n \hat{g}_{i,mk}^2$. Let $\tilde\tau = n^{-1/2} \beta^{-1} \bar\xi \Phi^{-1}(1-\alpha/(2p^2))$; then $\tau_* \le \tilde\tau$. Since $\Phi^{-1}(1-\alpha/(2p^2)) \asymp (\log{p})^{1/2}$, we have $\E[\tau_*] \le C' \beta^{-1} \E[\bar\xi] (\log(p)/n)^{1/2}$, where $C' > 0$ is a constant only depending on $\alpha$. Now, we bound $\E[\bar\xi]$. Let $\Delta = n^{-1} \max_{m,k} \sum_{i=1}^n [\hat{g}_{i,mk}-g_{mk}(\vX_i)]^2$. Then,
$$
\bar\xi^2 \le {8\over n} \max_{m,k} \sum_{i=1}^n g_{mk}^2(\vX_i) + 8 \Delta.
$$
By Lemma \ref{lem:gaussian_wild_bootstrap_Delta2_exp_moment} and recall that $\nu^4 \log^7(np) \le C_5 n^{1-K}$,
$$
\E[\Delta^{1/2}] \le C n^{-1/2}  (\log(pn))^{3/2}  \nu^2,
$$
where $C > 0$ is constant depending only on $C_i,i=1,\cdots,5$. By \cite[Lemma 9]{cck2014b} and Pisier's inequality \cite[Lemma 2.2.2]{vandervaartwellner1996}, we have
\begin{eqnarray*}
\E[\max_{m,k} \sum_{i=1}^n g_{mk}^2(\vX_i)] &\le& K_1 \Big\{ \max_{m,k} \E[\sum_{i=1}^n g_{mk}^2(\vX_i)] + (\log{p}) \E[\max_{m,k}\max_{i \le n} g_{mk}^2(\vX_i)] \Big\} \\
&\le& C \Big\{ n + (\log(np))^3 \nu^4 \Big\}.
\end{eqnarray*}
By Jensen's inequality, we get
$$
\E[\bar\xi] \le C \Big\{ 1 + (\nu^4 \log^3(np) / n)^{1/2} \Big\} \le C.
$$
Then, we conclude that $\E[\tau_*] \le C(\alpha,C_1,\cdots,C_5) \beta^{-1} (\log(p)/n)^{1/2}$.
\end{proof}

\begin{proof}[Proof of Theorem \ref{thm:thresholded_cov_mat_rate_adaptive_polymom}]
The proof is similar to that of Theorem \ref{thm:thresholded_cov_mat_rate_adaptive} and we only sketch the differences. By the assumptions and Lemma \ref{lem:moment-bounds-unifpoly-obs}, we have
$$
\max_{\ell=0,1,2} \E[|h_{mk}|^{2+\ell} / (C \nu^{2\ell})] \vee \E [\|h\| / (2\nu^2)]^4 \le 1.
$$
By Theorem \ref{thm:gaussian_wild_bootstrap_validity}, we have $ \|\hat{S}-\Sigma\| \le a_{\hat{L}_0^*}(1-\alpha)$ with probability at least $1-\alpha-Cn^{-K/12}$, where $C > 0$ is constant depending only on $C_i, i =1,\cdots,5$. So (\ref{eqn:thresholded_cov_mat_rate_spectral_adaptive}) and (\ref{eqn:thresholded_cov_mat_rate_F_adaptive}) follow. Note that
\begin{eqnarray*}
\E[\max_{m,k} \sum_{i=1}^n g_{mk}^2(\vX_i)] &\le& K_1 \Big\{ \max_{m,k} \E[\sum_{i=1}^n g_{mk}^2(\vX_i)] + (\log{p}) \E[\max_{m,k}\max_{i \le n} g_{mk}^2(\vX_i)] \Big\} \\
&\le& C \Big\{ n + (\log{p}) n^{1/2} \nu^4 \Big\}.
\end{eqnarray*}
Then, under the assumption that $\nu^8 \log^7(np) \le C_5 n^{1-K}$, it follows from Lemma \ref{lem:gaussian_wild_bootstrap_Delta2_unifpoly_moment} that
$$
\E[\bar\xi] \le C \Big\{ 1 + {\nu^2 \log^{1/2}(p) \over n^{1/4}} + {\nu^2 \log^{1/2}(np) \over n^{1/4}} + {\nu^2 \log(np) \over n^{1/2}} \Big\} \le C.
$$
Therefore, we get $\E[\tau_*] \le C(\alpha, C_1,\cdots,C_5) \beta^{-1} (\log(p)/n)^{1/2}$.
\end{proof}

\section*{Acknowledgments}
The author would like to thank two anonymous referees, an Associate Editor, and the Co-Editor Tailen Hsing for their many constructive comments that lead to the significant improvements of this paper. The author is also grateful to Stephen Portnoy (UIUC), Xiaofeng Shao (UIUC), and Wei Biao Wu (University of Chicago) for their helpful discussions.

\begin{supplement}[id=suppA]
  \sname{Supplemental Materials to}
  \stitle{``Gaussian approximation for the sup-norm of high-dimensional matrix-variate U-statistics and its applications"}
  \slink[doi]{}
  \sdescription{This supplemental file contains the additional proofs, technical lemmas, and simulation results.}
\end{supplement}

\bibliographystyle{imsart-number}
\bibliography{max-norm-cov}

\begin{thebibliography}{59}

\bibitem{adamczak2006}
\begin{barticle}[author]
\bauthor{\bsnm{Adamczak},~\bfnm{Rados{\l}aw}\binits{R.}}
(\byear{2006}).
\btitle{Moment inequalites for U-statistics}.
\bjournal{Annals of Probability}
\bvolume{34}
\bpages{2288-2314}.
\end{barticle}
\endbibitem

\bibitem{arconesgine1993}
\begin{barticle}[author]
\bauthor{\bsnm{Arcones},~\bfnm{Miguel}\binits{M.}} \AND
  \bauthor{\bsnm{Gin\'e},~\bfnm{Evarist}\binits{E.}}
(\byear{1993}).
\btitle{Limit theorems for U-processes}.
\bjournal{Annals of Probability}
\bvolume{21}
\bpages{1494-1542}.
\end{barticle}
\endbibitem

\bibitem{bentkus2003}
\begin{barticle}[author]
\bauthor{\bsnm{Bentkus},~\bfnm{V}\binits{V.}}
(\byear{2003}).
\btitle{On the dependence of the Berry-Esseen bound on dimension}.
\bjournal{Journal of Statistical Planning and Inference}
\bvolume{113}
\bpages{385-402}.
\end{barticle}
\endbibitem

\bibitem{bentkusgotzevanzwet1997}
\begin{barticle}[author]
\bauthor{\bsnm{Bentkus},~\bfnm{V}\binits{V.}},
  \bauthor{\bsnm{G\"otze},~\bfnm{F.}\binits{F.}} \AND \bauthor{\bparticle{van}
  \bsnm{Zwet},~\bfnm{W.~R.}\binits{W.~R.}}
(\byear{1997}).
\btitle{An edgeworth expansion for symmetric statistics}.
\bjournal{Annals of Statistics}
\bvolume{25}
\bpages{851-896}.
\end{barticle}
\endbibitem

\bibitem{bickelgotzevanzwet1986}
\begin{barticle}[author]
\bauthor{\bsnm{Bickel},~\bfnm{Peter~J.}\binits{P.~J.}},
  \bauthor{\bsnm{G\"otze},~\bfnm{F.}\binits{F.}} \AND \bauthor{\bparticle{van}
  \bsnm{Zwet},~\bfnm{W.~R.}\binits{W.~R.}}
(\byear{1986}).
\btitle{The Edgeworth expansion for U-statistics of degree two}.
\bjournal{Annals of Statistics}
\bvolume{14}
\bpages{1463-1484}.
\end{barticle}
\endbibitem

\bibitem{bickellevina2008a}
\begin{barticle}[author]
\bauthor{\bsnm{Bickel},~\bfnm{Peter~J.}\binits{P.~J.}} \AND
  \bauthor{\bsnm{Levina},~\bfnm{Elizaveta}\binits{E.}}
(\byear{2008}).
\btitle{{Covariance Regularization by Thresholding}}.
\bjournal{Annals of Statistics}
\bvolume{36}
\bpages{2577-2604}.
\end{barticle}
\endbibitem

\bibitem{bickellevina2008b}
\begin{barticle}[author]
\bauthor{\bsnm{Bickel},~\bfnm{Peter~J.}\binits{P.~J.}} \AND
  \bauthor{\bsnm{Levina},~\bfnm{Elizaveta}\binits{E.}}
(\byear{2008}).
\btitle{{Regularized Estimation of Large Covariance Matrices}}.
\bjournal{Annals of Statistics}
\bvolume{36}
\bpages{199-227}.
\end{barticle}
\endbibitem

\bibitem{brodieetal2009}
\begin{barticle}[author]
\bauthor{\bsnm{Brodie},~\bfnm{Joshua}\binits{J.}},
  \bauthor{\bsnm{Daubechies},~\bfnm{Ingrid}\binits{I.}},
  \bauthor{\bsnm{De~Mol},~\bfnm{Christine}\binits{C.}},
  \bauthor{\bsnm{Giannone},~\bfnm{Domenico}\binits{D.}} \AND
  \bauthor{\bsnm{Loris},~\bfnm{Ignace}\binits{I.}}
(\byear{2009}).
\btitle{Sparse and stable Markowitz portfolios}.
\bjournal{Proceedings of the National Academy of Sciences}
\bvolume{106}
\bpages{12267-12272}.
\end{barticle}
\endbibitem

\bibitem{buhlmannvandegeer2011}
\begin{bbook}[author]
\bauthor{\bsnm{B\"uhlmann},~\bfnm{Peter}\binits{P.}} \AND
  \bauthor{\bparticle{van~de} \bsnm{Geer},~\bfnm{Sara}\binits{S.}}
(\byear{2011}).
\btitle{Statistics for High-Dimensional Data: Methods, Theory and
  Applications}.
\bpublisher{Springer Series in Statistics}.
\end{bbook}
\endbibitem

\bibitem{cailiuluo2011a}
\begin{barticle}[author]
\bauthor{\bsnm{Cai},~\bfnm{Tony}\binits{T.}},
  \bauthor{\bsnm{Liu},~\bfnm{Weidong}\binits{W.}} \AND
  \bauthor{\bsnm{Luo},~\bfnm{Xi}\binits{X.}}
(\byear{2011}).
\btitle{{A Constrained $\ell_1$ Minimization Approach to Sparse Precision
  Matrix Estimation}}.
\bjournal{Journal of American Statistical Association}
\bvolume{106}
\bpages{594-607}.
\end{barticle}
\endbibitem

\bibitem{caizhou2011a}
\begin{barticle}[author]
\bauthor{\bsnm{Cai},~\bfnm{Tony}\binits{T.}} \AND
  \bauthor{\bsnm{Zhou},~\bfnm{Harrison}\binits{H.}}
(\byear{2012}).
\btitle{{Optimal Rates of Convergence for Sparse Covariance Matrix
  Estimation}}.
\bjournal{Annals of Statistics}
\bvolume{40}
\bpages{2389-2420}.
\end{barticle}
\endbibitem

\bibitem{changzhouzhouwang2016}
\begin{barticle}[author]
\bauthor{\bsnm{Chang},~\bfnm{Jinyuan}\binits{J.}},
  \bauthor{\bsnm{Zhou},~\bfnm{Wen}\binits{W.}},
  \bauthor{\bsnm{Zhou},~\bfnm{Wen-Xin}\binits{W.-X.}} \AND
  \bauthor{\bsnm{Wang},~\bfnm{Lan}\binits{L.}}
(\byear{2016}).
\btitle{Comparing large covariance matrices under weak conditions on the
  dependence structure and its application to gene clustering}.
\bjournal{Biometrics, to appear}.
\end{barticle}
\endbibitem

\bibitem{chenzhangzhong2010}
\begin{barticle}[author]
\bauthor{\bsnm{Chen},~\bfnm{Song~Xi}\binits{S.~X.}},
  \bauthor{\bsnm{Zhang},~\bfnm{Li-Xin}\binits{L.-X.}} \AND
  \bauthor{\bsnm{Zhong},~\bfnm{Ping-Shou}\binits{P.-S.}}
(\byear{2010}).
\btitle{Test for high-dimensional covariance matrices}.
\bjournal{Journal of American Statistical Association}
\bvolume{105}
\bpages{810-819}.
\end{barticle}
\endbibitem

\bibitem{chenxuwu2013a}
\begin{barticle}[author]
\bauthor{\bsnm{Chen},~\bfnm{Xiaohui}\binits{X.}},
  \bauthor{\bsnm{Xu},~\bfnm{Mengyu}\binits{M.}} \AND
  \bauthor{\bsnm{Wu},~\bfnm{Wei~Biao}\binits{W.~B.}}
(\byear{2013}).
\btitle{{Covariance and precision matrix estimation for high-dimensional time
  series}}.
\bjournal{Annals of Statistics}
\bvolume{41}
\bpages{2994-3021}.
\end{barticle}
\endbibitem

\bibitem{chenxuwu2015+}
\begin{barticle}[author]
\bauthor{\bsnm{Chen},~\bfnm{Xiaohui}\binits{X.}},
  \bauthor{\bsnm{Xu},~\bfnm{Mengyu}\binits{M.}} \AND
  \bauthor{\bsnm{Wu},~\bfnm{Wei~Biao}\binits{W.~B.}}
(\byear{2015}).
\btitle{Regularized estimation of linear functionals for high-dimensional time
  series}.
\bjournal{Preprint, arXiv:1506.03832}.
\end{barticle}
\endbibitem

\bibitem{cck2013}
\begin{barticle}[author]
\bauthor{\bsnm{Chernozhukov},~\bfnm{Victor}\binits{V.}},
  \bauthor{\bsnm{Chetverikov},~\bfnm{Denis}\binits{D.}} \AND
  \bauthor{\bsnm{Kato},~\bfnm{Kengo}\binits{K.}}
(\byear{2013}).
\btitle{Gaussian approximations and multiplier bootstrap for maxima of sums of
  high-dimensional random vectors}.
\bjournal{Annals of Statistics}
\bvolume{41}
\bpages{2786-2819}.
\end{barticle}
\endbibitem

\bibitem{cck2014b}
\begin{barticle}[author]
\bauthor{\bsnm{Chernozhukov},~\bfnm{Victor}\binits{V.}},
  \bauthor{\bsnm{Chetverikov},~\bfnm{Denis}\binits{D.}} \AND
  \bauthor{\bsnm{Kato},~\bfnm{Kengo}\binits{K.}}
(\byear{2014}).
\btitle{Comparison and anti-concentration bounds for maxima of Gaussian random
  vectors}.
\bjournal{arXiv:1301.4807v4}.
\end{barticle}
\endbibitem

\bibitem{delaPenaGine1999}
\begin{bbook}[author]
\bauthor{\bparticle{de~la} \bsnm{Pe\~na},~\bfnm{Victor}\binits{V.}} \AND
  \bauthor{\bsnm{Gin\'e},~\bfnm{Evarist}\binits{E.}}
(\byear{1999}).
\btitle{Decoupling: From Dependence to Independence}.
\bpublisher{Springer}.
\end{bbook}
\endbibitem

\bibitem{delapenamontgomerysmith1995}
\begin{barticle}[author]
\bauthor{\bparticle{de~la} \bsnm{Pe\~na},~\bfnm{Victor}\binits{V.}} \AND
  \bauthor{\bsnm{Montgomery-Smith},~\bfnm{Stephen}\binits{S.}}
(\byear{1995}).
\btitle{Decoupling inequalities for the tail probabilities of multivariate
  U-statistics}.
\bjournal{Annals of Probability}
\bvolume{23}
\bpages{806-816}.
\end{barticle}
\endbibitem

\bibitem{dempster1972}
\begin{barticle}[author]
\bauthor{\bsnm{Dempster},~\bfnm{Arthur~P.}\binits{A.~P.}}
(\byear{1972}).
\btitle{Covariance selection}.
\bjournal{Biometrics}
\bvolume{28}
\bpages{157-175}.
\end{barticle}
\endbibitem

\bibitem{einmahlli2008}
\begin{barticle}[author]
\bauthor{\bsnm{Einmahl},~\bfnm{Uwe}\binits{U.}} \AND
  \bauthor{\bsnm{Li},~\bfnm{Deli}\binits{D.}}
(\byear{2008}).
\btitle{Characterization of LIL behavior in Banach space}.
\bjournal{Trans. Amer. Math. Soc.}
\bvolume{360}
\bpages{6677-6693}.
\end{barticle}
\endbibitem

\bibitem{karoui2008}
\begin{barticle}[author]
\bauthor{\bsnm{El~Karoui},~\bfnm{Noureddine}\binits{N.}}
(\byear{2008}).
\btitle{Operator norm consistent estimation of large dimensional sparse
  covariance matrices}.
\bjournal{Annals of Statistics}
\bvolume{36}
\bpages{2717-2756}.
\end{barticle}
\endbibitem

\bibitem{fanliaomincheva2011}
\begin{barticle}[author]
\bauthor{\bsnm{Fan},~\bfnm{Jianqing}\binits{J.}},
  \bauthor{\bsnm{Liao},~\bfnm{Yuan}\binits{Y.}} \AND
  \bauthor{\bsnm{Mincheva},~\bfnm{Martina}\binits{M.}}
(\byear{2011}).
\btitle{High-dimensional covariance matrix estimation in approximate factor
  models}.
\bjournal{Annals of Statistics}
\bvolume{39}
\bpages{3320-3356}.
\end{barticle}
\endbibitem

\bibitem{ginelatalazinn2000}
\begin{barticle}[author]
\bauthor{\bsnm{Gin\'e},~\bfnm{Evarist}\binits{E.}},
  \bauthor{\bsnm{Lata{\l}a},~\bfnm{Rafa{\l}}\binits{R.}} \AND
  \bauthor{\bsnm{Zinn},~\bfnm{Joel}\binits{J.}}
(\byear{2000}).
\btitle{Exponential and moment inequalities for U-statistics}.
\bjournal{High Dimensional Probability II, Springer}.
\end{barticle}
\endbibitem

\bibitem{gotze1987}
\begin{barticle}[author]
\bauthor{\bsnm{G\"otze},~\bfnm{F.}\binits{F.}}
(\byear{1987}).
\btitle{Approximations for multivaraite U-statistics}.
\bjournal{Journal of Multivariate Analysis}
\bvolume{22}
\bpages{212-229}.
\end{barticle}
\endbibitem

\bibitem{gregory1977}
\begin{barticle}[author]
\bauthor{\bsnm{Gregory},~\bfnm{Gavin~G.}\binits{G.~G.}}
(\byear{1977}).
\btitle{Large sample theory for U-statistics and tests of fit}.
\bjournal{Annals of Statistics}
\bvolume{5}
\bpages{110-123}.
\end{barticle}
\endbibitem

\bibitem{guerci1999a}
\begin{barticle}[author]
\bauthor{\bsnm{Guerci},~\bfnm{Joseph~R.}\binits{J.~R.}}
(\byear{1999}).
\btitle{{Theory and Application of Covariance Matrix Tapers for Robust Adaptive
  Beamforming}}.
\bjournal{IEEE Transactions on Signal Processing}
\bvolume{47}
\bpages{977-985}.
\end{barticle}
\endbibitem

\bibitem{hoeffding1948}
\begin{barticle}[author]
\bauthor{\bsnm{Hoeffding},~\bfnm{Wassily}\binits{W.}}
(\byear{1948}).
\btitle{A class of statistics with asymptotically normal distributions}.
\bjournal{Annals of Mathematical Statistics}
\bvolume{19}
\bpages{293--325}.
\end{barticle}
\endbibitem

\bibitem{hoeffding1963}
\begin{barticle}[author]
\bauthor{\bsnm{Hoeffding},~\bfnm{Wassily}\binits{W.}}
(\byear{1963}).
\btitle{{Probability inequalities for sums of bounded random variables}}.
\bjournal{Journal of the American Statistical Association}
\bvolume{58}
\bpages{13-30}.
\end{barticle}
\endbibitem

\bibitem{houdrepatricia2003}
\begin{barticle}[author]
\bauthor{\bsnm{Houdr\'e},~\bfnm{Christian}\binits{C.}} \AND
  \bauthor{\bsnm{Reynaud-Bouret},~\bfnm{Patricia}\binits{P.}}
(\byear{2003}).
\btitle{Exponential inequalities, with constants, for U-statistics of order
  two}.
\bjournal{Stochastic Inequalities and Applications}
\bvolume{56}
\bpages{55-69}.
\end{barticle}
\endbibitem

\bibitem{hsingwu2004}
\begin{barticle}[author]
\bauthor{\bsnm{Hsing},~\bfnm{Tailen}\binits{T.}} \AND
  \bauthor{\bsnm{Wu},~\bfnm{Wei~Biao}\binits{W.~B.}}
(\byear{2004}).
\btitle{On weighted U-statistics for stationary processes}.
\bjournal{Annals of Probability}
\bvolume{32}
\bpages{1600-1631}.
\end{barticle}
\endbibitem

\bibitem{kleinrio2005}
\begin{barticle}[author]
\bauthor{\bsnm{Klein},~\bfnm{T}\binits{T.}} \AND
  \bauthor{\bsnm{Rio},~\bfnm{E}\binits{E.}}
(\byear{2005}).
\btitle{Concentration around the mean for maxima of empirical processes.}
\bjournal{Annals of Probability}
\bvolume{33}
\bpages{1060--1077}.
\end{barticle}
\endbibitem

\bibitem{lamfan2009a}
\begin{barticle}[author]
\bauthor{\bsnm{Lam},~\bfnm{Clifford}\binits{C.}} \AND
  \bauthor{\bsnm{Fan},~\bfnm{Jianqing}\binits{J.}}
(\byear{2009}).
\btitle{{Sparsistency and Rates of Convergence in Large Covariance Matrix
  Estimation}}.
\bjournal{The Annals of Statistics}
\bvolume{37}
\bpages{4254-4278}.
\end{barticle}
\endbibitem

\bibitem{MR2933663}
\begin{barticle}[author]
\bauthor{\bsnm{Lam},~\bfnm{Clifford}\binits{C.}} \AND
  \bauthor{\bsnm{Yao},~\bfnm{Qiwei}\binits{Q.}}
(\byear{2012}).
\btitle{Factor modeling for high-dimensional time series: inference for the
  number of factors}.
\bjournal{Ann. Statist.}
\bvolume{40}
\bpages{694--726}.
\bdoi{10.1214/12-AOS970}
\bmrnumber{2933663}
\end{barticle}
\endbibitem

\bibitem{latala1997}
\begin{barticle}[author]
\bauthor{\bsnm{Lata{\l}a},~\bfnm{Rafa{\l}}\binits{R.}}
(\byear{1997}).
\btitle{Estimation of moments of sums of independent random variables}.
\bjournal{Annals of Probability}
\bvolume{25}
\bpages{1502-1513}.
\end{barticle}
\endbibitem

\bibitem{lauritzen1996a}
\begin{bbook}[author]
\bauthor{\bsnm{Lauritzen},~\bfnm{Steffen~L.}\binits{S.~L.}}
(\byear{1996}).
\btitle{Graphical Models}.
\bpublisher{Oxford University Press}.
\end{bbook}
\endbibitem

\bibitem{ledouxtalagrand1991}
\begin{bbook}[author]
\bauthor{\bsnm{Ledoux},~\bfnm{Michel}\binits{M.}} \AND
  \bauthor{\bsnm{Talagrand},~\bfnm{Michel}\binits{M.}}
(\byear{1991}).
\btitle{Probability in Banach Spaces: Isoperimetry and Processes}.
\bpublisher{Springer. New York}.
\end{bbook}
\endbibitem

\bibitem{maizouyuan2012a}
\begin{barticle}[author]
\bauthor{\bsnm{Mai},~\bfnm{Qing}\binits{Q.}},
  \bauthor{\bsnm{Zou},~\bfnm{Hui}\binits{H.}} \AND
  \bauthor{\bsnm{Yuan},~\bfnm{Ming}\binits{M.}}
(\byear{2012}).
\btitle{{A direct approach to sparse discriminant analysis in ultra-high
  dimensions}}.
\bjournal{Biometrika}
\bvolume{99}
\bpages{29-42}.
\end{barticle}
\endbibitem

\bibitem{markowitz1952}
\begin{barticle}[author]
\bauthor{\bsnm{Markowitz},~\bfnm{H.~M.}\binits{H.~M.}}
(\byear{1952}).
\btitle{{Portfolio Selection}}.
\bjournal{The Journal of Finance}
\bvolume{7}
\bpages{77-91}.
\end{barticle}
\endbibitem

\bibitem{massart2000}
\begin{barticle}[author]
\bauthor{\bsnm{Massart},~\bfnm{Pascal}\binits{P.}}
(\byear{2000}).
\btitle{About the constants in Talagrand's concentration inequalities for
  empirical processes}.
\bjournal{Annals of Probability}
\bvolume{28}
\bpages{863-884}.
\end{barticle}
\endbibitem

\bibitem{mcmurrypolitis2015}
\begin{barticle}[author]
\bauthor{\bsnm{McMurry},~\bfnm{T.}\binits{T.}} \AND
  \bauthor{\bsnm{Politis},~\bfnm{D}\binits{D.}}
(\byear{2015}).
\btitle{High-dimensional autocovariance matrices and optimal linear
  prediction}.
\bjournal{Electronic Journal of Statistics}
\bvolume{9}
\bpages{753-788}.
\end{barticle}
\endbibitem

\bibitem{meinshausenbuhlmann2006}
\begin{barticle}[author]
\bauthor{\bsnm{Meinshausen},~\bfnm{Nicolai}\binits{N.}} \AND
  \bauthor{\bsnm{B{\"u}hlmann},~\bfnm{Peter}\binits{P.}}
(\byear{2006}).
\btitle{{High-dimensional graphs and variable selection with the Lasso}}.
\bjournal{The Annals of Statistics}
\bvolume{34}
\bpages{1436-1462}.
\end{barticle}
\endbibitem

\bibitem{muirhead1982}
\begin{bbook}[author]
\bauthor{\bsnm{Muirhead},~\bfnm{Robb~J.}\binits{R.~J.}}
(\byear{1982}).
\btitle{Aspects of Multivariate Statistical Theory}.
\bpublisher{Wiley Series in Probability and Statistics}.
\end{bbook}
\endbibitem

\bibitem{nagaev1979a}
\begin{barticle}[author]
\bauthor{\bsnm{Nagaev},~\bfnm{S.~V.}\binits{S.~V.}}
(\byear{1979}).
\btitle{{Large deviations of sums of independent random variables}}.
\bjournal{Annals of Probability}
\bvolume{7}
\bpages{745-789}.
\end{barticle}
\endbibitem

\bibitem{nazarov2003}
\begin{bbook}[author]
\bauthor{\bsnm{Nazarov},~\bfnm{F}\binits{F.}}
(\byear{2003}).
\btitle{On the maximal perimeter of a convex set in $\mathbb{R}^n$ with respect
  to a Gaussian measure}.
\bpublisher{In Geometric Aspects of Functional Analysis, Lecture Notes in
  Mathematics Volume 1807, Springer}.
\end{bbook}
\endbibitem

\bibitem{pengwangzhouzhu2009a}
\begin{barticle}[author]
\bauthor{\bsnm{Peng},~\bfnm{Jie}\binits{J.}},
  \bauthor{\bsnm{Wang},~\bfnm{Pei}\binits{P.}},
  \bauthor{\bsnm{Zhou},~\bfnm{Nengfeng}\binits{N.}} \AND
  \bauthor{\bsnm{Zhu},~\bfnm{Ji}\binits{J.}}
(\byear{2009}).
\btitle{{Partial Correlation Estimation by Joint Sparse Regression Models}}.
\bjournal{Journal of American Statistical Association}
\bvolume{104}
\bpages{735-746}.
\end{barticle}
\endbibitem

\bibitem{portnoy1986}
\begin{barticle}[author]
\bauthor{\bsnm{Portnoy},~\bfnm{Stephen}\binits{S.}}
(\byear{1986}).
\btitle{On the central limit theorem in $\mathbb{R}^p$ when $p\to\infty$}.
\bjournal{Probability Theory and Related Fields}
\bvolume{73}
\bpages{571-583}.
\end{barticle}
\endbibitem

\bibitem{ravikumarwainwrightraskuttiyu2008a}
\begin{barticle}[author]
\bauthor{\bsnm{Ravikumar},~\bfnm{Pradeep}\binits{P.}},
  \bauthor{\bsnm{Wainwright},~\bfnm{Martin~J.}\binits{M.~J.}},
  \bauthor{\bsnm{Raskutti},~\bfnm{Garvesh}\binits{G.}} \AND
  \bauthor{\bsnm{Yu},~\bfnm{Bin}\binits{B.}}
(\byear{2008}).
\btitle{{High-dimensional covariance estimation by minimizing
  $\ell_1$-penalized log-determinant divergence}}.
\bjournal{Electronic Journal of Statistics}.
\end{barticle}
\endbibitem

\bibitem{rothmanbickellevinazhu2008a}
\begin{barticle}[author]
\bauthor{\bsnm{Rothman},~\bfnm{Adam~J.}\binits{A.~J.}},
  \bauthor{\bsnm{Bickel},~\bfnm{Peter~J.}\binits{P.~J.}},
  \bauthor{\bsnm{Levina},~\bfnm{Elizaveta}\binits{E.}} \AND
  \bauthor{\bsnm{Zhu},~\bfnm{Ji}\binits{J.}}
(\byear{2008}).
\btitle{{Sparse Permutation Invariant Covariance Estimation}}.
\bjournal{Electronic Journal of Statistics}
\bvolume{2}
\bpages{494-515}.
\end{barticle}
\endbibitem

\bibitem{rudelsonvershynin2013a}
\begin{barticle}[author]
\bauthor{\bsnm{Rudelson},~\bfnm{Mark}\binits{M.}} \AND
  \bauthor{\bsnm{Vershynin},~\bfnm{Roman}\binits{R.}}
(\byear{2013}).
\btitle{{Hanson-Wright inequality and sub-Gaussian concentration}}.
\bjournal{Electronic Communications in Probability}
\bvolume{18}
\bpages{1-9}.
\end{barticle}
\endbibitem

\bibitem{serfling1980}
\begin{bbook}[author]
\bauthor{\bsnm{Serfling},~\bfnm{Robert~J.}\binits{R.~J.}}
(\byear{2001}).
\btitle{Approximation Theorems of Mathematical Statistics}.
\bpublisher{Wiley, New York}.
\end{bbook}
\endbibitem

\bibitem{talagrand1996}
\begin{barticle}[author]
\bauthor{\bsnm{Talagrand},~\bfnm{Michel}\binits{M.}}
(\byear{1996}).
\btitle{New concentration inequalities in product spaces}.
\bjournal{Invent. Math}
\bvolume{126}.
\end{barticle}
\endbibitem

\bibitem{vandervaartwellner1996}
\begin{bbook}[author]
\bauthor{\bparticle{van~der} \bsnm{Vaart},~\bfnm{Ada}\binits{A.}} \AND
  \bauthor{\bsnm{Wellner},~\bfnm{Jon}\binits{J.}}
(\byear{1996}).
\btitle{Weak Convergence and Empirical Processes: With Applications to
  Statistics}.
\bpublisher{Springer}.
\end{bbook}
\endbibitem

\bibitem{vershynin2010a}
\begin{barticle}[author]
\bauthor{\bsnm{Vershynin},~\bfnm{Roman}\binits{R.}}
(\byear{2010}).
\btitle{Introduction to the non-asymptotic analysis of random matrices}.
\bjournal{arXiv preprint arXiv:1011.3027}.
\end{barticle}
\endbibitem

\bibitem{yuan2010a}
\begin{barticle}[author]
\bauthor{\bsnm{Yuan},~\bfnm{Ming}\binits{M.}}
(\byear{2010}).
\btitle{{High dimensional inverse covariance matrix estimation via linear
  programming}}.
\bjournal{Journal of Machine Learning Research}
\bvolume{11}
\bpages{2261-2286}.
\end{barticle}
\endbibitem

\bibitem{yuanlin2007}
\begin{barticle}[author]
\bauthor{\bsnm{Yuan},~\bfnm{Ming}\binits{M.}} \AND
  \bauthor{\bsnm{Lin},~\bfnm{Yi}\binits{Y.}}
(\byear{2007}).
\btitle{Model selection and estimation in the Gaussian graphical model}.
\bjournal{Biometrika}
\bvolume{94}
\bpages{19-35}.
\end{barticle}
\endbibitem

\bibitem{zhang1999}
\begin{barticle}[author]
\bauthor{\bsnm{Zhang},~\bfnm{Cun-Hui}\binits{C.-H.}}
(\byear{1999}).
\btitle{Sub-Bernoulli functions, moment inequalities and strong laws for
  nonnegative and symmterized U-staitstics}.
\bjournal{Annals of Probability}
\bvolume{27}
\bpages{432-453}.
\end{barticle}
\endbibitem

\bibitem{zhangwu2015a}
\begin{barticle}[author]
\bauthor{\bsnm{Zhang},~\bfnm{Danna}\binits{D.}} \AND
  \bauthor{\bsnm{Wu},~\bfnm{Wei~Biao}\binits{W.~B.}}
(\byear{2015}+).
\btitle{Gaussian approximation for high-dimensional time series}.
\bjournal{arXiv preprint arXiv:1508.07036}.
\end{barticle}
\endbibitem

\bibitem{zhangcheng2014}
\begin{barticle}[author]
\bauthor{\bsnm{Zhang},~\bfnm{Xianyang}\binits{X.}} \AND
  \bauthor{\bsnm{Cheng},~\bfnm{Guang}\binits{G.}}
(\byear{2014}).
\btitle{Bootstrapping high dimensional time series}.
\bjournal{arXiv:1406.1037}.
\end{barticle}
\endbibitem

\end{thebibliography}

\newpage

\begin{center}
{\Large \bf Supplemental Materials to ``Gaussian approximation for the sup-norm of high-dimensional matrix-variate U-statistics and its applications"} \\

\vspace{0.1in}

Xiaohui Chen \\

\vspace{0.1in}

University of Illinois at Urbana-Champaign
\end{center}

\vspace{0.2in}

\appendix

Let $q > 0$ and $g_i := g(\vX_i) = \E[h(\vX_i,\vX') | \vX_i] - \E h$ is the H\'ajek projection in (\ref{eqn:hoeffding-decomp-order-one}). Write $g_{mk} = g_{mk}(\vX)$ for $m,k=1,\cdots,p$. Recall the definitions
\begin{eqnarray*}
&& D_q = \max_{m,k} (\E|h_{mk}|^q)^{1/q}, \qquad \tilde{D}_q = \max_{m,k} (\E|g_{mk}|^q)^{1/q}, \\
&& M = \max_{1 \le i < j \le n} (\|h(\vX_i,\vX_j)\| \vee \|h(\vX_i,\vX'_j)\|).
\end{eqnarray*}
We shall use $K, K_0, K_1,\cdots$ to denote positive absolute constants, and $C, C', C_0, C_1, \cdots$ and $c,c',c_0,c_1,\cdots$ to denote positive finite constants whose values are independent of $n$ and $p$ and may vary at different places. We write $a \lesssim b$ if $a \le C b$ for some constant $C$, and $a \asymp b$ if $a \lesssim b$ and $b \lesssim a$.

\section{Auxiliary lemmas in the proof of Section 2}
\label{appendix:technical_lemmas}

Appendix \ref{appendix:technical_lemmas} contains additional technical lemmas that are used to prove the main results of this paper.

Recall that $u(\gamma)=u_x(\gamma) \vee u_z(\gamma)$, where
$$u_x(\gamma) = \inf\{u\ge0 : \Prob(g_{mk}^2(\vX_i) \le u^2 \E g_{mk}^2 \text{ for all } m,k,i) \ge 1- \gamma\}$$
and $u_z(\gamma)$ is similarly defined with $g_{mk}(\vX_i)$ replaced by $Z_{i,mk}$. Here, $Z_i$ follow iid $N(\vzero, \Gamma_g)$.

\begin{lem}[Moment bounds for sub-exponential kernel]
\label{lem:moment-bounds-subexponential-kernel}
Let $K_1>0$ and $\gamma = n^{-K_1}$. If (GA.1) holds for all $m,k=1,\cdots,p$, then we have
\begin{eqnarray*}
\tilde{D}_2 &\le& D_2 \le 2^{1/2}, \\
\tilde{D}_3 &\le& D_3 \le 2^{1/3} B_n^{1/3}, \\
\tilde{D}_4 &\le& D_4 \le 2^{1/4} B_n^{1/2}, \\
\|M\|_q &\le& K q!B_n \log(np) \qquad \forall q \ge 1, \\
u(\gamma) &\le& C B_n \log^2(np),
\end{eqnarray*}
where $C > 0$ is a constant depending only on $C_1$ in (GA.1).
\end{lem}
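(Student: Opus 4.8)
The plan is to treat the five bounds in three groups. The three polynomial bounds on $D_2,D_3,D_4$ come straight out of the moment hypothesis: the cases $\ell=0,1,2$ of (\ref{eqn:subexponential-kernel-moment-condition}) read $\E|h_{mk}|^{2}\le2$, $\E|h_{mk}|^{3}\le2B_n$, $\E|h_{mk}|^{4}\le2B_n^{2}$, so taking $q$-th roots gives $D_2\le2^{1/2}$, $D_3\le2^{1/3}B_n^{1/3}$, $D_4\le2^{1/4}B_n^{1/2}$. To pass from $D_q$ to $\tilde D_q$ I would write the H\'ajek projection as a conditional expectation, $g_{mk}(\vX)=\E[h_{mk}(\vX,\vX')-\E h_{mk}\mid\vX]$; conditional Jensen then gives $\tilde D_q^{q}\le\E|h_{mk}(\vX,\vX')-\E h_{mk}|^{q}$, which for $q=2$ is $\le\E h_{mk}^{2}=D_2^{2}$ (law of total variance), and for $q=3,4$ is controlled by $D_q$ up to the harmless triangle-inequality factor $\|h_{mk}(\vX,\vX')-\E h_{mk}\|_q\le2D_q$ — in the applications only $\tilde D_3\lesssim B_n^{1/3}$ and $\tilde D_4\lesssim B_n^{1/2}$ are ever used, so the precise constant is immaterial. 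No dimension restriction enters anywhere in this lemma.

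For $\|M\|_q$ the idea is to go through the $\psi_1$-Orlicz norm, $\psi_1(x)=e^{x}-1$. Here $M$ is the maximum of $N\le 2{n\choose2}p^{2}\le(np)^{2}$ scalars, each distributed as some $|h_{mk}|$, and (\ref{eqn:subexponential-kernel-moment-condition}) is exactly the statement $\|h_{mk}\|_{\psi_1}\le B_n$. Pisier's maximal inequality (\cite[Lemma 2.2.2]{vandervaartwellner1996}) then yields $\|M\|_{\psi_1}\le K\log(1+N)\max_{m,k}\|h_{mk}\|_{\psi_1}\le K'B_n\log(np)$. Finally I would convert Orlicz control into $L^{q}$ control through the elementary bound $|Z|^{q}\le q!\,\|Z\|_{\psi_1}^{q}\,e^{|Z|/\|Z\|_{\psi_1}}$, which after taking expectations gives $\E|Z|^{q}\le 2\,q!\,\|Z\|_{\psi_1}^{q}$ and hence $\|M\|_q\le(2q!)^{1/q}\|M\|_{\psi_1}\le Kq!\,B_n\log(np)$, uniformly in $q\ge1$.

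For $u(\gamma)$ with $\gamma=n^{-K_1}$ I would first observe that $g_{mk}(\vX)$ inherits subexponentiality from $h_{mk}$: conditional Jensen for the Orlicz norm together with the triangle inequality give $\|g_{mk}(\vX)\|_{\psi_1}\le\|h_{mk}(\vX,\vX')-\E h_{mk}\|_{\psi_1}\le B_n+|\E h_{mk}|/\log2\le KB_n$, using $|\E h_{mk}|\le D_1\le2^{1/2}$ and $B_n\ge1$; likewise $Z_{i,mk}\sim N(0,\E g_{mk}^{2})$ with $\E g_{mk}^{2}\le\tilde D_2^{2}\le2$, so it is subgaussian with parameter $O(1)$. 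A union bound over the $\le(np)^{2}$ indices $(m,k,i)$, combined with the subexponential (resp. subgaussian) tail and $\log((np)^{2}/\gamma)=\log(n^{1+K_1}p^{2})\lesssim\log(np)$, shows that with probability at least $1-\gamma$ one has $\max_{m,k,i}g_{mk}^{2}(\vX_i)\le KB_n^{2}\log^{2}(np)$ and $\max_{m,k,i}Z_{i,mk}^{2}\le K\log(np)$. Since $\E g_{mk}^{2}\ge C_1$, this forces $u_x(\gamma)\le C_1^{-1/2}KB_n\log(np)$ and $u_z(\gamma)\le C_1^{-1/2}K\log^{1/2}(np)$, hence $u(\gamma)=u_x(\gamma)\vee u_z(\gamma)\le CB_n\log^{2}(np)$ with $C$ depending only on $C_1$.

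\textbf{Main obstacle.} The only step needing care is the uniform-in-$q$ bound on $\|M\|_q$ with its $q!$ factor: bounding each $q$-th moment separately from (\ref{eqn:subexponential-kernel-moment-condition}) does not produce a clean uniform statement, so one must route through the $\psi_1$-norm and a maximal inequality, keeping track of the fact that the maximum runs over only polynomially many ($\le(np)^{2}$) terms — which is exactly what yields a single power of $\log(np)$. Everything else is a routine application of Jensen's inequality and standard subexponential/subgaussian tail estimates, and in particular none of it requires a restriction of the form $p\le e^{bn}$.
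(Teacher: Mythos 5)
Your proof is correct and for the first four bounds follows essentially the same route as the paper: the $D_2,D_3,D_4$ estimates read off directly from (GA.1), the passage from $D_q$ to $\tilde D_q$ is conditional Jensen, and the $\|M\|_q$ bound is Pisier's maximal inequality \cite[Lemma 2.2.2]{vandervaartwellner1996} for the $\psi_1$-Orlicz norm combined with the elementary relation $\|M\|_q\lesssim q!\,\|M\|_{\psi_1}$ (the paper uses precisely this pair of steps). Your remark that $\tilde D_q\le D_q$ for $q=3,4$ really only holds with a harmless triangle-inequality factor is a fair caveat that the paper glosses over with the word ``obvious''; since only $\tilde D_3\lesssim B_n^{1/3}$ and $\tilde D_4\lesssim B_n^{1/2}$ are used downstream, the discrepancy is absorbed into the generic constants.

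Where you genuinely diverge is the bound on $u(\gamma)$. The paper appeals to \cite[Lemma 2.2]{cck2013} as a black box, which yields $u(\gamma)\le C\max\{M'\log(1+n/\gamma),\,\tilde D_2\sqrt{\log(np/\gamma)}\}$ with $M'$ a $\psi_1$-bound on $\max_{m,k}|g_{mk}|$; Pisier gives $M'\lesssim B_n\log p$, and the $\log(1+n/\gamma)\lesssim\log n$ factor from unionizing over the sample index then produces the stated $B_n\log^2(np)$. You instead bound $\|g_{mk}\|_{\psi_1}\lesssim B_n$ directly (conditional Jensen for the Orlicz norm plus control of the centering constant by $|\E h_{mk}|\le D_2\le\sqrt 2$ and $B_n\ge1$), and then take a single union bound over all $\lesssim np^2$ indices $(m,k,i)$, arriving at $\max_{m,k,i}|g_{mk}(\vX_i)|\lesssim B_n\log(np)$ with probability $1-\gamma$; together with the non-degeneracy $\E g_{mk}^2\ge C_1$ this gives $u_x(\gamma)\lesssim B_n\log(np)$, which is in fact a logarithmic factor sharper than the paper's intermediate estimate, and the parallel Gaussian union bound for $u_z(\gamma)$ is routine. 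Both arguments are valid and give the lemma's conclusion; yours is more elementary and self-contained (no external lemma to cite) and slightly tighter, while the paper's leans on an existing tool from the CCK framework and avoids repeating the tail computation. You are also right that no dimension restriction such as $p\le e^{bn}$ is needed anywhere in this particular lemma.
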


\begin{proof}[Proof of Lemma \ref{lem:moment-bounds-subexponential-kernel}]
The bounds on $\tilde{D_\ell}$ and $D_\ell$ for $\ell = 2,3,4,$ are obvious under (\ref{eqn:subexponential-kernel-moment-condition}). Since $\|h_{mk}(\vX,\vX') / B_n\|_{\psi_1} \le 1,$
where $\|\cdot\|_{\psi_q}$ is the Orlicz norm for $\psi_q(x) = \exp(x^q) - 1$ for $q \ge 1$ and $x \ge 0$, we have by Pisier's inequality \cite[Lemma 2.2.2]{vandervaartwellner1996} that
$$
\left\| {M \over B_n} \right\|_{\psi_1} \le K \log(np) \max_{m,k} \max_{i<j}  \left( \left\| {h_{mk}(\vX_i,\vX_j) \over B_n} \right\|_{\psi_1} + \left\| {h_{mk}(\vX_i,\vX'_j) \over B_n} \right\|_{\psi_1} \right).
$$
Since $\|M\|_q \le q! \left\| M \right\|_{\psi_1}$ for any $q \ge 1$, we get $\|M\|_q \le K q! B_n \log(np).$ By \cite[Lemma 2.2]{cck2013}, we have
$$
u(\gamma) \le C \max\{M' \log(1+n/\gamma), \; \tilde{D}_2 \sqrt{\log(np/\gamma)}\},
$$
where $C > 0$ is a constant only depending on $C_1$ in (GA.1) and $M'$ satisfies $\E [\exp(\max_{1\le m,k \le p} |g_{mk}| / M')] \le 2$. By Pisier's and Jensen's inequalities, and using (\ref{eqn:subexponential-kernel-moment-condition}), we have
$$
\left\| \max_{m,k} |g_{mk}| / B_n \right\|_{\psi_1} \le K (\log{p}) \max_{m,k} \left\| g_{mk} / B_n \right\|_{\psi_1} \le K \log{p}.
$$
Therefore, we can take $M' = K B_n \log{p}$ for a large enough absolute constant $K > 0$. Then, we have
$u(\gamma) \le C B_n \log^2(np)$ for $\gamma = n^{-K_1}$.
\end{proof}

\begin{lem}[A moment bound for subgaussian observations]
\label{lem:moment-bounds-gaussian-obs}
Suppose that $\vX_i$ are iid mean zero random vectors such that $X_{im} \sim \text{subgaussian}(\nu^2)$. If $h$ is the covariance matrix kernel in (\ref{eqn:sample-covmat-kernel}), then we have for all $m,k=1,\cdots,p,$
\begin{eqnarray*}
\E[\exp(|h_{mk}| / \nu^2)] \le 2,
\end{eqnarray*}
i.e. $\|h_{mk}\|_{\psi_1} \le \nu^2$.
\end{lem}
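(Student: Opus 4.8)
The plan is to reduce the bound on $\E[\exp(|h_{mk}|/\nu^2)]$ to the defining subgaussian moment inequality $\E[\exp(X_{1m}^2/\nu^2)]\le\sqrt2$ from Definition \ref{defn:subgaussian_rv}, by splitting the product appearing in the kernel into squared factors. Recall from (\ref{eqn:sample-covmat-kernel}) that $h_{mk}(\vX_1,\vX_2)=\tfrac12(X_{1m}-X_{2m})(X_{1k}-X_{2k})$. First I would apply the AM--GM inequality to the two absolute differences, giving $|h_{mk}|\le\tfrac14\big[(X_{1m}-X_{2m})^2+(X_{1k}-X_{2k})^2\big]$, so that $\exp(|h_{mk}|/\nu^2)$ is dominated by the product $\exp\!\big((X_{1m}-X_{2m})^2/(4\nu^2)\big)\exp\!\big((X_{1k}-X_{2k})^2/(4\nu^2)\big)$.

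Next I would take expectations and apply the Cauchy--Schwarz inequality to this product, which reduces matters to showing that $\E\big[\exp\!\big((X_{1m}-X_{2m})^2/(2\nu^2)\big)\big]\le 2$ (and the analogous statement with index $k$). For this, use the elementary bound $(a-b)^2\le 2a^2+2b^2$ to obtain $\exp\!\big((X_{1m}-X_{2m})^2/(2\nu^2)\big)\le\exp(X_{1m}^2/\nu^2)\exp(X_{2m}^2/\nu^2)$; since $\vX_1$ and $\vX_2$ are independent and $X_{1m},X_{2m}\sim\text{subgaussian}(\nu^2)$, the expectation factorizes and each factor is at most $\sqrt2$, yielding $\sqrt2\cdot\sqrt2=2$. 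Feeding this back through Cauchy--Schwarz gives $\E[\exp(|h_{mk}|/\nu^2)]\le(2\cdot2)^{1/2}=2$, which is exactly the claim, i.e. $\|h_{mk}\|_{\psi_1}\le\nu^2$.

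There is no real analytic difficulty here; the only thing requiring attention is keeping track of the numerical constants --- the $\tfrac14$ from AM--GM, the halving of the exponent under Cauchy--Schwarz, and the factor $2$ in $(a-b)^2\le 2a^2+2b^2$ --- so that the chain closes at exactly $2$ and $\nu^2$ carries the correct normalization throughout. Working with the difference kernel (\ref{eqn:sample-covmat-kernel}) rather than $\vX_i\vX_i^\top$ is what makes this clean, since no separate centering of the coordinates is needed.
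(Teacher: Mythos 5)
Your argument is correct and follows essentially the same path as the paper: AM--GM on the product in the kernel, Cauchy--Schwarz to split the expectation, $(a-b)^2\le 2a^2+2b^2$, and the independence factorization, with the constants $\tfrac14$, $\tfrac12$, $2$, $\sqrt2$ chaining to exactly $2$. No gaps.
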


\begin{proof}[Proof of Lemma \ref{lem:moment-bounds-gaussian-obs}]
The lemma follows from
\begin{eqnarray*}
\E \left[\exp \left({|h_{mk}(\vX_1,\vX_2)| \over \nu^2} \right) \right] &=& \E\left[\exp \left({1\over2} {|X_{1m}-X_{2m}| \over  \nu} {|X_{1k}-X_{2k} | \over \nu} \right) \right] \\
&\le& \E \left[ \exp \left(  {(X_{1m}-X_{2m})^2 \over 4 \nu^2 } +  {(X_{1k}-X_{2k})^2 \over 4 \nu^2 }  \right) \right] \\
&\le& \max_{1\le m \le p} \E \left[ \exp \left( {(X_{1m}-X_{2m})^2 \over 2 \nu^2 } \right)\right] \\
&\le& \max_{1\le m \le p} \E \left[ \exp \left( {X_{1m}^2 + X_{2m}^2 \over \nu^2 } \right)\right] \\
&\le& \max_{1\le m \le p} \left\{ \E \left[ \exp \left( {X_{1m}^2 \over \nu^2 } \right)\right] \right\}^2 \le 2,
\end{eqnarray*}
where we used the elementary inequality $|ab| \le (a^2+b^2)/2$ in the second step, the Cauchy-Schwarz inequality in the third step, $(a-b)^2 \le 2(a^2+b^2)$ in the fourth step, the iid assumption in the fifth step, and the assumption that $X_{im} \sim \text{subgaussian}(\nu^2)$ in the last step. 
\end{proof}

\begin{lem}[Moment bounds for uniform polynomial kernel]
\label{lem:moment-bounds-unifpoly-kernel}
Let $q \ge 4$, $K_1>0$ and $\gamma = n^{-K_1}$. If $\E g_{mk}^2 \ge C_1$ and 
\begin{equation}
\label{eqn:unifpoly-kernel-moment-condition}
\max_{\ell=0,1,2} \E(|h_{mk}|^{2+\ell} / B_n^\ell) \vee \E[(\|h\| / B_n)^q] \le 1
\end{equation}
for all $m,k=1,\cdots,p$, then we have
\begin{eqnarray*}
\tilde{D}_2 &\le& D_2 \le 1, \\
\tilde{D}_3 &\le& D_3 \le B_n^{1/3}, \\
\tilde{D}_4 &\le& D_4 \le B_n^{1/2}, \\
\|M\|_q &\le& 2 B_n n^{2/q}, \\
u(\gamma) &\le& C \max\{ B_n n^{1+K_1 \over q}, \log^{1/2}(np) \},
\end{eqnarray*}
where $C > 0$ is a constant depending only on $C_1$.
\end{lem}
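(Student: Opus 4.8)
The plan is to follow the template of Lemma~\ref{lem:moment-bounds-subexponential-kernel}, splitting the statement into its four groups of estimates and reducing each to the moment hypothesis (\ref{eqn:unifpoly-kernel-moment-condition}) together with three elementary tools: Jensen's inequality, a crude union/maximal bound, and \cite[Lemma~2.2]{cck2013}. First, the bounds $D_2\le1$, $D_3\le B_n^{1/3}$, $D_4\le B_n^{1/2}$ are read off immediately by evaluating $\max_{\ell=0,1,2}\E(|h_{mk}|^{2+\ell}/B_n^\ell)\le1$ at $\ell=0,1,2$, and $\tilde D_\ell\le D_\ell$ follows (as in Lemma~\ref{lem:moment-bounds-subexponential-kernel}) from Jensen's inequality, since $g_{mk}(\vX)=\E[h_{mk}(\vX,\vX')\mid\vX]-\E h_{mk}$. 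This gives the first three displayed bounds.

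Next, for $\|M\|_q$ I would use a crude bound: $M$ is the maximum of at most $n^2$ random variables, each of which equals $\|h(\vX_i,\vX_j)\|$ or $\|h(\vX_i,\vX'_j)\|$ and hence has $q$-th moment at most $B_n^q$ by (\ref{eqn:unifpoly-kernel-moment-condition}). Bounding the maximum by the sum, $\E M^q\le n^2 B_n^q$, so $\|M\|_q\le n^{2/q}B_n\le 2B_n n^{2/q}$.

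For $u(\gamma)=u_x(\gamma)\vee u_z(\gamma)$ I would apply \cite[Lemma~2.2]{cck2013} exactly as in the subexponential case, but in the polynomial-moment regime: it bounds $u(\gamma)$ by the maximum of a ``tail'' term of the form $(n/\gamma)^{1/q}\bar M_q$, with $\bar M_q:=\bigl(\E[(\max_{m,k}|g_{mk}(\vX)|)^q]\bigr)^{1/q}$ a $q$-th-moment envelope of the coordinatewise maximum of $g$, and a ``bulk'' term $C\,\tilde D_2\sqrt{\log(np/\gamma)}$ that also controls the Gaussian maximum $u_z(\gamma)$; the constant depends only on the non-degeneracy constant $C_1$, which enters through $\E g_{mk}^2\ge C_1$. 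The one step requiring genuine care is estimating $\bar M_q$ \emph{without} letting a factor polynomial in $p$ creep in: rather than union-bounding $\max_{m,k}|g_{mk}|$ over the $p^2$ coordinates (which would produce a fatal $p^{2/q}$), I would use
\[
\max_{m,k}|g_{mk}(\vX)|\le\E[\,\|h(\vX,\vX')\|\mid\vX\,]+\E\|h\|,
\]
and then the $L^q$ triangle inequality, conditional Jensen, and $\E(\|h\|/B_n)^q\le1$ give $\bar M_q\le 2B_n$. Substituting $\gamma=n^{-K_1}$ and $\tilde D_2\le D_2\le1$ turns the tail term into $\lesssim B_n n^{(1+K_1)/q}$ and the bulk term into $\lesssim\log^{1/2}(np)$, i.e.\ $u(\gamma)\le C\max\{B_n n^{(1+K_1)/q},\log^{1/2}(np)\}$.

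Apart from this, the argument is routine bookkeeping. The main obstacle is precisely the envelope estimate for $u(\gamma)$: one must route the coordinatewise maximum of $g$ through the conditional expectation of $\|h\|$ rather than a coordinatewise union bound, so that the dependence on $p$ drops out entirely — which is essential because $p$ is allowed to grow faster than any power of $n$ — and one must check that the hypotheses of \cite[Lemma~2.2]{cck2013} are met in the polynomial-moment regime with the envelope $2B_n$ and the lower bound $C_1$.
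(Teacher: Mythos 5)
Your proof is correct and follows essentially the same route as the paper's (the paper simply cites Pisier's maximal inequality and \cite[Lemma~2.2]{cck2013} without elaboration). Your explicit derivation of the $L^q$ envelope bound $\|\max_{m,k}|g_{mk}(\vX)|\|_q\le 2B_n$ via conditional Jensen and the hypothesis $\E(\|h\|/B_n)^q\le1$ correctly identifies the reason this sup-norm moment condition appears in (GA.1'), and fills in precisely the step the paper leaves implicit.
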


\begin{proof}[Proof of Lemma \ref{lem:moment-bounds-unifpoly-kernel}]
The bounds on $\tilde{D_\ell}$ and $D_\ell$ for $\ell = 2,3,4,$ are similar to those in Lemma \ref{lem:moment-bounds-subexponential-kernel}. By \cite[Lemma 2.2.2]{vandervaartwellner1996} and (\ref{eqn:unifpoly-kernel-moment-condition}), we have
$$
\left\| \max_{i<j}  {\|h(\vX_i,\vX_j)\| \over B_n} \right\|_q \le n^{2/q} \max_{i<j} \left\| {\|h\| \over B_n}\right\|_q \le n^{2/q}.
$$
Same bound holds for $\max_{i<j} \|h(\vX_i,\vX'_j)\|$. The bound on $u(\gamma)$ follows from \cite[Lemma 2.2]{cck2013} and the choice $\gamma = n^{-K_1}$.
\end{proof}

\begin{lem}[A moment bound for observations with uniform polynomial moments]
\label{lem:moment-bounds-unifpoly-obs}
Let $q\ge8$. Suppose that $\vX_i$ are iid mean zero random vectors such that $\|\max_{1 \le k \le p} |X_{1k}| \|_q \le \nu$. If $h$ is the covariance matrix kernel in (\ref{eqn:sample-covmat-kernel}), then we have
\begin{eqnarray*}
\E [\|h\| / (2 \nu^2)]^4 \le 1.
\end{eqnarray*}
\end{lem}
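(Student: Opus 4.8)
The plan is to reduce the matrix sup-norm of the covariance kernel to a coordinatewise maximum and then apply Minkowski's inequality, exactly in the spirit of Lemma~\ref{lem:moment-bounds-gaussian-obs}. First I would write out the kernel (\ref{eqn:sample-covmat-kernel}) entrywise: $h_{mk}(\vX_1,\vX_2) = \tfrac12 (X_{1m}-X_{2m})(X_{1k}-X_{2k})$, so that
\[
\|h(\vX_1,\vX_2)\| = \max_{1\le m,k\le p} \tfrac12\,|X_{1m}-X_{2m}|\,|X_{1k}-X_{2k}| = \tfrac12\Big(\max_{1\le m\le p}|X_{1m}-X_{2m}|\Big)^{2},
\]
where the last step uses that the maximum of a product of nonnegative quantities over two independent indices factorizes into the product of the two maxima, and here the two vectors coincide.

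Next I would control the coordinatewise maximum of the difference. By the triangle inequality, $\max_m|X_{1m}-X_{2m}| \le R_1 + R_2$ with $R_i := \max_{1\le m\le p}|X_{im}|$; the $R_i$ are iid and, by hypothesis, $\|R_1\|_q \le \nu$. Since $q\ge 8$, Lyapunov's (Jensen's) inequality gives $\|R_i\|_8 \le \|R_i\|_q \le \nu$, and Minkowski's inequality then yields $\|R_1+R_2\|_8 \le 2\nu$. Consequently
\[
\E\big[\|h(\vX_1,\vX_2)\|^{4}\big] = \frac{1}{16}\,\E\Big[\Big(\max_m|X_{1m}-X_{2m}|\Big)^{8}\Big] \le \frac{1}{16}\,(2\nu)^{8} = 16\,\nu^{8} = (2\nu^{2})^{4},
\]
which is precisely $\E\big[\,\|h\|/(2\nu^{2})\,\big]^{4}\le 1$.

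There is essentially no obstacle here: the argument is a short computation. The only two points deserving a line of care are (i) the factorization of the double maximum $\max_{m,k}|a_m b_k| = (\max_m|a_m|)^2$ when the two index sets and the two vectors coincide, and (ii) the use of $q\ge 8$ to pass from the assumed $L^{q}$ control of $\max_k|X_{1k}|$ to control of its eighth moment, which is what the lemma's hypothesis is tailored for. The constant $2$ inside $2\nu^{2}$ is exactly what absorbs the factor introduced by splitting $\vX_1-\vX_2$ into its two iid pieces via Minkowski's inequality.
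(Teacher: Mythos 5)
Your proof is correct and takes essentially the same approach as the paper: both reduce $\|h\|$ to $\tfrac12(\max_m|X_{1m}-X_{2m}|)^2$, both invoke $q\ge 8$ to pass to the eighth moment of that maximum, and both use the iid structure to absorb the two copies into the hypothesis $\|\max_k|X_{1k}|\|_q\le\nu$ with the same final constant. The only cosmetic difference is that you split the difference by Minkowski's inequality in $L^8$ after a Lyapunov step on each factor, whereas the paper applies Jensen to $\|h\|/\nu^2$ directly and then uses the pointwise bound $|a-b|^q\le 2^{q-1}(|a|^q+|b|^q)$; these are interchangeable devices yielding identical bounds.
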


\begin{proof}[Proof of Lemma \ref{lem:moment-bounds-unifpoly-obs}]
The lemma follows from
\begin{eqnarray*}
\E [\|h\| / \nu^2]^4 &\le& \left[ \E(\|h\| / \nu^2)^{q/2} \right]^{8/q} \\
&=& 2^{-4} \left[ \E \left(\max_m |X_{1m}-X_{2m}|^q / \nu^q \right) \right]^{8/q} \\
&\le& 2^{-4} \left[ 2^q \E (\max_m |X_{1m}|^q / \nu^q) \right]^{8/q} \le 2^4.
\end{eqnarray*}
\end{proof}

\section{Proof details in Section 5}

\begin{proof}[Proof of Theorem \ref{thm:expectation-bound}]
In the proof, we shall use $K_1,K_2,\cdots,$ to denote absolute constants whose values may differ from place to place, and the indices $i<j$ and $(m,k)$ implicitly run over $1 \le i < j \le n$ and $1 \le m,k \le p$. Let $\varepsilon_i, i = 1,\cdots,n$, be a sequence of iid Rademacher random variables such that $\Prob(\varepsilon_i = \pm1) = 1/2$ and $\varepsilon_i, i =1,\cdots,n,$ are also independent of $\vX_1^n$ and ${\vX'}_1^n$. By the randomization inequality \cite[Theorem 3.5.3]{delaPenaGine1999}, we have
$$
\E\|\sum_{i<j} f(\vX_i,\vX_j)\| \le K_1 \E\|\sum_{i<j} \varepsilon_i \varepsilon_j f(\vX_i,\vX_j)\|.
$$
Fix an $m,k=1,\cdots,p$ and let $\Lambda^{m,k}$ be the $n \times n$ upper triangular matrix with diagonal of zeros and $\Lambda^{m,k}_{ij} = f_{mk}(\vX_i,\vX_j)$ for $i<j$. Since $\tr(\Lambda^{m,k})=0$ and $\varepsilon_i$'s are iid sub-Gaussian, by the Hanson-Wright inequality \cite[Theorem 1]{rudelsonvershynin2013a}, conditional on $\vX_1^n$, we have for all $t>0$
\begin{equation*}
\Prob(|\mbf\varepsilon^\top \Lambda^{m,k} \mbf\varepsilon| \ge t \mid \vX_1^n) \le 2 \exp\left\{ -K_2 \min\left[ {t^2 \over |\Lambda^{m,k}|_F^2}, {t \over |\Lambda^{m,k}|_2} \right] \right\},
\end{equation*}
where $\mbf\varepsilon = (\varepsilon_1,\cdots,\varepsilon_n)^\top$. Denote $V_1 = \max_{m,k} |\Lambda^{m,k}|_F$ and $V_2 = \max_{m,k} |\Lambda^{m,k}|_2$. Let 
$$t^* = \max \left\{ V_1 \sqrt{\log(p^2) / K_2}, \quad V_2 {\log(p^2) / K_2} \right\}.$$
By the union bound, we have
\begin{eqnarray*}
&& \E[\max_{m,k} |\mbf\varepsilon^\top \Lambda^{m,k} \mbf\varepsilon|  \mid \vX_1^n] = \int_0^\infty \Prob(\max_{m,k} |\mbf\varepsilon^\top \Lambda^{m,k} \mbf\varepsilon| \ge t \mid \vX_1^n) \; dt \\
&& \qquad \le t^* + 2 p^2 \int_{t^*}^\infty \max\left\{ \exp\left( -{K_2 t^2 \over V_1^2} \right), \; \exp\left( -{K_2 t \over V_2} \right) \right\} \; dt.
\end{eqnarray*}
Changing variables, we see that
$$
\int_{t^*}^\infty \exp\left( -{K_2 t^2 \over V_1^2} \right) \; dt \le {V_1 \over \sqrt{2 K_2} } \int_{\sqrt{4\log p}}^\infty \exp\left(-{s^2 \over2} \right) \;ds.
$$
By the tail bound $1-\Phi(x) \le \phi(x)/x$ for all $x>0$, where $\Phi(\cdot)$ and $\phi(\cdot)$ are the cdf and pdf of the standard Gaussian random variable, respectively, it follows that
\begin{equation}
\label{eqn:V_1}
2 p^2 \int_{t^*}^\infty \exp\left( -{K_2 t^2 \over V_1^2} \right) \; dt  \le {V_1 \over \sqrt{2 K_2 \log{p}}} \le K_2 V_1.
\end{equation}
Here, we used $p \ge 2$. Similarly, we have
$$
2 p^2 \int_{t^*}^\infty \exp\left( -{K_2 t / V_2} \right) \; dt \le 2 V_2 / K_2.
$$
Note that $V_2 \le V_1$. Therefore, we have
\begin{eqnarray}
\nonumber
\E\|\sum_{i<j} \varepsilon_i \varepsilon_j f(\vX_i,\vX_j)\| \le K_3 t^* &\le&  K_3 (\log{p}) \E\max_{m,k} |\Lambda^{m,k}|_F \\
\label{eqn:Lambda2_term}
&\le& K_3 (\log{p}) (\E\max_{m,k} |\Lambda^{m,k}|_F^2)^{1/2},
\end{eqnarray}
where the last step follows from Jensen's inequality. 

Next, we bound the term $\E[\max_{m,k} |\Lambda^{m,k}|_F^2]$. Consider the Hoeffding decomposition of $f_{mk}^2$. Let
$$
\tilde{f}^{m,k}_1(\vx_1) = \E f^2_{mk}(\vx_1,\vX') - \E f^2_{mk}
$$
and
$$
\tilde{f}^{m,k}(\vx_1,\vx_2) = f^2_{mk}(\vx_1,\vx_2) - \E f^2_{mk}(\vx_1,\vX') - \E f^2_{mk}(\vX,\vx_2) + \E f^2_{mk}.
$$
Clearly, $\E [\tilde{f}^{m,k}_1(\vX)] = 0$ and $\E [\tilde{f}^{m,k}(\vX,\vX')] = \E [\tilde{f}^{m,k}(\vx_1,\vX')] = \E [\tilde{f}^{m,k}(\vX,\vx_2)] = 0$ for all $\vx_1,\vx_2 \in \mathbb{R}^p$; i.e. $\tilde{f}^{m,k}_1$ is centered and $\tilde{f}^{m,k}$ is a canonical kernel of U-statistic of order two w.r.t. $F$. Since
$$
f^2_{mk}(\vx_1,\vx_2) - \E f^2_{mk} = \tilde{f}^{m,k}(\vx_1,\vx_2) + \tilde{f}^{m,k}_1(\vx_1) + \tilde{f}^{m,k}_1(\vx_2),
$$
we have by the triangle inequality
\begin{eqnarray}
\nonumber
\E \max_{m,k} \sum_{i<j} f_{mk}^2(\vX_i,\vX_j) &\le& \max_{m,k} \sum_{i<j} \E f_{mk}^2 + \E \max_{m,k} \Big| \sum_{i<j} ( f_{mk}^2(\vX_i,\vX_j) - \E f_{mk}^2 ) \Big| \\
\label{eqn:f^2_quad}
&\le& n^2 \max_{m,k} \E f_{mk}^2 + \E \|\sum_{i<j} \tilde{f}(\vX_i,\vX_j)\| + (n-1) \E\|\sum_{i=1}^n \tilde{f}_1(\vX_i)\|,
\end{eqnarray}
where $\tilde{f}=\{\tilde{f}^{m,k}\}_{m,k=1}^p$ and $\tilde{f}_1=\{\tilde{f}^{m,k}_1\}_{m,k=1}^p$ are $p \times p$ random matrices. By the Hoeffding inequality, conditional on $\vX_1^n$, we have for all $t>0$
\begin{equation*}
\Prob(|\sum_{i=1}^n \varepsilon_i \tilde{f}_1^{m,k}(\vX_i)| \ge t \mid \vX_1^n) \le 2 \exp\left( - {K_4 t^2 \over \sum_{i=1}^n (\tilde{f}_1^{m,k}(\vX_i))^2} \right).
\end{equation*}
By the symmetrization inequality \cite[Lemma 2.3.1]{vandervaartwellner1996} and the argument for bounding (\ref{eqn:V_1}), we get
\begin{equation}
\label{eqn:f_tilde_1}
\E \|\sum_{i=1}^n \tilde{f}_1(\vX_i)\| \le 2 \E \|\sum_{i=1}^n \varepsilon_i \tilde{f}_1(\vX_i)\| \le K_5 (\log{p})^{1/2} \E \Big[ \max_{m,k} \sum_{i=1}^n \tilde{f}^{m,k}_1(\vX_i)^2 \Big]^{1/2}.
\end{equation}
By the randomization inequality as in the previous argument before Jensen's inequality (\ref{eqn:Lambda2_term}), we get
\begin{equation}
\label{eqn:f_tilde_2}
\E \|\sum_{i<j}\tilde{f}(\vX_i,\vX_j)\| \le K_1 \E \|\sum_{i<j} \varepsilon_i \varepsilon_j \tilde{f}(\vX_i,\vX_j)\| \le K_3 (\log{p}) \E \Big[ \max_{m,k} \sum_{i<j} \tilde{f}^{m,k}(\vX_i,\vX_j)^2 \Big]^{1/2}.
\end{equation}
By the triangle and Jensen's inequalities, we have
\begin{eqnarray*}
&& \E \Big[ \max_{m,k} \sum_{i<j} \tilde{f}^{m,k}(\vX_i,\vX_j)^2 \Big]^{1/2}  \le 2 \E \Big[\max_{m,k} \sum_{i<j} f_{mk}^4(\vX_i,\vX_j) \Big]^{1/2} \\
&& \qquad \qquad  + 8^{1/2} \E\Big\{ \max_{m,k} \sum_{i<j} [\E( f_{mk}^2(\vX_i, \vX') | \vX_1^n)]^2 \Big\}^{1/2} + 2 \Big[ \max_{m,k} \sum_{i<j} (\E f_{mk}^2)^2 \Big]^{1/2}.
\end{eqnarray*}
Let $I = \E[\max_{m,k} \sum_{i<j} f_{mk}^2(\vX_i,\vX_j)]$. Then by the Cauchy-Schwarz inequality, we have
$$
\E \Big[\max_{m,k} \sum_{i<j} f_{mk}^4(\vX_i,\vX_j) \Big]^{1/2} \le \sqrt{I} \sqrt{\E M^2},
$$
and
\begin{eqnarray*}
&& \E\Big\{ \max_{m,k} \sum_{i<j} [\E( f_{mk}^2(\vX_i, \vX') | \vX_1^n)]^2 \Big\}^{1/2}\\
&\le& \E \Big\{ \Big[\max_{m,k} \max_{i<j} \E( f_{mk}^2(\vX_i, \vX'_j) | \vX_1^n) \Big]^{1/2} \Big[\max_{m,k} \sum_{i<j} \E( f_{mk}^2(\vX_i, \vX'_j) | \vX_1^n \Big]^{1/2} \Big\} \\
&\le& \Big[\E \max_{m,k} \max_{i<j} \E( f_{mk}^2(\vX_i, \vX'_j) | \vX_1^n) \Big]^{1/2} \Big[ \E \max_{m,k} \sum_{i<j} \E( f_{mk}^2(\vX_i, \vX'_j) | \vX_1^n) \Big]^{1/2} \\
&\le& \sqrt{\E M^2} \sqrt{4 I},
\end{eqnarray*}
where in the last step we used Jensen's inequality and the decoupling inequality \cite[Theorem 3.1.1]{delaPenaGine1999}. In addition, $\max_{m,k} \sum_{i<j} (\E f_{mk}^2)^2 \le I \E M^2$. Therefore, we obtain from (\ref{eqn:f_tilde_2}) that
$$
\E \|\sum_{i<j}\tilde{f}(\vX_i,\vX_j)\| \le K_6 (\log{p}) \sqrt{I} \sqrt{\E M^2}.
$$
By (\ref{eqn:f^2_quad}), (\ref{eqn:f_tilde_1}), and (\ref{eqn:f_tilde_2}), we obtain that
$$
I \le K_7 \Big\{ n^2 \max_{m,k} \E f_{mk}^2 + (\log{p}) \sqrt{I} \sqrt{\E M^2} + n (\log{p})^{1/2} \E \Big[ \max_{m,k} \sum_i \tilde{f}^{m,k}_1(\vX_i)^2 \Big]^{1/2} \Big\}.
$$
The solution of this quadratic inequality for $I$ is given by
\begin{equation}
\label{eqn:bound_I}
I \le K_8 \Big\{ (\log{p})^2 (\E M^2) + n^2 \max_{m,k} \E f_{mk}^2 + n (\log{p})^{1/2} \E \Big[ \max_{m,k} \sum_i \tilde{f}^{m,k}_1(\vX_i)^2 \Big]^{1/2} \Big\}.
\end{equation}
By \cite[Lemma 9]{cck2014b} and Jensen's inequality,
\begin{eqnarray}
\nonumber
\E \Big[ \max_{m,k} \sum_i \tilde{f}^{m,k}_1(\vX_i)^2 \Big] &\le& K_9 \Big\{ \max_{m,k} \E \sum_i \tilde{f}^{m,k}_1(\vX_i)^2 + (\log{p}) \E  \Big[ \max_{m,k} \max_i \tilde{f}^{m,k}_1(\vX_i)^2 \Big] \Big\} \\
\nonumber
&\le& K_9 \Big\{ n \max_{m,k} \E f_{mk}^4(\vX_1,\vX_2) + (\log{p}) \E  \Big[ \max_{i<j} \max_{m,k} f_{mk}^4(\vX_i,\vX'_j) \Big] \Big\}  \\
\label{eqn:bound_I_simplify}
&\le& K_9 \Big\{ n \max_{m,k} \E f_{mk}^4(\vX_1,\vX_2) + (\log{p}) \E M^4 \Big\}.
\end{eqnarray}

Now, combining (\ref{eqn:Lambda2_term}), (\ref{eqn:bound_I}), and (\ref{eqn:bound_I_simplify}), we conclude that
\begin{eqnarray*}
&&\qquad  \E \| \sum_{i<j} f(\vX_i,\vX_j) \| \\
&\le& K_{10} (\log{p}) \Big\{ n^2 \max_{m,k} \E f_{mk}^2 + (\log{p})^2 \E M^2 + n (\log{p})^{1/2} [n \max_{m,k} \E f_{mk}^4 + (\log{p}) \E M^4]^{1/2}  \Big\}^{1/2} \\
&\le& K_{10} (\log{p}) \Big\{ n D_2 + (\log{p}) \|M\|_2 + n^{3/4} (\log{p})^{1/4} D_4 + (n \log{p})^{1/2} \|M\|_4 \Big\}.
\end{eqnarray*}
Since $\|M\|_2 \le \|M\|_4$ and $p \le \exp(b n)$, we have $(\log{p}) \|M\|_2 \le (b n \log{p})^{1/2} \|M\|_4$, from which (\ref{eqn:expectation-bound-canonical}) follows.
\end{proof}

\begin{proof}[Proof of Corollary \ref{cor:expectation-subexponential-kernel}]
The Corollary follows from the bounds $D_2 \le D_4 \le B_n$ and $\|M\|_4 \le K B_n \log(np)$, where the last inequality comes from Lemma \ref{lem:moment-bounds-subexponential-kernel}.
\end{proof}

\begin{proof}[Proof of Corollary \ref{cor:expectation-polynomial-kernel}]
The Corollary follows from the bounds $D_2 \le D_4 \le B'_n$ and $\|M\|_4 \le B_n n^{1/2}$, where the last inequality comes from Lemma \ref{lem:moment-bounds-unifpoly-kernel}.
\end{proof}

\begin{proof}[Proof of Lemma \ref{lem:gaussian-approximation-nondegenarate-U-smooth}]
Let $v = \lambda \circ F_\beta$. By the mean value theorem and noticing that $\pi_{mk}(Z) \ge 0$ and $\sum_{m,k=1}^p \pi_{mk}(Z) = 1$ for all $Z \in \mathbb{R}^{p \times p}$, we have
\begin{equation*}
\left| \E[v(T) - v(L)] \right| = | \E [ \sum_{m,k=1}^p W_{mk} \partial \lambda(F_\beta(\xi))\pi_{mk}(\xi) ] | \le  K_0 \psi \E \|W\|,
\end{equation*}
where $\xi$ is a random matrix on the line segment between $T$ and $L$. Let $\Delta_v = \left| \E[v(L) - v(Z)] \right|$. Then,
\begin{equation*}
\left| \E[v(T) - v(Z)] \right| \le \Delta_v + K_0 \psi \E \|W\|.
\end{equation*}
By the smoothing properties of $F_\beta(\cdot)$ in (\ref{eqn:smoothing_max}) and $\lambda(\cdot)$ in (\ref{eqn:smoothing_indicator}), we have
\begin{eqnarray*}
\Prob(\bar{T}_0 \le t) &\le& \Prob(F_\beta(T) \le t + e_\beta) \le \E[\lambda(F_\beta(T))]  = \E[v(T)] \\
&\le& \E[v(Z)] + (K_0 \psi \E \|W\| + \Delta_v) \\
&=& \E[\lambda(F_\beta(Z))] + (K_0 \psi \E \|W\| + \Delta_v) \\
&\le& \Prob(F_\beta(Z) \le t + e_\beta + \psi^{-1}) +  (K_0 \psi \E \|W\| + \Delta_v) \\
&\le& \Prob(\bar{Z}_0 \le t + e_\beta + \psi^{-1}) +  (K_0 \psi \E \|W\| + \Delta_v).
\end{eqnarray*}
By the anti-concentration inequality \cite[Lemma 2.1]{cck2013},
\begin{equation*}
\Prob(\bar{Z}_0 \le t + e_\beta + \psi^{-1}) - \Prob(\bar{Z}_0 \le t) \le C (e_\beta + \psi^{-1}) \sqrt{1 \vee \log(p \psi)},
\end{equation*}
where $C>0$ is a constant depending only on $c_0$ and $C_0$. Therefore, we get
\begin{equation*}
\Prob(\bar{T}_0 \le t) - \Prob(\bar{Z}_0 \le t) \le C[ (\psi \E \|W\| + \Delta_v) +  (e_\beta + \psi^{-1}) \sqrt{1 \vee \log(p \psi)}].
\end{equation*}
Since $L$ involves sums of iid random matrices and $Z$ is Gaussian of the matching first and second moments to $L$, the bound for $\Delta_v$ in (\ref{eqn:bound_Delta_v}) follows from Theorem 2.1 and the Step 1 in the proof of Theorem 2.2 in \cite{cck2013}. Similarly, we can prove the other half inequality.
\end{proof}

\begin{proof}[Proof of Theorem \ref{thm:GA_master_thm}]
In this proof, we shall use $a \lesssim b$ to denote $a \le C b$ for some constant $C > 0$ depending only on $c_0,C_0,$ and $b$. Since $\tilde{D}_2$ is lower and upper bounded, we may assume that $\tilde{D}_2=1$. For $u>0$, let $\bar\varphi(u) = C_1 \tilde{D}_4^2 / u$ and $\beta(u) = n^{1/2} / (2\sqrt{2} u)$. Define
\begin{eqnarray*}
\psi_1(u) &=& \min\{ n^{1/8} \ell_n^{-3/8} \tilde{D}_3^{-3/4}, \; \ell_n^{-1/6} (\bar\varphi(u))^{-1/3} \}, \\
\psi_2(u) &=& \min\{ n^{1/2} \ell_n^{-1/2} \|M\|_4^{-1/2}, \; \ell_n^{-1/6} (\bar\varphi(u))^{-1/3} \}, \\
\psi_3(u) &=& \min\{ n^{1/4} \ell_n^{-1/4} D_2^{-1/2}, \; \ell_n^{-1/6} (\bar\varphi(u))^{-1/3} \}, \\
\psi_4(u) &=& \min\{ n^{3/8} \ell_n^{-3/8} D_4^{-1/2}, \; \ell_n^{-1/6} (\bar\varphi(u))^{-1/3} \},
\end{eqnarray*}
and let $\psi(u) = \min_{1 \le i \le 4} \psi_i(u)$. Clearly, $\psi(u) \le n^{1/8}$ for $u \ge 0$ and $\psi_i(u)$ are strictly increasing functions for $u \in (0, u_i^*)$, where $u_i^*$ balances the corresponding two components in $\psi_i(u)$ for $i=1,2,3,4$. In addition, for $u \ge u_i^*$ we have $\psi_i(u) = \psi_i(u_i^*) = \max_{v \ge 0} \psi_i(v)$, i.e. all the $u_i^*$'s attain the corresponding maximum values of $\psi_i(\cdot)$ at the truncation levels. Let $u^* = \max_{1 \le i \le 4} u_i^*$.

Our goal is to show that: for any $\gamma \in (0,1)$ and for some properly chosen $u$ in {\it Step 1} below, we have
\begin{equation}
\label{eqn:GA_master_thm_revised_goal}
\boxed{\rho(\bar{T}_0, \bar{Z}_0) \lesssim n^{-1/2} \ell_n^{3/2} u + \gamma.}
\end{equation}

\vspace{0.1in}

\uline{{\it Step 1.} Choose a proper value of $u$.} 

To prove (\ref{eqn:GA_master_thm_revised_goal}), we may further assume that $n^{-1/2} \ell_n^{3/2} u \le 1$ because otherwise it trivially holds. Choose $u = u_0 \vee u_1 \vee u_2 \vee u_3 \vee u_4 \vee u(\gamma)$, where
\begin{eqnarray*}
u_0 &=& n^{3/8} \ell_n^{-5/8} \tilde{D}_4^{1/2}, \\
u_1 &=& n^{3/8} \ell_n^{-5/8} \tilde{D}_3^{3/4}, \\
u_2 &=& \ell_n^{-1/2} \|M\|_4^{1/2}, \\
u_3 &=& n^{1/4} \ell_n^{-3/4} D_2^{1/2}, \\
u_4 &=& n^{1/8} \ell_n^{-5/8} D_4^{1/2}.
\end{eqnarray*}
For this chosen $u$, we then determine the smoothing parameters $\beta := \beta(u)$ and $\psi = \min_{1 \le i \le 4} \psi_i$ where $\psi_i := \psi_i(u)$. Let $\bar\beta_i = n^{1/2} u_i^{-1}$ for $i=1,2,3,4,$ and $\bar{\beta} = \min_{1 \le i \le 4} \bar{\beta}_i$. Then, 
$$(\beta \vee \psi) \le \bar\beta$$
because $\beta = 8^{-1/2} n^{1/2} u^{-1} \le n^{1/2} \min_{1 \le i \le 4} u_i^{-1} = \bar\beta$ and $\psi_i \le \bar\beta_i$ for $i=1,2,3,4$. \\

\uline{{\it Step 2.} Show that (\ref{eqn:GA_master_thm_revised_goal}) holds for our choice of $u$ (and therefore $\beta$ and $\psi$).} Then, (\ref{eqn:GA_master_thm}) follows immediately from the substitution of $u$ into (\ref{eqn:GA_master_thm_revised_goal}) 
$$
\rho(\bar{T}_0, \bar{Z}_0) \lesssim n^{-1/2} \ell_n^{3/2} [u_0 \vee u_1 \vee u_2 \vee u_3 \vee u_4 \vee u(\gamma)] + \gamma.
$$
The rest of the proof is to show (\ref{eqn:GA_master_thm_revised_goal}). First, note that since $\psi \le n^{1/8}$, 
$$
e_\beta \sqrt{1 \vee \log(p\psi)} \le K_1 \beta^{-1} (\log{p}) \sqrt{\log(pn)} \le K_1 \beta^{-1} \ell_n^{3/2} \le K_1 n^{-1/2} \ell_n^{3/2} u.
$$
For the rest of the terms, we divide into two cases. \\

\underline{\bf Case I: $u \ge u^*$.} By Lemma \ref{lem:gaussian-approximation-nondegenarate-U-smooth} and Theorem \ref{thm:expectation-bound}, we have
\begin{eqnarray*}
\rho(\bar{T}_0, \bar{Z}_0) &\lesssim& \underbrace{n^{-1/2} \bar\beta^2 \psi \tilde{D}_3^3}_{\text{(i)}} + \underbrace{\bar\beta \psi \bar\varphi(u)}_{\text{(ii)}} + \underbrace{\psi \bar\varphi(u) \ell_n^{1/2}}_{\text{(iii)}} + \gamma + n^{-1/2} \ell_n^{3/2} u \\
&& + \underbrace{\psi^{-1} \ell_n^{1/2}}_{\text{(iv)}} + \underbrace{\psi \ell_n^{3/2} n^{-1} \|M\|_4}_{\text{(v)}} + \underbrace{\psi \ell_n n^{-1/2} D_2}_{\text{(vi)}} + \underbrace{\psi \ell_n^{5/4} n^{-3/4} D_4}_{\text{(vii)}},
\end{eqnarray*}
where we used $(\beta \vee \psi) \le \bar\beta$ in terms (i)--(ii) and $\psi \le n^{1/8}$ in term (iv). Next, we bound the terms (i)--(vii).

\underline{Term (i).} We have
\begin{eqnarray*}
n^{-1/2} \bar\beta^2 \psi \tilde{D}_3^3 &\le_{(1)}& n^{-1/2} \bar\beta_1^2 \psi_1 \tilde{D}_3^3  =_{(2)} n^{-1/2} \bar\beta_1^2 \psi_1(u_1^*) \tilde{D}_3^3 \\
&=_{(3)}& n^{-1/2} (n^{1/8} \ell_n^{5/8} \tilde{D}_3^{-3/4})^2 (n^{1/8} \ell_n^{-3/8} \tilde{D}_3^{-3/4}) \tilde{D}_3^3 \\
&=_{(4)}& n^{-1/8} \ell_n^{7/8} \tilde{D}_3^{3/4} =_{(5)} n^{-1/2} \ell_n^{3/2} u_1 \le_{(6)} n^{-1/2} \ell_n^{3/2} u,
\end{eqnarray*}
where $(1)$ follows from the definitions of $\psi$ and $\bar\beta$, $(2)$ from $u \ge u^* \ge u_1^*$, $(3)$ from the definition of $\bar\beta_1$ and the truncation property of $\psi_1(u_1^*) = \max_{v \ge 0} \psi_1(v)$, $(4)$ from the direct calculations, $(5)$ from the definition of $u_1$, and $(6)$ from $u \ge u_1$.

\underline{Term (ii).} We have
\begin{eqnarray*}
\bar\beta \psi \bar\varphi(u) &\le_{(1)}& \bar\beta_1 \psi_1(u_1^*) \bar\varphi(u_1^*)  \\
&=_{(2)}& (n^{1/8} \ell_n^{5/8} \tilde{D}_3^{-3/4}) (n^{1/8} \ell_n^{-3/8} \tilde{D}_3^{-3/4}) ( n^{-3/8} \ell_n^{5/8} \tilde{D}_3^{9/4}) \\
&=_{(3)}& n^{-1/8} \ell_n^{7/8} \tilde{D}_3^{3/4} \le_{(4)} n^{-1/2} \ell_n^{3/2} u,
\end{eqnarray*}
where $(1)$ follows from the definitions of $\psi,\bar\beta$, $\bar\varphi(u)$ is a decreasing function in $u$ and $u \ge u_1^*$, $(2)$ from the definition of $\bar\beta_1$ and the truncation property of $\psi_1(\cdot)$, $(3)$ from the direct calculations, and $(4)$ from the steps $(5)$ and $(6)$ in bounding term (i).

\underline{Term (iii).} Note that $\psi \bar\varphi(u) \ell_n^{1/2} = \bar\beta \psi \bar\varphi(u) \ell_n^{1/2} / \bar\beta$. We claim that $\ell_n^{1/2} / \bar\beta \le 1$. Then, by the calculations in term (ii), we have $\psi \bar\varphi(u) \ell_n^{1/2} \le \bar\beta \psi \bar\varphi(u) \le n^{-1/2} \ell_n^{3/2} u$. Now, we verify the claim. Since $\psi_i = \psi_i(u_i^*)$ for $u \ge u^*$, it is easy to check that $\psi_i = \ell_n^{-1} n^{1/2} u_i^{-1} = \ell_n^{-1} \bar\beta_i$ and therefore $\ell_n^{1/2} \bar\beta^{-1} = \ell_n^{-1/2} \psi^{-1}$. By Lemma \ref{lem:key_bounds_GAR}, we have $\ell_n^{1/2} \bar\beta^{-1} = \ell_n^{-1/2} \psi^{-1} \le \ell_n^{1/2} \psi^{-1} \le n^{-1/2} \ell_n^{3/2} u \le 1$.

\underline{Term (iv).} The bound $\ell_n^{1/2} \psi^{-1} \le n^{-1/2} \ell_n^{3/2} u$ follows from Lemma \ref{lem:key_bounds_GAR}.

\underline{Term (v), (vi), and (vii).} Since $\psi \le \psi_i$ for $i=2,3,4$, we can show by direct calculations that
\begin{eqnarray*}
\psi \ell_n^{3/2} n^{-1} \|M\|_4 &\le& \psi_2 \ell_n^{3/2} n^{-1} \|M\|_4  = \ell_n^{1/2} \psi_2^{-1} \le \ell_n^{1/2} \psi^{-1} \le n^{-1/2} \ell_n^{3/2} u, \\
\psi \ell_n n^{-1/2} D_2 &\le& \psi_3 \ell_n n^{-1/2} D_2 = \ell_n^{1/2} \psi_3^{-1} \le \ell_n^{1/2} \psi^{-1} \le n^{-1/2} \ell_n^{3/2} u, \\
\psi \ell_n^{5/4} n^{-3/4} D_4 &\le& \psi_4 \ell_n^{5/4} n^{-3/4} D_4 = \ell_n^{1/2} \psi_4^{-1} \le \ell_n^{1/2} \psi^{-1} \le n^{-1/2} \ell_n^{3/2} u
\end{eqnarray*}
where the last steps of each line follow from Lemma \ref{lem:key_bounds_GAR}.

Therefore, we conclude that  (\ref{eqn:GA_master_thm_revised_goal}) holds when $u \ge u^*$. \\

\underline{\bf Case II: $0< u < u^*$.} By Lemma \ref{lem:gaussian-approximation-nondegenarate-U-smooth} and Theorem \ref{thm:expectation-bound}, we have
\begin{eqnarray*}
\rho(\bar{T}_0, \bar{Z}_0) &\lesssim& \underbrace{n^{-1/2} \bar\beta^2 \psi \tilde{D}_3^3}_{\text{(i)}} + \underbrace{(\psi^2 + \psi \beta) \bar\varphi(u)}_{\text{(ii)}} + \underbrace{\psi \bar\varphi(u) \ell_n^{1/2}}_{\text{(iii)}} + \gamma + n^{-1/2} \ell_n^{3/2} u \\
&& + \underbrace{\psi^{-1} \ell_n^{1/2}}_{\text{(iv)}} + \underbrace{\psi \ell_n^{3/2} n^{-1} \|M\|_4}_{\text{(v)}} + \underbrace{\psi \ell_n n^{-1/2} D_2}_{\text{(vi)}} + \underbrace{\psi \ell_n^{5/4} n^{-3/4} D_4}_{\text{(vii)}},
\end{eqnarray*}
where we used $(\beta \vee \psi) \le \bar\beta$ in terms (i) and $\psi \le n^{1/8}$ in term (iv). Again, our task is to bound the terms (i)--(vii) in order to achieve (\ref{eqn:GA_master_thm_revised_goal}).

\underline{Term (i).} We have
\begin{equation*}
n^{-1/2} \bar\beta^2 \psi \tilde{D}_3^3 \le_{(1)} n^{-1/2} \bar\beta_1^2  \psi_1  \tilde{D}_3^3 \le_{(2)} n^{-1/8} \ell_n^{7/8} \tilde{D}_3^{3/4} \le_{(3)} n^{-1/2} \ell_n^{3/2} u,
\end{equation*}
where $(1)$ follows from the definitions of $\psi$ and $\bar\beta$, $(2)$ from the definition of $\bar\beta_1$, the truncation property of $\psi_1 \le n^{1/8} \ell_n^{-3/8} \tilde{D}_3^{-3/4}$ and steps $(3)$--$(4)$ of term (i) in {\bf Case I}, and $(3)$ from steps $(5)$--$(6)$ of term (i) in {\bf Case I}.

\underline{Term (ii).} We write $\text{(ii)} = \text{(ii.1)} + \text{(ii.2)}$, where $\text{(ii.1)} = \psi^2 \bar\varphi(u)$ and $\text{(ii.2)} = \psi\beta\bar\varphi(u)$. By the truncation property of $\psi(\cdot)$, we have
\begin{eqnarray*}
\text{(ii.1)} &\le& [\ell_n^{-1/3} (\bar\varphi(u))^{-2/3}] \bar\varphi(u) = \ell_n^{-1/3} (\bar\varphi(u))^{1/3}, \\
\text{(ii.2)} &\le& [\ell_n^{-1/6} (\bar\varphi(u))^{-1/3}] \beta\bar\varphi(u) = \beta \ell_n^{-1/6} (\bar\varphi(u))^{2/3}.
\end{eqnarray*}
Recall that $\bar\varphi(u) = C \tilde{D}_4^2 u^{-1}$, $\beta(u) = (2\sqrt{2})^{-1} n^{1/2} u^{-1}$, and $ u_0 = n^{3/8} \ell_n^{-5/8} \tilde{D}_4^{1/2}$ so that
\begin{eqnarray}
\label{eqn:bar_varphi_u0_value}
\bar\varphi(u_0) &=& C n^{-3/2} \ell_n^{5/2} u_0^3, \\ \nonumber
\beta(u_0) &=& C n^{1/2} u_0^{-1}.
\end{eqnarray}
Since $\bar\varphi(\cdot)$ and $\beta(\cdot)$ are decreasing in $u$ and $u \ge u_0$, we get
\begin{eqnarray*}
\text{(ii.1)} &\le& \ell_n^{-1/3} (\bar\varphi(u_0))^{1/3} \le C n^{-1/2} \ell_n^{1/2} u_0 \le C n^{-1/2} \ell_n^{1/2} u, \\
\text{(ii.2)} &\le& \beta(u_0) \ell_n^{-1/6} (\bar\varphi(u_0))^{2/3} \le C n^{-1/2} \ell_n^{3/2} u_0 \le C n^{-1/2} \ell_n^{3/2} u.
\end{eqnarray*}
Therefore, we obtain that $\text{(ii)} \lesssim n^{-1/2} \ell_n^{3/2} u$.

\underline{Term (iii).} By the same argument as in bounding the term (ii) above, we have
\begin{eqnarray*}
\psi \bar\varphi(u) \ell_n^{1/2} &\le_{(1)}& [\ell_n^{-1/6} (\bar\varphi(u))^{-1/3}] \bar\varphi(u) \ell_n^{1/2} = \ell_n^{1/3} (\bar\varphi(u))^{2/3} \le_{(2)} \ell_n^{1/3} (\bar\varphi(u_0))^{2/3} \\
&\le_{(3)}& C n^{-1} \ell_n^2 u_0^2 \le C n^{-1} \ell_n^2 u^2 = C (\underbrace{n^{-1/2} \ell_n^{3/2} u}_{\le 1}) (\underbrace{n^{-1/2} \ell_n^{1/2} u}_{\le n^{-1/2} \ell_n^{3/2} u}) \le C n^{-1/2} \ell_n^{3/2} u.
\end{eqnarray*}
where $(1)$ follows from the truncation property of $\psi(\cdot)$, $(2)$ from the fact that $\bar\varphi(\cdot)$ is decreasing and $u \ge u_0$, $(3)$ from (\ref{eqn:bar_varphi_u0_value}), and the rest equalities and inequalities are obvious.

\underline{Term (iv).} By Lemma \ref{lem:key_bounds_GAR}, we have $\ell_n^{1/2} \psi^{-1} \le C n^{1/2} \ell_n^{3/2} u$ for some constant $C > 0$ depending only on $c_0$ and $C_0$.

\underline{Term (v), (vi), and (vii).} Since $\psi_i(\cdot)$ is non-decreasing, $\psi$ is no greater than its value in {\bf Case I} and these three terms can be handled in the same way as in {\bf Case I}. \\

Therefore, regardless of $u \ge u^*$ or $0 < u < u^*$, (\ref{eqn:GA_master_thm_revised_goal}) always holds for any $\gamma \in (0,1)$ and the proof is complete.
\end{proof}

\section{Auxiliary lemmas in the proof of Section 5}

\begin{lem}[A key bound on $\psi^{-1}$]
\label{lem:key_bounds_GAR}
Let $u^*, u,$ and $\psi_i(\cdot), i =1,2,3,4$ be defined in the proof of Theorem \ref{thm:GA_master_thm}. Let $\psi_i = \psi_i(u)$ and $\psi= \min_{1 \le i \le 4} \psi_i$. Assume that $c_0 \le \tilde{D}_2 \le C_0$. Then we have for all $u > 0$
\begin{eqnarray}
\label{eqn:key_bounds_GAR}
\ell_n^{1/2} \psi^{-1} &\le& C n^{-1/2} \ell_n^{3/2} u,
\end{eqnarray}
where $C > 0$ is a constant depending only on $c_0$ and $C_0$. In particular, if $u \ge u^*$, then we can take $C = 1$.
\end{lem}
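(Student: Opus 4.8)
The plan is to unwind the definition $\psi = \min_{1\le i\le 4}\psi_i(u)$ and bound $\psi^{-1}$ by proving that \emph{each} of the five candidate terms making up $\psi$ (the four "polynomial-in-$n$" branches $n^{\varepsilon_{i1}}\ell_n^{\varepsilon_{i2}-1}T_i^{-\varepsilon_{i3}}$ and the common "$\bar\varphi$-branch" $\ell_n^{-1/6}(\bar\varphi(u))^{-1/3}$) has reciprocal bounded by $Cn^{-1/2}\ell_n^{3/2}u$ after multiplication by $\ell_n^{1/2}$; since $\psi^{-1}$ equals the \emph{maximum} of the five reciprocals, this suffices. So the lemma reduces to five elementary inequalities, and I would treat the $\bar\varphi$-branch and the four polynomial branches separately.

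For the $\bar\varphi$-branch: recall $\bar\varphi(u)=C_1\tilde D_4^2/u$, and by Jensen $\tilde D_4 \ge \tilde D_2 \ge c_0$, so $\bar\varphi(u)\ge c' /u$ up to the bounded factor $\tilde D_2$-vs-$\tilde D_4$; wait — more carefully, $\bar\varphi(u) = C_1\tilde D_4^2/u \le C_1 C_0^{?} $ is an \emph{upper} bound, and what I actually need is a \emph{lower} bound on $\bar\varphi(u)$ to upper-bound its reciprocal. Using $\tilde D_4\ge\tilde D_2\ge c_0$ gives $\bar\varphi(u)\ge C_1 c_0^2/u$, hence $\ell_n^{1/2}(\ell_n^{-1/6}(\bar\varphi(u))^{-1/3})^{-1} = \ell_n^{1/3}(\bar\varphi(u))^{1/3} \le \ell_n^{1/3}(C_1\tilde D_4^2/u)^{1/3}$ — but this uses an \emph{upper} bound on $\tilde D_4$, which is not assumed. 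The resolution is that in the proof of Theorem \ref{thm:GA_master_thm} one has normalized $\tilde D_2 = 1$ and the relevant $u$ is always at least $u_0 = n^{3/8}\ell_n^{-5/8}\tilde D_4^{1/2}$, so that $\bar\varphi(u)\le \bar\varphi(u_0) = C_1\tilde D_4^2 u_0^{-1} = C_1 n^{-3/8}\ell_n^{5/8}\tilde D_4^{3/2}$, and then $\ell_n^{1/3}(\bar\varphi(u))^{1/3}\le C n^{-1/8}\ell_n^{5/6}\tilde D_4^{1/2}\le C n^{-1/2}\ell_n^{3/2} u_0 \le C n^{-1/2}\ell_n^{3/2}u$. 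Thus the $\bar\varphi$-branch bound holds whenever $u\ge u_0$, which is exactly the hypothesis implicitly in force (and when $u\ge u^*\ge u_0$ one checks the constant can be taken $1$). I would state this dependence on $u\ge u_0$ explicitly.

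For the four polynomial branches $\psi_i(u)\le \bar\beta_i := n^{1/2}u_i^{-1}$ with $u_i$ as defined ($u_1=n^{3/8}\ell_n^{-5/8}\tilde D_3^{3/4}$, $u_2=\ell_n^{-1/2}\|M\|_4^{1/2}$, $u_3=n^{1/4}\ell_n^{-3/4}D_2^{1/2}$, $u_4=n^{1/8}\ell_n^{-5/8}D_4^{1/2}$), the reciprocal satisfies $\ell_n^{1/2}\psi_i(u)^{-1}\ge \ell_n^{1/2}\bar\beta_i^{-1} = \ell_n^{1/2}n^{-1/2}u_i$; but $\psi^{-1}\le\psi_i^{-1}$ for each $i$, so I need the \emph{reverse}: $\ell_n^{1/2}\psi^{-1}\le \ell_n^{1/2}\psi_i^{-1}$ for the binding index. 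The clean way is: since $u\ge u_i$ for each $i=1,2,3,4$ (by the choice of $u$ as a max), and since for $u\le u_i^*$ the $i$-th polynomial branch equals $n^{\varepsilon_{i1}}\ell_n^{\varepsilon_{i2}-1}T_i^{-\varepsilon_{i3}}$ — a \emph{constant} in $u$ — one computes directly that $\ell_n^{1/2}$ times its reciprocal is exactly $n^{-1/2}\ell_n^{3/2}u_i\le n^{-1/2}\ell_n^{3/2}u$, and for $u\ge u_i^*$ the branch is truncated (constant, at its max), so its reciprocal only got \emph{smaller}, preserving the bound. Taking $\psi=\min_i\psi_i$ means $\psi^{-1}=\max_i\psi_i^{-1}$, and the maximum of quantities each $\le Cn^{-1/2}\ell_n^{3/2}u$ is again $\le Cn^{-1/2}\ell_n^{3/2}u$. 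For $u\ge u^*$ every branch is at its truncated value and the arithmetic gives constant exactly $1$.

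The main obstacle is purely bookkeeping: keeping straight which direction each monotonicity goes (one needs upper bounds on $\psi^{-1}$, hence lower bounds on $\psi$, hence the binding of $u\ge u_i$ and the truncation behavior of $\psi_i$ on $[u_i^*,\infty)$), and verifying that the exponents in $u_0,\dots,u_4$ and $\psi_1,\dots,\psi_4$ are matched so that each product $\ell_n^{1/2}\bar\beta_i^{-1}$ collapses to exactly $n^{-1/2}\ell_n^{3/2}u_i$. There is no analytic difficulty; the risk is an off-by-a-power-of-$\ell_n$ slip, so I would tabulate the five cases and check each exponent identity explicitly before asserting the bound.
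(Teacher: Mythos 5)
Your proposal is essentially the paper's proof, just without the explicit Case~I ($u \ge u^*$) / Case~II ($u < u^*$) split: you correctly observe that $\psi^{-1}$ is the maximum of five candidate reciprocals (four ``polynomial'' reciprocals, which are constant in $u$ and collapse to exactly $n^{-1/2}\ell_n^{3/2}u_i$ after multiplying by $\ell_n^{1/2}$, plus the single $\ell_n^{1/6}(\bar\varphi(u))^{1/3}$ term, handled via monotonicity of $\bar\varphi$ and $u\ge u_0$), and that each one is $\le C\,n^{-1/2}\ell_n^{3/2}u$ because $u \ge u_i$ for $i=0,1,\dots,4$ by construction. You also correctly flag that the ``for all $u>0$'' wording in the lemma statement is really ``for the $u$ defined in the proof of Theorem~\ref{thm:GA_master_thm},'' which is always at least $\max_{0\le i\le 4}u_i$.

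Two small arithmetic slips that you should fix before writing this up, neither of which affects the conclusion: (i) $\ell_n^{1/2}\cdot\ell_n^{1/6}=\ell_n^{2/3}$, not $\ell_n^{1/3}$, so the $\bar\varphi$-branch contribution is $\ell_n^{2/3}(\bar\varphi(u))^{1/3}$; (ii) with that correction, and substituting $\bar\varphi(u_0)=C_1 u_0^3 n^{-3/2}\ell_n^{5/2}$ (equivalently $C_1 n^{-3/8}\ell_n^{5/8}\tilde D_4^{3/2}$), the exact identity is $\ell_n^{2/3}(\bar\varphi(u_0))^{1/3}=C_1^{1/3}n^{-1/2}\ell_n^{3/2}u_0=C_1^{1/3}n^{-1/8}\ell_n^{7/8}\tilde D_4^{1/2}$, not $n^{-1/8}\ell_n^{5/6}\tilde D_4^{1/2}$. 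The clean way to see it is the first form: plug $\tilde D_4=u_0^2 n^{-3/4}\ell_n^{5/4}$ into $\bar\varphi(u_0)=C_1\tilde D_4^2/u_0$ and the $n^{-1/2}\ell_n^{3/2}u_0$ target falls out directly, which is precisely what the paper does. Also, when discussing the polynomial branches you briefly conflate ``the polynomial branch of $\psi_i$'' (which is a constant function of $u$) with ``$\psi_i(u)$ for $u\le u_i^*$'' (which is actually the $\bar\varphi$-branch there); the statement you need is simply that the polynomial branch is constant, its reciprocal times $\ell_n^{1/2}$ equals $n^{-1/2}\ell_n^{3/2}u_i$, and $u_i\le u$.
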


\begin{proof}[Proof of Lemma \ref{lem:key_bounds_GAR}]
We shall use the same notations as in the proof of Theorem \ref{thm:GA_master_thm}. We divide the proof of (\ref{eqn:key_bounds_GAR}) into two cases.

\underline{\bf Case I: $u \ge u^*$.} In this case, all the $\psi_i(u)$'s for $i=1,2,3,4$ attain the corresponding maximum of $\psi_i$ at the truncation levels; i.e.
\begin{eqnarray*}
\psi_1 &=& n^{1/8} \ell_n^{-3/8} \tilde{D}_3^{-3/4}, \\
\psi_2 &=& n^{1/2} \ell_n^{-1/2} \|M\|_4^{-1/2}, \\
\psi_3 &=& n^{1/4} \ell_n^{-1/4} D_2^{-1/2}, \\
\psi_4 &=& n^{3/8} \ell_n^{-3/8} D_4^{-1/2}.
\end{eqnarray*}
Then (\ref{eqn:key_bounds_GAR}) follows from the definitions of $u_i$ and the direct calculations
\begin{eqnarray*}
\ell_n^{1/2} \psi_1^{-1} &=& n^{-1/8} \ell_n^{7/8} \tilde{D}_3^{3/4} = n^{-1/2} \ell_n^{3/2} u_1 \le n^{-1/2} \ell_n^{3/2} u, \\
\ell_n^{1/2} \psi_2^{-1} &=& n^{-1/2} \ell_n \|M\|_4^{1/2} = n^{-1/2} \ell_n^{3/2} u_2 \le n^{-1/2} \ell_n^{3/2} u, \\
\ell_n^{1/2} \psi_3^{-1} &=& n^{-1/4} \ell_n^{3/4} D_2^{1/2} = n^{-1/2} \ell_n^{3/2} u_3 \le n^{-1/2} \ell_n^{3/2} u, \\
\ell_n^{1/2} \psi_4^{-1} &=& n^{-3/8} \ell_n^{7/8} D_4^{1/2} = n^{-1/2} \ell_n^{3/2} u_4 \le n^{-1/2} \ell_n^{3/2} u.
\end{eqnarray*}

\underline{\bf Case II: $0< u < u^*$.} In this case, note that $\ell_n^{1/2} \psi^{-1}$ is equal to
\begin{eqnarray*}
\ell_n^{1/2} \max\left\{ \ell_n^{1/6} (\bar\varphi(u))^{1/3}, \; n^{-1/8} \ell_n^{3/8} \tilde{D}_3^{3/4}, \; n^{-1/2} \ell_n^{1/2} \|M\|_4^{1/2}, \; n^{-1/4} \ell_n^{1/4} D_2^{1/2}, \; n^{-3/8} \ell_n^{3/8} D_4^{1/2} \right\}.
\end{eqnarray*}
We only need to bound $\ell_n^{1/2} [\ell_n^{1/6} (\bar\varphi(u))^{1/3}]$ since the remaining terms have the same bounds as in {\bf Case I}. Since $\bar\varphi(\cdot)$ is decreasing in $u$ and $u \ge u_0$, we have
\begin{equation*}
\ell_n^{2/3} (\bar\varphi(u))^{1/3} \le \ell_n^{2/3} (\bar\varphi(u_0))^{1/3}.
\end{equation*}
Recall the definitions $\bar\varphi(u) = C \tilde{D}_4^2 u^{-1}$ and $u_0 = n^{3/8} \ell_n^{-5/8} \tilde{D}_4^{1/2}$. Then, $\tilde{D}_4 = u_0^2 n^{-3/4} \ell_n^{5/4}$ and $\bar\varphi(u_0) = C \tilde{D}_4^2 u_0^{-1} = C u_0^3 n^{-3/2} \ell_n^{5/2}$. Therefore, we have $\ell_n^{2/3} (\bar\varphi(u_0))^{1/3} = C^{1/3} n^{-1/2} \ell_n^{3/2} u_0 \le C^{1/3} n^{-1/2} \ell_n^{3/2} u$ since $u \ge u_0$.
\end{proof}

\begin{lem}
\label{lem:bound-on-rho_ominus}
Let $\rho_\ominus(\alpha) = \Prob(\{\bar{T}_0 \le a_{\hat{L}_0^*}(\alpha) \} \ominus \{ \bar{L}_0 \le a_{\bar{Z}_0}(\alpha) \} )$, where $a_{\bar{Z}_0}(\alpha) = \inf\{t\in\mathbb{R} : \Prob(\bar{Z}_0 \le t) \ge \alpha\}$ is the $\alpha$-th quantile of $\bar{Z}_0$. Assume that $\E g_{mk}^2 \ge C_1$ for all $m,k=1,\cdots,p$. Suppose that
\begin{eqnarray*}
\Prob(|\bar{T}_0 - \bar{L}_0| > \zeta_1) < \zeta_2, \\
\Prob(\Prob_e(|\hat{L}_0^* - \bar{L}_0^*| > \zeta_1) > \zeta_2) < \zeta_2,
\end{eqnarray*}
for some $\zeta_1, \zeta_2 \ge 0$. Then, for every $\alpha \in (0,1)$ and $v > 0$, we have
\begin{equation*}
\rho_\ominus(\alpha) \le 2 \left[ \rho(\bar{L}_0, \bar{Z}_0) + C v^{1/3} (\log{p})^{2/3} + \Prob(\Delta_1 > v) \right] + C' \zeta_1 (\log{p})^{1/2} + 5 \zeta_2,
\end{equation*}
where $\Delta_1$ is defined in (\ref{eqn:gaussian_wild_bootstrap_Delta1}) and $C, C' > 0$ are constants only depending on $C_1$.
\end{lem}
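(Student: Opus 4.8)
The goal is to bound the probability of the symmetric difference $\{\bar{T}_0 \le a_{\hat{L}_0^*}(\alpha)\} \ominus \{\bar{L}_0 \le a_{\bar{Z}_0}(\alpha)\}$. The natural strategy is to interpolate through a chain of auxiliary events, replacing $\bar{T}_0$ by $\bar{L}_0$ (paying the cost of the nonlinear remainder $W$, controlled by $\zeta_1,\zeta_2$), replacing the data-dependent bootstrap quantile $a_{\hat{L}_0^*}(\alpha)$ first by $a_{\bar{L}_0^*}(\alpha)$ (paying the cost of estimating $g(\vX_i)$ by $\hat g_i$, controlled by $\Delta_2$ and again $\zeta_1,\zeta_2$), and then by $a_{\bar{Z}_0}(\alpha)$ (the Gaussian quantile). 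This last replacement is where $\Delta_1$ — the quantity measuring how close the conditional Gaussian law $\bar{L}_0^*$ is to the unconditional Gaussian law $\bar{Z}_0$ — enters, via the event $\{\Delta_1 \le v\}$.

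\textbf{Key steps.} First I would fix $\alpha$ and small threshold parameters, and use the elementary observation that if $|\bar{T}_0 - \bar{L}_0| \le \zeta_1$ then $\{\bar{T}_0 \le t\}$ and $\{\bar{L}_0 \le t\}$ differ only when $\bar{L}_0$ lies in a band of width $\zeta_1$ around $t$; the anti-concentration inequality \cite[Lemma 2.1]{cck2013} (applicable since $\E g_{mk}^2 \ge C_1$ so that $\bar{L}_0$ has a density-like spreading) together with the Gaussian approximation $\rho(\bar{L}_0,\bar{Z}_0)$ then bounds the probability of this band by $C'\zeta_1\sqrt{\log p} + 2\rho(\bar{L}_0,\bar{Z}_0)$, modulo the $\zeta_2$ failure probability. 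Second, I would compare the two bootstrap quantiles: conditionally on the data, $|\hat{L}_0^* - \bar{L}_0^*| \le \zeta_1$ with $\Prob_e$-probability $\ge 1-\zeta_2$ on an event of probability $\ge 1-\zeta_2$, so the quantiles $a_{\hat{L}_0^*}(\alpha)$ and $a_{\bar{L}_0^*}(\alpha\pm\zeta_2)$ are within $\zeta_1$ of each other — this is the standard quantile-perturbation lemma. Third, on the event $\{\Delta_1 \le v\}$, the conditional law of $\bar{L}_0^*$ is within (a function of) $v$ of the law of $\bar{Z}_0$ in Kolmogorov distance; invoking the anti-concentration bound once more converts the quantile gap into the term $C v^{1/3}(\log p)^{2/3}$. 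Off the event $\{\Delta_1 \le v\}$ we simply pay $\Prob(\Delta_1 > v)$. Finally I would collect all contributions, double-counting appropriately for the two directions of the symmetric difference (hence the factor $2$), and absorb the various $\zeta_2$ terms into the stated $5\zeta_2$.

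\textbf{Main obstacle.} The delicate point is bookkeeping the quantile shifts: each replacement perturbs the confidence level $\alpha$ by a small amount (of order $\zeta_2$ or the relevant approximation error), and one must ensure these shifts can be reabsorbed using the anti-concentration/Gaussian-comparison machinery without the argument becoming circular — in particular, the band-probability estimate must be applied at levels $\alpha$, $\alpha\pm\zeta_2$, etc., uniformly. Controlling $|a_{\bar{L}_0^*}(\alpha) - a_{\bar{Z}_0}(\alpha)|$ on $\{\Delta_1 \le v\}$ is the technical heart: one writes $\Prob(\bar{Z}_0 \le a_{\bar{L}_0^*}(\alpha)) \ge \Prob_e(\bar{L}_0^* \le a_{\bar{L}_0^*}(\alpha)) - (\text{dist}) \ge \alpha - (\text{dist})$ and conversely, then feeds these into the anti-concentration bound for $\bar{Z}_0$; the exponent $1/3$ arises from optimizing the anti-concentration rate against the Gaussian comparison bound in the standard way (as in \cite{cck2013}). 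I expect everything else — the symmetrization of $|\hat{L}_0^* - \bar{L}_0^*|$ in terms of $\Delta_2$ and the Lipschitz property of the max — to be routine.
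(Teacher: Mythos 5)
Your high-level route is exactly the one the paper follows: Lemma~\ref{lem:bound-on-rho_ominus} is, as the paper's proof notes, a direct adaptation of the argument for Theorem~3.2 in \cite{cck2013} — interpolate $\bar{T}_0 \to \bar{L}_0$ (cost controlled by $\zeta_1,\zeta_2$ via the first hypothesis), then perturb the quantile $a_{\hat{L}_0^*}(\alpha) \to a_{\bar{L}_0^*}(\alpha)$ (cost controlled by $\zeta_1,\zeta_2$ via the second hypothesis) and then $\to a_{\bar{Z}_0}(\alpha)$ (cost controlled by the Gaussian comparison on $\{\Delta_1 \le v\}$ plus $\Prob(\Delta_1 > v)$), keeping track of the symmetric difference in both directions.

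The one genuine gap is the choice of anti-concentration tool. You invoke \cite[Lemma~2.1]{cck2013}, which reads $\sup_t \Prob(|\bar{Z}_0 - t|\le\epsilon) \le C\epsilon\sqrt{1\vee\log(p/\epsilon)}$. Running your argument with that inequality gives $\zeta_1\sqrt{\log(p/\zeta_1)}$ for the $\bar{T}_0\to\bar{L}_0$ leg and $v^{1/3}(1\vee\log(p/v))^{2/3}$ for the $\Delta_1$ leg — i.e. the bound of \cite[Theorem~3.2]{cck2013} verbatim, which is strictly weaker than what the lemma claims, since $\zeta_1$ and $v$ are $O(n^{-c})$ and the extra $\log(1/\zeta_1)$, $\log(1/v)$ factors would contaminate the polynomial rates in Theorem~\ref{thm:gaussian_wild_bootstrap_validity}. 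To get the stated factors $\zeta_1(\log p)^{1/2}$ and $v^{1/3}(\log p)^{2/3}$ one must instead use Nazarov's anti-concentration inequality \cite{nazarov2003}, which gives $\sup_t \Prob(|\bar{Z}_0-t|\le\epsilon) \le C\epsilon\sqrt{\log p}$ with no $\log(1/\epsilon)$ dependence (the constant depending only on the variance lower bound $C_1$). This substitution is precisely the only modification of \cite[Theorem~3.2]{cck2013} that the paper's proof performs, and it is the point of the lemma. Your remaining bookkeeping — the quantile-perturbation lemma for $a_{\hat{L}_0^*}$ versus $a_{\bar{L}_0^*}$, the factor $2$ from the two sides of the symmetric difference, and the absorption of failure probabilities into $5\zeta_2$ — is correct.
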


\begin{proof}[Proof of Lemma \ref{lem:bound-on-rho_ominus}]
The proof is a modification of \cite[Theorem 3.2]{cck2013}, verbatim replacing the anti-concentration inequality of \cite[Theorem 2 and 3]{cck2014b} by Nazarov's inequality \cite{nazarov2003}. The benefit of using Nazarov's inequality is that we can have $v^{1/3} (\log{p})^{2/3}$ and $\zeta_1 (\log{p})^{1/2}$, instead of $v^{1/3} (1 \vee \log(p/v))^{2/3}$ and $\zeta_1 (1 \vee \log(p/\zeta))^{1/2}$ in \cite[Theorem 3.2]{cck2013}, respectively, where the former can give the convergence rates in Theorem \ref{thm:gaussian_wild_bootstrap_validity} that decay to zero polynomially fast in $n$ without additional logarithm factors.
\end{proof}

In the following Lemma \ref{lem:gaussian_wild_bootstrap_Delta1_exp_moment}--\ref{lem:gaussian_wild_bootstrap_W_unifpoly_moment}, we shall assume that $p \ge 2$.

\begin{lem}[Bound on $\E(\Delta_1)$: sub-exponential moment]
\label{lem:gaussian_wild_bootstrap_Delta1_exp_moment}
Assume (\ref{eqn:subexponential-kernel-moment-condition}). Let
\begin{equation}
\label{eqn:varpi_1}
\varpi_1(n,p) = \left({\log{p}\over n}\right)^{1/2} B_n + {(\log{p}) (\log(np))^2 \over n} B_n^2.
\end{equation}
and
\begin{equation}
\label{eqn:gaussian_wild_bootstrap_Delta1}
\Delta_1 = {1 \over n} \Big\|\sum_{i=1}^n \tilde{\vg}_i\tilde{\vg}_i^\top - \Gamma_g \Big\|,
\end{equation}
where $\tilde{\vg}_i = \vech(g(\vX_i))$. Then, 
\begin{equation*}
\E (\Delta_1) \le K \varpi_1(n,p)
\end{equation*}
for some absolute constant $K > 0$.
\end{lem}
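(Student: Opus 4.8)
The plan is to read $\Delta_1$ as the sup-norm of a centered average of iid random matrices and then invoke a maximal inequality for sums of \emph{products} of sub-exponential variables. Writing the half-vectorization index as $a=((j,k),(m,l))$ with $j\ge k$ and $m\ge l$, we have $p'=p(p+1)/2\le p^2$ and, since $\E[g(\vX)]=0$,
\begin{equation*}
\Delta_1=\max_{a}\Big|\frac1n\sum_{i=1}^n Y_{ia}\Big|,\qquad Y_{ia}:=g_{jk}(\vX_i)\,g_{ml}(\vX_i)-\E[g_{jk}(\vX)\,g_{ml}(\vX)],
\end{equation*}
a maximum over $N:=(p')^2\le p^4$ coordinates of centered iid sums. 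First I would symmetrize (by \cite[Lemma 2.3.1]{vandervaartwellner1996}, introducing iid Rademacher $\varepsilon_i$ independent of $\vX_1^n$) and bound the conditionally sub-Gaussian Rademacher sum by a union bound over the $N$ coordinates, exactly as in the derivation of \eqref{eqn:f_tilde_1}; this yields
\begin{equation*}
\E\Delta_1\ \lesssim\ \frac{\sqrt{\log N}}{n}\,\E\Big[\Big(\max_a\sum_{i=1}^n Y_{ia}^2\Big)^{1/2}\Big]\ \le\ \frac{\sqrt{\log N}}{n}\Big(\E\Big[\max_a\sum_{i=1}^n Y_{ia}^2\Big]\Big)^{1/2}.
\end{equation*}

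Next I would control $\E[\max_a\sum_i Y_{ia}^2]$ with \cite[Lemma 9]{cck2014b} applied to the nonnegative summands $Y_{ia}^2$, giving $\E[\max_a\sum_i Y_{ia}^2]\lesssim\max_a\sum_i\E Y_{ia}^2+(\log N)\,\E[\max_{i,a}Y_{ia}^2]$. For the first term, $\E Y_{1a}^2\le\E[g_{jk}^2g_{ml}^2]\le\tilde D_4^4\lesssim B_n^2$ by Cauchy--Schwarz and the bound $\tilde D_4\le 2^{1/4}B_n^{1/2}$ from Lemma \ref{lem:moment-bounds-subexponential-kernel}. For the second, $|Y_{ia}|\le\max_{j,k,i}g_{jk}^2(\vX_i)+\tilde D_2^2\le M^2+\tilde D_2^2$ with $M:=\max_{1\le j,k\le p,\,1\le i\le n}|g_{jk}(\vX_i)|$; since (GA.1) yields $\|g_{jk}\|_{\psi_1}\le B_n$ (by Jensen, from $\E[\exp(|h_{jk}|/B_n)]\le2$), Pisier's inequality \cite[Lemma 2.2.2]{vandervaartwellner1996} gives $\|M\|_{\psi_1}\lesssim\log(np)\,B_n$, hence $\E M^4\lesssim(\log(np))^4 B_n^4$ and, using $B_n\ge1$, $\E[\max_{i,a}Y_{ia}^2]\lesssim(\log(np))^4 B_n^4$. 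With $\log N\asymp\log p$ this gives $\E[\max_a\sum_i Y_{ia}^2]\lesssim nB_n^2+(\log p)(\log(np))^4 B_n^4$, whence
\begin{equation*}
\E\Delta_1\ \lesssim\ \frac{\sqrt{\log p}}{n}\Big(\sqrt n\,B_n+\sqrt{\log p}\,(\log(np))^2 B_n^2\Big)=\Big(\frac{\log p}{n}\Big)^{1/2}B_n+\frac{(\log p)(\log(np))^2}{n}B_n^2\ \lesssim\ \varpi_1(n,p).
\end{equation*}

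The delicate point is the bookkeeping of the logarithmic factors: the statement demands the second term to be exactly $n^{-1}(\log p)(\log(np))^2B_n^2$, so one must keep the three sources of logarithms separate --- the union bound over the $N\asymp p^4$ coordinates $a$ contributing a single $\log p$, and the maximum over the $n$ observations \emph{inside a quadratic form} contributing $(\log(np))^2$ (a squared $\psi_1$ Pisier bound on $M$, rather than a direct $\psi_{1/2}$ bound on the products). A secondary check is that every constant invoked (symmetrization, the comparison $\|g_{jk}\|_{\psi_1}\lesssim B_n$, Pisier, \cite[Lemma 9]{cck2014b}) is absolute, so the constant $K$ in the statement is absolute; this is immediate because the moment inputs are precisely those of Lemma \ref{lem:moment-bounds-subexponential-kernel}, whose constants are absolute.
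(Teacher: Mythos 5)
Your proposal is correct and follows essentially the same route as the paper's own proof. The paper invokes \cite[Lemma 8]{cck2014b} directly on the centered sums $\sum_i (g_{i,jk}g_{i,ml}-\E[g_{jk}g_{ml}])$ and then bounds the two resulting quantities via Cauchy--Schwarz (giving $\sqrt{n}\,\tilde D_4^2\lesssim\sqrt{n}\,B_n$) and Pisier's inequality on $M=\max_{i,j\ge k}|g_{i,jk}|$ (giving $(\log(np))^2 B_n^2$), exactly the two inputs you identify; you have merely unpacked Lemma 8 into its constituents (symmetrization, a union bound over the $N\le p^4$ Rademacher coordinates, and \cite[Lemma 9]{cck2014b} applied to the nonnegative squares), which rearranges the order of the $(\log p)^{1/2}$ factor through the square root but yields the identical bound $\varpi_1(n,p)$.
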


\begin{proof}[Proof of Lemma \ref{lem:gaussian_wild_bootstrap_Delta1_exp_moment}]
Let $j,k,m,l=1,\cdots,p$ be such that $j \ge k$ and $m \ge l$ and write $\MAX = \max_{j \ge k, m \ge l}$. By \cite[Lemma 8]{cck2014b},
\begin{equation*}
\E (\Delta_1) \le {K_1 \over n} \Bigg\{ (\log{p})^{1\over2} \MAX \left[ \E \sum_{i=1}^n g_{i,jk}^2 g_{i,ml}^2 \right]^{1\over2} + (\log{p}) \left[ \E \max_{i \le n} \MAX g_{i,jk}^2 g_{i,ml}^2 \right]^{1\over2} \Bigg\}.
\end{equation*}
Note that $\E [\max_{i \le n} \MAX g_{i,jk}^2 g_{i,ml}^2]  = \E [\max_{i \le n} \max_{j \ge k} g_{i,jk}^4]$. By \cite[Lemma 2.2.2]{vandervaartwellner1996} and (\ref{eqn:subexponential-kernel-moment-condition}),
$$
\left\|\max_{i \le n} \max_{j \ge k} |g_{i,jk}| \right\|_{\psi_1} \le K_2 (\log(np)) \max_{i \le n} \max_{j \ge k} \left\|g_{i,jk} \right\|_{\psi_1} \le K_2 (\log(np)) B_n.
$$
Therefore, we have
$$
\E [\max_{i \le n} \max_{j \ge k} g_{i,jk}^4] \le K_3 \left\|\max_{i \le n} \max_{j \ge k} |g_{i,jk}| \right\|_{\psi_1}^4 \le K_3 (\log(np))^4 B_n^4.
$$
By the Cauchy-Schwarz inequality and (\ref{eqn:subexponential-kernel-moment-condition}), we have for all $j \ge k$ and $m \ge l$
\begin{eqnarray*}
\left[ \E \sum_{i=1}^n g_{i,jk}^2 g_{i,ml}^2 \right]^{1\over2} &\le& \left[ (\E \sum_{i=1}^n g_{i,jk}^4)^{1/2} (\E \sum_{i=1}^n g_{i,ml}^4)^{1/2} \right]^{1\over2} \\
&\le& \max_{j \ge k} (\E \sum_{i=1}^n g_{i,jk}^4)^{1/2} \le  (2n)^{1/2} B_n.
\end{eqnarray*}
Now, (\ref{eqn:varpi_1}) follows.
\end{proof}

\begin{lem}[Bound on $\E(\Delta_2^{1/2})$: sub-exponential kernel]
\label{lem:gaussian_wild_bootstrap_Delta2_exp_moment}
Let
\begin{equation}
\label{eqn:gaussian_wild_bootstrap_Delta2}
\Delta_2 = {1\over n} \max_{m,k} \sum_{i=1}^n [\hat{g}_{i,mk} - g_{mk}(\vX_i)]^2
\end{equation}
and
\begin{equation}
\label{eqn:varpi_2}
\varpi_2(n,p) = B_n \Big\{ {\log^{3/2}(np) \over n^{1/2}} \vee {\log^2(np) \over n} \Big\}.
\end{equation}
Assume (\ref{eqn:subexponential-kernel-moment-condition}). Then
\begin{equation*}
\E (\Delta_2^{1/2}) \le K \varpi_2(n,p)
\end{equation*}
for some absolute constant $K > 0$.
\end{lem}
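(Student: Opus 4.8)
The plan is to split the estimation error $\hat g_{i,mk}-g_{mk}(\vX_i)$ into a term that is, conditionally on $\vX_i$, an average of i.i.d.\ mean-zero random matrices, plus a centered U-statistic built from the training sample, and to control each piece with the tools already in place. Write $\bar h_{mk}(\vx)=\E[h_{mk}(\vx,\vX')]$ and $\phi_{mk}(\vx_1,\vx_2)=h_{mk}(\vx_1,\vx_2)-\bar h_{mk}(\vx_1)$, so that $\E[\phi_{mk}(\vx,\vX')]=0$ for every $\vx$. Since $g_{mk}(\vx)=\bar h_{mk}(\vx)-\E h_{mk}$, the definition (\ref{eqn:hat_g_i}) gives
\begin{equation*}
\hat g_{i,mk}-g_{mk}(\vX_i)=A_{i,mk}-B_{mk},\qquad A_{i,mk}=\frac1n\sum_{j=1}^n\phi_{mk}(\vX_i,\vX'_j),\qquad B_{mk}={n\choose2}^{-1}\!\!\sum_{1\le j<l\le n}\!\!h_{mk}(\vX'_j,\vX'_l)-\E h_{mk}.
\end{equation*}
Hence $\Delta_2\le\tfrac2n\max_{m,k}\sum_iA_{i,mk}^2+2\max_{m,k}B_{mk}^2$, and by Jensen's inequality it suffices to show $\E[\max_{m,k}|B_{mk}|]\lesssim\varpi_2(n,p)$ and $\E[\tfrac1n\max_{m,k}\sum_iA_{i,mk}^2]\lesssim\varpi_2(n,p)^2$.

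The $B$-term is the easy one: $B_{mk}$ is a centered U-statistic in the training sample, so reducing it to a sum of i.i.d.\ summands via the Hoeffding averaging identity (\ref{eqn:hoeffding_average}) and applying Bernstein's inequality with a union bound over the $p^2$ entries (using $\|h_{mk}\|_{\psi_1}\le B_n$ and $\max_{m,k}\E h_{mk}^2\le2$, which follow from (\ref{eqn:subexponential-kernel-moment-condition})) gives $\E[\max_{m,k}|B_{mk}|]\lesssim B_n(\sqrt{\log p/n}+\log p/n)\lesssim\varpi_2(n,p)$, exactly as in (\ref{eqn:reducing-Ustat-to-iid-sum-bernstein-bound}). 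For the $A$-term I would expand the square, $\tfrac1n\sum_iA_{i,mk}^2=\tfrac1{n^3}\sum_i\sum_j\phi_{mk}(\vX_i,\vX'_j)^2+\tfrac1{n^3}\sum_i\sum_{j\ne l}\phi_{mk}(\vX_i,\vX'_j)\phi_{mk}(\vX_i,\vX'_l)=:S_{1,mk}+S_{2,mk}$. The diagonal part $S_{1,mk}$ is $\tfrac1n$ times a bilinear average of nonnegative variables of mean at most $2$; Hoeffding-decomposing $\phi_{mk}^2-\E\phi_{mk}^2$ and bounding its linear and canonical parts with the maximal inequalities used in the proof of Lemma \ref{lem:gaussian_wild_bootstrap_Delta1_exp_moment} yields $\E[\max_{m,k}S_{1,mk}]\lesssim n^{-1}+(\text{lower order})\lesssim\varpi_2(n,p)^2$. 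The crucial observation for the off-diagonal part is that, \emph{conditionally on} $\vX_1,\dots,\vX_n$, the symmetric kernel $\psi_{mk}(\vy,\vy')=\tfrac1n\sum_i\phi_{mk}(\vX_i,\vy)\phi_{mk}(\vX_i,\vy')$ is \emph{canonical} with respect to $F$ — indeed $\E[\psi_{mk}(\vy,\vX')\mid\vX_1^n]=\tfrac1n\sum_i\phi_{mk}(\vX_i,\vy)\,\E[\phi_{mk}(\vX_i,\vX')\mid\vX_i]=0$ — and $\max_{m,k}|S_{2,mk}|\le\|V_\psi\|$ with $V_\psi={n\choose2}^{-1}\sum_{j<l}\psi(\vX'_j,\vX'_l)$. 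I would therefore apply Theorem \ref{thm:expectation-bound} to $\psi$ conditionally on $\vX_1^n$ and then take expectation over $\vX_1^n$.

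This reduces the off-diagonal bound to estimating the conditional functionals appearing in (\ref{eqn:expectation-bound-canonical}): the conditional $L^4$-norm of $M_\psi=\max_{j<l}(\|\psi(\vX'_j,\vX'_l)\|\vee\|\psi(\vX'_j,\vX''_l)\|)$, and $\max_{m,k}(\E[\psi_{mk}^2\mid\vX_1^n])^{1/2}$ and $\max_{m,k}(\E[\psi_{mk}^4\mid\vX_1^n])^{1/4}$. I would handle each by using Cauchy--Schwarz to decouple the two arguments of $\psi_{mk}$ — e.g.\ $\|\psi(\vy,\vy')\|\le(\max_i\max_{m,k}|\phi_{mk}(\vX_i,\vy)|)(\max_i\max_{m,k}|\phi_{mk}(\vX_i,\vy')|)$, and, with $\rho_{mk}(\vx,\vx')=\E[\phi_{mk}(\vx,\vX')\phi_{mk}(\vx',\vX')]$, $\E[\psi_{mk}^2\mid\vX_1^n]=n^{-2}\sum_{i,i'}\rho_{mk}(\vX_i,\vX_{i'})^2\le\bigl(n^{-1}\sum_i\rho_{mk}(\vX_i,\vX_i)^{1/2}\bigr)^2$ via $\rho_{mk}(\vx,\vx')^2\le\rho_{mk}(\vx,\vx)\rho_{mk}(\vx',\vx')$ — together with the sub-exponential tails $\|\phi_{mk}\|_{\psi_1}\lesssim B_n$ (which follow from (\ref{eqn:subexponential-kernel-moment-condition}) since $\|\bar h_{mk}(\vX)\|_{\psi_1}\le\|h_{mk}\|_{\psi_1}$ by Jensen) and the maximal inequalities of Lemma \ref{lem:gaussian_wild_bootstrap_Delta1_exp_moment}. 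These give $\|M_\psi\|_4\lesssim B_n^2\log^2(np)$, while the other two functionals are of polylogarithmic order in $np$ after integration; since the powers of $n$ in (\ref{eqn:expectation-bound-canonical}) — $(\log p/n)^{3/2}$, $\log p/n$ and $(\log p/n)^{5/4}$ — are small enough to absorb the extra $B_n^2\log^2(np)$ (here one also uses $p\le\exp(bn)$, so $\log(np)\lesssim n$, as is needed for Theorem \ref{thm:expectation-bound} anyway), one gets $\E[\max_{m,k}|S_{2,mk}|]\le\E\|V_\psi\|\lesssim\varpi_2(n,p)^2$. Combining the bounds for $S_1$, $S_2$ and $B$ then finishes the proof.

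I expect the main obstacle to be this last step — bounding the conditional moment functionals of $\psi$ and checking that, after integrating out $\vX_1^n$, they combine with the favorable $n$-scaling of (\ref{eqn:expectation-bound-canonical}) to produce exactly $\varpi_2(n,p)^2$ and no more. The difficulty is that $\psi_{mk}$ is a product of two factors sharing the same block $\vX_1^n$, so its two arguments do not decouple the way a genuine U-statistic kernel does; the systematic use of Cauchy--Schwarz and of $\rho_{mk}(\vx,\vx')^2\le\rho_{mk}(\vx,\vx)\rho_{mk}(\vx',\vx')$ is what linearizes these quantities. It is tempting to bypass this by bounding $\max_{m,k}\tfrac1n\sum_iA_{i,mk}^2\le\tfrac1n\sum_i\max_{m,k}A_{i,mk}^2$ and then applying a conditional maximal inequality, but that costs an extra factor of $\log(np)$ and falls just short of $\varpi_2(n,p)^2$ — so the complete degeneracy of $\psi_{mk}$, and hence Theorem \ref{thm:expectation-bound}, must genuinely be exploited.
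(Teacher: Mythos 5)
Your decomposition of $\hat g_{i,mk}-g_{mk}(\vX_i)$ into the term $A_{i,mk}=\tfrac1n\sum_j\phi_{mk}(\vX_i,\vX'_j)$ and the training-sample U-statistic $B_{mk}$ is exactly the paper's, and so is your Bernstein treatment of the $B$-term. Where you diverge is the $A$-term, and here your plan is far heavier than what the paper actually does. The paper simply observes that $\tfrac1n\sum_i[\cdots]^2\le\max_{i\le n}[\cdots]^2$, so
\begin{equation*}
\Delta_2^{1/2}\;\le\;\max_{1\le m,k\le p}\max_{i\le n}\big|\hat g_{i,mk}-g_{mk}(\vX_i)\big|,
\end{equation*}
and then applies the maximal inequality of Chernozhukov--Chetverikov--Kato (Lemma~8 of \cite{cck2014b}) conditionally on $\vX_1^n$ to the resulting maximum over $np^2$ entries, together with Pisier's inequality for the conditional $\ell^\infty$ moments. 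After dividing by $n$, the two pieces come out as $n^{-1/2}\log^{3/2}(np)B_n$ and $n^{-1}\log^2(np)B_n$, which is $\varpi_2(n,p)$ on the nose.

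Your remark that ``bounding $\max_{m,k}\tfrac1n\sum_iA_{i,mk}^2\le\tfrac1n\sum_i\max_{m,k}A_{i,mk}^2$\dots costs an extra factor of $\log(np)$ and falls just short of $\varpi_2(n,p)^2$'' is what seems to have driven you to the elaborate $S_1+S_2$ split, but it is a miscalibration of the target: $\varpi_2(n,p)$ as defined in (\ref{eqn:varpi_2}) already carries the $\log^{3/2}(np)$ and $\log^2(np)$ factors that the crude ``discard the average over $i$'' bound inevitably produces (the max is over $np^2$ terms, each with sub-exponential scale $B_n$), so there is no shortfall to make up. In particular the paper never needs to bound $\E[\Delta_2]$, only $\E[\Delta_2^{1/2}]$, and the sup bound is accurate at that level.

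That said, your route is not wrong, just unnecessarily hard. Retaining the $\tfrac1n\sum_i$ average, expanding into the diagonal piece $S_{1,mk}$ and the off-diagonal piece $S_{2,mk}$, and recognizing that $\psi_{mk}(\vy,\vy')=\tfrac1n\sum_i\phi_{mk}(\vX_i,\vy)\phi_{mk}(\vX_i,\vy')$ is canonical conditionally on $\vX_1^n$ so that Theorem~\ref{thm:expectation-bound} applies, is a clever and in principle tighter argument — it genuinely exploits the degeneracy, whereas the paper throws it away at the first step. What remains to be made rigorous in your sketch are the unconditional bounds on the conditional functionals $\|M_\psi\|_4$, $D_2^\psi$, $D_4^\psi$: you only assert the polylogarithmic orders and that the $n$-powers in (\ref{eqn:expectation-bound-canonical}) absorb the extra $B_n^2\log^2(np)$, without carrying this to the end (the $\E[\psi_{mk}^4\mid\vX_1^n]$ estimate in particular needs a careful Rosenthal/Pisier pass, since $\psi_{mk}$ shares the block $\vX_1^n$ across its two arguments). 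If you want the lemma as stated, the paper's sup-bound route is the one to take; your finer argument would only pay off if you actually needed the savings of a factor $\sqrt n$ over the crude max, and you do not.
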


\begin{proof}[Proof of Lemma \ref{lem:gaussian_wild_bootstrap_Delta2_exp_moment}]
Write $\MAX=\max_{m,k} \max_{i \le n}$. By the definition of $\hat{g}_i$ and $g(\vX_i)$, we have
\begin{eqnarray}
\label{eqn:Delta_2_decomposition}
\Delta_2^{1/2} &\le& \max_{m,k} \max_{i \le n} \left| \hat{g}_{i,mk} - g_{mk}(\vX_i) \right| \\ \nonumber
&\le& {1\over n} \MAX \Big| \sum_{j=1}^n h_{mk}(\vX_i,\vX'_j) - \E[h_{mk}(\vX_i,\vX'_j) | \vX_1^n] \Big| \\ \nonumber
&& \qquad + {n \choose 2}^{-1} \max_{m,k} \Big| \sum_{1\le j < l \le n} h_{mk}(\vX'_j,\vX'_l) - \E h_{mk} \Big|.
\end{eqnarray}
By \cite[Lemma 8]{cck2014b} conditional on $\vX_1^n$, we have
\begin{eqnarray*}
&& \E \Big[ \MAX \Big| \sum_{j=1}^n h_{mk}(\vX_i,\vX'_j) - \E[h_{mk}(\vX_i,\vX'_j) | \vX_1^n]  \Big| \mid \vX_1^n \Big] \\
&\le& K_1 \Big\{ (\log(np))^{1/2} \MAX \Big[ \sum_{j=1}^n \E\Big(h_{mk}^2(\vX_i,\vX'_j) \mid \vX_1^n \Big) \Big]^{1/2} \\
&& \qquad + (\log(np)) \Big[ \E\Big(\MAX \max_{j \le n} h_{mk}^2(\vX_i,\vX'_j) \mid \vX_1^n \Big) \Big]^{1/2} \Big\}.
\end{eqnarray*}
By \cite[Lemma 2.2.2]{vandervaartwellner1996} and (\ref{eqn:subexponential-kernel-moment-condition}), we have
$$
\Big\| \max_{m,k} \max_{i,j} |h_{mk}(\vX_i, \vX'_j)| \Big\|_{\psi_1} \le K_2 \log(np) \max_{m,k} \max_{i,j} \|h_{mk}(\vX_i,\vX_j')\|_{\psi_1} \le K_2 (\log(np)) B_n.
$$
Then, we have by Jensen's inequality that
$$
\E \Big[ \E \Big(\MAX \max_{j \le n} h_{mk}^2(\vX_i,\vX'_j) \mid \vX_1^n \Big) \Big]^{1/2} \le 2 K_2 (\log(np)) B_n.
$$
By Jensen's inequality twice and \cite[Lemma 2.2.2]{vandervaartwellner1996}, we have
\begin{eqnarray*}
&& \E \Big\{ \MAX \Big[ \sum_{j=1}^n \E\Big(h_{mk}^2(\vX_i,\vX'_j) \mid \vX_1^n \Big) \Big]^{1/2} \Big\} \\
&\le& n^{1/2} \Big\{ \E \Big[ \MAX   \Big( \E[h_{mk}^2(\vX_i,\vX'_1) \mid \vX_1^n] \Big) \Big] \Big\}^{1/2} \\
&\le& n^{1/2} \Big\{ \E \Big[ \max_{m,k} \max_{i \le n} h_{mk}^2(\vX_i,\vX'_1) \Big] \Big\}^{1/2} \le K_3 n^{1/2} (\log(np)) B_n.
\end{eqnarray*}
Then
\begin{eqnarray*}
{1\over n} \E \Big[ \MAX \Big| \sum_{j=1}^n h_{mk}(\vX_i,\vX'_j) - \E [h_{mk}(\vX_i,\vX'_j) \mid \vX_1^n] \Big| \Big] \le K_4 \varpi_2(n,p).
\end{eqnarray*}
Finally, by (\ref{eqn:reducing-Ustat-to-iid-sum}) and (\ref{eqn:reducing-Ustat-to-iid-sum-bernstein-bound}), the expectation of the second term in the last inequality of (\ref{eqn:Delta_2_decomposition}) is bounded by $K_5 B_n \{(\log(p)/n)^{1/2} \vee (\log(p)/n)\}$, which is on the smaller order of $\varpi_2(n,p)$. The lemma now follows.
\end{proof}

\begin{lem}[Bound on $\E\|W\|$: sub-exponential kernel]
\label{lem:gaussian_wild_bootstrap_W_exp_moment}
Let 
\begin{equation}
\label{eqn:bound_W_exp_moment}
\varpi_3(n,p) =  {(\log{p})^{3/2} (\log(np)) \over n} B_n + {\log{p}\over n^{1/2}} +  {(\log{p})^{5/4} \over n^{3/4}} B_n^{1/2}.
\end{equation}
Assume (\ref{eqn:subexponential-kernel-moment-condition}). If $2 \le p \le \exp(b n)$ for some absolute constant $b > 0$, then
\begin{equation*}
\E \|W\| \le K (1 \vee b^{1/2}) \varpi_3(n,p)
\end{equation*}
for some absolute constant $K > 0$.
\end{lem}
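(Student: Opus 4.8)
The plan is to reduce the moment bound to the canonical expectation inequality of Theorem~\ref{thm:expectation-bound}. First I would observe that $W$ is, up to a scalar factor, the canonical $U$-statistic with kernel $f$: since $W = n^{-1/2}(n-1)^{-1}\sum_{1\le i<j\le n} f(\vX_i,\vX_j)$ and ${n\choose 2}^{-1}=2/(n(n-1))$, one has $W = \tfrac{\sqrt n}{2}\,V$ with $V := {n\choose 2}^{-1}\sum_{1\le i<j\le n} f(\vX_i,\vX_j)$, where $f$ from (\ref{eqn:hoeffding-decomp-order-two}) is symmetric and canonical w.r.t.\ $F$. Under $2\le p\le\exp(bn)$, Theorem~\ref{thm:expectation-bound} applied to the kernel $f$ then gives
\[
\E\|W\| = \tfrac{\sqrt n}{2}\,\E\|V\| \le K(1\vee b^{1/2})\Big\{ \tfrac{(\log p)^{3/2}}{n}\,\|M\|_4 + \tfrac{\log p}{n^{1/2}}\,D_2 + \tfrac{(\log p)^{5/4}}{n^{3/4}}\,D_4 \Big\},
\]
where now $D_q = \max_{m,k}(\E|f_{mk}|^q)^{1/q}$ and $M = \max_{i<j}(\|f(\vX_i,\vX_j)\|\vee\|f(\vX_i,\vX'_j)\|)$ refer to $f$, not $h$. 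It remains to control $D_2$, $D_4$, and $\|M\|_4$ by the appropriate powers of $B_n$ under (\ref{eqn:subexponential-kernel-moment-condition}).

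These are exact analogues of the bounds in Lemma~\ref{lem:moment-bounds-subexponential-kernel}, transferred from $h$ to $f$. Writing $f_{mk}(\vX_1,\vX_2) = h_{mk}(\vX_1,\vX_2) - g_{mk}(\vX_1) - g_{mk}(\vX_2) - \E h_{mk}$, the triangle inequality together with conditional Jensen (so $\|g_{mk}\|_q\lesssim\|h_{mk}\|_q$) and $|\E h_{mk}|\le\|h_{mk}\|_q$ gives $\|f_{mk}\|_q\lesssim\|h_{mk}\|_q$ for every $q\ge1$. Hence $\E h_{mk}^2\le2$ yields $D_2\lesssim1$, and $\E(|h_{mk}|^4/B_n^2)\le2$ yields $D_4\lesssim B_n^{1/2}$. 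For $\|M\|_4$, the equivalence $\E\exp(|h_{mk}|/B_n)\le2 \Leftrightarrow \|h_{mk}/B_n\|_{\psi_1}\lesssim1$ and the same contraction argument give $\max_{m,k}\|f_{mk}(\vX_i,\vX_j)/B_n\|_{\psi_1}\lesssim1$ (and likewise for $f_{mk}(\vX_i,\vX'_j)$); Pisier's inequality \cite[Lemma~2.2.2]{vandervaartwellner1996} over the at most $2p^2n^2$ entries then gives $\|M/B_n\|_{\psi_1}\lesssim\log(np)$, so $\|M\|_4\le 4!\,\|M\|_{\psi_1}\lesssim B_n\log(np)$.

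Substituting $D_2\lesssim1$, $D_4\lesssim B_n^{1/2}$, and $\|M\|_4\lesssim B_n\log(np)$ into the displayed estimate gives
\[
\E\|W\| \lesssim (1\vee b^{1/2})\Big\{ \tfrac{(\log p)^{3/2}\log(np)}{n}\,B_n + \tfrac{\log p}{n^{1/2}} + \tfrac{(\log p)^{5/4}}{n^{3/4}}\,B_n^{1/2} \Big\} = K(1\vee b^{1/2})\,\varpi_3(n,p),
\]
which is the assertion. The only point requiring care is that $D_2$ must stay at order $1$ rather than $B_n$: this is precisely what makes the middle term of $\varpi_3$ free of $B_n$, and it is why one invokes Theorem~\ref{thm:expectation-bound} with its separate $D_2$/$D_4$ dependence rather than the coarser Corollary~\ref{cor:expectation-subexponential-kernel}, whose bound would only supply the middle term $B_n\log p/\sqrt n$. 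A secondary bookkeeping issue is tracking the extra $\log(np)$ factor in $\|M\|_4$ coming from the maximum over the $\asymp p^2 n^2$ summands, but this is routine.
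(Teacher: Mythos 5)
Your proposal is correct and takes essentially the same route as the paper: rescale $W=\tfrac{\sqrt n}{2}V$, apply Theorem~\ref{thm:expectation-bound} to the canonical kernel $f$, and plug in the sub-exponential moment bounds $D_2\lesssim1$, $D_4\lesssim B_n^{1/2}$, $\|M\|_4\lesssim B_n\log(np)$ from Lemma~\ref{lem:moment-bounds-subexponential-kernel}. The one place you are more careful than the paper is the explicit transfer $\|f_{mk}\|_q\lesssim\|h_{mk}\|_q$ (and the $\psi_1$-norm analogue) via the Hoeffding identity $f=h-g(\vx_1)-g(\vx_2)-\E h$ and conditional Jensen: Theorem~\ref{thm:expectation-bound} is stated with $D_q$ and $M$ built from $f$, while Lemma~\ref{lem:moment-bounds-subexponential-kernel} bounds those quantities built from $h$, and the paper silently identifies the two; your argument fills that small gap.
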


\begin{proof}[Proof of Lemma \ref{lem:gaussian_wild_bootstrap_W_exp_moment}]
By Lemma \ref{lem:moment-bounds-subexponential-kernel}, we have $\|M\|_q \le K q! B_n \log(np)$ and $D_2 \le 2^{1/2}$, $D_4 \le 2^{1/4} B_n^{1/2}$. By Theorem \ref{thm:expectation-bound}, we have
$$
\E \|W\| \le K (1 \vee b^{1/2}) n^{1/2} \Big[ \Big({ \log{p} \over n}\Big)^{3/2} B_n \log(np) + \Big({ \log{p} \over n}\Big) + \Big({ \log{p} \over n}\Big)^{5/4} B_n^{1/2} \Big],
$$
from which lemma follows.
\end{proof}

\begin{lem}[Bound on $\E(\Delta_1)$: uniform polynomial moment]
\label{lem:gaussian_wild_bootstrap_Delta1_unifpoly_moment}
Let $q\ge4$. Assume (\ref{eqn:unifpoly-kernel-moment-condition}).
Let
\begin{equation}
\label{eqn:varpi_1_unifpoly}
\varpi_1(n,p) = \left({\log{p}\over n}\right)^{1/2} B_n + {\log{p} \over n^{1-2/q}} B_n^2.
\end{equation}
Then, 
\begin{equation*}
\E (\Delta_1) \le K \varpi_1(n,p),
\end{equation*}
where $\Delta_1$ is defined in (\ref{eqn:gaussian_wild_bootstrap_Delta1}) and $K > 0$ is an absolute constant.
\end{lem}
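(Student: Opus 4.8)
The plan is to mirror the proof of Lemma~\ref{lem:gaussian_wild_bootstrap_Delta1_exp_moment}, replacing its Orlicz-norm (Pisier) estimate of the extreme term by an argument that uses only the uniform polynomial moment in (\ref{eqn:unifpoly-kernel-moment-condition}). Writing $\MAX=\max_{j\ge k,\,m\ge l}$ for the indices of the $p'\times p'$ matrix $\Gamma_g$ and applying the maximal inequality \cite[Lemma 8]{cck2014b} to the mean-zero random matrices $\tilde{\vg}_i\tilde{\vg}_i^\top-\Gamma_g$ gives, exactly as in that proof (and using $\log((p')^2)\lesssim\log p$),
\begin{equation*}
\E(\Delta_1)\le\frac{K_1}{n}\left\{(\log p)^{1/2}\MAX\Big(\E\sum_{i=1}^n g_{i,jk}^2 g_{i,ml}^2\Big)^{1/2}+(\log p)\Big(\E\max_{i\le n}\MAX g_{i,jk}^2 g_{i,ml}^2\Big)^{1/2}\right\}.
\end{equation*}

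For the first (``variance'') term, Cauchy--Schwarz yields $\E\sum_{i=1}^n g_{i,jk}^2 g_{i,ml}^2\le n\max_{j\ge k}\E g_{jk}^4\le n\tilde{D}_4^4\le nB_n^2$, where $\tilde{D}_4\le D_4\le B_n^{1/2}$ by Lemma~\ref{lem:moment-bounds-unifpoly-kernel}; this contributes $\lesssim(\log p/n)^{1/2}B_n$, the first summand of $\varpi_1(n,p)$. The second (``extreme'') term equals $\E\max_{i\le n}G_i^4$ with $G_i:=\max_{j\ge k}|g_{jk}(\vX_i)|$. Here I would use that $g_{mk}(\vx)=\E h_{mk}(\vx,\vX')-\E h_{mk}$ is a conditional expectation, so $G_i\le\E[\|h(\vX_i,\vX')\|\mid\vX_i]+\E\|h\|$; Jensen's inequality for the conditional expectation together with (\ref{eqn:unifpoly-kernel-moment-condition}) gives $\E(\E[\|h(\vX_i,\vX')\|\mid\vX_i])^q\le\E\|h\|^q\le B_n^q$ and $(\E\|h\|)^q\le B_n^q$, hence $\E G_i^q\le 2^q B_n^q$. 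A crude union bound over $i$ then gives $\E\max_{i\le n}G_i^q\le 2^q nB_n^q$, and since $q\ge4$ the concavity of $x\mapsto x^{4/q}$ and Jensen yield $\E\max_{i\le n}G_i^4\le(2^q nB_n^q)^{4/q}=16\,n^{4/q}B_n^4$, so $(\E\max_{i\le n}G_i^4)^{1/2}\le 4\,n^{2/q}B_n^2$. This contributes $\lesssim(\log p)B_n^2/n^{1-2/q}$, the second summand of $\varpi_1(n,p)$, and combining the two terms proves the claim.

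The main obstacle is the extreme term: unlike in Lemma~\ref{lem:gaussian_wild_bootstrap_Delta1_exp_moment}, with only uniform polynomial (not sub-exponential) moments of $\|h\|$ one cannot route the maximum over the matrix indices $(m,k)$ through a union bound with exponentially decaying tails. The fix is to exploit that $g$ is a conditional expectation of $h$, which lets one absorb the maximum over $(m,k)$ into $\|h\|$ \emph{before} any moment bound is invoked; only the maximum over the $n$ samples then survives, and that is affordable via $\E\max_i Y_i\le\sum_i\E Y_i$ at the cost of the $n^{2/q}$ factor already present in $\varpi_1(n,p)$.
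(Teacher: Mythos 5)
Your proof is correct and follows essentially the same route as the paper's: start from the maximal inequality of Chernozhukov--Chetverikov--Kato applied to $\tilde{\vg}_i\tilde{\vg}_i^\top-\Gamma_g$, bound the variance term via Cauchy--Schwarz and $\tilde{D}_4\le D_4\le B_n^{1/2}$, and control the extreme term $\E\max_{i\le n}\max_{j\ge k}g_{i,jk}^4$ by first passing from the $L^4$ to the $L^q$ norm (Jensen/H\"older) and then applying a crude union bound over the $n$ samples, using the uniform polynomial moment $\E(\|h\|/B_n)^q\le 1$ to absorb the maximum over the matrix indices. The only minor difference is bookkeeping: you explicitly track the factor $2$ from writing $\|g_i\|\le\E[\|h\|\mid\vX_i]+\E\|h\|$ (so $\E G_i^q\le 2^qB_n^q$), whereas the paper records this norm as $\le 1$; this is immaterial since $K$ is an absolute constant.
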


\begin{proof}[Proof of Lemma \ref{lem:gaussian_wild_bootstrap_Delta1_unifpoly_moment}]
The proof is similar to the argument of Lemma \ref{lem:gaussian_wild_bootstrap_Delta1_exp_moment} with the difference in bounding $\E [\max_{i \le n} \max_{j \ge k} g_{i,jk}^4]$. By Jensen's inequality,
$$
\left\|\max_{i \le n} \max_{j \ge k} {|g_{i,jk}| \over B_n} \right\|_4 \le \left\|\max_{i \le n} \max_{j \ge k} {|g_{i,jk}| \over B_n} \right\|_q \le n^{1/q} \max_{i \le n} \left\| \left\|{g_i \over B_n} \right\| \right\|_q \le n^{1/q}.
$$
Therefore, we have
$$
\E [\max_{i \le n} \max_{j \ge k} g_{i,jk}^4]  \le n^{4/q} B_n^4.
$$
\end{proof}

\begin{lem}[Bound on $\E(\Delta_2^{1/2})$: kernel with uniform polynomial moment]
\label{lem:gaussian_wild_bootstrap_Delta2_unifpoly_moment}
Let $q \ge 4$ and
\begin{equation}
\label{eqn:varpi_2_unifpoly}
\varpi_2(n,p) = {B_n (\log(np))^{1/2} \over n^{1/2-1/q}} \left[ 1 \vee {(\log(np))^{1/2} \over n^{1/2-1/q}}\right].
\end{equation}
Assume (\ref{eqn:unifpoly-kernel-moment-condition}). Then
\begin{equation*}
\E (\Delta_2^{1/2}) \le K \varpi_2(n,p),
\end{equation*}
where $\Delta_2$ is defined in (\ref{eqn:gaussian_wild_bootstrap_Delta2}) and $K > 0$ is an absolute constant.
\end{lem}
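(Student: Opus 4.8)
The plan is to follow the proof of Lemma~\ref{lem:gaussian_wild_bootstrap_Delta2_exp_moment} nearly verbatim, replacing every appeal to an Orlicz-norm ($\psi_1$) maximal inequality by its elementary $L^q$ counterpart; the polynomial-in-$n$ factors in $\varpi_2(n,p)$ of (\ref{eqn:varpi_2_unifpoly}) are exactly what these $L^q$ bounds produce in place of the logarithmic factors of the sub-exponential case. First I would reuse the decomposition (\ref{eqn:Delta_2_decomposition}): writing $\MAX = \max_{m,k}\max_{i\le n}$,
\begin{equation*}
\Delta_2^{1/2} \le \MAX\, |\hat{g}_{i,mk} - g_{mk}(\vX_i)| \le A + B,
\end{equation*}
where $A = n^{-1}\MAX\,|\sum_{j=1}^n ( h_{mk}(\vX_i,\vX'_j) - \E[h_{mk}(\vX_i,\vX'_j)\mid\vX_1^n])|$ is the decoupled empirical-process error and $B = \binom{n}{2}^{-1}\max_{m,k}|\sum_{1\le j<l\le n}( h_{mk}(\vX'_j,\vX'_l) - \E h_{mk})|$ is a centered non-degenerate U-statistic built from the training data alone.

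For the term $A$, conditionally on $\vX_1^n$ the summands (over $j$) are i.i.d.\ and mean zero, so I would apply \cite[Lemma~8]{cck2014b} over the $p^2 n$ indices $(m,k,i)$ to bound $\E[nA\mid\vX_1^n]$ by a variance piece $(\log(np))^{1/2}\MAX[\sum_j \E(h_{mk}^2(\vX_i,\vX'_j)\mid\vX_1^n)]^{1/2}$ plus a maximum piece $\log(np)\,[\E(\MAX\max_{j\le n}h_{mk}^2(\vX_i,\vX'_j)\mid\vX_1^n)]^{1/2}$. Taking expectations and using Jensen's inequality to pull the maxima inside the conditional expectation, both pieces are dominated by $\E\max_{i,j\le n}\|h(\vX_i,\vX'_j)\|^2 \le \big\|\max_{i,j\le n}\|h(\vX_i,\vX'_j)\|\big\|_q^2$, which by the $L^q$ form of Pisier's inequality \cite[Lemma~2.2.2]{vandervaartwellner1996} and (\ref{eqn:unifpoly-kernel-moment-condition}) is at most $(n^2)^{2/q}B_n^2$. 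After dividing by $n$ this gives $\E A \lesssim B_n(\log(np))^{1/2}n^{-(1/2-1/q)} + B_n\log(np)\,n^{-(1-2/q)}$, which is precisely of order $\varpi_2(n,p)$.

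For the term $B$ I would reduce to a sum of i.i.d.\ random matrices via (\ref{eqn:reducing-Ustat-to-iid-sum}) with $m=\lfloor n/2\rfloor$ and again invoke \cite[Lemma~8]{cck2014b} on the resulting $m$-term sum, now needing only $\E h_{mk}^2\le 1$ and $\big\|\max_{i\le m}\|h(\vX'_i,\vX'_{i+m})\|\big\|_q \le m^{1/q}B_n$ from (\ref{eqn:unifpoly-kernel-moment-condition}); this produces a bound of order $(\log p)^{1/2}n^{-1/2} + (\log p)B_n n^{-(1-1/q)}$, which is of strictly smaller order than $\varpi_2(n,p)$ since $B_n\ge 1$ and $n^{1/q}\ge 1$. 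Combining the two estimates yields $\E(\Delta_2^{1/2})\le K\varpi_2(n,p)$. The proof involves no new idea beyond Lemma~\ref{lem:gaussian_wild_bootstrap_Delta2_exp_moment}; the only delicate points are bookkeeping ones, namely getting the correct power of $n$ in the maximum-of-maxima term (a max over $n^2$ kernel evaluations for $A$, hence $n^{2/q}$, versus $n$ evaluations for $B$, hence $n^{1/q}$) and verifying that the training-data term $B$ stays of strictly smaller order so that the rate is governed by $A$. I expect this exponent accounting to be the main place where care is required.
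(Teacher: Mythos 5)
Your proposal follows the paper's proof of Lemma~\ref{lem:gaussian_wild_bootstrap_Delta2_unifpoly_moment} step by step — same decomposition (\ref{eqn:Delta_2_decomposition}), same conditional application of \cite[Lemma~8]{cck2014b}, same data-splitting bound for the training term $B$ — and your final displayed rate is correct. However, the intermediate claim for term~$A$ that ``both pieces are dominated by $\E\max_{i,j\le n}\|h(\vX_i,\vX'_j)\|^2\le (n^2)^{2/q}B_n^2$'' is inconsistent with the exponent $n^{-(1/2-1/q)}$ you then write down for the variance piece. If you bound the variance piece by a supremum over all $n^2$ pairs $(i,j)$, Pisier gives $n^{2/q}B_n$, and after multiplying by $n^{1/2}$ from $\sum_j\E(h_{mk}^2\mid\vX_1^n)=n\,\E(h_{mk}^2(\vX_i,\vX'_1)\mid\vX_1^n)$ and dividing by $n$ you would obtain the strictly weaker exponent $n^{-(1/2-2/q)}$, which is larger than the first component of $\varpi_2(n,p)$ by a factor $n^{1/q}$.

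The point the paper's proof relies on — and the one you should make explicit — is that for the variance piece the conditional second moment $\E(h_{mk}^2(\vX_i,\vX'_1)\mid\vX_1^n)$ no longer depends on $j$, so after Jensen the supremum runs only over the $n$ evaluations $h(\vX_i,\vX'_1)$, $i\le n$ (the max over $m,k$ being absorbed into $\|h\|$); Pisier then gives $n^{1/q}B_n$, not $n^{2/q}B_n$, producing $n^{-(1/2-1/q)}$ as claimed. Only the maximum piece genuinely has a sup over $n^2$ pairs and hence $n^{2/q}$. So the exponent bookkeeping you flag at the end applies not just between $A$ and $B$ but also between the two pieces of $A$: one index for the variance piece, two for the maximum piece. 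With that correction, your argument matches the paper's exactly.
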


\begin{proof}[Proof of Lemma \ref{lem:gaussian_wild_bootstrap_Delta2_unifpoly_moment}]
The proof is similar to the argument in Lemma \ref{lem:gaussian_wild_bootstrap_Delta2_exp_moment}. We only note the differences. First, under (\ref{eqn:unifpoly-kernel-moment-condition}),
$$
\|\max_{m,k} \max_{i,j} |h_{mk}(\vX_i,\vX'_j)|\|_q \le n^{2/q} B_n,
$$
which in combination with the conditional Jensen's inequality imply that
$$
\E \Big[ \E\Big(\max_{m,k} \max_{i,j} h_{mk}^2(\vX_i,\vX'_j) \mid \vX_1^n \Big) \Big]^{1/2} \le n^{2/q} B_n.
$$
Second, by Jensen's inequality twice and \cite[Lemma 2.2.2]{vandervaartwellner1996}, we have
\begin{eqnarray*}
&& \E \Big\{ \MAX \Big[ \sum_{j=1}^n \E\left(h_{mk}^2(\vX_i,\vX'_j) \mid \vX_1^n \right) \Big]^{1/2} \Big\} \\
&\le& n^{1/2} \Big\{ \E \Big[ \max_{m,k} \max_{i \le n} h_{mk}^2(\vX_i,\vX'_1) \Big] \Big\}^{1/2} \\
&\le& n^{1/2} \|\max_{m,k} \max_{i \le n} |h_{mk}(\vX_i,\vX'_1)|\|_q \\
&\le& n^{1/2+1/q} \left\| \|h \| \right\|_q \le n^{1/2+1/q} B_n.
\end{eqnarray*}
Then
\begin{eqnarray*}
{1\over n} \E \Big[ \MAX \Big| \sum_{j=1}^n h_{mk}(\vX_i,\vX'_j) - \E [h_{mk}(\vX_i,\vX'_j) \mid \vX_1^n] \Big| \Big] \le K_1 \varpi_2(n,p).
\end{eqnarray*}
By (\ref{eqn:reducing-Ustat-to-iid-sum}) and \cite[Lemma 8]{cck2013}, we have
\begin{eqnarray*}
&& \E \Big[ {n \choose 2}^{-1} \max_{m,k} \Big| \sum_{1\le j < l \le n} h_{mk}(\vX'_j,\vX'_l) - \E h_{mk} \Big| \Big] \\
&\le& K_2 \Big\{ {(\log{p})^{1/2} \over n} \max_{m,k} \Big[ \sum_{i=1}^{n/2} \E h_{mk}^2(X_i, X_{i+n/2}) \Big]^{1/2} \\
&& \qquad \qquad + \Big({\log{p} \over n}\Big) \Big[ \E \Big( \max_{1 \le i \le n/2} \|h(\vX_i, \vX_{i+n/2})\|^2 \Big) \Big]^{1/2} \Big\} \\
&\le& K_3 \Big\{ \Big({\log{p} \over n}\Big)^{1/2} + {B_n \log{p} \over n^{1-1/q}} \Big\}.
\end{eqnarray*}
Since the last bound is of smaller order than $\varpi_2(n,p)$, the remaining proof follows from the argument in Lemma \ref{lem:gaussian_wild_bootstrap_Delta2_exp_moment}.
\end{proof}

\begin{lem}[Bound on $\E\|W\|$: kernel with uniform polynomial moment]
\label{lem:gaussian_wild_bootstrap_W_unifpoly_moment}
Let $q\ge4$ and 
\begin{equation}
\label{eqn:bound_W_unifpoly_moment}
\varpi_3(n,p) =  {(\log{p})^{3/2} \over n^{1-2/q}} B_n + {\log{p}\over n^{1/2}} +  {(\log{p})^{5/4} \over n^{3/4}} B_n^{1/2}.
\end{equation}
Assume (\ref{eqn:unifpoly-kernel-moment-condition}). If $2 \le p \le \exp(b n)$ for some absolute constant $b > 0$, then
\begin{equation*}
\E \|W\| \le K (1 \vee b^{1/2}) \varpi_3(n,p)
\end{equation*}
for some absolute constant $K > 0$.
\end{lem}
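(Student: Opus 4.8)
The plan is to reduce the estimate directly to Theorem~\ref{thm:expectation-bound} applied to the completely degenerate part, exactly as in the proof of Lemma~\ref{lem:gaussian_wild_bootstrap_W_exp_moment}. First I would record the identity $W = 2^{-1}\sqrt{n}\,V$, where $V = {n\choose2}^{-1}\sum_{1\le i<j\le n} f(\vX_i,\vX_j)$ is the order-two U-statistic with the canonical kernel $f$ from the Hoeffding decomposition (\ref{eqn:hoeffding-decomp-order-two}). Since $f$ is symmetric and canonical and $2\le p\le\exp(bn)$, Theorem~\ref{thm:expectation-bound} yields
$$
\E\|W\| = \frac{\sqrt{n}}{2}\,\E\|V\| \le K(1\vee b^{1/2})\Big\{ \frac{(\log p)^{3/2}}{n}\,\|M\|_4 + \frac{\log p}{n^{1/2}}\,D_2 + \frac{(\log p)^{5/4}}{n^{3/4}}\,D_4\Big\},
$$
where $D_q = \max_{m,k}(\E|f_{mk}|^q)^{1/q}$ and $M = \max_{i<j}(\|f(\vX_i,\vX_j)\|\vee\|f(\vX_i,\vX'_j)\|)$ now refer to the canonical kernel $f$.

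The second step is to transfer the uniform polynomial moment condition (\ref{eqn:unifpoly-kernel-moment-condition}) from $h$ to $f$. Using $f(\vx_1,\vx_2) = h(\vx_1,\vx_2) - g(\vx_1) - g(\vx_2) - \E h$ together with Jensen's inequality (which gives $\|g_{mk}(\vX)\|_q \le 2\|h_{mk}\|_q$ and $|\E h_{mk}|\le\|h_{mk}\|_q$), one obtains $\max_{m,k}(\E|f_{mk}|^q)^{1/q}\le C\,\max_{m,k}(\E|h_{mk}|^q)^{1/q}$ for all $q\ge1$, and likewise $\E(\|f\|/(CB_n))^q\le1$ after controlling the additional $g$-terms in $\|f\|$ by a Pisier-type maximal inequality \cite[Lemma 2.2.2]{vandervaartwellner1996} and the crude bound $\E\|h(\vX_i,\vX')\|^q\le B_n^q$. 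Hence $f$ satisfies the hypotheses of Lemma~\ref{lem:moment-bounds-unifpoly-kernel} with $B_n$ replaced by an absolute multiple of itself, so $D_2\le C$, $D_4\le CB_n^{1/2}$, and $\|M\|_4\le\|M\|_q\le CB_n n^{2/q}$ for $q\ge4$.

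Finally I would substitute these three bounds into the displayed estimate and simplify: the first term becomes $(\log p)^{3/2}n^{-1}\cdot B_n n^{2/q} = (\log p)^{3/2}n^{-(1-2/q)}B_n$, the second $(\log p)n^{-1/2}$, and the third $(\log p)^{5/4}n^{-3/4}B_n^{1/2}$, whose sum is a constant multiple of $\varpi_3(n,p)$ in (\ref{eqn:bound_W_unifpoly_moment}), with the $(1\vee b^{1/2})$ factor carried through. I expect the only genuinely new computation, compared with the sub-exponential case in Lemma~\ref{lem:gaussian_wild_bootstrap_W_exp_moment}, to be the moment transfer for $\|M\|_q$: unlike $D_q$, the maximum defining $M$ ranges over all $\binom{n}{2}$ pairs, so bounding $\big\|\max_{1\le i\le n}\|g(\vX_i)\|\big\|_q$ requires an extra conditional Jensen step and a union bound over $i$, which produces a harmless factor $n^{1/q}$ that is absorbed into $n^{2/q}$.
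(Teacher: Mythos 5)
Your proof takes essentially the same route as the paper: write $W=\sqrt{n}\,V/2$, apply Theorem~\ref{thm:expectation-bound} to the canonical U-statistic $V$, then substitute $\|M\|_4\le\|M\|_q\lesssim n^{2/q}B_n$, $D_2\lesssim 1$, $D_4\lesssim B_n^{1/2}$ from Lemma~\ref{lem:moment-bounds-unifpoly-kernel}. You are somewhat more careful than the paper in spelling out the transfer of the moment bounds from $h$ to the canonical kernel $f$ (the paper silently identifies $D_q$ and $M$ for $f$ with those for $h$ up to constants); your transfer is correct, though the Pisier step for $\E\|g(\vX)\|^q$ is not needed since a single Jensen inequality $\|g(\vx)\|\le\E[\|h(\vx,\vX')\|]+\|\E h\|$ already gives $\|\,\|g(\vX)\|\,\|_q\le 2B_n$.
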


\begin{proof}
By Lemma \ref{lem:moment-bounds-unifpoly-kernel}, we have $\|M\|_4 \le \|M\|_q \le 2 n^{2/q} B_n$, $D_2 \le 1$ and $D_4 \le B_n^{1/2}$. By Theorem \ref{thm:expectation-bound}, we have
$$
\E \|W\| \le  K (1 \vee b^{1/2}) n^{1/2} \Big[ \Big({ \log{p} \over n}\Big)^{3/2} B_n n^{2/q} + \Big({ \log{p} \over n}\Big) + \Big({ \log{p} \over n}\Big)^{5/4} B_n^{1/2} \Big].
$$
\end{proof}

\section{Additional numerical comparisons}
\label{sec:numerical_comparison}

We present more numerical comparisons of the Gaussian approximation on the H\'ajek projection and its wild bootstrap version for the covariance matrix. We consider two mean-zero distributions from the elliptical family \cite{muirhead1982}:
\begin{enumerate}
\item[(M1)] (sub-exponential moment) The $\varepsilon$-contaminated $p$-variate elliptical normal distribution with density function
\begin{eqnarray}
\nonumber
f(\vx; \varepsilon, \nu, V) &=& {1-\varepsilon \over (2\pi)^{p/2} \det(V)^{1/2}} \exp\left(-{\vx^\top V^{-1} \vx \over2} \right) \\ \label{eqn:eps_contaminated_normal_distn}
&& \qquad + {\varepsilon \over (2 \pi \nu^2)^{p/2} \det(V)^{1/2}} \exp\left(-{\vx^\top V^{-1} \vx \over2 \nu^2} \right);
\end{eqnarray}
\item[(M2)] (polynomial moment) The $p$-variate elliptical $t$-distribution with degree of freedom $\nu$ and density function
\begin{equation}
\label{eqn:elliptic_t_distn}
f(\vx; \nu, V) = {\Gamma(\nu+p)/2 \over \Gamma(\nu/2) (\nu \pi)^{p/2} \det(V)^{1/2}} \left( 1 + {\vx^\top V^{-1} \vx \over \nu} \right)^{-(\nu+p)/2}.
\end{equation}
\end{enumerate}

For the positive-definite matrix $V$, we consider three dependence models:
\begin{enumerate}
\item[(D1)] strong dependence model with $V = 0.9\times\vone_p\vone_p^\top + 0.1\times \Id_p$, where $\vone_p$ is the $p \times 1$ vector of all ones;
\item[(D2)] moderate dependence AR(1) model with $V = \{v_{mk}\}_{m,k=1}^p$ and $v_{mk} = 0.7^{|m-k|}$;
\item[(D3)] weak dependence AR(1) model with $V = \{v_{mk}\}_{m,k=1}^p$ and $v_{mk} = 0.3^{|m-k|}$.
\end{enumerate}
 
We use $\varepsilon=0.2$ and $\nu=1.5$ in (M1) and $\nu=10$ in (M2). For the chosen parameters, the two distributions have the same variance scaling for each $V$, while the kurtosis of the sub-exponential case is $0.16$ and the polynomial case is $1/3$. We compare the finite sample performance on $n=500$ and $p=40$ so that there are 820 covariance parameters. In each setup, we compare the approximation quality of $\bar{T}_0$ using $\bar{Z}_0$ and $\hat{L}^*_0$. All results are reported over 5000 simulation runs.

\begin{figure}[t!] 
   \centering
      \subfigure{\label{subfig:approx_wild_bootstrap_eps_contaminated_n=200_p=40} \includegraphics[scale=0.22]{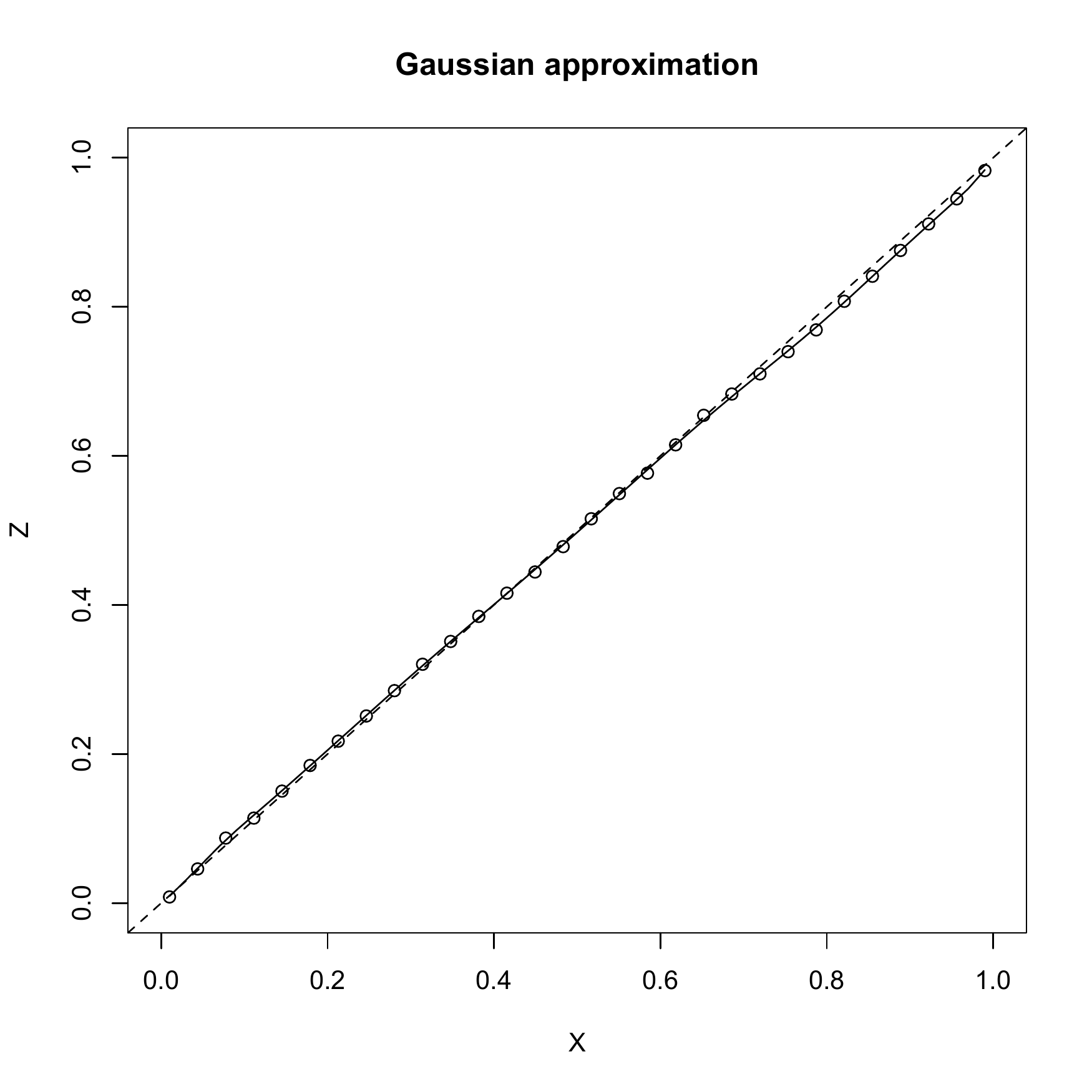}}
      \subfigure{\label{subfig:approx_wild_bootstrap_eps_contaminated_n=200_p=40} \includegraphics[scale=0.22]{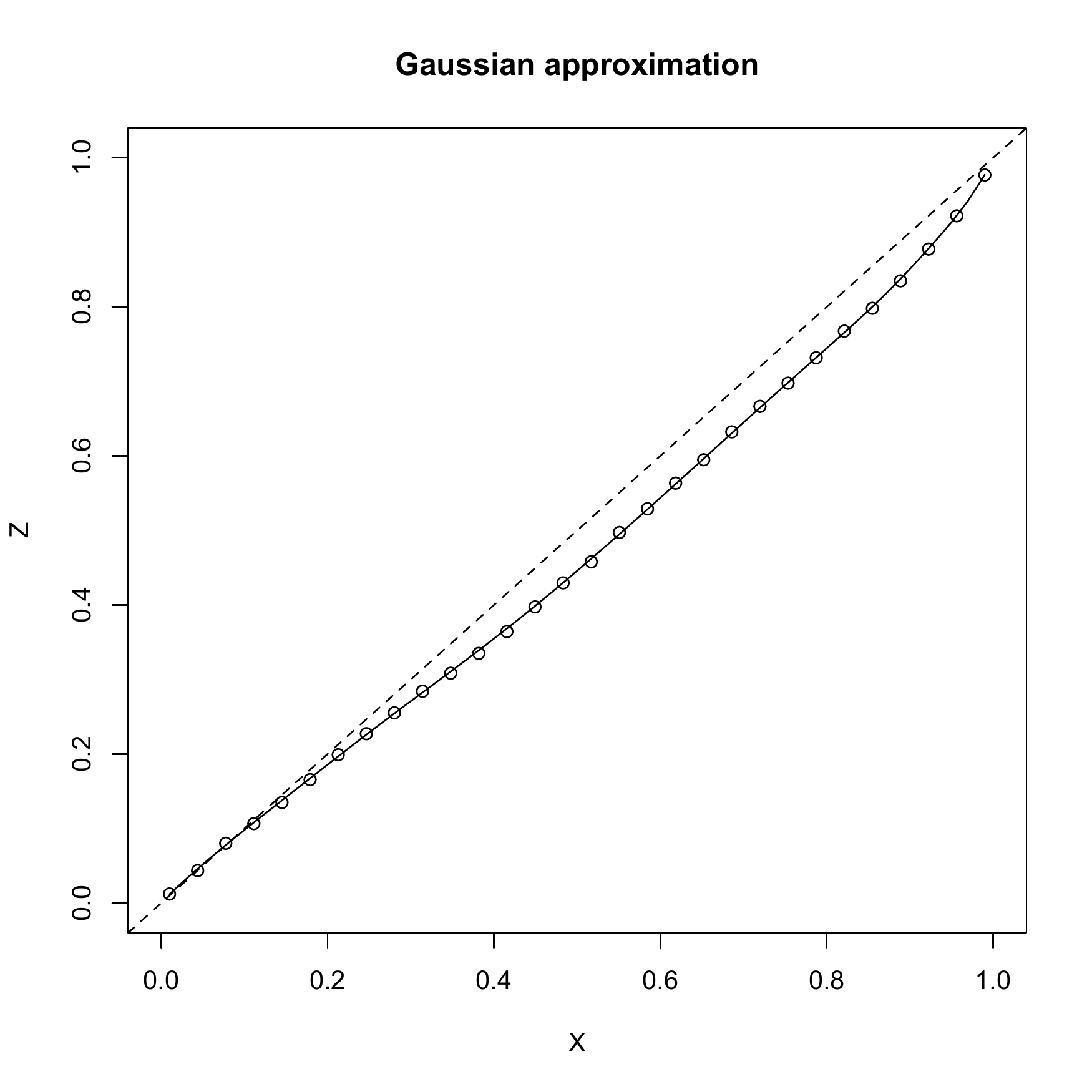}}
      \subfigure{\label{subfig:approx_wild_bootstrap_eps_contaminated_n=200_p=40} \includegraphics[scale=0.22]{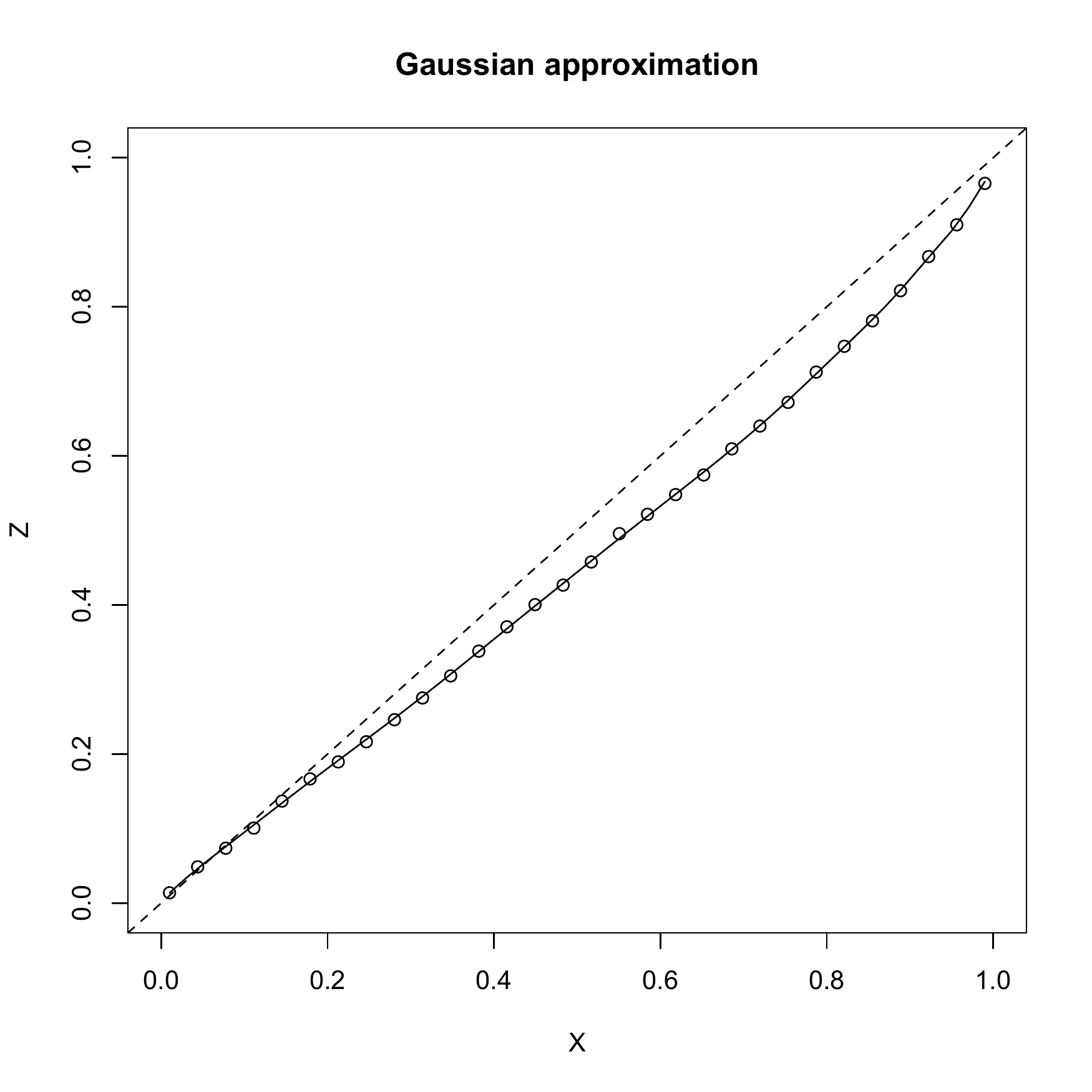}}  \\
      
	\subfigure {\label{subfig:approx_wild_bootstrap_n=200_p=40}\includegraphics[scale=0.22]{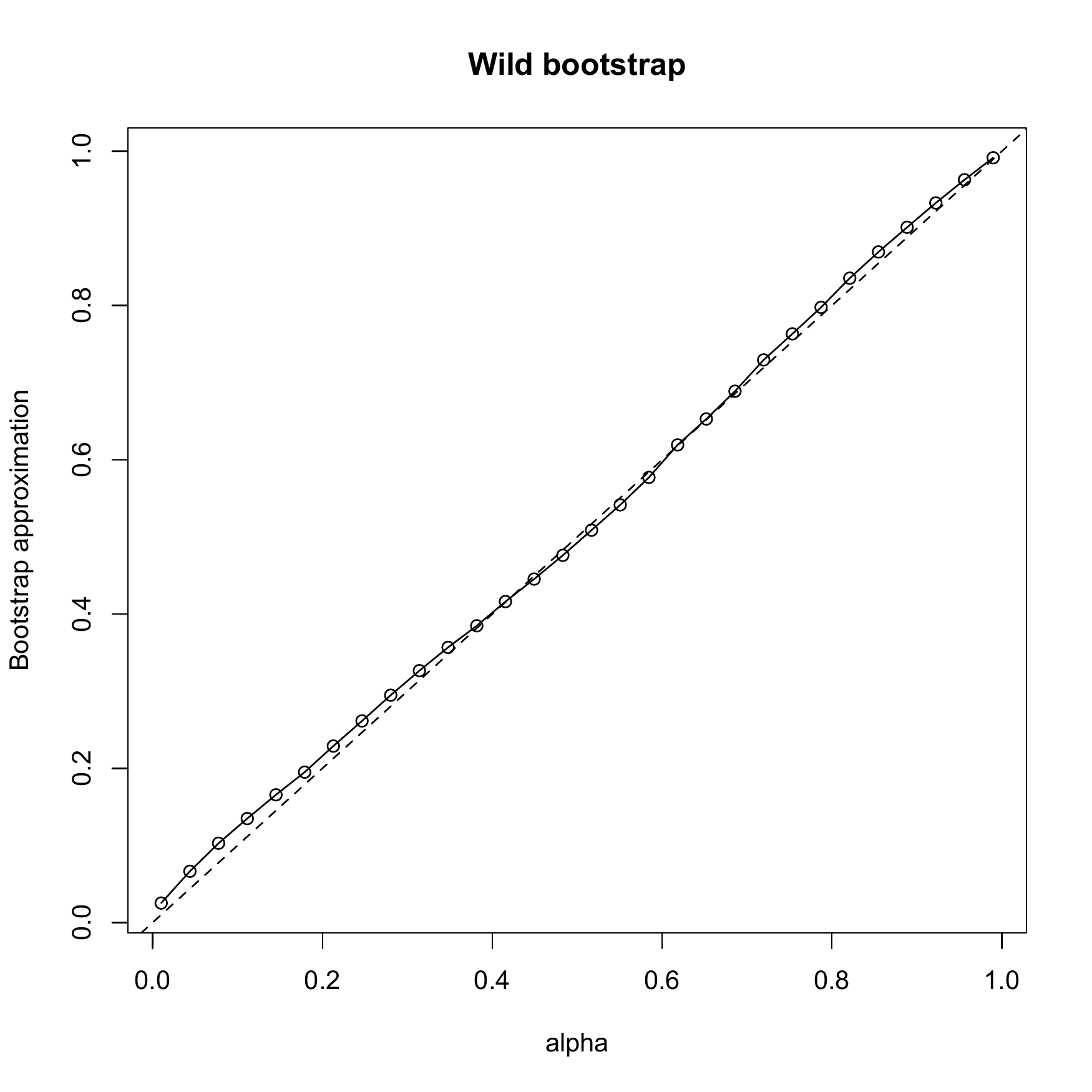}} 
	\subfigure {\label{subfig:approx_wild_bootstrap_n=200_p=40}\includegraphics[scale=0.22]{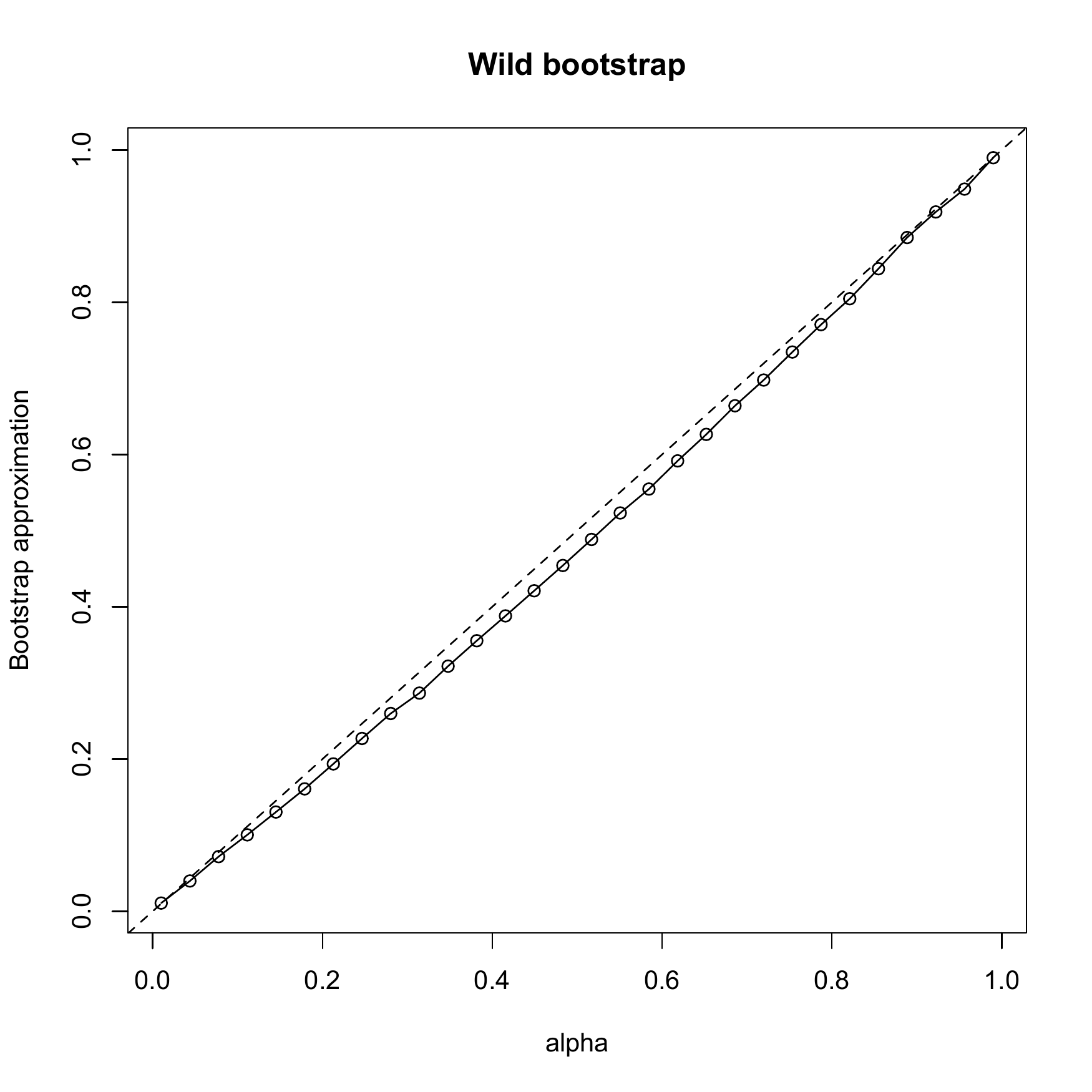}}
	\subfigure {\label{subfig:approx_wild_bootstrap_n=200_p=40}\includegraphics[scale=0.22]{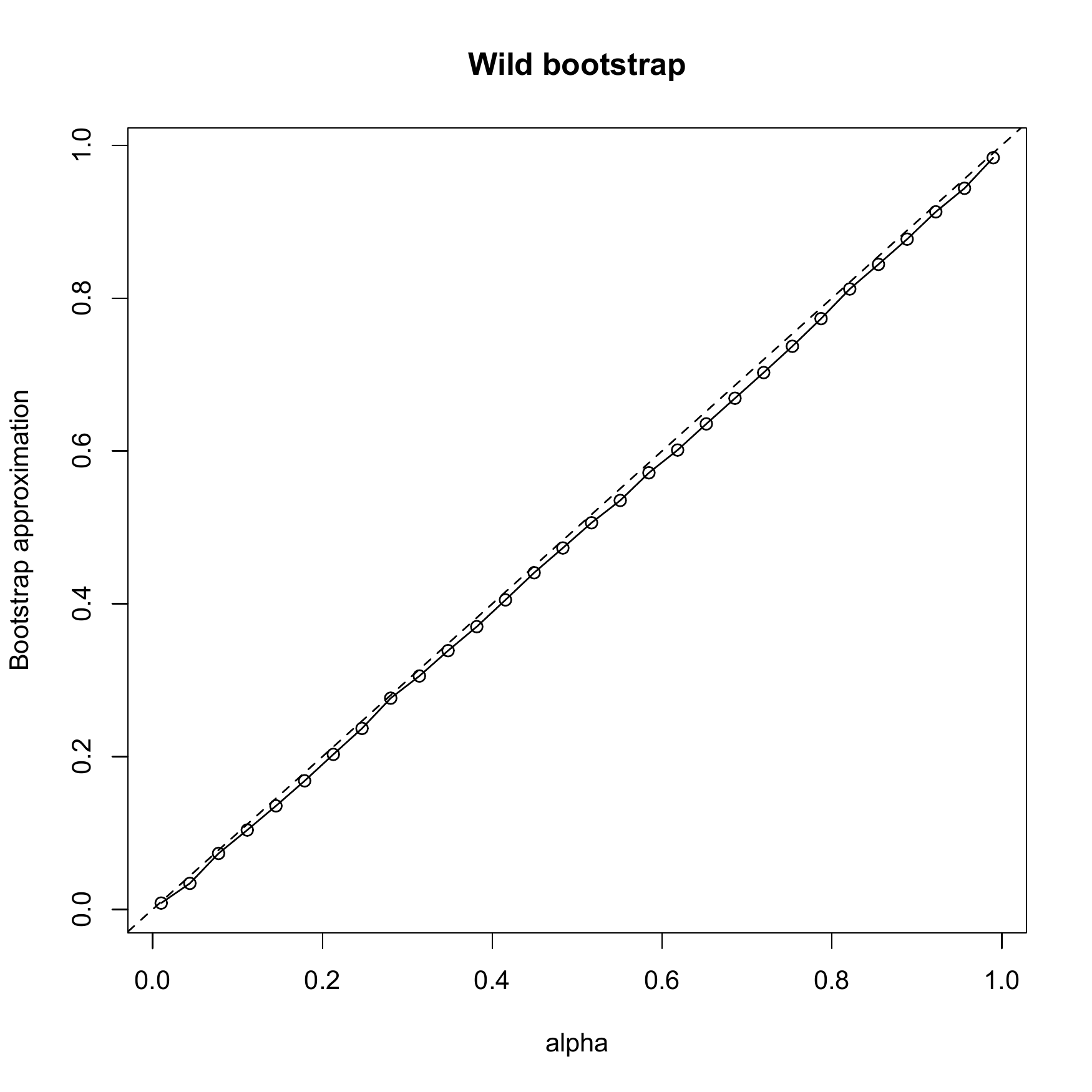}} \\

   \caption{Gaussian approximations (top row) by $\bar{Z}_0$  and wild bootstrap approximations (bottom row) by $\hat{L}^*_0$ for the $\varepsilon$-contaminated normal distribution model: left (M1)+(D1), middle (M1)+(D2), and right (M1)+(D3). Sample size $n=500$ and dimension $p=40$.}
   \label{fig:p=40_n=500_eps_cont_normal}
\end{figure}

\begin{figure}[t!] 
   \centering
	\subfigure{\label{subfig:approx_wild_bootstrap_eps_contaminated_n=200_p=40} \includegraphics[scale=0.22]{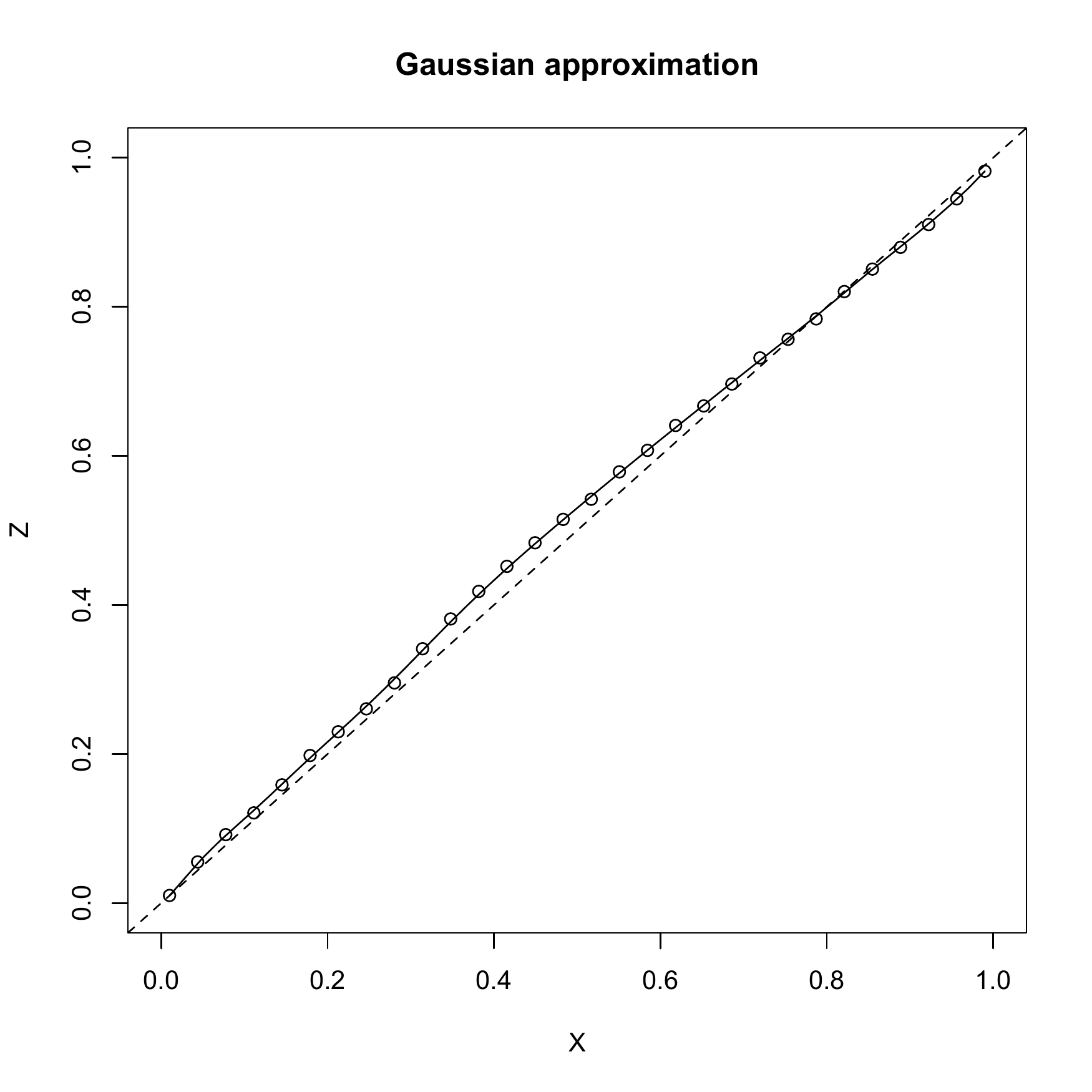}}
	\subfigure{\label{subfig:approx_wild_bootstrap_eps_contaminated_n=200_p=40} \includegraphics[scale=0.22]{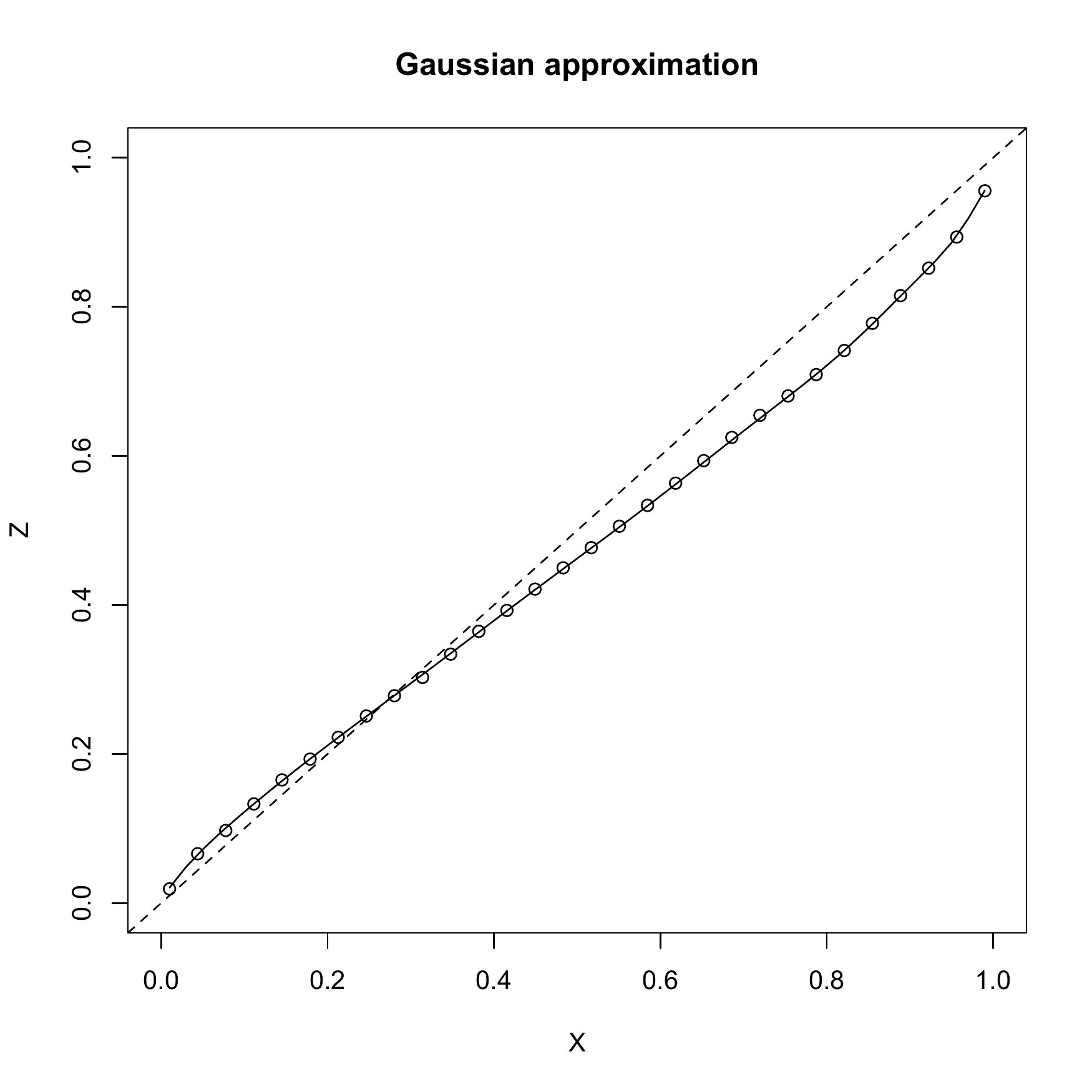}}
	\subfigure{\label{subfig:approx_wild_bootstrap_eps_contaminated_n=200_p=40} \includegraphics[scale=0.22]{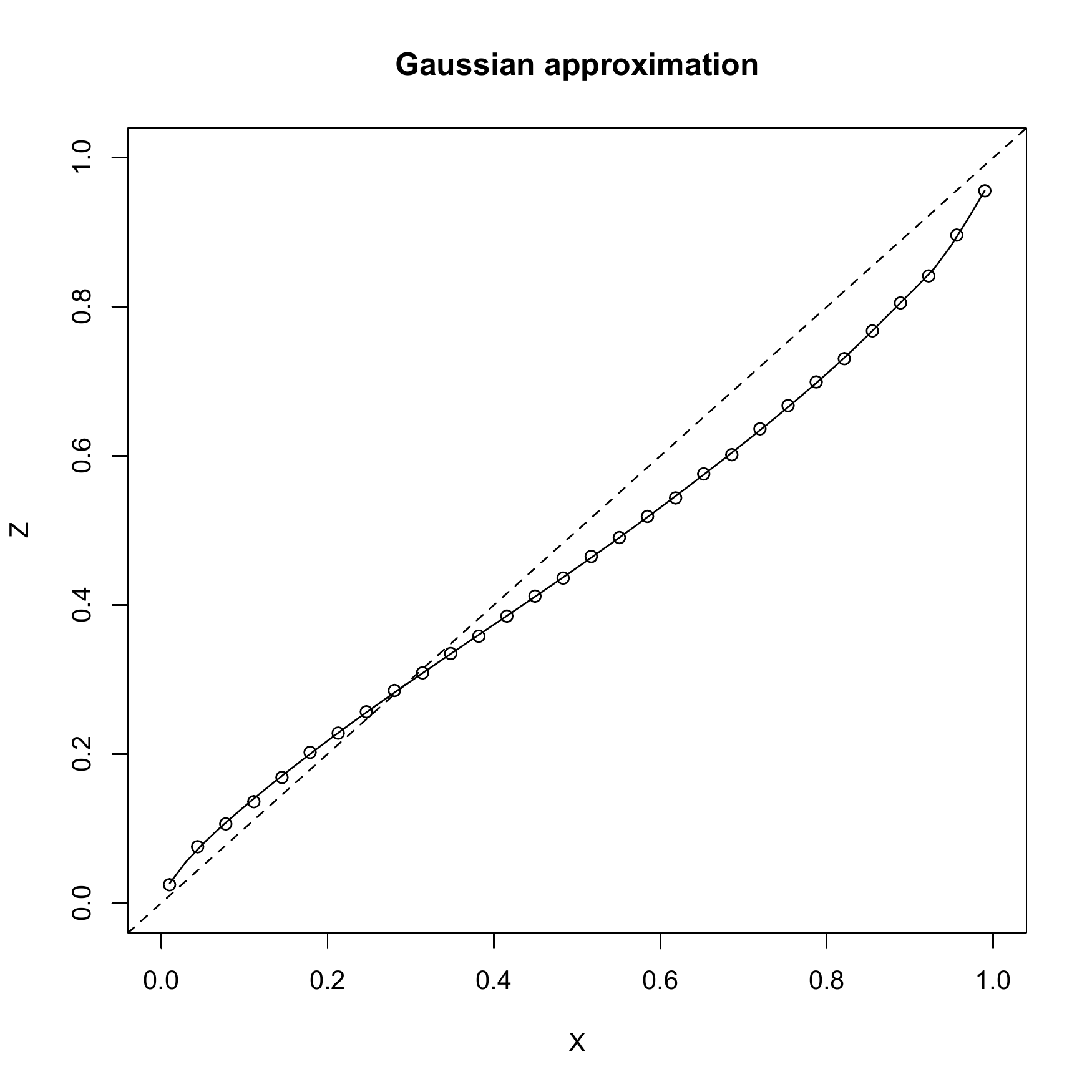}}  \\
	
	\subfigure {\label{subfig:approx_wild_bootstrap_n=200_p=40}\includegraphics[scale=0.22]{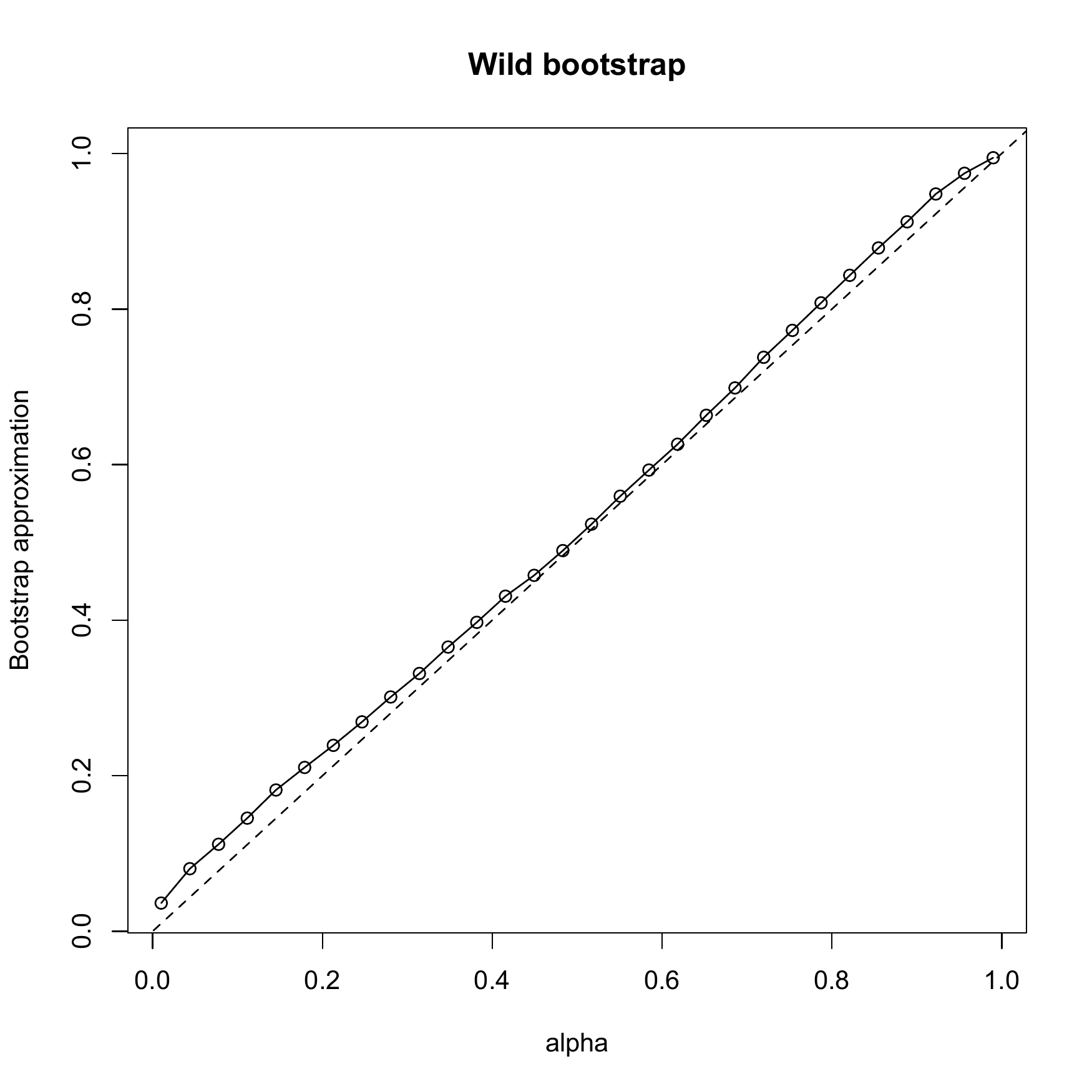}}
	\subfigure {\label{subfig:approx_wild_bootstrap_n=200_p=40}\includegraphics[scale=0.22]{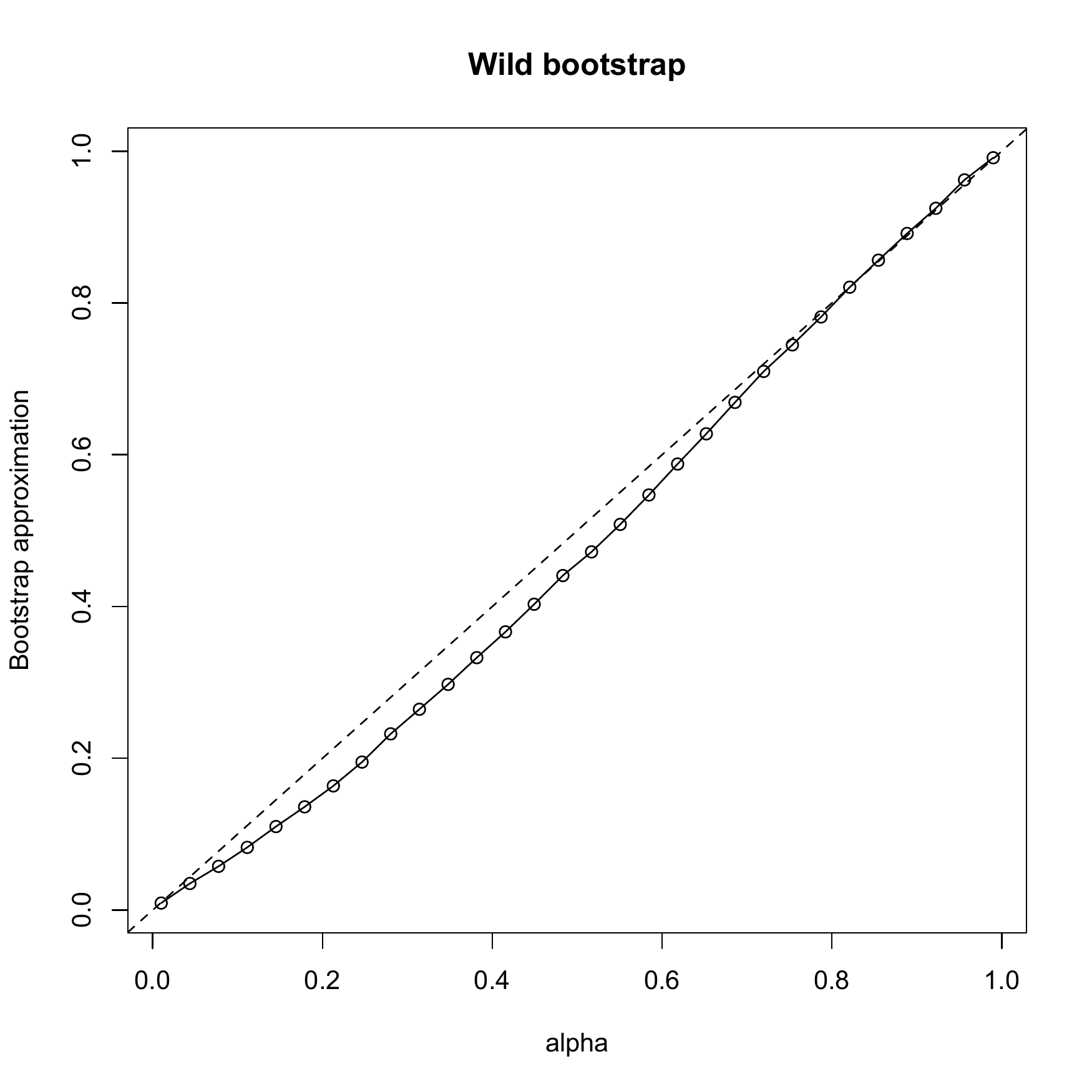}}
	\subfigure {\label{subfig:approx_wild_bootstrap_n=200_p=40}\includegraphics[scale=0.22]{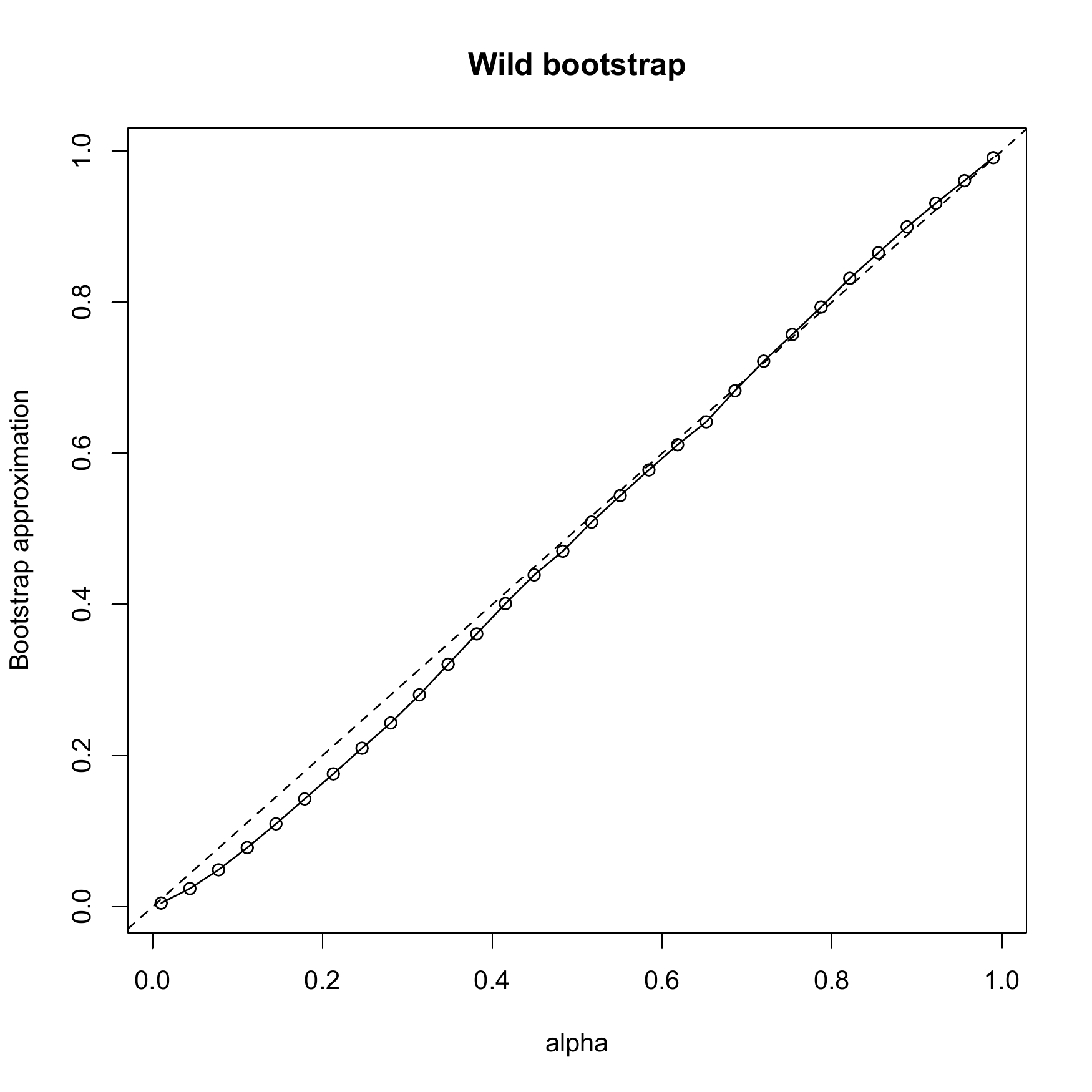}} \\
	
   \caption{Gaussian approximations (top row) by $\bar{Z}_0$  and wild bootstrap approximations (bottom row) by $\hat{L}^*_0$ for the elliptic $t$-distribution model: left (M2)+(D1), middle (M2)+(D2), and right (M2)+(D3). Sample size $n=500$ and dimension $p=40$.}
   \label{fig:p=40_n=500_elliptical_t}
\end{figure}

First, Figure \ref{fig:p=40_n=500_eps_cont_normal} shows a better approximation than Figure \ref{fig:p=40_n=500_elliptical_t} for $\bar{Z}_0$ and $\hat{L}_0^*$. This is predicted by our theory in Section \ref{sec:gaussian-approx} and \ref{subsec:gassuain_wild_bootstrap}. Second, the approximation becomes more accurate as the dependence gets stronger in both Figure \ref{fig:p=40_n=500_eps_cont_normal} and Figure \ref{fig:p=40_n=500_elliptical_t}. Third, the wild bootstrap has high-quality approximation for the upper tail probabilities, which is particularly relevant for statistical applications; see Section \ref{sec:stat_apps} below. This occurs even when the Gaussian approximation has deteriorated performance such as in the weakly dependent AR(1) models (D2) and (D3). One possible explanation for this phenomenon can be the numeric instability for simulating the $p'\times1$ normal random vectors $Z_i$ in the approximation, where $p'=p(p+1)/2$. On the contrary, the wild bootstrap only requires the simulation of $n$ univariate normal random variables.

\section{Two additional application examples}
\label{sec:more_examples}

In this section, we provide two more examples for applying the Gaussian wild bootstrap procedure. We only state results for subgaussian observations. For the uniform polynomial moment case, we can easily obtain similar results as in Section \ref{sec:stat_apps}. For a matrix $\Theta = \{\theta_{mk}\}_{m,k=1}^p$ and a vector $\mbf\theta$, we write $|\Theta|_{L^1} = \max_{1 \le k \le p}\sum_{m=1}^p |\theta_{mk}|$ is the matrix $L^1$-norm of $\Theta$ and $|\mbf\theta|_w = (\sum_{j=1}^p |\theta_j|^w)^{1/w}, w\ge1$, is the $\ell^w$-norm of $\mbf\theta$, where $|\mbf\theta|_\infty = \max_{1 \le j \le p} |\theta_j| $ is the max-norm.

\subsection{Estimation of the sparse precision matrix}
\label{subsec:sparse_prec_mat}

Precision matrix, i.e. the inverse of the covariance matrix $\Omega = \Sigma^{-1}$, is an important object in high-dimensional statistics because it closely ties to the Gaussian graphical models and partial correlation graphs \cite{meinshausenbuhlmann2006,yuanlin2007,rothmanbickellevinazhu2008a,pengwangzhouzhu2009a,yuan2010a,cailiuluo2011a}. For multivariate Gaussian observations $\vX_i$, zero entries in the precision matrix correspond to missing edges in the graphical models; i.e. $\omega_{mk} = 0$ means that $X_m$ and $X_k$ are conditionally independent given the values of all other variables \cite{dempster1972,lauritzen1996a}. To avoid the overfitting for graphical models with a large number of nodes, the {\it sparsity} is a widely considered structural assumption. Here, we consider the estimation of $\Omega$ by using the CLIME method \cite{cailiuluo2011a}
\begin{equation}
\label{eqn:clime}
\hat\Omega(\lambda) = \text{argmin}_{\Theta \in \mathbb{R}^{p \times p}} |\Theta|_1 \quad \text{subject to} \quad \|\hat{S} \Theta - \Id_{p \times p}\| \le \lambda,
\end{equation}
where $\lambda \ge 0$ is a tuning parameter to control the sparsity in $\hat\Omega(\lambda)$ and $|\Theta|_1 = \sum_{m,k=1}^p |\theta_{mk}|$. As in the thresholded covariance matrix estimation case in Section \ref{subsec:tuning_selection_thresholded_cov_mat}, the performance of CLIME depends on the selection of tuning parameter $\lambda$. A popular approach is to use the cross-validation (CV), whose theoretical properties again are unclear in the high-dimensional setup. Here, we shall apply the Gaussian wild bootstrap to determine $\lambda$. Let $r \in [0,1)$ and 
$$
\tilde{\cal G}(r, M, \zeta_p) = \Big \{ \Theta \in \mathbb{S}_+^{p \times p} : |\Theta|_{L^1} \le M, \; \max_{m \le p} \sum_{k=1}^p |\theta_{mk}|^r \le \zeta_p \Big \},
$$
where $\mathbb{S}_+^{p \times p}$ is the collection of positive-definite $p \times p$ symmetric matrices.

\begin{thm}[Adaptive tuning parameter selection for CLIME: subgaussian observations]
\label{thm:clime_precision_mat_rate_adaptive}
Let $\nu \ge 1$ and $\vX_i$ be iid mean zero random vectors such that $X_{ik} \sim  \text{subgaussian}(\nu^2)$ for all $k=1,\cdots,p$. Suppose that there exist constants $C_i > 0, i=1,\cdots,4,$ such that $\{\Gamma_g\}_{(j,k),(j,k)} \ge C_1$, $\|X_{1k}\|_4 \le C_2$, $\|X_{1k}\|_6 \le C_3 \nu^{1/3}$ and $\|X_{1k}\|_8 \le C_4 \nu^{1/2}$ for all $j,k=1,\cdots,p$. Assume that $\Omega \in \tilde{\cal G}(r, M, \zeta_p)$ and and $\nu^4 \log^7(np) \le C_1 n^{1-K}$ for some $K \in (0,1)$. Choose $\lambda_* = M a_{\hat{L}_0^*}(1-\alpha)$, where the bootstrap samples are generated with the covariance matrix kernel in (\ref{eqn:sample-covmat-kernel}). Then, we have with probability at least $1-\alpha-C n^{-K/8}$ for some constant $C > 0$ depending only on $C_1,\cdots,C_5$ such that
\begin{eqnarray}
\label{eqn:spectral_rate_clime_precision_mat_rate_adaptive_wild_bootstrap}
\|\hat\Omega(\lambda_*) - \Omega\|_2 &\le& C_r \zeta_p M^{2-2r} a_{\hat{L}_0^*}(1-\alpha)^{1-r}, \\
\label{eqn:Frobenius_rate_clime_precision_mat_rate_adaptive_wild_bootstrap}
p^{-1} |\hat\Omega(\lambda_*) - \Omega|_F^2 &\le& 4 C_r \zeta_p M^{4-2r} a_{\hat{L}_0^*}(1-\alpha)^{2-r},
\end{eqnarray}
where $C_r = 2^{3-2r} (1+2^{1-r}+3^{1-r})$. In addition, we have $\E[a_{\hat{L}_0^*}(1-\alpha)] \le C' (\log(p)/n)^{1/2}$ for some constant $C' > 0$ depending only on $\alpha$ and $C_1,\cdots,C_5$. In particular, $\E[\lambda_*] \le C' M (\log(p)/n)^{1/2}$.
\end{thm}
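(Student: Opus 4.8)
The plan is to follow the template of the proof of Theorem \ref{thm:thresholded_cov_mat_rate_adaptive}, with the oracle threshold $\tau_\diamond = \|\hat S - \Sigma\|$ replaced by the oracle CLIME radius $\lambda_\diamond = |\Omega|_{L^1}\|\hat S-\Sigma\|$: first produce a deterministic oracle inequality for the CLIME estimator that is valid on the event $\{\Omega \text{ is feasible for } (\ref{eqn:clime}) \text{ with } \lambda=\lambda_*\}$, and then use the wild bootstrap guarantee to show this event has probability at least $1-\alpha-Cn^{-K/8}$. For the probabilistic part, the subgaussian and moment hypotheses let Lemma \ref{lem:moment-bounds-gaussian-obs} (applied as in (\ref{eqn:check_GA1})) show that the covariance matrix kernel (\ref{eqn:sample-covmat-kernel}) satisfies (GA.1) with $B_n\asymp\nu^2$, whence $\nu^4\log^7(np)\le C_1 n^{1-K}$ gives (GA.2); since $\{\Gamma_g\}_{(j,k),(j,k)}\ge C_1$, Theorem \ref{thm:gaussian_wild_bootstrap_validity}(i) yields $\Prob(\|\hat S-\Sigma\|\le a_{\hat L_0^*}(1-\alpha))\ge 1-\alpha-Cn^{-K/8}$. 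On this event, using $\hat S\Omega-\Id = (\hat S-\Sigma)\Omega$ and $|\Omega|_{L^1}\le M$ we get $\|\hat S\Omega-\Id\| \le \|\hat S-\Sigma\|\,|\Omega|_{L^1} \le M\,a_{\hat L_0^*}(1-\alpha) = \lambda_*$, so $\Omega$ is feasible for (\ref{eqn:clime}) with $\lambda=\lambda_*$; the remainder of the argument is deterministic and runs on this event.

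For the deterministic CLIME bound, feasibility of $\Omega$ together with optimality of $\hat\Omega(\lambda_*)$ gives $|\hat\Omega(\lambda_*)|_1\le|\Omega|_1$, and the standard CLIME argument \cite{cailiuluo2011a} (working with the symmetrized minimizer) produces the entrywise bound $\|\hat\Omega(\lambda_*)-\Omega\| \le 4\,|\Omega|_{L^1}\lambda_* \le 4 M^2 a_{\hat L_0^*}(1-\alpha)$. Using $\Omega\in\tilde{\cal G}(r,M,\zeta_p)$, I would then split $\hat\Omega(\lambda_*)-\Omega$ into the entries where $|\omega_{mk}|$ lies above, respectively below, a fixed multiple of $\|\hat\Omega(\lambda_*)-\Omega\|$, exactly as in the covariance case, and control each block through the strong $\ell^r$-ball to obtain $\|\hat\Omega(\lambda_*)-\Omega\|_{L^1} \le 2(1+2^{1-r}+3^{1-r})\,\zeta_p\,\|\hat\Omega(\lambda_*)-\Omega\|^{1-r}$. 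Since $\|\hat\Omega(\lambda_*)-\Omega\|_2\le \|\hat\Omega(\lambda_*)-\Omega\|_{L^1}$ for a symmetric matrix, and $p^{-1}|\hat\Omega(\lambda_*)-\Omega|_F^2 \le \|\hat\Omega(\lambda_*)-\Omega\|\cdot\|\hat\Omega(\lambda_*)-\Omega\|_{L^1}$, substituting the entrywise bound and tracking constants yields (\ref{eqn:spectral_rate_clime_precision_mat_rate_adaptive_wild_bootstrap}) and (\ref{eqn:Frobenius_rate_clime_precision_mat_rate_adaptive_wild_bootstrap}) with $C_r=2^{3-2r}(1+2^{1-r}+3^{1-r})$.

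The expectation estimate $\E[a_{\hat L_0^*}(1-\alpha)]\le C'(\log(p)/n)^{1/2}$, hence $\E[\lambda_*]=M\,\E[a_{\hat L_0^*}(1-\alpha)]\le C' M(\log(p)/n)^{1/2}$, is obtained verbatim from the last part of the proof of Theorem \ref{thm:thresholded_cov_mat_rate_adaptive}: a Gaussian tail bound plus a union bound over the $p^2$ coordinates give $a_{\hat L_0^*}(1-\alpha)\le n^{-1/2}\bar\xi\,\Phi^{-1}(1-\alpha/(2p^2))$ with $\bar\xi^2\le 8n^{-1}\max_{m,k}\sum_{i=1}^n g_{mk}^2(\vX_i)+8\Delta$; bounding $\E[\Delta^{1/2}]$ by Lemma \ref{lem:gaussian_wild_bootstrap_Delta2_exp_moment} and $\E[\max_{m,k}\sum_i g_{mk}^2(\vX_i)]$ by \cite[Lemma 9]{cck2014b} together with Pisier's inequality \cite[Lemma 2.2.2]{vandervaartwellner1996}, and invoking $\nu^4\log^7(np)\le C_1 n^{1-K}$, gives $\E[\bar\xi]\le C$ and hence the claim.

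The main obstacle is the deterministic CLIME oracle inequality: it is the only genuinely new ingredient and needs some care, since the CLIME minimizer is generically non-symmetric (so one must pass to the symmetrized estimator) and the entrywise $\ell^\infty$ control must be converted into spectral and Frobenius control through the strong $\ell^r$-ball with the sharp constant $C_r$. Everything else is a direct transcription of the covariance-matrix argument with $\tau_*$ replaced by $\lambda_*/|\Omega|_{L^1}$.
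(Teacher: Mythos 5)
Your proposal follows essentially the same route as the paper: verify (GA.1)--(GA.2) with $B_n\asymp\nu^2$ via Lemma \ref{lem:moment-bounds-gaussian-obs}, invoke Theorem \ref{thm:gaussian_wild_bootstrap_validity}(i) to get $\Prob(\|\hat S-\Sigma\|\le a_{\hat L_0^*}(1-\alpha))\ge 1-\alpha-Cn^{-K/8}$, observe that $\lambda_\diamond=|\Omega|_{L^1}\|\hat S-\Sigma\|\le\lambda_*$ on that event, apply the deterministic CLIME oracle inequality, and recycle the $\E[a_{\hat L_0^*}(1-\alpha)]$ bound from Theorem \ref{thm:thresholded_cov_mat_rate_adaptive}. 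The one place where you diverge from the paper is that you treat the deterministic CLIME oracle inequality as a ``main obstacle'' to be re-proved; the paper packages exactly this as Lemma \ref{lem:precmat_general_bound} and sources it verbatim to \cite[Theorem 6]{cailiuluo2011a}, so no new work is needed there, and your constant-tracking ($2\cdot4^{1-r}=2^{3-2r}$ combined with $\lambda_*^{1-r}=M^{1-r}a_{\hat L_0^*}(1-\alpha)^{1-r}$) correctly reproduces $C_r=2^{3-2r}(1+2^{1-r}+3^{1-r})$.
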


Now, we compare Theorem \ref{thm:clime_precision_mat_rate_adaptive} with \cite[Theorem 1(a) and 4(a)]{cailiuluo2011a}. Let $\eta \in (0,1/4)$ and $K,\tau \in (0,\infty)$ be bounded constants. Assuming that $\log(p)/n \le \eta$, $\E[\exp(t X_{ij}^2)] \le K$ for all $|t| \le \eta$ and $i=1,\cdots,n; j=1,\cdots,p$, and $\Omega \in \tilde{\cal G}(r, M, \zeta_p)$, \cite{cailiuluo2011a} showed that with probability at least $1-4p^{-\tau}$
\begin{eqnarray*}
\|\hat\Omega(\lambda_\Delta) - \Omega\|_2 &\le& C'_\Delta \zeta_p M^{2-2r} (\log(p)/n)^{(1-r)/2}, \\
p^{-1} |\hat\Omega(\lambda_\Delta) - \Omega|_F^2 &\le& 4 C'_\Delta \zeta_p M^{4-2r} (\log(p)/n)^{1-r/2},
\end{eqnarray*}
where $\lambda_\Delta = C_\Delta M (\log(p)/n)^{1/2}$, $C_\Delta = 2 \eta^{-2} (2+\tau+\eta^{-1}e^2K^2)^2$, and $C'_\Delta = C_r C_\Delta^{1-r}$. If $X_{ij} \sim \text{subgaussian}(\nu^2)$, then $\eta \le \nu^{-2}$ for large enough $K$. Therefore, $C_\Delta \gtrsim \nu^8$ and $C'_\Delta \gtrsim \nu^{8(1-r)}$, both diverging to infinity as $\nu^2 \to \infty$ (i.e. $\eta \to 0$). Therefore, $\lambda_* = o_\Prob(\lambda_\Delta)$ and the convergence rates in (\ref{eqn:spectral_rate_clime_precision_mat_rate_adaptive_wild_bootstrap}) and (\ref{eqn:Frobenius_rate_clime_precision_mat_rate_adaptive_wild_bootstrap}) are much sharper than those obtained in \cite[Theorem 1(a) and 4(a)]{cailiuluo2011a}.

On the other hand, the turning parameter $\lambda_*$ requires the knowledge of $M$ and thus the estimator $\hat\Omega(\lambda_*)$ is not fully data-dependent. In contrast with the thresholded covariance matrix estimation problem in Section \ref{subsec:tuning_selection_thresholded_cov_mat}, the fundamental difficulty here for estimating the precision matrix is that there is no sample analog of $\Omega$ when $p > n$ and $M$ can be viewed as a stability parameter in the sparse inversion of the matrix $\hat{S}$. That is, the larger $M$, the more difficult to estimate $\Omega = \Sigma^{-1}$; in particular for CLIME, the rates (\ref{eqn:spectral_rate_clime_precision_mat_rate_adaptive_wild_bootstrap}) and (\ref{eqn:Frobenius_rate_clime_precision_mat_rate_adaptive_wild_bootstrap}) become slower. In addition, $|\Omega|_{L^1}$ plays a similar role in the graphical Lasso model for estimating the sparse precision matrix \cite{ravikumarwainwrightraskuttiyu2008a}. The same comments apply to the problem of estimating the sparse linear functionals in Section \ref{subsec:sparse_linear_functionals}.

\subsection{Estimation of the sparse linear functionals}
\label{subsec:sparse_linear_functionals}

Consider estimation of the linear functional $\mbf\theta = \Sigma^{-1} \vb$, where $\vb$ is a fixed known $p \times 1$ vector and $\Sigma = \Var(\vX_i)$. Functionals of such form are related to the solution of the linear equality constrained quadratic program
\begin{equation}
\label{eqn:lcqp}
\text{minimize}_{\vw \in \mathbb{R}^{p\times p}} \vw^\top \Sigma \vw \quad \text{subject to} \quad \vb^\top \vw = 1,
\end{equation}
which arises naturally in Markowitz portfolio selection, linear discriminant analysis, array signal processing, best linear unbiased estimator (BLUE), and optimal linear prediction for univariate time series \cite{markowitz1952,maizouyuan2012a,guerci1999a,mcmurrypolitis2015}. For example, in Markowitz portfolio selection, the portfolio risk $\Var(\vX_i^\top \vw)$ is minimized subject to the constraint that the expected mean return $\E(\vX_i^\top \vw)$ is fixed at certain level. The solution of (\ref{eqn:lcqp}) $\vw^*= (\vb^\top \Sigma^{-1} \vb)^{-1} \Sigma^{-1} \vb$ is proportional to $\mbf\theta$ and the optimal value of (\ref{eqn:lcqp}) is $(\vb^\top \Sigma^{-1} \vb)^{-1}$. A naive approach to estimate $\mbf\theta$ has two steps: first construct an invertible estimator $\hat\Sigma$ of $\Sigma$ and second estimate $\mbf\theta$ by $\hat\Sigma^{-1} \vb$. This two-step estimator may not be consistent for $\mbf\theta$ in high-dimensions even though $\hat\Sigma$ is a spectral norm consistent (and typically regularized) estimator of $\Sigma$ because in the worst case $|\hat{\mbf\theta}-\mbf\theta| = |\hat\Sigma^{-1} \vb - \Sigma^{-1} \vb| \le \|\hat\Sigma^{-1} - \Sigma^{-1}\|_2 \cdot |\vb|$ does not converge if $|\vb| \to \infty$ at faster rate than $\|\hat\Sigma^{-1} - \Sigma^{-1}\|_2 \to 0$. However, if $\mbf\theta$ has some structural assumptions such as sparsity, then we can directly estimate $\mbf\theta$ without the intermediate step for estimating $\hat\Sigma$ or $\hat\Sigma^{-1}$. Sparsity in $\mbf\theta$ is often a plausible assumption in real applications. For instance, the sparse portfolio has been considered in \cite{brodieetal2009} to obtain the stable portfolio optimization and to facilitate the transaction cost for a large number of assets. When $\mbf\theta$ is sparse, the following Dantzig-selector type problem has been proposed in \cite{chenxuwu2015+} to estimate $\mbf\theta$
\begin{equation}
\label{eqn:dantzig_linear_functional}
\hat{\mbf\theta}(\lambda) = \text{argmin}_{\vw \in \mathbb{R}^{p\times p}} |\vw|_1 \quad \text{subject to} \quad |\hat{S} \vw - \vb|_\infty \le \lambda,
\end{equation}
where $\lambda \ge 0$ is a tuning parameter to control the sparsity in $\hat{\mbf\theta}(\lambda)$. The optimization problem (\ref{eqn:dantzig_linear_functional}) can be solved by linear programming and thus there are computationally efficient algorithms for obtaining $\hat{\mbf\theta}(\lambda)$. The intuition of (\ref{eqn:dantzig_linear_functional}) is that since $\Sigma \mbf\theta = \vb$, we should expect that $\hat{S} \hat{\mbf\theta} \approx \vb$ for a reasonably good estimator $\hat{\mbf\theta}$. Under the sparsity assumption on $\mbf\theta$ and suitable moment conditions on $\vX_i$, \cite{chenxuwu2015+} obtained the rate of convergence for $\hat{\mbf\theta}(\lambda)$. However, a remaining issue for using (\ref{eqn:dantzig_linear_functional}) on real data is to properly select the tuning parameter $\lambda$. Different from the thresholded covariance matrix estimation where the sparsity is assumed in $\Sigma$, here we do not require this structure in the linear functional estimation. Instead, we impose the sparsity assumption directly on $\mbf\theta$. Let $r \in [0,1)$ and
$$
{\cal G}'(r, \zeta_p) = \Big \{\vw \in \mathbb{R}^p : \sum_{j=1}^p |w_j|^r \le \zeta_p \Big \}.
$$
Here, $\zeta_p$ controls the sparsity level of the elements in ${\cal G}'(r, \zeta_p)$. Without assuming any structure on $\Sigma$, we can allow stronger dependence in $\Sigma$ and therefore the Gaussian wild bootstrap approximation may perform better in this case.

\begin{thm}[Adaptive tuning parameter selection in linear functional estimation: subgaussian observations]
\label{thm:linear_functional_estimation_adaptive_wild_bootstrap}
Let $\nu \ge 1$ and $\vX_i$ be iid mean zero random vectors such that $X_{ik} \sim  \text{subgaussian}(\nu^2)$ for all $k=1,\cdots,p$. Suppose that there exist constants $C_i > 0, i=1,\cdots,4,$ such that $\{\Gamma_g\}_{(j,k),(j,k)} \ge C_1$, $\|X_{1k}\|_4 \le C_2$, $\|X_{1k}\|_6 \le C_3 \nu^{1/3}$ and $\|X_{1k}\|_8 \le C_4 \nu^{1/2}$ for all $j,k=1,\cdots,p$. Assume that $\mbf\theta \in {\cal G}'(r, \zeta_p)$ and $\nu^4 \log^7(np) \le C_5 n^{1-K}$ for some $K \in (0,1)$. Let $|\mbf\theta|_1 \le M$ and choose $\lambda_* = M a_{\hat{L}_0^*}(1-\alpha)$, where the bootstrap samples are generated with the covariance matrix kernel in (\ref{eqn:sample-covmat-kernel}). Then we have for all $w \in [1,\infty]$
\begin{equation}
\label{eqn:rate_linear_functional_estimation_adaptive_wild_bootstrap}
|\hat{\mbf\theta}(\lambda_*) - \mbf\theta|_w \le (2 \cdot 6^{1 \over w} \cdot 5^{1-r\over w}) \zeta_p^{1\over w} (M |\Sigma^{-1}|_{L^1})^{1-{r \over w}} a_{\hat{L}_0^*}(1-\alpha)^{1-{r \over w}}
\end{equation}
with probability at least $1-\alpha-C n^{-K/8}$ for some constant $C > 0$ depending only on $C_1,\cdots,C_5$. In addition, we have
\begin{equation}
\label{eqn:linfun_lambda_*-bound_subgaussian}
\E[\lambda_*] \le C' M (\log(p)/n)^{1/2},
\end{equation}
where $C' > 0$ is a constant depending only on $\alpha$ and $C_1,\cdots,C_5$.
\end{thm}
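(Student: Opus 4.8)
The plan is to follow the proof of Theorem~\ref{thm:thresholded_cov_mat_rate_adaptive}, replacing its thresholding sensitivity analysis by the sensitivity analysis of the Dantzig-type program~(\ref{eqn:dantzig_linear_functional}) from \cite{chenxuwu2015+}. First I would establish the high-probability event on which the oracle tuning parameter $\lambda_\diamond:=M\|\hat{S}-\Sigma\|$ is dominated by the bootstrap choice $\lambda_*$. By Lemma~\ref{lem:moment-bounds-gaussian-obs} and the assumed bounds $\|X_{1k}\|_4\le C_2$, $\|X_{1k}\|_6\le C_3\nu^{1/3}$, $\|X_{1k}\|_8\le C_4\nu^{1/2}$, conditions (GA.1) and (GA.2) hold for the covariance matrix kernel~(\ref{eqn:sample-covmat-kernel}) with $B_n\asymp\nu^2$ exactly as in the proof of Theorem~\ref{thm:thresholded_cov_mat_rate_adaptive}, the scaling limit being furnished by $\nu^4\log^7(np)\le C_5n^{1-K}$; hence Theorem~\ref{thm:gaussian_wild_bootstrap_validity}(i) gives $\Prob(\|\hat{S}-\Sigma\|\le a_{\hat{L}_0^*}(1-\alpha))\ge 1-\alpha-Cn^{-K/8}$, i.e. $\Prob(\lambda_\diamond\le\lambda_*)\ge 1-\alpha-Cn^{-K/8}$. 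On the event $\{\lambda_\diamond\le\lambda_*\}$ the truth $\mbf\theta=\Sigma^{-1}\vb$ is feasible for~(\ref{eqn:dantzig_linear_functional}) with tuning parameter $\lambda_*$, since $|\hat{S}\mbf\theta-\vb|_\infty=|(\hat{S}-\Sigma)\mbf\theta|_\infty\le\|\hat{S}-\Sigma\|\,|\mbf\theta|_1\le\lambda_\diamond\le\lambda_*$.

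Next, restricting to $\{\lambda_\diamond\le\lambda_*\}$, I would run the deterministic estimation bound. Feasibility of $\mbf\theta$ gives $|\hat{\mbf\theta}(\lambda_*)|_1\le|\mbf\theta|_1\le M$, so with $\hat{\mbf\delta}:=\hat{\mbf\theta}(\lambda_*)-\mbf\theta$ we have $|\hat{\mbf\delta}|_1\le 2M$; both points being feasible gives $|\hat{S}\hat{\mbf\delta}|_\infty\le 2\lambda_*$, whence $|\Sigma\hat{\mbf\delta}|_\infty\le|\hat{S}\hat{\mbf\delta}|_\infty+\|\hat{S}-\Sigma\|\,|\hat{\mbf\delta}|_1\lesssim\lambda_*$ and therefore $|\hat{\mbf\delta}|_\infty\le|\Sigma^{-1}|_{L^1}\,|\Sigma\hat{\mbf\delta}|_\infty\lesssim M|\Sigma^{-1}|_{L^1}\,a_{\hat{L}_0^*}(1-\alpha)$. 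The $\ell^1$-control comes from the usual cone inequality: from $|\hat{\mbf\theta}(\lambda_*)|_1\le|\mbf\theta|_1$ and the triangle inequality, $|\hat{\mbf\delta}_{S^c}|_1\le|\hat{\mbf\delta}_S|_1+2|\mbf\theta_{S^c}|_1$ for any index set $S$; taking $S=\{j:|\theta_j|>|\hat{\mbf\delta}|_\infty\}$ and using $\mbf\theta\in{\cal G}'(r,\zeta_p)$ (so $|S|\le\zeta_p|\hat{\mbf\delta}|_\infty^{-r}$ and $|\mbf\theta_{S^c}|_1\le\zeta_p|\hat{\mbf\delta}|_\infty^{1-r}$) yields $|\hat{\mbf\delta}|_1\lesssim\zeta_p|\hat{\mbf\delta}|_\infty^{1-r}$. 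The $\ell^w$ bound for $w\in[1,\infty]$ then follows by interpolation, $|\hat{\mbf\delta}|_w\le|\hat{\mbf\delta}|_\infty^{1-1/w}|\hat{\mbf\delta}|_1^{1/w}$, giving $|\hat{\mbf\delta}|_w\lesssim\zeta_p^{1/w}(M|\Sigma^{-1}|_{L^1})^{1-r/w}a_{\hat{L}_0^*}(1-\alpha)^{1-r/w}$; tracking the constants through the cone and interpolation steps produces the explicit prefactor $2\cdot 6^{1/w}\cdot 5^{(1-r)/w}$ in~(\ref{eqn:rate_linear_functional_estimation_adaptive_wild_bootstrap}).

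For the bound~(\ref{eqn:linfun_lambda_*-bound_subgaussian}) on $\E[\lambda_*]=M\,\E[a_{\hat{L}_0^*}(1-\alpha)]$ I would reuse the final paragraph of the proof of Theorem~\ref{thm:thresholded_cov_mat_rate_adaptive} verbatim: a union bound over the $p^2$ Gaussian tails gives $a_{\hat{L}_0^*}(1-\alpha)\le n^{-1/2}\bar\xi\,\Phi^{-1}(1-\alpha/(2p^2))$ with $\bar\xi^2\le 8n^{-1}\max_{m,k}\sum_{i=1}^n g_{mk}^2(\vX_i)+8\Delta$; then $\E[\Delta^{1/2}]\lesssim n^{-1/2}(\log(pn))^{3/2}\nu^2$ by Lemma~\ref{lem:gaussian_wild_bootstrap_Delta2_exp_moment} and $\E[\max_{m,k}\sum_i g_{mk}^2(\vX_i)]\lesssim n+(\log(np))^3\nu^4$ by \cite[Lemma 9]{cck2014b} and Pisier's inequality, so that $\nu^4\log^7(np)\le C_5n^{1-K}$ yields $\E[\bar\xi]\le C$ and hence $\E[\lambda_*]\le C'M(\log(p)/n)^{1/2}$.

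The step I expect to be the main obstacle is the deterministic sensitivity analysis of~(\ref{eqn:dantzig_linear_functional}) in the second paragraph: passing from $\hat{S}$ to $\Sigma$ and isolating $|\hat{\mbf\delta}|_\infty$ requires the self-improving bound $|\hat{\mbf\delta}|_1\lesssim\zeta_p|\hat{\mbf\delta}|_\infty^{1-r}$ (which in turn rests on the cone inequality), and recovering the exact constants in~(\ref{eqn:rate_linear_functional_estimation_adaptive_wild_bootstrap}) demands careful bookkeeping; however the argument is standard and is already carried out in \cite{chenxuwu2015+}, so the genuinely new content here is only the substitution of the bootstrap threshold $\lambda_*$ via Theorem~\ref{thm:gaussian_wild_bootstrap_validity} and the moment estimates imported from the proof of Theorem~\ref{thm:thresholded_cov_mat_rate_adaptive}.
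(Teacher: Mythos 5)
Your proposal matches the paper's proof: the only substantive step is showing $\Prob(\lambda_\diamond\le\lambda_*)\ge 1-\alpha-Cn^{-K/8}$ via Theorem~\ref{thm:gaussian_wild_bootstrap_validity}(i) (exactly as in Theorem~\ref{thm:thresholded_cov_mat_rate_adaptive}), then applying the deterministic Dantzig-selector sensitivity bound (Lemma~\ref{lem:linfun_general_bound}, imported from \cite{chenxuwu2015+}) and the smallness estimate $D(u)\le 2u^{1-r}\zeta_p$, and finally reusing the $\E[\bar\xi]$ computation from Theorem~\ref{thm:thresholded_cov_mat_rate_adaptive} for the tuning-parameter expectation. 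The only presentational difference is that you re-derive the cone/interpolation argument behind that lemma rather than citing it, which does reproduce the form of the bound but leaves the exact prefactor $2\cdot6^{1/w}\cdot5^{(1-r)/w}$ to the cited lemma.
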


The tuning parameter $\lambda_\Delta = C_\Delta M \sqrt{\log(p)/n}$ is selected in \cite[Theorem II.1]{chenxuwu2015+}, which is non-adaptive and the constant $C_\Delta > 0$ depends on the underlying data distribution $F$ through $\nu^2$. In particular, $C_\Delta \to \infty$ as $\nu^2 \to \infty$. Theorem \ref{thm:linear_functional_estimation_adaptive_wild_bootstrap} shows that the bootstrap tuning parameter selection strategy is less conservative in view of (\ref{eqn:linfun_lambda_*-bound_subgaussian}) and the rate (\ref{eqn:rate_linear_functional_estimation_adaptive_wild_bootstrap}) can be much tighter than $\hat{\mbf\theta}(\lambda_\Delta)$ when $\nu^2 \to \infty$. However, as in Section \ref{subsec:sparse_prec_mat}, the turning parameter $\lambda_*$ here requires the knowledge of $M$ and thus the estimator $\hat{\mbf\theta}(\lambda_*)$ is not fully data-dependent. But this is due to the fundamental difficulty for the lack of the sample analog of $\mbf\theta$ in this problem.

\subsection{Proof of Theorem \ref{thm:clime_precision_mat_rate_adaptive} and \ref{thm:linear_functional_estimation_adaptive_wild_bootstrap}}

\begin{lem}
\label{lem:precmat_general_bound}
Suppose that $\Omega \in \tilde{\cal G}(r, M, \zeta_p)$ and let $\lambda \ge |\Omega|_{L^1} \|\hat{S}-\Sigma\|$. Then, we have
\begin{eqnarray*}
|\hat\Omega(\lambda) - \Omega|_\infty &\le& 4 |\Omega|_{L^1} \lambda, \\
\|\hat\Omega(\lambda) - \Omega\|_2 &\le& C_1 \zeta_p \lambda^{1-r}, \\
p^{-1} |\hat\Omega(\lambda) - \Omega|_F^2 &\le& C_2 \zeta_p \lambda^{2-r},
\end{eqnarray*}
where $C_1 \le 2 (1+2^{1-r}+3^{1-r}) (4 |\Omega|_{L^1})^{1-r}$ and $C_2 \le 4 |\Omega|_{L^1} C_1$.
\end{lem}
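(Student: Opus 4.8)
The plan is to prove Lemma~\ref{lem:precmat_general_bound} as a purely deterministic statement, valid on the event $\lambda \ge |\Omega|_{L^1}\,\|\hat S-\Sigma\|$; the stochastic ingredient (replacing $\|\hat S-\Sigma\|$ by the bootstrap quantile $a_{\hat L_0^*}(1-\alpha)$) is deferred to the proof of Theorem~\ref{thm:clime_precision_mat_rate_adaptive}. Throughout I write $\hat\Omega:=\hat\Omega(\lambda)$, let $\|\cdot\|$ be the entrywise max-norm, and use the two mixed-norm inequalities $\|AB\| \le \|A\|\,|B|_{L^1}$ and $\|AB\| \le |A|_{L^\infty}\|B\|$ (with $|A|_{L^\infty}$ the maximal absolute row sum), together with the symmetry of $\Omega=\Sigma^{-1}$, which gives $|\Omega|_{L^\infty}=|\Omega|_{L^1}$.

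First I would verify feasibility: since $\Sigma\Omega=\Id$, one has $\|\hat S\Omega-\Id\| = \|(\hat S-\Sigma)\Omega\| \le |\Omega|_{L^1}\|\hat S-\Sigma\| \le \lambda$, so $\Omega$ is feasible for (\ref{eqn:clime}); as the program separates column by column, optimality yields $|\hat\Omega|_{L^1}\le|\Omega|_{L^1}$ and hence $|\hat\Omega-\Omega|_{L^1}\le 2|\Omega|_{L^1}$. Next, from the identity
\[
\Sigma\hat\Omega-\Id=(\Sigma-\hat S)(\hat\Omega-\Omega)+(\hat S\hat\Omega-\Id)-(\hat S\Omega-\Id),
\]
the three terms on the right have max-norm at most $2\lambda$, $\lambda$, $\lambda$ respectively (using feasibility of $\hat\Omega$ and of $\Omega$, the bound $|\hat\Omega-\Omega|_{L^1}\le 2|\Omega|_{L^1}$, and $\lambda \ge |\Omega|_{L^1}\|\hat S-\Sigma\|$), so $\|\Sigma\hat\Omega-\Id\|\le 4\lambda$. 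Then $\hat\Omega-\Omega=\Omega(\Sigma\hat\Omega-\Id)$ gives $|\hat\Omega-\Omega|_\infty\le|\Omega|_{L^\infty}\,\|\Sigma\hat\Omega-\Id\|\le 4|\Omega|_{L^1}\lambda$, which is the first claimed bound. Put $\delta:=4|\Omega|_{L^1}\lambda$.

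To pass from the entrywise rate to the spectral and Frobenius rates I would run the standard $\ell^r$-ball splitting, column by column. Fix a column $j$ and set $S=\{m:|\omega_{mj}|>\delta\}$, so $|S|\le\delta^{-r}\zeta_p$ since $\Omega\in\tilde{\cal G}(r,M,\zeta_p)$. Splitting $|\hat\omega_j-\omega_j|_1$ over $S$ and $S^c$: on $S$ use $|\hat\omega_{mj}-\omega_{mj}|\le\delta$; on $S^c$ bound $|\hat\omega_{mj}-\omega_{mj}|\le|\hat\omega_{mj}|+|\omega_{mj}|$, control $\sum_{m\in S^c}|\omega_{mj}|\le\zeta_p\delta^{1-r}$ via $|\omega_{mj}|\le|\omega_{mj}|^r\delta^{1-r}$, and control $\sum_{m\in S^c}|\hat\omega_{mj}|=|\hat\omega_j|_1-\sum_{m\in S}|\hat\omega_{mj}| \le |\omega_j|_1-\sum_{m\in S}(|\omega_{mj}|-\delta)$ using the $\ell^1$-minimality. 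Collecting terms gives $\max_j|\hat\omega_j-\omega_j|_1\le(1+2^{1-r}+3^{1-r})\zeta_p\delta^{1-r}$, i.e.\ $|\hat\Omega-\Omega|_{L^1}\le(1+2^{1-r}+3^{1-r})\zeta_p\delta^{1-r}$; the same bound holds for the maximal row sum after noting that $\hat\Omega^{\!\top}$ is the transposed CLIME solution, for which $\Omega$ is again feasible. For the spectral norm, use $\|\hat\Omega-\Omega\|_2\le\sqrt{|\hat\Omega-\Omega|_{L^1}\,|\hat\Omega-\Omega|_{L^\infty}}$ (an extra factor $2$ entering when symmetrizing $\hat\Omega$), which after substituting $\delta=4|\Omega|_{L^1}\lambda$ gives $\|\hat\Omega-\Omega\|_2\le C_1\zeta_p\lambda^{1-r}$ with $C_1=2(1+2^{1-r}+3^{1-r})(4|\Omega|_{L^1})^{1-r}$. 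For the Frobenius norm, combine $|\hat\Omega-\Omega|_F^2=\sum_j|\hat\omega_j-\omega_j|_2^2\le|\hat\Omega-\Omega|_\infty\sum_j|\hat\omega_j-\omega_j|_1\le p\,\delta\max_j|\hat\omega_j-\omega_j|_1$, so $p^{-1}|\hat\Omega-\Omega|_F^2\le C_2\zeta_p\lambda^{2-r}$ with $C_2\le 4|\Omega|_{L^1}C_1$.

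The main obstacle I anticipate is not a deep inequality but the careful constant bookkeeping in the splitting step: the CLIME estimator is not itself sparse, so one must control $\sum_{m\in S^c}|\hat\omega_{mj}|$ purely through the $\ell^1$-minimality $|\hat\omega_j|_1\le|\omega_j|_1$ rather than by thresholding, and one must also check that the symmetrization of $\hat\Omega$ degrades the relevant column/row sums by at most the factor $2$ appearing in $C_1$. Everything else reduces to the two mixed-norm inequalities above and the elementary estimate $|\omega_{mj}|^{1-r}\le\delta^{1-r}$ on $S^c$.
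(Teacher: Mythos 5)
Your proposal reconstructs in full the argument that the paper disposes of with a single citation to \cite[Theorem 6]{cailiuluo2011a}; the skeleton is correct — feasibility of $\Omega$, column-wise $\ell^1$-minimality, the identity $\hat\Omega-\Omega=\Omega(\Sigma\hat\Omega-\Id)$ to extract the entrywise rate $\delta=4|\Omega|_{L^1}\lambda$, and a weak $\ell^r$-ball splitting to pass to the operator and Frobenius norms. However, two steps need repair before the proof is airtight. First, a bookkeeping error: summing the three estimates your own splitting produces on $S$ and $S^c$ — namely $\zeta_p\delta^{1-r}$, $\zeta_p\delta^{1-r}$ and $2\zeta_p\delta^{1-r}$ — gives $\max_j|\hat\omega_j-\omega_j|_1\le 4\zeta_p\delta^{1-r}$, \emph{not} the asserted $(1+2^{1-r}+3^{1-r})\zeta_p\delta^{1-r}$; the latter form arises in Cai--Liu--Luo from a different, threshold-at-$2\delta$ three-way split. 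Because $4\le 2(1+2^{1-r}+3^{1-r})$ for every $r\in[0,1)$, your factor $4$ still verifies the lemma's stated $C_1$ (and is in fact slightly tighter), but the intermediate display you wrote is simply not what your own computation yields.

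Second, and more substantively, the remark ``$\hat\Omega^\top$ is the transposed CLIME solution, for which $\Omega$ is again feasible'' is not justified: feasibility of $\hat\Omega$ controls $\|\hat S\hat\Omega-\Id\|$, not $\|\hat S\hat\Omega^\top-\Id\|$, and the column-wise program (\ref{eqn:clime}) imposes no $\ell^1$-minimality on the rows of $\hat\Omega$, so the row-sum bound $|\hat\Omega-\Omega|_{L^\infty}$ needed for the interpolation $\|A\|_2\le\sqrt{|A|_{L^1}|A|_{L^\infty}}$ does not come for free. The correct fix — and what Cai--Liu--Luo actually do — is to pass to the symmetrized estimator $\check\omega_{mk}=\hat\omega_{mk}\vone\{|\hat\omega_{mk}|\le|\hat\omega_{km}|\}+\hat\omega_{km}\vone\{|\hat\omega_{mk}|>|\hat\omega_{km}|\}$, which preserves $|\check\Omega-\Omega|_\infty\le|\hat\Omega-\Omega|_\infty$ (since $\Omega=\Omega^\top$) and $|\check\omega_j|_1\le|\hat\omega_j|_1\le|\omega_j|_1$ with \emph{no} extra factor, so that $|\check\Omega-\Omega|_{L^1}=|\check\Omega-\Omega|_{L^\infty}$. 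The hand-waved ``extra factor $2$ when symmetrizing'' should therefore be dropped: if it were actually needed, you would end up with $C_1=8(4|\Omega|_{L^1})^{1-r}$, which exceeds the lemma's $2(1+2^{1-r}+3^{1-r})(4|\Omega|_{L^1})^{1-r}$ once $r$ is close to $1$ (at $r=1$ one has $8>6$), and your proof would then fail to deliver the stated constant.
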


\begin{proof}[Proof of Lemma \ref{lem:precmat_general_bound}]
See \cite[Theorem 6]{cailiuluo2011a}.
\end{proof}

\begin{lem}
\label{lem:linfun_general_bound}
Let $\lambda \ge |\mbf\theta|_1 \|\hat{S} - \Sigma\|$. Then, $\mbf\theta$ satisfies $|\hat{S} \mbf\theta - \vb|_\infty \le \lambda$. For the Dantzig-selector estimator $\hat{\mbf\theta}(\lambda)$ in (\ref{eqn:dantzig_linear_functional}), we have
$$
|\hat{\mbf\theta}(\lambda) - \mbf\theta|_w \le [6 D(5\lambda|\Sigma^{-1}|_{L^1})]^{1 \over w} (2\lambda|\Sigma^{-1}|_{L^1})^{1-{1 \over w}},
$$
where $D(u) = \sum_{j=1}^p (|\theta_j| \wedge u), u \ge 0$, is the smallness measure of $\mbf\theta$.
\end{lem}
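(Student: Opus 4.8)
The plan is to run the by-now-standard Dantzig-selector argument in the $|\cdot|_\infty$ geometry, exactly as in the CLIME analysis of \cite{cailiuluo2011a,chenxuwu2015+}. \emph{Step 1 (feasibility of $\mbf\theta$).} Since $\mbf\theta=\Sigma^{-1}\vb$ satisfies $\Sigma\mbf\theta=\vb$, we have $\hat{S}\mbf\theta-\vb=(\hat{S}-\Sigma)\mbf\theta$, hence $|\hat{S}\mbf\theta-\vb|_\infty\le\big(\max_{m}\textstyle\sum_{k}|\hat{s}_{mk}-\sigma_{mk}|\,|\theta_k|\big)\le\|\hat{S}-\Sigma\|\,|\mbf\theta|_1\le\lambda$, using $\lambda\ge|\mbf\theta|_1\|\hat{S}-\Sigma\|$. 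So $\mbf\theta$ lies in the feasible set of (\ref{eqn:dantzig_linear_functional}), and since $\hat{\mbf\theta}(\lambda)$ is the $\ell^1$-minimiser over that set, $|\hat{\mbf\theta}(\lambda)|_1\le|\mbf\theta|_1$.

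\emph{Step 2 ($|\cdot|_\infty$ bound).} Put $\hat{\mbf\delta}=\hat{\mbf\theta}(\lambda)-\mbf\theta$. From $\Sigma\hat{\mbf\delta}=\Sigma\hat{\mbf\theta}-\vb=(\Sigma-\hat{S})\hat{\mbf\theta}+(\hat{S}\hat{\mbf\theta}-\vb)$ one gets $\hat{\mbf\delta}=\Sigma^{-1}(\Sigma-\hat{S})\hat{\mbf\theta}+\Sigma^{-1}(\hat{S}\hat{\mbf\theta}-\vb)$. Since $\Sigma^{-1}$ is symmetric its max row sum equals $|\Sigma^{-1}|_{L^1}$, so $|\Sigma^{-1}\vv|_\infty\le|\Sigma^{-1}|_{L^1}\,|\vv|_\infty$ for any $\vv$; combining this with the constraint $|\hat{S}\hat{\mbf\theta}-\vb|_\infty\le\lambda$ and the bound $\|\hat{S}-\Sigma\|\,|\hat{\mbf\theta}|_1\le\|\hat{S}-\Sigma\|\,|\mbf\theta|_1\le\lambda$ from Step 1 yields $|\hat{\mbf\delta}|_\infty\le 2\lambda|\Sigma^{-1}|_{L^1}$, i.e.\ the asserted bound at $w=\infty$.

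\emph{Step 3 ($\ell^1$ bound, then interpolation).} For $u>0$ set $S=\{j:|\theta_j|>u\}$. Splitting $|\mbf\theta+\hat{\mbf\delta}|_1\le|\mbf\theta|_1$ over $S$ and $S^c$ and applying the triangle inequality on each block gives the usual cone bound $|\hat{\mbf\delta}_{S^c}|_1\le|\hat{\mbf\delta}_S|_1+2|\mbf\theta_{S^c}|_1$, whence $|\hat{\mbf\delta}|_1\le 2|S|\,|\hat{\mbf\delta}|_\infty+2|\mbf\theta_{S^c}|_1$. The smallness measure $D$ enters precisely here, through $|S|\le D(u)/u$ and $|\mbf\theta_{S^c}|_1\le D(u)$. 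Choosing $u=5\lambda|\Sigma^{-1}|_{L^1}$ and inserting the Step 2 bound gives $|\hat{\mbf\delta}|_1\le \tfrac45 D(u)+2D(u)\le 6\,D(5\lambda|\Sigma^{-1}|_{L^1})$, the case $w=1$. For general $w\in[1,\infty]$, interpolate via $|\hat{\mbf\delta}|_w^w\le|\hat{\mbf\delta}|_\infty^{w-1}\,|\hat{\mbf\delta}|_1$ and take the $1/w$ power to obtain $|\hat{\mbf\delta}|_w\le(2\lambda|\Sigma^{-1}|_{L^1})^{1-1/w}\,(6\,D(5\lambda|\Sigma^{-1}|_{L^1}))^{1/w}$, which is the claim.

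I do not anticipate a genuine obstacle: every step rests only on feasibility of $\mbf\theta$ and the two elementary consequences of $\ell^1$-optimality (the $|\cdot|_\infty$ identity for $\hat{\mbf\delta}$ and the cone inequality). The one mildly fiddly point is the constant bookkeeping needed to make the argument of $D$ come out as $5\lambda|\Sigma^{-1}|_{L^1}$ and the prefactor as $6$; since the cone step actually produces only $\tfrac{14}{5}D(\cdot)$ there is ample slack, and in any case the statement is quoted from \cite{chenxuwu2015+}, against which I would cross-check the bookkeeping.
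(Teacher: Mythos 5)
Your proof is correct and is exactly the standard Dantzig-selector argument (feasibility of the true parameter $\Rightarrow$ $\ell^1$-norm non-increase $\Rightarrow$ $|\cdot|_\infty$ bound via $|\Sigma^{-1}|_{L^1}$ $\Rightarrow$ cone inequality with $D(\cdot)$ and $\ell^1$/$\ell^\infty$ interpolation) that underlies Lemma V.6 of \cite{chenxuwu2015+}, which is all the paper itself cites for the proof. The constant bookkeeping works out with slack, as you note ($\tfrac{14}{5} \le 6$), so there is no gap.
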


\begin{proof}[Proof of Lemma \ref{lem:linfun_general_bound}]
See \cite[Lemma V.6]{chenxuwu2015+}.
\end{proof}

\begin{proof}[Proof of Theorem \ref{thm:clime_precision_mat_rate_adaptive}]
Let $\lambda_\diamond = |\Omega|_{L^1} \|\hat{S}-\Sigma\|$. By the subgaussian assumption and Lemma \ref{lem:moment-bounds-gaussian-obs}, we have (\ref{eqn:check_GA1}) for some large enough constant $C > 0$ depending only on $C_2,C_3,C_4$ so that (\ref{eqn:check_GA1}) holds. Since $\{\Gamma_g\}_{(j,k),(j,k)} \ge C_1$ for all $j,k=1,\cdots,p$ and $\nu^4 \log^7(np) \le C_5 n^{1-K}$, by Theorem \ref{thm:gaussian_wild_bootstrap_validity}, we have $ \|\hat{S}-\Sigma\| \le a_{\hat{L}_0^*}(1-\alpha)$ with probability at least $1-\alpha-Cn^{-K/8}$, where $C > 0$ is a constant depending only on $C_i,i=1,\cdots,5$. Since $|\Omega|_{L^1} \le M$ for $\Omega \in \tilde{\cal G}(r, M, \zeta_p)$, $\Prob(\lambda_\diamond \le \lambda_*) \ge 1- \alpha -Cn^{-K/8}$. Then, (\ref{eqn:spectral_rate_clime_precision_mat_rate_adaptive_wild_bootstrap}) and (\ref{eqn:Frobenius_rate_clime_precision_mat_rate_adaptive_wild_bootstrap}) follow from Lemma \ref{lem:precmat_general_bound} applied to the event $\{\lambda_\diamond \le \lambda_*\}$. The bounds for $\E[a_{\hat{L}_0^*}(1-\alpha)]$ and $\E[\lambda_*]$ are the same as those in Theorem \ref{thm:thresholded_cov_mat_rate_adaptive}.
\end{proof}

\begin{proof}[Proof of Theorem \ref{thm:linear_functional_estimation_adaptive_wild_bootstrap}]
For $\mbf\theta \in {\cal G}'(r, C_0, \zeta_p)$, we have $D(u) \le 2 u^{1-r} \zeta_p$. Let $\lambda_\diamond = |\mbf\theta|_1 \|\hat{S}-\Sigma\|$. Following the proof of Theorem \ref{thm:clime_precision_mat_rate_adaptive}, we have $\Prob(\lambda_\diamond \le \lambda_*) \ge 1- \alpha -Cn^{-K/8}$.  By Lemma \ref{lem:linfun_general_bound}, we have with probability at least $1- \alpha -Cn^{-K/8}$
\begin{eqnarray*}
|\hat{\mbf\theta}(\lambda_*)-\mbf\theta|_w &\le& [6 D(5 \lambda_* |\Sigma^{-1}|_{L^1}) ]^{1 \over w} (2 \lambda_* |\Sigma^{-1}|_{L^1})^{1-{1 \over w}} \\
&\le& (2 \cdot 6^{1 \over w} \cdot 5^{1-r\over w}) \zeta_p^{1\over w} |\Sigma^{-1}|_{L^1}^{1-{r \over w}} \lambda_*^{1-{r \over w}},
\end{eqnarray*}
which is (\ref{eqn:rate_linear_functional_estimation_adaptive_wild_bootstrap}). The bound for $\E[\lambda_*]$ is immediate.
\end{proof}

\section{Higher-order moment inequalities of the decoupled and canonical V-statistics}
\label{sec:concentration_inequalities}

Let ${\vX'}_1^n$ be an independent copy of $\vX_1^n$ following the distribution $F$. Here, we present some higher-order moment and concentration inequalities for 
$$V' = \sum_{1 \le i , j \le n} f(\vX_i, \vX'_j),$$
which is the unnormalized version of the decoupled V-statistics. Formally, U-statistics is asymptotically equivalent to V-statistics by removing the diagonal sum $\sum_{i=1}^n f(\vX_i,\vX'_i)$. So here we also refer $V'$ as the decoupled U-statistics with the kernel $f$.
We consider the canonical kernel $f$. Tail probability inequalities for $\|V'\|$ is closely related to the moment bounds of higher-orders. Let $B$ be a separable Banach space, $(B^*, \|\cdot\|_*)$ the dual space of $(B,\|\cdot\|)$, and $B^*_1$ the unit ball in $B^*$. Let $I_n = \{1,\cdots,n\}$ and $\iota = (i, j) \in I_n^2$ be the collection of index pairs implicitly running over $i,j \in I_n$. For notation simplicity, we write $\fiota = f(\vX_i, \vX'_j)$ and $V' = \sum_\iota \fiota$. For $I \subset I_2$, we let $\iota_I$ be the $|I|$-dimensional vector such that it is the restriction of $\iota$ to the coordinates in $I$ (e.g. if $\iota = (2,5)$, then $\iota_{\{1\}} = 2$). For $J \in I_2$, we denote $\E_J$ as the expectation taken w.r.t. the random variables $X_i^{(j)}$ for all $j \in J$ and $i \in I_n$. By convention, $\E_\emptyset X = X$. Following \cite{adamczak2006}, for $J \subset I \subset I_2$ and $I \neq \emptyset$, we define
\begin{eqnarray}
\nonumber
\vertiii{\fiota}_{I,J} &=& \E_{I\setminus J} \sup\Big\{ \E_J \sum_{\iota_I} \phi(\fiota) \prod_{j \in J} g_{\iota_j}^{(j)}(\vX_{\iota_j}^{(j)}) : \phi \in B^*_1, \\
&& \quad  g^{(j)}_i : \mathbb{R}^p \to \mathbb{R}, j \in J, i \in I_n, \text{ and } \E \sum_i |g^{(j)}_i(\vX^{(j)}_{i})|^2 \le 1 \Big\}. \label{eqn:kernel-norms}
\end{eqnarray}
If $I = \emptyset$, then by convention $\vertiii{\fiota}_{\emptyset,\emptyset} = \|\fiota\|$. As remarked by \cite{adamczak2006}, $\vertiii{f_{(\iota)}}_{I_2, J}$ is a deterministic quantity and it is in fact a norm. For $I \neq I_2$, $\vertiii{f_{(\iota)}}_{I, J}$ is random variable only depending on $\{\vX^{(k)}_{\iota_k}\}_{k \in I^c}$ where $\vX^{(1)}_i = \vX_i$ and $\vX^{(2)}_j = \vX'_j$.

\begin{lem}[Higher-order moment inequality for unbounded canonical kernel]
\label{lem:exp-ineq-bounded-kernels}
Let $h$ be a canonical kernel of order two w.r.t. $F$. Then, there exists an absolute constant $K > 0$ such that we have for all $q \ge 2$
\begin{eqnarray}
\nonumber
&& \E \|\sum_\iota \fiota\|^q \le K^q \Big\{ (\E\|\sum_\iota \fiota\|)^q  \\ \nonumber
&& + q^{q/2} \Big[ \vertiii{\fiota}_{I_2, \{1\}}^q + \vertiii{\fiota}_{I_2, \{2\}}^q + [ \E_2 \sum_j (\E_1 \|\sum_i \fiota\|)^2 ]^{q/2} + [ \E_1 \sum_i (\E_2 \| \sum_j \fiota\|)^2 ]^{q/2} \\ \nonumber
&&  + \Big( \sum_\iota \E \|\fiota\|^2 \Big)^{q/2} + \E_2 \max_j \Big( \sum_i \E_1 \|\fiota\|^2\Big)^{q/2} + \E_1 \max_i \Big( \sum_j \E_2 \|\fiota\|^2\Big)^{q/2} \Big] \\\nonumber
&&  + q^q \Big[ \vertiii{\fiota}_{I_2, I_2}^q + \E_2 \max_j \vertiii{\fiota}_{\{1\}, \emptyset}^q + \E_1 \max_i \vertiii{\fiota}_{\{2\}, \emptyset}^q \Big]  \\\nonumber
&&  + q^{3q/2} \Big[ \E_2 \max_j \vertiii{\fiota}_{\{1\}, \{1\}}^q + \E_1 \max_i \vertiii{\fiota}_{\{2\}, \{2\}}^q \Big] + q^{2q} \E \max_\iota \|\fiota\|^q \Big\}. \\ \label{eqn:moment-bound-canonical-kernels-order2}
\end{eqnarray}
\end{lem}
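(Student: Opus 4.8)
The asserted bound is the order-two, decoupled case of the moment inequality for completely degenerate (canonical) $U$-statistics due to Adamczak \cite{adamczak2006}, and the plan is to reproduce his strategy in the present Banach-valued (here $B=\mathbb{R}^{p\times p}$) decoupled setting. Since $f$ is canonical, $\E[f(\vx_1,\vX')]=\E[f(\vX,\vx_2)]=0$ for all $\vx_1,\vx_2$, so the randomization inequality for (decoupled) $U$-statistics \cite[Theorem 3.5.3]{delaPenaGine1999}, applied with the convex function $x\mapsto x^q$, yields an absolute constant $K$ with
\[
\E\Big\|\sum_\iota \fiota\Big\|^q \;\le\; K^q\,\E\Big\|\sum_{i,j}\varepsilon_i\varepsilon'_j\, f(\vX_i,\vX'_j)\Big\|^q ,
\]
where $(\varepsilon_i)_{i\le n}$ and $(\varepsilon'_j)_{j\le n}$ are independent Rademacher sequences, independent of $\vX_1^n$ and of the second sample. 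The problem is thereby reduced to bounding the $q$-th moment of a decoupled Rademacher chaos of order two with random coefficients.

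Next I would condition on $(\vX_1^n,{\vX'}_1^n,(\varepsilon'_j))$ and view $S:=\sum_{i,j}\varepsilon_i\varepsilon'_j\fiota$ as the Rademacher sum $\sum_i\varepsilon_i a_i$, $a_i:=\sum_j\varepsilon'_j\fiota$, and invoke the moment inequality for Rademacher sums in a Banach space — a consequence of Talagrand's concentration inequality (see \cite{adamczak2006}) — that for an absolute constant $K$,
\[
\Big(\E_\varepsilon\Big\|\sum_i\varepsilon_i a_i\Big\|^q\Big)^{1/q}\le K\Big[\,\E_\varepsilon\Big\|\sum_i\varepsilon_i a_i\Big\| + \sqrt q\,\sup_{\phi\in B^*_1}\Big(\sum_i\phi(a_i)^2\Big)^{1/2} + q\,\max_i\|a_i\|\,\Big].
\]
Applying this once, then taking $\E_{\varepsilon'}$ of each of the three resulting terms and applying the same inequality again in the $\varepsilon'_j$ direction — to $\E_\varepsilon\|\sum_i\varepsilon_i a_i\|$ through the identity $a_i=\sum_j\varepsilon'_j f_{ij}$, to $\sup_\phi(\sum_i\phi(a_i)^2)^{1/2}$ after squaring and re-expanding, and to $\max_i\|a_i\|$ termwise — and finally pushing $\E_\varepsilon$ and $\E_{\varepsilon'}$ inside by Jensen, produces finitely many terms. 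Each is of the form (a power of $q$) times $\E_{\{1\}}\E_{\{2\}}$ of one of the quantities on the right of \eqref{eqn:moment-bound-canonical-kernels-order2}, with the random coefficient $L^2$-norms and maxima identified with the chaos norms $\vertiii{\fiota}_{I,J}$ of \eqref{eqn:kernel-norms} for the appropriate coordinate sets $I,J\subset\{1,2\}$.

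Finally, the remaining purely probabilistic quantities — e.g. $\sum_{i,j}\E\|f_{ij}\|^2$, $\E_{\{2\}}\max_j(\sum_i\E_{\{1\}}\|f_{ij}\|^2)^{q/2}$, $\E_{\{2\}}\max_j\vertiii{\fiota}^q_{\{1\},\{1\}}$ and $\E\max_\iota\|\fiota\|^q$ — would be bounded using the moment inequalities for sums of independent nonnegative random variables (Rosenthal / Latała, as in \cite{adamczak2006}) together with the contraction principle; the extra factors $q^{q/2}$, $q^q$, $q^{3q/2}$ arise when these bounds are combined with the base $\sqrt q$ and $q$ factors from the two applications of the Rademacher-sum inequality, while $q^{2q}\E\max_\iota\|\fiota\|^q$ is the doubly maximal contribution. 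Collecting the terms and relabelling gives \eqref{eqn:moment-bound-canonical-kernels-order2}. \textbf{The main obstacle is organizational rather than analytic:} tracking which of the many terms produced by the iterated conditioning survive, matching them to the norms $\vertiii{\cdot}_{I,J}$, and verifying that each carries exactly the stated power of $q$; the analytic ingredients (randomization, Talagrand's inequality for Rademacher sums, and Rosenthal/Latała moment bounds) are all standard.
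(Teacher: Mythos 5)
Your proposal essentially re-derives the machinery behind Adamczak's Theorem~1, whereas the paper invokes that theorem as a black box and devotes the entire proof to a different task: converting the \emph{sums} over indices that naturally appear in Adamczak's bound into \emph{maxima}. Concretely, Adamczak's result (quoted as (\ref{eqn:exp-ineq-grand-decomp}) in the paper) gives a $q^{2q}$ term proportional to $\E\sum_\iota\|\fiota\|^q$, a $q^{3q/2}$ term proportional to $\sum_j \E_2 \vertiii{\fiota}_{\{1\},\{1\}}^q$, and a $q^q$ term proportional to $\sum_j \E_2 \vertiii{\fiota}_{\{1\},\emptyset}^q$ (plus the symmetric counterparts). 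The statement you want instead has $\E\max_\iota$, $\E_2\max_j$ and $\E_2\max_j$ in those places, and in exchange has a larger $q^{q/2}$ bracket: the additional terms $(\sum_\iota\E\|\fiota\|^2)^{q/2}$, $\E_2\max_j(\sum_i\E_1\|\fiota\|^2)^{q/2}$, $[\E_2\sum_j(\E_1\|\sum_i\fiota\|)^2]^{q/2}$, etc. are precisely the byproducts of that sum-to-max replacement.

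That replacement is where your proposal has a genuine gap. If you iterate the Rademacher-sum moment inequality as you describe, you will in fact reproduce Adamczak's form with \emph{sums}: for example, to control $\E_{\varepsilon'}\max_i\|a_i\|^q$ you must first pass to $\sum_i\E_{\varepsilon'}\|a_i\|^q$ before the second Talagrand application can bite, and this produces $q^{2q}\E\sum_\iota\|\fiota\|^q$, not $q^{2q}\E\max_\iota\|\fiota\|^q$; similarly for the conditional norms one gets $\sum_i\E_1\vertiii{\fiota}_{\{2\},\{2\}}^q$, not $\E_1\max_i\vertiii{\fiota}_{\{2\},\{2\}}^q$. So the claim that the iterated conditioning ``produces terms each of the form (a power of $q$) times $\E_{\{1\}}\E_{\{2\}}$ of one of the quantities on the right of (\ref{eqn:moment-bound-canonical-kernels-order2})'' is not correct as stated. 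The missing ingredient is the Gin\'e--Lata{\l}a--Zinn trick (the paper's Lemma~\ref{lem:replace-sum-by-max}, i.e.\ $s^{\alpha s}\sum_i\E\xi_i^s \le 2(1+s^\alpha)\max\{s^{\alpha s}\E\max_i\xi_i^s,(\sum_i\E\xi_i)^s\}$), applied repeatedly along with Lata{\l}a's moment bound for sums of nonnegative variables, and specialized to $\vertiii{\fiota}_{\{1\},\{1\}}^2$ and $\vertiii{\fiota}_{\{1\},\emptyset}$ via the paper's Lemmas~\ref{eqn:replace_h11_sum_with_max}--\ref{eqn:replace_h1emptyset_sum_with_max}. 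Your final paragraph gestures at ``Rosenthal/Lata{\l}a and the contraction principle'' but does not identify this replacement as the operative step nor explain where the new $q^{q/2}$ terms come from; as written, the proposal would stop short at Adamczak's (\ref{eqn:exp-ineq-grand-decomp}) rather than reach (\ref{eqn:moment-bound-canonical-kernels-order2}).
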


For bounded kernels, (\ref{eqn:moment-bound-canonical-kernels-order2}) leads to an exponential concentration inequality; c.f. Corollary \ref{cor:exp-ineq-bounded-kernels} below. Lemma \ref{lem:exp-ineq-bounded-kernels} can be viewed as the matrix-variate version of the moments and exponential inequalities for the real-valued U-statistics in \cite{ginelatalazinn2000}. On the other hand, it can also be viewed as an extension of the upper tail part of Talagrand's inequality for maxima of empirical processes of iid random variables to U-statistics \cite{talagrand1996,massart2000}. Indeed, Lemma \ref{lem:exp-ineq-bounded-kernels} recovers \cite[Theorem 3.2]{ginelatalazinn2000} for real-valued kernels. For $f$ taking values in $\mathbb{R}$, it was established in \cite[Theorem 3.2]{ginelatalazinn2000} that there exists an absolute constant $K > 0$ such that for all $q \ge 2$
\begin{eqnarray}
\nonumber
&&  \E|\sum_\iota \fiota|^q \le K^q \Big\{ q^{q/2}  (\sum_\iota \E \fiota^2)^{q/2} + q^q \|\fiota\|_{L^2\to L^2}^q   \\\nonumber
&&  + q^{3q/2} \Big[  \E_1 \max_i (\E_2 \sum_j \fiota^2)^{q/2} + \E_2 \max_j (\E_1 \sum_i \fiota^2)^{q/2}  \Big] + q^{2q} \E \max_\iota |\fiota|^q \Big\}, \\ \label{eqn:glz2000-thm3.2}
\end{eqnarray}
where
\begin{eqnarray*}
\|\fiota\|_{L^2\to L^2} = \sup\Big\{ \E \sum_\iota \fiota g^{(1)}_i(\vX^{(1)}_i) g^{(2)}_j(\vX^{(2)}_j): \E \sum_i |g^{(1)}_i(\vX^{(1)}_i)|^2 \le 1, \E \sum_j |g^{(2)}_j(\vX^{(2)}_j)|^2 \le 1 \Big\}.
\end{eqnarray*}
To compare (\ref{eqn:glz2000-thm3.2}) and (\ref{eqn:moment-bound-canonical-kernels-order2}), we first compute the terms with coefficient $q^{3q/2}$ in (\ref{eqn:moment-bound-canonical-kernels-order2}). By (\ref{eqn:alternative-expression-h11}), we have
\begin{eqnarray*}
\E_2 \max_j \vertiii{\fiota}_{\{1\}, \{1\}}^q  \le \E_2 \max_j \Big( \E_1 \sum_i \fiota^2 \Big)^{q/2},
\end{eqnarray*}
which is handled by the $q^{3q/2}$ term in (\ref{eqn:glz2000-thm3.2}). Similar bound holds for $\E_1 \max_i \vertiii{\fiota}_{\{2\}, \{2\}}^q$. Next, we consider the $q^q$ terms. Observe that $\vertiii{\fiota}_{I_2, I_2} = \|\fiota\|_{L^2\to L^2}$. In addition, by (\ref{eqn:alternative-expression-h1emptyset}) and Jensen's inequality, we have
\begin{eqnarray*}
\E_2 \max_j \vertiii{\fiota}_{\{1\}, \emptyset}^q = \E_2 \max_j \Big( \E_1 |\sum_i \fiota| \Big)^q \le \E_2 \max_j \Big( \E_1 \sum_i \fiota^2 \Big)^{q/2},
\end{eqnarray*}
which is again smaller than $q^{3q/2}$ terms in (\ref{eqn:glz2000-thm3.2}). For the $q^{q/2}$ terms, by (\ref{eqn:alternative-expression-hI_21}), the degeneracy of $h$, Jensen's inequality, and the orthogonality of $\fiota$ conditional on $\vX_1^n$, we have
\begin{equation}
\label{eqn:not_type2_space}
\max\Big\{ \vertiii{\fiota}_{I_2, \{1\}}^2, \; \E_1 \sum_i (\E_2 \| \sum_j \fiota \|)^2 \Big\} \le \E_1 \sum_i \E_2 ( \sum_j \fiota )^2 = \sum_\iota \E \fiota^2.
\end{equation}

\begin{rmk}
On the contrary, unlike the real-valued kernels where the higher-order moment bounds and exponential inequalities involving only the $L^2$ and $L^\infty$ norms, the matrix-valued U-statistics involves more subtle balance among the ``mixed norms" of $h$ in (\ref{eqn:moment-bound-canonical-kernels-order2}). Those quantities are expressed in terms of the maxima of empirical processes and it thus can be much smaller than a straightforward extension of (\ref{eqn:glz2000-thm3.2}). This remark also applies to Corollary \ref{cor:exp-ineq-bounded-kernels}.
\qed
\end{rmk}

\subsection{Exponential inequality for bounded kernels}

As an immediate consequence of Lemma \ref{lem:exp-ineq-bounded-kernels}, we have the following concentration inequality, which is equivalent to (\ref{eqn:moment-bound-canonical-kernels-order2}) up to constants.

\begin{cor}[Exponential inequality for bounded canonical kernel]
\label{cor:exp-ineq-bounded-kernels}
Let $f$ be a bounded canonical kernel of order two w.r.t. $F$. Then, there exist absolute constants $K, M, M' > 0$ such that for all $t>0$
\begin{equation}
\label{eqn:exp-ineq-bounded-kernels}
\Prob(\|\sum_\iota \fiota\| \ge K \E\|\sum_\iota \fiota\| + t) \le  M \exp\left\{ -M' \min\left[ \left({t \over D}\right)^2, \left({t \over C}\right), \left({t \over B}\right)^{2\over3}, \left({t \over A}\right)^{1\over2} \right] \right\},
\end{equation}
where
\begin{eqnarray*}
A &=&  \|\fiota\|_\infty, \qquad B = \left\| \vertiii{\fiota}_{\{1\},\{1\}}   \right\|_\infty + \left\| \vertiii{\fiota}_{\{2\},\{2\}}  \right\|_\infty, \\
C &=& \|\fiota\|_{I_2,I_2} + \left\| \vertiii{\fiota}_{\{1\},\emptyset}  \right\|_\infty + \left\| \vertiii{\fiota}_{\{2\},\emptyset}  \right\|_\infty, \\
D &=& n^{1\over2} \left[ n \E \|\fiota\|^2 + \left\| \E_1 \|\fiota\|^2 \right\|_\infty + \left\| \E_2 \|\fiota\|^2 \right\|_\infty \right]^{1\over2} + \vertiii{\fiota}_{I_2, \{1\}} \\
&&  + \vertiii{\fiota}_{I_2, \{2\}} + [ \E_1 \sum_i (\E_2 \| \sum_j \fiota\|)^2 ]^{1\over2} + [ \E_2 \sum_j (\E_1 \| \sum_i \fiota\|)^2 ]^{1\over2}.
\end{eqnarray*}
\end{cor}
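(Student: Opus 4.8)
The plan is to derive the tail bound (\ref{eqn:exp-ineq-bounded-kernels}) from the moment bound (\ref{eqn:moment-bound-canonical-kernels-order2}) of Lemma \ref{lem:exp-ineq-bounded-kernels} by the standard device of Markov's inequality followed by optimization over the moment order $q$; this is why the corollary is ``immediate'' once the right-hand side of (\ref{eqn:moment-bound-canonical-kernels-order2}) has been put in a usable shape for a bounded kernel. First I would specialize (\ref{eqn:moment-bound-canonical-kernels-order2}) to a bounded canonical $f$. Every term on its right-hand side is a deterministic quantity, and, using $\|\fiota\|\le\|f\|_\infty$ almost surely together with the conditional-$L^2$ representations of the norms $\vertiii{\cdot}_{I,J}$ recorded just after Lemma \ref{lem:exp-ineq-bounded-kernels}, each such term is bounded above by the corresponding $\|\cdot\|_\infty$-expression appearing in the definitions of $A,B,C,D$ (here $\sum_\iota\E\|\fiota\|^2 = n^2\E\|\fiota\|^2$, $\E_2\max_j(\sum_i\E_1\|\fiota\|^2)^{q/2}\le(n\,\|\E_1\|\fiota\|^2\|_\infty)^{q/2}$, and $\sqrt a+\sqrt b+\sqrt c\le\sqrt 3\sqrt{a+b+c}$ merges the three variance contributions into the single bracket defining $D$). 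After collecting constants one obtains an absolute constant $K_1$ with
\[
\E\Big\|\sum_\iota\fiota\Big\|^q \;\le\; K_1^q\Big[\big(\E\|\textstyle\sum_\iota\fiota\|\big)^q + q^{q/2}D^q + q^q C^q + q^{3q/2}B^q + q^{2q}A^q\Big],\qquad q\ge 2 .
\]

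Next, since $a_1^q+\cdots+a_5^q\le(a_1+\cdots+a_5)^q$ for nonnegative reals, this collapses to $\big\|\,\|\sum_\iota\fiota\|\,\big\|_q\le K_1\big(\E\|\sum_\iota\fiota\| + q^{1/2}D + qC + q^{3/2}B + q^2A\big)$ for every $q\ge2$, and Markov's inequality gives
\[
\Prob\Big(\Big\|\sum_\iota\fiota\Big\|\ge eK_1\E\|\textstyle\sum_\iota\fiota\| + eK_1\big(q^{1/2}D+qC+q^{3/2}B+q^2A\big)\Big)\le e^{-q},\qquad q\ge 2 .
\]
To turn this into (\ref{eqn:exp-ineq-bounded-kernels}), for a given $t>0$ I would choose $q:=c_0\min\big[(t/D)^2,\,t/C,\,(t/B)^{2/3},\,(t/A)^{1/2}\big]$ with $c_0>0$ a sufficiently small absolute constant. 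Then $q^{1/2}D\le c_0^{1/2}t$, $qC\le c_0 t$, $q^{3/2}B\le c_0^{3/2}t$ and $q^2A\le c_0^2 t$, so choosing $c_0$ small enough (depending only on $K_1$) makes the sum of these four terms at most $t/(eK_1)$; hence the event in the last display contains $\{\|\sum_\iota\fiota\|\ge eK_1\E\|\sum_\iota\fiota\| + t\}$, and whenever the chosen $q$ is at least $2$ we get $\Prob(\|\sum_\iota\fiota\|\ge eK_1\E\|\sum_\iota\fiota\| + t)\le e^{-q}=\exp(-c_0\min[(t/D)^2,t/C,(t/B)^{2/3},(t/A)^{1/2}])$, which is (\ref{eqn:exp-ineq-bounded-kernels}) with $K=eK_1$, $M'=c_0$, $M=1$. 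When instead $q<2$, the asserted right-hand side exceeds $Me^{-2}$, so (\ref{eqn:exp-ineq-bounded-kernels}) holds trivially after enlarging the prefactor to $M\ge e^{2}$; this is the only role of a fixed $M>1$.

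I expect the main obstacle to be the bookkeeping in the first step rather than anything conceptual: one has to verify that each of the many mixed-norm terms in (\ref{eqn:moment-bound-canonical-kernels-order2}) is genuinely controlled by the intended deterministic quantity among $A,B,C,D$, and --- crucially --- that the exponent of $q$ carried by each term ($q^{1/2}$, $q$, $q^{3/2}$ or $q^{2}$) is matched to the correct one of the four regimes $(t/D)^2,\ t/C,\ (t/B)^{2/3},\ (t/A)^{1/2}$ in the final minimum. The reduction of the norms $\vertiii{\cdot}_{I,J}$ to conditional $L^2$-type expressions is exactly what makes this matching transparent, after which the Markov/optimization step above closes the argument.
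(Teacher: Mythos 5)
Your proposal is correct and is precisely the argument the paper points to: the paper remarks only that the corollary follows by ``a standard argument combining the Chebyshev and higher-order moment inequalities (Theorem 5.5); see e.g.\ \cite{ginelatalazinn2000},'' and you have spelled out that argument --- collapse the right side of (\ref{eqn:moment-bound-canonical-kernels-order2}) into $K_1^q[(\E\|\sum_\iota\fiota\|)^q + q^{q/2}D^q + q^q C^q + q^{3q/2}B^q + q^{2q}A^q]$ using the bounded-kernel simplifications and the $\vertiii{\cdot}_{I,J}$ representations, then apply Markov at level $e$ and optimize over $q\ge 2$, handling $q<2$ by enlarging $M$.
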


Proof of Corollary \ref{cor:exp-ineq-bounded-kernels} is based on a standard argument combining the Chebyshev and higher-order moment inequalities (Theorem \ref{lem:exp-ineq-bounded-kernels}); see e.g. \cite{ginelatalazinn2000}. Corollary \ref{cor:exp-ineq-bounded-kernels} is a Bernstein-type inequality for the high-dimensional matrix-valued U-statistics. Exact computations of $A, B, C, D$ are quite complicated. Below, we shall give a less sharp (with uniform bounds) but user-friendly version of Corollary \ref{cor:exp-ineq-bounded-kernels}.

\begin{cor}
\label{cor:exp-ineq-bounded-kernels-simplified}
If $f$ be a bounded canonical kernel of order two w.r.t. $F$ and $\|f\|_\infty \le C_0$, then there exist constants $K, M > 0$ and a constant $M' > 0$ depending only $C_0$ such that for all $t>0$
\begin{equation}
\label{eqn:exp-ineq-bounded-kernels-simplified}
\Prob(\|\sum_\iota \fiota\| \ge K \E\|\sum_\iota \fiota\| + t) \le  M \exp\left[ -M' \min\left( {t^2 \over n^3}, {t \over n}, {t^{2/3} \over n^{1/3}}, t^{1/2} \right) \right].
\end{equation}
\end{cor}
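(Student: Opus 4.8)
The plan is to obtain Corollary \ref{cor:exp-ineq-bounded-kernels-simplified} as an immediate consequence of the sharp Bernstein-type inequality (\ref{eqn:exp-ineq-bounded-kernels}) in Corollary \ref{cor:exp-ineq-bounded-kernels}, by replacing the four structural quantities $A$, $B$, $C$, $D$ appearing there with crude uniform upper bounds expressed purely in terms of $n$ and $C_0$. The observation that makes this legitimate is a monotonicity remark: for every $\alpha>0$ the map $x\mapsto(t/x)^\alpha$ is decreasing, so enlarging $A$, $B$, $C$, or $D$ can only decrease the exponent on the right-hand side of (\ref{eqn:exp-ineq-bounded-kernels}), which is exactly the direction needed for an upper bound on the tail probability. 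Hence the entire argument reduces to establishing $A\le C_0$, $B\le 2C_0 n^{1/2}$, $C\le 3C_0 n$, and $D\lesssim C_0 n^{3/2}$.

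First I would note $A=\|\fiota\|_\infty\le C_0$. For $B$, I would apply the Cauchy--Schwarz inequality in the summation index directly to the definition (\ref{eqn:kernel-norms}) of $\vertiii{\fiota}_{\{1\},\{1\}}$, together with $|\phi(\fiota)|\le\|\fiota\|$ and the normalization $\E\sum_i|g^{(1)}_i|^2\le1$, to get $\vertiii{\fiota}_{\{1\},\{1\}}\le(\E_1\sum_i\|\fiota\|^2)^{1/2}\le C_0 n^{1/2}$; the $\{2\},\{2\}$ term is symmetric, so $B\le 2C_0 n^{1/2}$. For $C$, the operator-type norm $\|\fiota\|_{I_2,I_2}=\vertiii{\fiota}_{I_2,I_2}$ is bounded using $|\phi(\fiota)|\le C_0$ followed by Cauchy--Schwarz in each of the two independent samples (using independence of $\vX_1^n$ and ${\vX'}_1^n$), giving $\|\fiota\|_{I_2,I_2}\le C_0 n$, while $\vertiii{\fiota}_{\{1\},\emptyset}=\E_1\|\sum_i\fiota\|\le nC_0$ and the same for $\{2\},\emptyset$, so $C\le 3C_0 n$. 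For $D$, each summand is dispatched by triangle and Cauchy--Schwarz inequalities: the first term is $n^{1/2}[nC_0^2+C_0^2+C_0^2]^{1/2}\lesssim C_0 n$; $\vertiii{\fiota}_{I_2,\{1\}}$ and $\vertiii{\fiota}_{I_2,\{2\}}$ are each at most $C_0 n^{3/2}$ (bound $|\phi|\le C_0$, apply Cauchy--Schwarz in the weighted copy, and sum the remaining $n$ indices); and $[\E_1\sum_i(\E_2\|\sum_j\fiota\|)^2]^{1/2}\le(n(nC_0)^2)^{1/2}=C_0 n^{3/2}$, with the mirror term identical. Thus $D\le c\,C_0 n^{3/2}$ for an absolute constant $c$.

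Finally I would substitute these bounds into (\ref{eqn:exp-ineq-bounded-kernels}) and factor the $C_0$-dependence out of the minimum; since each of the four entries is a fixed monotone power of $t$ divided by the corresponding bound, this yields $\min[(t/D)^2,\,t/C,\,(t/B)^{2/3},\,(t/A)^{1/2}]\ge M''\min[t^2/n^3,\,t/n,\,t^{2/3}/n^{1/3},\,t^{1/2}]$ for a constant $M''>0$ depending only on $C_0$, while the centering constant $K$ in front of $\E\|\sum_\iota\fiota\|$ passes through unchanged; this is precisely (\ref{eqn:exp-ineq-bounded-kernels-simplified}). I do not anticipate a genuine obstacle: the only point requiring care is bookkeeping — tracking which of the mixed decoupling norms of Corollary \ref{cor:exp-ineq-bounded-kernels} can be crushed by brute force under $\|f\|_\infty\le C_0$ (all of them, at the $n^{3/2}$ scale, with no appeal to degeneracy) — and observing that the first term of $D$, which scales like $n$ rather than $n^{3/2}$, is harmlessly absorbed since $n\le n^{3/2}$.
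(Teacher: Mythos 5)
Your proposal is correct and follows essentially the same route as the paper: bound the four scale parameters in Corollary \ref{cor:exp-ineq-bounded-kernels} by crude $L^\infty$ estimates $A=O(1)$, $B=O(n^{1/2})$, $C=O(n)$, $D=O(n^{3/2})$ using the dual representations (\ref{eqn:alternative-expression-h11}), (\ref{eqn:alternative-expression-h1emptyset}), and (\ref{eqn:alternative-expression-hI_21}), then substitute into (\ref{eqn:exp-ineq-bounded-kernels}) and absorb the $C_0$-dependence into the constant $M'$. The paper's proof is terser (it simply asserts "elementary calculations show ...") but invokes exactly the same Cauchy–Schwarz and duality computations you spell out, including the observation that $\vertiii{\fiota}_{I_2,\{1\}}\le[\E_1\sum_i\E_2\|\sum_j\fiota\|^2]^{1/2}=O(n^{3/2})$.
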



\begin{rmk}
\label{rmk:decoupled=undecoupled}
\begin{enumerate}
\item Except for a worse factor on $t^2/n^3$, the exponential inequality (\ref{eqn:exp-ineq-bounded-kernels-simplified}) is the same as \cite[Corollary 3.4]{ginelatalazinn2000}, which considered the real-valued kernels and the corresponding term is $t^2/n^2$. The loss of order $n$ is due to the lack of orthogonality for the sup-norm. Since the Banach space $(\mathbb{R}^{p\times p}, \|\cdot\|)$ is of type 1 and nothing more \cite[Chapter 9.2]{ledouxtalagrand1991} , this term in (\ref{eqn:exp-ineq-bounded-kernels-simplified}) cannot be improved.

\item In view of the undecoupled results of \cite{delaPenaGine1999,delapenamontgomerysmith1995}, inequalities (\ref{eqn:moment-bound-canonical-kernels-order2}), (\ref{eqn:exp-ineq-bounded-kernels}) and (\ref{eqn:exp-ineq-bounded-kernels-simplified}) also hold for $\sum_{1\le i,j \le n} f(\vX_i, \vX_j)$.

\end{enumerate}
\qed
\end{rmk}

\subsection{Proof of results in Section \ref{sec:concentration_inequalities}}

\begin{proof}[Proof of Lemma \ref{lem:exp-ineq-bounded-kernels}]
Note that $\vertiii{\fiota}_{I_2, \emptyset} = \E\|\sum_\iota \fiota\|$. By \cite[Theorem 1]{adamczak2006} with order two, there exists an absolute constant $K_1 > 0$ such that for all $q \ge 2$, we have
\begin{eqnarray}
\nonumber
\E \|\sum_\iota \fiota\|^q &\le& K_1^q \Big\{ (\E\|\sum_\iota \fiota\|)^q + q^{q/2} \Big[ \vertiii{\fiota}_{I_2, \{1\}}^q + \vertiii{\fiota}_{I_2, \{2\}}^q \Big] \\ \nonumber
&& + q^q \Big[ \vertiii{\fiota}_{I_2, I_2}^q + \sum_j \E_2 \vertiii{\fiota}_{\{1\}, \emptyset}^q + \sum_i \E_1 \vertiii{\fiota}_{\{2\}, \emptyset}^q \Big]  \\ \nonumber
&& + q^{3q/2} \Big[ \sum_j \E_2 \vertiii{\fiota}_{\{1\}, \{1\}}^q + \sum_i \E_1 \vertiii{\fiota}_{\{2\}, \{2\}}^q \Big] \\ \label{eqn:exp-ineq-grand-decomp}
&& + q^{2q} \E \sum_\iota \|\fiota\|^q \Big\}.
\end{eqnarray}
First, we replace the summation in the last term on the RHS of (\ref{eqn:exp-ineq-grand-decomp}) by the maximum over $\iota$, with proper modifications of the argument from \cite{ginelatalazinn2000} in the Banach space setting. Applying Lemma \ref{lem:replace-sum-by-max} with $s = q/2$ and $\alpha = 4$ conditional on $\vX_1^n$, we have
\begin{eqnarray}
\label{eqn:exp-ineq-last-term}
& & \qquad q^{2q} \sum_\iota \E \|\fiota\|^q \\ \nonumber
& \le & 2^{2q+1} \Big[1+\Big({q\over2}\Big)^4\Big] \sum_i \E_1 \Big\{ \Big({q\over2}\Big)^{2q} \E_2 \max_j \|\fiota\|^q + (\sum_j \E_2 \|\fiota\|^2)^{q\over2} \Big\}.
\end{eqnarray}
By a second application of Lemma \ref{lem:replace-sum-by-max} for $\E_1$ with $s = q/2$ and $\alpha = 0$, we can bound the second term on the RHS of (\ref{eqn:exp-ineq-last-term}) by
\begin{equation*}
\sum_i \E_1 (\sum_j \E_2 \|\fiota\|^2)^{q\over2} \le 2 \Big[ \E_1 \max_i (\sum_j \E_2 \|\fiota\|^2)^{q\over2} + (\sum_\iota \E \|\fiota\|^2)^{q\over2} \Big].
\end{equation*}
For the first term on the RHS of (\ref{eqn:exp-ineq-last-term}), by Fubini's theorem to interchange the order of $\E_1$ and $\E_2$, we proceed as
\begin{eqnarray*}
&& \qquad \Big({q\over2}\Big)^{2q} \E_2 \sum_i \E_1 \max_j \|\fiota\|^q \\
&\le_{(*)}& 2 \Big[1+\Big({q\over2}\Big)^4\Big] \E_2 \Big\{ \Big({q\over2}\Big)^{2q} \E_1 \max_{\iota} \|\fiota\|^q + (\sum_i \E_1 \max_j \|\fiota\|^2)^{q\over2}  \Big\} \\
&\le& 2 \Big[1+\Big({q\over2}\Big)^4\Big] \Big\{ \Big({q\over2}\Big)^{2q} \E \max_{\iota} \|\fiota\|^q + \E_2 \Big[ \sum_j \Big(\sum_i \E_1 \|\fiota\|^2\Big) \Big]^{q\over2}  \Big\},
\end{eqnarray*}
where in step $(*)$ a third application of Lemma \ref{lem:replace-sum-by-max} is used with $s = q/2$ and $\alpha = 4$. By Lemma \ref{lem:rosenthal-latala-nonnegative-rv} with $s = q/2$ and by Lemma \ref{lem:replace-sum-by-max} with $s=q/2$ and $\alpha=0$ both on $\E_2$, we have
\begin{eqnarray*}
&& \qquad \E_2 \Big[ \sum_j \Big(\sum_i \E_1 \|\fiota\|^2\Big) \Big]^{q\over2} \\
&\le& K_2^q \Big[  \Big({q\over2}\Big)^{q\over2} \sum_j \E_2 \Big(\sum_i \E_1 \|\fiota\|^2\Big)^{q\over2} +  \Big( \sum_\iota \E \|\fiota\|^2 \Big)^{q\over2}   \Big] \\
&\le& K_3^q \Big({q\over2}\Big)^{q\over2} \Big[ \E_2 \max_j \Big( \sum_i \E_1 \|\fiota\|^2\Big)^{q/2}  +  \Big( \sum_\iota \E \|\fiota\|^2 \Big)^{q\over2}   \Big].
\end{eqnarray*}
Now, substituting those estimates into (\ref{eqn:exp-ineq-last-term}), it follows that
\begin{eqnarray}
\label{eqn:exp-ineq-last-term-bound}
&& \qquad q^{2q} \sum_{i<j} \E \|\hiota\|^q \le K_4^q \Big\{ q^{2q} \E \max_\iota \|\fiota\|^q + q^{q\over2} (\sum_\iota \E \|\fiota\|^2)^{q\over2} \\ \nonumber
&& \quad + q^{q/2} \Big[ \E_2 \max_j \Big( \sum_i \E_1 \|\fiota\|^2\Big)^{q/2} + \E_1 \max_i \Big( \sum_j \E_2 \|\fiota\|^2\Big)^{q/2} \Big] \Big\}.
\end{eqnarray}
Next, by Lemma \ref{eqn:replace_h11_sum_with_max}, we can replace the $q^{3q/2} \sum_j \E_2 \vertiii{\fiota}_{\{1\}, \{1\}}^q$ in (\ref{eqn:exp-ineq-grand-decomp}) by $q^{3q/2} \E_2 \max_j \vertiii{\fiota}_{\{1\}, \{1\}}^q$, with an additional term $(\sum_\iota \E \|\fiota\|^2)^{q/2}$ that have already been subsumed in the previous steps. Similarly, by Lemma \ref{eqn:replace_h1emptyset_sum_with_max}, we may replace $q^q \sum_j \E_2 \vertiii{\fiota}_{\{1\},\emptyset}^q$ in (\ref{eqn:exp-ineq-grand-decomp}) by $q^q \E_2 \max_j \vertiii{\fiota}_{\{1\},\emptyset}^q + [ \E_2 \sum_j (\E_1 \|\sum_i \fiota\|)^2 ]^{q/2}$ and then move the second term to the $q^{q/2}$ terms. Now, (\ref{eqn:moment-bound-canonical-kernels-order2}) follows from (\ref{eqn:exp-ineq-grand-decomp}) and (\ref{eqn:exp-ineq-last-term-bound}).
\end{proof}

\begin{proof}[Proof of Corollary \ref{cor:exp-ineq-bounded-kernels-simplified}]
By (\ref{eqn:alternative-expression-h11}) and (\ref{eqn:alternative-expression-h1emptyset}), elementary calculations show that $A = O(1)$, $B = O(n^{1/2})$, $C = O(n)$ for bounded kernels. For $D$, it is easy to see that
\begin{eqnarray}
\label{eqn:alternative-expression-hI_21}
&& \vertiii{\fiota}_{I_2, \{1\}} = \E_2 \sup_{\phi \in B^*_1} \Big[ \E_1 \sum_i (\sum_j \phi(\fiota) )^2 \Big]^{1/2} \\ \nonumber
&\le& \E_2 \Big[ \E_1 \sum_i \| \sum_j \fiota \|^2 \Big]^{1/2} \le \Big[ \E_1 \sum_i \E_2 \| \sum_j \fiota \|^2 \Big]^{1/2}.
\end{eqnarray}
Therefore, $D = O(n^{3/2})$ and (\ref{eqn:exp-ineq-bounded-kernels-simplified}) follows from (\ref{eqn:exp-ineq-bounded-kernels}).
\end{proof}

\begin{lem}
\label{lem:replace-sum-by-max}
Let $s \ge 1$ and $\alpha \ge 0$. Let $\xi_i$ be nonnegative independent random variables. Then, we have
\begin{equation}
\label{eqn:replace-sum-by-max}
s^{\alpha s} \sum_i \E \xi_i^s \le 2 (1+s^\alpha) \max\big\{ s^{\alpha s} \E \max_i \xi_i^s,  (\sum_i \E \xi_i)^s \big\}.
\end{equation}
\end{lem}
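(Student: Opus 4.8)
The plan is to pass both sides of the inequality to their layer--cake (distribution--function) representations and then to split the resulting integral at the level where the summed tail probabilities drop below $1$. Writing $\sigma = \sum_i \E\xi_i$ and $\mu = (\E\max_i\xi_i^s)^{1/s}$, I would first dispose of the trivial cases: if $\sigma=\infty$ or $\mu=\infty$ the right-hand side is infinite, and if $\mu=0$ then all $\xi_i$ vanish a.s. Then, setting $q(t)=\sum_i\Prob(\xi_i>t)$ and $p(t)=\Prob(\max_i\xi_i>t)$, I would use $\E\xi_i^s=\int_0^\infty s t^{s-1}\Prob(\xi_i>t)\,dt$ to obtain $\sum_i\E\xi_i^s=\int_0^\infty s t^{s-1}q(t)\,dt$ and $\E\max_i\xi_i^s=\int_0^\infty s t^{s-1}p(t)\,dt$. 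The two facts linking $q$ and $p$ that I would use are the union bound $p(t)\le q(t)$ and, crucially, the independence estimate $p(t)=1-\prod_i(1-\Prob(\xi_i>t))\ge 1-e^{-q(t)}\ge q(t)/(1+q(t))$, the last step being $e^y\ge 1+y$.

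Next I would introduce the threshold $t_*=\inf\{t\ge 0: q(t)\le 1\}$, which is finite because $q(t)\le\sigma/t\to 0$ by Markov's inequality. From $1<q(t)\le\sigma/t$ on $[0,t_*)$ I read off $t_*\le\sigma$; and — this is the point that makes the max-term usable — since $q$ is non-increasing we have $q(t_*^-)\ge 1$, hence $p(t_*^-)\ge 1/2$, and therefore, since $p$ is non-increasing, $\mu^s=\E\max_i\xi_i^s\ge\int_0^{t_*}s t^{s-1}p(t)\,dt\ge \tfrac12 t_*^s$, i.e. $t_*\le 2^{1/s}\mu$. I would then split $\int_0^\infty s t^{s-1}q(t)\,dt$ at $t_*$: on $[t_*,\infty)$ the bound $q(t)\le 1$ gives $p(t)\ge q(t)/2$, so that contribution is $\le 2\mu^s$; on $[0,t_*)$, Fubini turns the integral into $\sum_i\E[\min(\xi_i,t_*)^s]$, which is $\le t_*^{s-1}\sigma$ since $s\ge 1$ forces $\min(\xi_i,t_*)^s\le t_*^{s-1}\xi_i$. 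This yields $\sum_i\E\xi_i^s\le 2\mu^s+t_*^{s-1}\sigma$.

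The final step is to multiply through by $s^{\alpha s}$ and match the right-hand side, writing $M=\max\{s^{\alpha s}\mu^s,\sigma^s\}$. The term $2s^{\alpha s}\mu^s$ is trivially $\le 2M$, which is where the leading $2$ comes from. For the remaining term I would substitute $t_*\le 2^{1/s}\mu$ to get $s^{\alpha s}t_*^{s-1}\sigma\le 2^{(s-1)/s}s^{\alpha s}\mu^{s-1}\sigma$, and then split according to whether $\sigma\lessgtr s^\alpha\mu$: if $\sigma\le s^\alpha\mu$ then $\mu^{s-1}\sigma\le s^\alpha\mu^s$, so the term is $\le 2^{(s-1)/s}s^\alpha(s^{\alpha s}\mu^s)\le 2s^\alpha M$; if $\sigma> s^\alpha\mu$ then $\mu\le s^{-\alpha}\sigma$, so $\mu^{s-1}\sigma\le s^{-\alpha(s-1)}\sigma^s$ and the term is $\le 2^{(s-1)/s}s^\alpha\sigma^s\le 2s^\alpha M$. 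Adding the two contributions gives $s^{\alpha s}\sum_i\E\xi_i^s\le 2M+2s^\alpha M=2(1+s^\alpha)M$, which is the claimed inequality.

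The step I expect to be the real obstacle — and the reason the clean constant $2(1+s^\alpha)$ is not immediate — is the control of $s^{\alpha s}t_*^{s-1}\sigma$: the naive bound $t_*\le\sigma$ only gives $s^{\alpha s}\sigma^s$, which fails to be $\le 2s^\alpha M$ once $s$ and $\alpha$ are large. One must instead exploit the max-driven bound $t_*\le 2^{1/s}\mu$, and the dichotomy $\sigma\lessgtr s^\alpha\mu$ is exactly what is needed so that whichever of $s^{\alpha s}\mu^s$ and $\sigma^s$ dominates $M$ also dominates $s^{\alpha s}t_*^{s-1}\sigma$ up to the factor $2s^\alpha$. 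Obtaining the sharp factor $2$ in $q\le 2p$ (rather than $1-1/e$) likewise relies on using $q(t)\le 1$ throughout $[t_*,\infty)$ rather than a cruder tail estimate.
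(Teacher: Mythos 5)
Your proof is correct. In the paper this lemma carries no proof at all---the author simply cites equation~(2.6) of Gin\'e, Lata{\l}a, and Zinn~(2000)---so your argument supplies the missing derivation. I checked each step: the layer--cake identities; the comparison $q(t)/(1+q(t)) \le p(t) \le q(t)$ obtained from $\prod(1-a_i)\le e^{-\sum a_i}$ and $e^y\ge 1+y$; the definition of $t_*$ and the two bounds $t_*\le \sigma$ (Markov) and $t_*\le 2^{1/s}\mu$ (via $p(t_*^-)\ge 1/2$); the split of $\int_0^\infty st^{s-1}q(t)\,dt$ at $t_*$, with $q\le 2p$ on $[t_*,\infty)$ and $\min(\xi_i,t_*)^s\le t_*^{s-1}\xi_i$ on $[0,t_*)$; and the final dichotomy $\sigma\lessgtr s^\alpha\mu$, which correctly places $s^{\alpha s}t_*^{s-1}\sigma\le 2s^\alpha M$ in either case. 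The key observation you flag---that the Markov bound $t_*\le\sigma$ alone is insufficient and one must use the max-driven bound $t_*\le 2^{1/s}\mu$ together with the case split---is exactly right and is what produces the clean constant $2(1+s^\alpha)$. This is the standard route to inequalities of this type, so in spirit it almost certainly matches the argument in the cited reference.
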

\begin{proof}[Proof of Lemma \ref{lem:replace-sum-by-max}]
See equation (2.6) in \cite{ginelatalazinn2000}.
\end{proof}

\begin{lem}
\label{lem:rosenthal-latala-nonnegative-rv}
Let $s \ge1 $ and $\xi_i$ be nonnegative independent random variables. Then, we have
\begin{equation}
\label{eqn:rosenthal-latala-nonnegative-rv}
 \E (\sum_i \xi_i)^s \le (2e)^s \max\big\{ e s^{s-1} \sum_i \E \xi_i^s,  e^s (\sum_i \E \xi_i)^s \big\}.
\end{equation}
\end{lem}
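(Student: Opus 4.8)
\textbf{Proof plan for Lemma~\ref{lem:rosenthal-latala-nonnegative-rv}.} The statement is a classical Rosenthal--Lata\l{}a type inequality for sums of nonnegative independent random variables, and the plan is to derive it from the sharp two-sided moment bound of Lata\l{}a for sums of independent nonnegative random variables, or alternatively to give a self-contained proof via truncation and the standard Rosenthal inequality. The cleanest route: first dispose of the case $1 \le s \le 2$, where by the triangle inequality in $L^s$ (subadditivity of $x \mapsto x^{s/s}$ is not available, so instead use $\E(\sum_i \xi_i)^s \le \E(\sum_i \xi_i)^2)^{s/2}$ only when $s\le 2$ fails; more carefully, for $1\le s\le 2$ use the inequality $(\sum a_i)^s \le (\sum a_i)(\sum a_i)^{s-1}$ combined with convexity, which reduces matters to controlling $\sum_i \E\xi_i$ and $\sum_i\E\xi_i^s$ directly). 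For $s \ge 2$ the main tool is the Rosenthal inequality: there is an absolute constant such that $\E|\sum_i(\xi_i - \E\xi_i)|^s \le K^s\{ s^{s/2}(\sum_i\Var\xi_i)^{s/2} + s^s \sum_i\E|\xi_i-\E\xi_i|^s\}$, and then one bounds $\Var\xi_i \le \E\xi_i^2$, uses $\E|\xi_i-\E\xi_i|^s \le 2^s\E\xi_i^s$, and finally interpolates $\E\xi_i^2 \le (\E\xi_i)^{2-2/s}(\E\xi_i^s)^{2/s}$ by H\"older to convert the variance term into a mixture of $\sum_i\E\xi_i$ and $\sum_i\E\xi_i^s$. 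Combining with $(\E\sum_i\xi_i)^s = (\sum_i\E\xi_i)^s$ then yields the claimed form with the $\max$ on the right-hand side.

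The key steps, in order, are: (1) reduce to $s\ge 2$ (the range $1\le s<2$ being elementary); (2) invoke Rosenthal's inequality for the centered sum; (3) bound the second-moment (variance) term by H\"older interpolation between the first and $s$-th moments, namely $\sum_i\E\xi_i^2 \le (\sum_i\E\xi_i)^{2-2/s}(\sum_i\E\xi_i^s)^{\,?}$ --- here one must be slightly careful and instead interpolate termwise, $\E\xi_i^2 \le (\E\xi_i)^{(s-2)/(s-1)}(\E\xi_i^s)^{1/(s-1)}$, then sum with H\"older --- so that $s^{s/2}(\sum_i\E\xi_i^2)^{s/2}$ is dominated by a constant multiple of $\max\{s^{s-1}\sum_i\E\xi_i^s,\ (\sum_i\E\xi_i)^s\}$ up to the factor $e s^{s-1}$ and $e^s$ appearing in the statement; (4) absorb the $s^s\sum_i\E\xi_i^s$ term from Rosenthal into the $e s^{s-1}\sum_i\E\xi_i^s$ bucket (note $s^s \le e s^{s-1}$ fails, so actually one keeps track: $s^s = s\cdot s^{s-1}$ and $s \le e^s$, so $s^s\sum\E\xi_i^s \le e^s\cdot(\text{stuff})$; the precise bookkeeping is what produces the stated constants $(2e)^s$, $e$, $e^s$); (5) add back the mean: $\E(\sum\xi_i)^s \le 2^{s-1}(\E|\sum(\xi_i-\E\xi_i)|^s + (\sum\E\xi_i)^s)$, and collect.

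The main obstacle I expect is purely the constant-chasing in steps (3)--(5): getting exactly the constants $(2e)^s$, $e\,s^{s-1}$, and $e^s$ as written (rather than some unspecified $K^s$) requires either citing a version of Rosenthal's inequality with explicit constants tuned to nonnegative summands --- for which the natural reference is Lata\l{}a's two-sided bound, where for nonnegative $\xi_i$ one has $\E(\sum_i\xi_i)^s \asymp \big(\sup\{\tfrac{s}{t}(\sum_i\E(\xi_i/e)^t)^{1/t}: \max(1,s/n)\le t\le s\}\big)^s$ with absolute constants --- or deriving it from scratch via the exponential-moment method. Given that this lemma is stated as an auxiliary tool and is used only up to constants elsewhere, the honest plan is: cite Lata\l{}a's inequality \cite{latala1997} (or the nonnegative-variable formulation), specialize the supremum to the two endpoints $t=1$ and $t=s$, and verify that the resulting constants can be taken as in~\eqref{eqn:rosenthal-latala-nonnegative-rv}; the endpoint specialization is routine and the constant bookkeeping, while tedious, presents no conceptual difficulty.
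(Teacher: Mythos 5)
Your final recommendation — cite Lata\l{}a~\cite{latala1997} — is exactly what the paper does; its entire proof of Lemma~\ref{lem:rosenthal-latala-nonnegative-rv} is the single line ``See~\cite{latala1997}.'' The preliminary tour through Rosenthal plus H\"older interpolation is extra exploration you yourself flag as unlikely to produce the stated constants cleanly, so it does not change the verdict: your proposal lands on the same approach as the paper.
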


\begin{proof}[Proof of Lemma \ref{lem:rosenthal-latala-nonnegative-rv}]
See \cite{latala1997}.
\end{proof}

The following Lemma \ref{eqn:replace_h11_sum_with_max} and \ref{eqn:replace_h1emptyset_sum_with_max} hold for both non-degenerate and canonical V-statistics with the kernel $f$.

\begin{lem}
\label{eqn:replace_h11_sum_with_max}
Let $q \ge 2$ and $\vertiii{\fiota}_{\{1\}, \{1\}}$ be defined in (\ref{eqn:kernel-norms}). Then, there exists an absolute constant $K > 0$ such that
\begin{equation*}
q^{3q/2} \sum_j \E_2 \vertiii{\fiota}_{\{1\}, \{1\}}^q \le K^q \Big[ q^{3q/2} \E_2 \max_j \vertiii{\fiota}_{\{1\}, \{1\}}^q + (\sum_\iota \E \|\fiota\|^2)^{q/2} \Big].
\end{equation*}
\end{lem}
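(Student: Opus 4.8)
The plan is to reduce the bound to a sum-to-maximum conversion for nonnegative i.i.d.\ random variables. Write $\xi_j := \vertiii{\fiota}_{\{1\},\{1\}}$, where $j$ is the free index; since this quantity is a measurable function of $\vX'_j$ alone and the $\vX'_j$ are i.i.d., $(\xi_j)_{j=1}^n$ is an i.i.d.\ nonnegative sequence, and the target is a bound for $q^{3q/2}\sum_j\E_2\xi_j^q$. The only fact about the kernel norm I would need is the Cauchy--Schwarz bound $\xi_j^2 = \vertiii{\fiota}_{\{1\},\{1\}}^2 \le \E_1\sum_i\|\fiota\|^2$, obtained from the definition $(\ref{eqn:kernel-norms})$ by applying Cauchy--Schwarz to the sum over $i$ and using $|\phi(\fiota)|\le\|\fiota\|$ for $\phi\in B^*_1$ (this is the alternative form $(\ref{eqn:alternative-expression-h11})$). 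Taking $\E_2$ and summing over $j$ then gives $\sum_j\E_2\xi_j^2 \le \sum_\iota\E\|\fiota\|^2$, with no loss of a power of $n$.

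Next I would apply Lemma \ref{lem:replace-sum-by-max} to the nonnegative independent random variables $\eta_j := \xi_j^2$ --- not to $\xi_j$ directly --- with $s = q/2$ (admissible since $q\ge2$) and $\alpha = 3$. Using $\eta_j^{q/2}=\xi_j^q$, this yields
\[
(q/2)^{3q/2}\sum_j\E_2\xi_j^q \;\le\; 2\bigl(1+(q/2)^3\bigr)\,\max\Bigl\{\,(q/2)^{3q/2}\,\E_2\max_j\xi_j^q,\ \Bigl(\sum_j\E_2\xi_j^2\Bigr)^{q/2}\Bigr\}.
\]
Multiplying through by $2^{3q/2}$ turns $(q/2)^{3q/2}$ into $q^{3q/2}$; bounding the maximum by the sum of its two entries and the resulting prefactor $2(1+(q/2)^3)\,2^{3q/2}$ by $K^q$ for an absolute constant $K$ (for instance using $q^3 2^{3q/2}\le 9^q$ for $q\ge2$), and finally substituting $\sum_j\E_2\xi_j^2\le\sum_\iota\E\|\fiota\|^2$ from the first step, gives exactly
\[
q^{3q/2}\sum_j\E_2\vertiii{\fiota}_{\{1\},\{1\}}^q \;\le\; K^q\Bigl[\,q^{3q/2}\,\E_2\max_j\vertiii{\fiota}_{\{1\},\{1\}}^q + \Bigl(\sum_\iota\E\|\fiota\|^2\Bigr)^{q/2}\Bigr].
\]

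The step that needs care --- essentially the only subtlety --- is to square first, i.e.\ to feed $\xi_j^2$ into Lemma \ref{lem:replace-sum-by-max} rather than $\xi_j$. Applied to $\xi_j$ directly, the conversion produces the residual $(\sum_j\E_2\xi_j)^q$, and Cauchy--Schwarz over the $j$-index only bounds this by $n^{q/2}(\sum_\iota\E\|\fiota\|^2)^{q/2}$, which carries a spurious factor $n^{q/2}$ that cannot be absorbed into $K^q$; passing through the squares removes the loss, since $\sum_j\E_2\xi_j^2\le\sum_\iota\E\|\fiota\|^2$ holds term by term. The rest is bookkeeping of the $q$-dependent prefactors coming out of Lemma \ref{lem:replace-sum-by-max}, and --- consistent with the statement being asserted for both non-degenerate and canonical kernels --- the argument uses only the Cauchy--Schwarz structure of $\vertiii{\cdot}_{\{1\},\{1\}}$ and no degeneracy of $f$; the companion bound for $\sum_i\E_1\vertiii{\fiota}_{\{2\},\{2\}}^q$ follows verbatim by exchanging the roles of the two samples.
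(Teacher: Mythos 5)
Your proof is correct and follows exactly the paper's argument: both identify $\vertiii{\fiota}_{\{1\},\{1\}}^2$ as an i.i.d.\ nonnegative sequence in $j$, apply Lemma \ref{lem:replace-sum-by-max} to these squares with $s=q/2$ and $\alpha=3$, and then invoke the pointwise bound $\vertiii{\fiota}_{\{1\},\{1\}}^2\le\E_1\sum_i\|\fiota\|^2$ from (\ref{eqn:alternative-expression-h11}) to control the residual term. The paper performs the "square first" step without comment, whereas you explicitly explain why applying the lemma to $\xi_j$ directly would incur an unacceptable $n^{q/2}$ loss --- a useful remark, but the underlying argument is the same.
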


\begin{proof}[Proof of Lemma \ref{eqn:replace_h11_sum_with_max}]
By duality, we have
\begin{eqnarray}
\nonumber
\vertiii{\fiota}_{\{1\}, \{1\}}  &=& \sup\Big\{ \E_1 \sum_i \phi(\fiota) g^{(1)}_i(\vX^{(1)}_i) : \phi \in B^*_1, \E \sum_i |g^{(1)}_i(\vX^{(1)}_i)|^2 \le 1 \Big\} \\ \label{eqn:alternative-expression-h11}
&=& \sup\Big\{ \Big( \E_1 \sum_i \phi^2(\fiota) \Big)^{1/2} : \phi \in B^*_1 \Big\} .
\end{eqnarray}
Therefore, $\{\vertiii{\fiota}_{\{1\}, \{1\}}^2\}_{j=1}^n$ is a sequence of nonnegative independent random variables. By Lemma \ref{lem:replace-sum-by-max} with $\alpha=3$ and $s=q/2$, we have
\begin{eqnarray*}
&& q^{3q/2} \sum_j \E_2 \vertiii{\fiota}_{\{1\}, \{1\}}^q \\
&\le& 2^{1+3q/2} \Big[1+\Big({q\over2}\Big)^3\Big] \Big[ \Big({q\over2}\Big)^{3q/2} \E_2 \max_j \vertiii{\fiota}_{\{1\}, \{1\}}^q + (\sum_j \E_2 \vertiii{\fiota}_{\{1\}, \{1\}}^2)^{q/2} \Big].
\end{eqnarray*}
By (\ref{eqn:alternative-expression-h11}), $\vertiii{\fiota}_{\{1\}, \{1\}}^2 \le \sum_i \E_1 \|\fiota\|^2$ and the lemma follows.
\end{proof}

\begin{lem}
\label{eqn:replace_h1emptyset_sum_with_max}
Let $q \ge 2$ and $\vertiii{\fiota}_{\{1\}, \emptyset}$ be defined in (\ref{eqn:kernel-norms}). Then, there exists an absolute constant $K > 0$ such that
\begin{equation*}
q^q \sum_j \E_2 \vertiii{\fiota}_{\{1\}, \emptyset}^q \le K^q \Big[ q^q \E_2 \max_j \vertiii{\fiota}_{\{1\}, \emptyset}^q + \Big( \E_2 \sum_j (\E_1 \|\sum_i \fiota\|)^2 \Big)^{q/2} \Big].
\end{equation*}
\end{lem}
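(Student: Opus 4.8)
The plan is to follow almost verbatim the argument of Lemma~\ref{eqn:replace_h11_sum_with_max}, with two adjustments: the kernel norm $\vertiii{\fiota}_{\{1\},\{1\}}$ is replaced by $\vertiii{\fiota}_{\{1\},\emptyset}$, and the exponent fed into Lemma~\ref{lem:replace-sum-by-max} is tuned so that the power of $q$ it produces matches the prefactor $q^q$ in the present statement rather than $q^{3q/2}$. First I would unwind the definition \eqref{eqn:kernel-norms} in the case $I=\{1\}$, $J=\emptyset$: since $\E_\emptyset$ is the identity, the product over $J$ is empty, and $\sup_{\phi\in B^*_1}\phi(x)=\|x\|$, this gives the alternative expression \eqref{eqn:alternative-expression-h1emptyset}, namely $\vertiii{\fiota}_{\{1\},\emptyset}=\E_1\|\sum_i\fiota\|$. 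Read as a function of the index $j$, the right-hand side depends only on $\vX'_j$, so $\{\vertiii{\fiota}_{\{1\},\emptyset}\}_{j=1}^n$ is a family of nonnegative \emph{independent} random variables under $\E_2$.

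Next I would apply Lemma~\ref{lem:replace-sum-by-max} with $\xi_j=\vertiii{\fiota}_{\{1\},\emptyset}^2$, $s=q/2$ and $\alpha=2$, so that $s^{\alpha s}=(q/2)^q$; this yields
\[
(q/2)^q\sum_j\E_2\vertiii{\fiota}_{\{1\},\emptyset}^q\le 2\bigl(1+(q/2)^2\bigr)\max\Bigl\{(q/2)^q\,\E_2\max_j\vertiii{\fiota}_{\{1\},\emptyset}^q,\ \bigl(\textstyle\sum_j\E_2\vertiii{\fiota}_{\{1\},\emptyset}^2\bigr)^{q/2}\Bigr\}.
\]
Multiplying through by $2^q$, bounding $2^{q+1}\bigl(1+(q/2)^2\bigr)\le K^q$ for a suitable absolute constant $K$, and using $\max\{a,b\}\le a+b$, I would obtain
\[
q^q\sum_j\E_2\vertiii{\fiota}_{\{1\},\emptyset}^q\le K^q\Bigl[q^q\,\E_2\max_j\vertiii{\fiota}_{\{1\},\emptyset}^q+\bigl(\textstyle\sum_j\E_2\vertiii{\fiota}_{\{1\},\emptyset}^2\bigr)^{q/2}\Bigr].
\]

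Finally I would identify the second term: by the alternative expression, $\sum_j\E_2\vertiii{\fiota}_{\{1\},\emptyset}^2=\sum_j\E_2\bigl(\E_1\|\sum_i\fiota\|\bigr)^2=\E_2\sum_j\bigl(\E_1\|\sum_i\fiota\|\bigr)^2$, which is exactly the quantity appearing on the right-hand side of the claimed bound. Note that, unlike in Lemma~\ref{eqn:replace_h11_sum_with_max} where one further bounds $\vertiii{\fiota}_{\{1\},\{1\}}^2\le\sum_i\E_1\|\fiota\|^2$ by Jensen's inequality, here this step is an equality and no additional estimation is required; in particular the argument does not use degeneracy of $f$, consistent with the stated scope of the lemma. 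There is no genuine obstacle here: the only two points needing care are the correct identification of $\vertiii{\fiota}_{\{1\},\emptyset}$ via \eqref{eqn:alternative-expression-h1emptyset} together with the resulting independence in $j$ (so that Lemma~\ref{lem:replace-sum-by-max} legitimately applies), and the choice $\alpha=2$ in that lemma so that the emitted power of $q$ is precisely $q^q$.
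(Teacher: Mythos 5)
Your proposal is correct and follows essentially the same route as the paper: you identify $\vertiii{\fiota}_{\{1\},\emptyset}=\E_1\|\sum_i\fiota\|$ via the duality in \eqref{eqn:alternative-expression-h1emptyset}, observe that these quantities are independent in $j$ under $\E_2$, and apply Lemma~\ref{lem:replace-sum-by-max} with $\xi_j=\vertiii{\fiota}_{\{1\},\emptyset}^2$, $s=q/2$, $\alpha=2$, exactly as the paper does. Your added remark that, unlike in Lemma~\ref{eqn:replace_h11_sum_with_max}, the final identification of $\sum_j\E_2\vertiii{\fiota}_{\{1\},\emptyset}^2$ is an equality rather than a Jensen bound is accurate and a nice clarification.
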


\begin{proof}[Proof of Lemma \ref{eqn:replace_h1emptyset_sum_with_max}]
As in the proof of Lemma \ref{eqn:replace_h11_sum_with_max}, by duality we have
\begin{equation}
\label{eqn:alternative-expression-h1emptyset}
\vertiii{\fiota}_{\{1\}, \emptyset} =  \E_1 \sup\Big\{ \sum_i \phi(\fiota) : \phi \in B^*_1 \Big\}  = \E_1 \|\sum_i \fiota\|.
\end{equation}
By Lemma \ref{lem:replace-sum-by-max} with $\alpha=2$ and $s=q/2$, we have
\begin{equation*}
q^q \sum_j \E_2 \vertiii{\fiota}_{\{1\}, \emptyset}^q \le K^q \Big[ q^q \E_2 \max_j \vertiii{\fiota}_{\{1\}, \emptyset}^q + (\sum_j \E_2 \vertiii{\fiota}_{\{1\}, \emptyset}^2)^{q/2} \Big].
\end{equation*}
\end{proof}

\end{document}